      \def\@setcopyright{}
      \def\serieslogo@{}
\newcommand{\Complex}{\mathbb C}
\newcommand{\Real}{\mathbb R}
\newcommand{\ddbar}{\overline\partial}
\newcommand{\pr}{\partial}
\newcommand{\ol}{\overline}
\newcommand{\Td}{\widetilde}
\newcommand{\norm}[1]{\left\Vert#1\right\Vert}
\newcommand{\abs}[1]{\left\vert#1\right\vert}
\newcommand{\set}[1]{\left\{#1\right\}}
\newcommand{\To}{\rightarrow}
\DeclareMathOperator{\Ker}{Ker}
\newcommand{\pit}{\mathit\Pi}
\newcommand{\cali}[1]{\mathscr{#1}}
\newcommand{\cH}{\cali{H}}
\theoremstyle{plain}
\newtheorem{thm}{Theorem}[section]
\newtheorem{lem}[thm]{Lemma}
\newtheorem{prop}[thm]{Proposition}
\theoremstyle{definition}
\newtheorem{defn}[thm]{Definition}
\theoremstyle{remark}
\newtheorem{rem}[thm]{Remark}
\newtheorem{ex}[thm]{Example}
\newtheorem{que}[thm]{Question}
\newtheorem{con}[thm]{Conjecture}
\numberwithin{equation}{section}
\begin{document}
\title[]{Existence of CR sections for high power of semi-positive generalized Sasakian CR line bundles over generalized Sasakian CR manifolds}
\author[]{Chin-Yu Hsiao}
\address{Universit{\"a}t zu K{\"o}ln,  Mathematisches Institut,
    Weyertal 86-90,   50931 K{\"o}ln, Germany}
\thanks{The author is supported by the DFG funded project MA 2469/2-1}
\email{chinyu.hsiao@gmail.com} 
\subjclass[2010]{32V30, 32W10, 32W25.} 


\begin{abstract} 
Let $X$ be a compact generalized Sasakian CR manifold of dimension $2n-1$, $n\geqslant2$, and let
$L$ be a generalized Sasakian CR line bundle over $X$ equipped with a rigid semi-positive Hermitian fiber metric $h^L$. 
In this paper we prove that if $h^L$ is positive at some point of $X$ and 
conditions $Y(0)$ and $Y(1)$ hold at each point of $X$, then $L$ is big. 
\end{abstract}  

\maketitle \tableofcontents 

\section{Introduction and statement of the main results}

Let $X$ be a compact CR manifold of dimension $2n-1$, $n\geq 2$. When $X$ is strongly pseudoconvex and dimension of $X$ is greater than five, a classical theorem of 
L. Boutet de Monvel~\cite{BdM1:74b} asserts that $X$ can be globally CR embedded into $\Complex^N$, for some $N\in\mathbb N$. 
For a strongly pseudoconvex CR  manifold of dimension greater than five, the dimension of the kernel of the tangential Cauchy-Riemmann operator $\ddbar_b$ is infinite and we can find many CR functions to embed $X$ into complex space. 
When the Levi form of $X$ has negative eigenvalues, global embedding problems for $X$ are a 
very important subject in CR geometry, see 
Andreotti-Siu~\cite{AndS70}, Epstein-Henkin~\cite{EH00} and Marinescu~\cite{Ma96}. 
In this case, the dimension of the kernel of $\ddbar_b$ is finite and could be zero and in general, $X$ can not be globally CR embedded into complex space.  
Inspired by Kodaira, we introduced in \cite{HM09} (see also \cite{Ma96}) the idea of embedding CR manifolds by means of CR sections of tensor powers $L^k$ of a CR line bundle $L\To X$. If the dimension of the space $H^0_b(X,L^k)$ of CR sections of $L^k$ is large, when $k\To\infty$, one should find many CR sections to embed $X$ into projective space. In analogy to the Kodaira embedding theorem, it is natural to ask if $X$ can be globally embedded into projective space when it carries a CR line bundle with positive curvature? To understand this question, it is crucial to be able to know if ${\rm dim\,}H^0_b(X,L^k)\sim k^n$, for $k$ large? The following conjecture was implicit in~\cite{Ma96}

\begin{con}\label{con1} 
If $L$ is positive and the Levi form of $X$ has at least two negative and two positive eigenvalues, then 
\[{\rm dim\,}H^0_b(X,L^k)\sim k^n,\] 
for $k$ large.
\end{con}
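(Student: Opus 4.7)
The plan is to adapt Demailly's holomorphic Morse inequalities to the CR setting, realizing $\dim H^0_b(X,L^k)$ as the integrated trace of the Szeg\H{o} projection $\Pi_k \colon L^2(X,L^k) \To H^0_b(X,L^k)$ and computing its leading asymptotics in $k$. First I would verify that the eigenvalue hypothesis on the Levi form implies the conditions $Y(0)$ and $Y(1)$ at every point of $X$: the Levi form has $n-1$ eigenvalues in total, and having at least two of each sign trivially gives both $Y(0)$ (at least one of each sign suffices because we cannot have all $n$ positive) and $Y(1)$ (at least two of each sign). Consequently the Kohn Laplacian $\Box_{b,k}$ on $L^k$-valued $(0,q)$-forms for $q=0,1$ has closed range with finite-dimensional kernel, and the Hodge decomposition identifies $\dim H^0_b(X,L^k)$ with $\int_X \Pi_k(x,x)\,dV(x)$.

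The technical heart is the construction of a semi-classical parametrix for $\Pi_k$ of Boutet de Monvel--Sj\"ostrand type, generalized from the strongly pseudoconvex case to indefinite Levi signature. The expected output is an on-diagonal asymptotic
\[
\Pi_k(x,x) = k^n b_0(x) + O(k^{n-1}),
\]
where $b_0(x)$ is a local invariant determined by the curvature $R^L$ of $h^L$ and the Levi form at $x$, concentrated on the part of the characteristic variety of $\ddbar_b$ where the model operator has a nontrivial kernel on $(0,0)$-forms. Strict positivity of $R^L$, together with the nondegeneracy supplied by $Y(0)$, should force $b_0$ to be pointwise positive, and integration then yields $\dim H^0_b(X,L^k) \sim k^n \int_X b_0\,dV$, which is of the required order $k^n$.

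The main obstacle is producing this parametrix in the non-pseudoconvex twisted setting and turning it into a genuine two-sided asymptotic for $\dim H^0_b$ rather than an alternating cohomology sum. One needs to split the characteristic variety of $\ddbar_b$ microlocally along directions of positive and negative Levi curvature, track how the $L^k$-twist shifts the associated symbolic calculus, and establish a spectral gap of order $k$ between the zero modes of $\Box_{b,k}$ and its first positive eigenvalues. This is precisely where the combination of positivity of $L$ with the conditions $Y(0)$ and $Y(1)$ is essential: $Y(0)$ is what gives the $\Pi_k$ a Fourier integral representation at all, while $Y(1)$ is needed to suppress the contribution of $H^1_b(X,L^k)$, which would otherwise leak into the lower bound and yield only a weak Morse inequality in the style of Demailly rather than the sharp lower bound on $\dim H^0_b(X,L^k)$ itself.
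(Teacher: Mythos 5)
There is a genuine gap, and it sits exactly at the point the paper identifies as the central difficulty. The statement you are proving is stated in the paper as a \emph{conjecture}; the paper itself only resolves it under additional rigidity hypotheses (Theorem~\ref{main}, for rigid generalized Sasakian CR line bundles), precisely because the strategy you propose breaks down in general. Your plan hinges on two ingredients: an on-diagonal asymptotic $\Pi_k(x,x)=k^nb_0(x)+O(k^{n-1})$ for the actual Szeg\H{o} projector onto $\Ker\Box^{(0)}_{b,k}$, and ``a spectral gap of order $k$ between the zero modes of $\Box_{b,k}$ and its first positive eigenvalues.'' But when the curvature of $L$ has positive eigenvalues while the Levi form has eigenvalues of both signs, the Kohn Laplacian has \emph{no} semi-classical spectral gap --- this is stated explicitly in the introduction as the source of the difficulty. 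What is actually known (Theorem~\ref{s2-t1}, from \cite{HM09}) is an asymptotic for the Szeg\H{o} kernel of the \emph{lower-energy} spectral spaces $\cH^0_{b,\leq k\nu_k}(X,L^k)$; without a gap this only bounds $\dim H^0_b(X,L^k)$ from above, since the small nonzero eigenvalues may contribute the full $k^n$ mass. Likewise, a Boutet de Monvel--Sj\"ostrand type parametrix for $\Pi_k$ in this mixed-signature, $L^k$-twisted setting is not available and cannot simply be ``expected''; producing it is essentially equivalent to the conjecture. So neither ingredient can be taken as a step in a proof, and the argument does not close. (Your verification that two eigenvalues of each sign give $Y(0)$ and $Y(1)$ is fine, and the identification $\dim H^0_b=\int_X\Pi_k(x,x)\,dv_X$ is fine; those are not the issue.)

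For comparison, the paper's route around the missing gap is quite different and is only available under rigidity. One chooses $\delta_0>0$ with $M^\phi_x+2s\mathcal{L}_x\geq0$ for $\abs{s}\leq\delta_0$ and builds operators $Q^{(0)}_{M,k}$, $Q^{(1)}_{M,k}$ by averaging along the $T$-flow against a frequency cutoff $\psi$ supported in $]-\delta_0,\delta_0[$; rigidity of the transition functions and of $h^L$ is what makes these globally well defined and gives the intertwining $\ddbar_{b,k}Q^{(0)}_{M,k}=Q^{(1)}_{M,k}\ddbar_{b,k}$. One then shows $k^{-n}(Q^{(0)}_{M,k}\pit^{(0)}_{k,\leq k\nu_k})(x)$ has a positive limit (by scaling to the Heisenberg model), while $k^{-n}(Q^{(1)}_{M,k}\pit^{(1)}_{k,\leq k\nu_k}\ol{Q^{(1)}_{M,k}})(x)$ is small, because on the model the Fourier transform in the $T$-direction of a harmonic $(0,1)$-form is supported in $\Real_{x,1}$, which is disjoint from ${\rm Supp\,}\psi$. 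The intertwining relation then converts the contribution of the small nonzero eigenvalues of $\Box^{(0)}_{b,k}$ into a $(0,1)$-form quantity controlled by the second estimate, so the genuine kernel must carry a definite fraction of the $k^n$ mass. This microlocal suppression of $H^1$ replaces the spectral gap you assumed; without the rigidity hypotheses even this substitute is unavailable, and the conjecture as stated remains open.
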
 

The difficulty of this conjecture comes from the presence of positive eigenvalues of the curvature of the line bundle and negative eigenvalues of the Levi form of $X$ and this causes the associated Kohn Laplacian 
to have no semi-classical spectral gap. 
This problem is also closely related to the fact that in the global $L^2$-estimates for the $\ddbar_b$-operator of Kohn-H\"{o}rmander there is a curvature term from the line bundle as well from the boundary and, in general, it is very difficult 
to control the sign of the total curvature contribution.  

In complex geometry, Demailly's holomorphic Morse inequalities~\cite{De:85} handled the corresponding analytical difficulties in a new way. Inspired by Demailly,  we established analogues of the holomorphic Morse inequalities of Demailly for CR manifolds
(see \cite{HM09})
\begin{thm} \label{HM}[Hsiao-Marinescu, 2009]
We assume that 
the Levi form of $X$ has at least two negative and two positive eigenvalues.
Then as $k\to\infty$,
\begin{equation}\label{*}
\begin{split}
&-{\rm dim\,}H^0_b(X, L^k)+{\rm dim\,}H^1_b(X, L^k)\\
&\quad\leqslant  \frac{k^{n}}{2(2\pi)^{n}}\Bigr(-\int_X\int_{\Real_{\phi(x),0}}\abs{\det(M^\phi_x+s\mathcal{L}_x)}ds\,dv_X(x)\\
&\quad+\int_X\int_{\Real_{\phi(x),1}}\abs{\det(M^\phi_x+s\mathcal{L}_x)}ds\,dv_X(x)\Bigr)+o(k^n),
\end{split}
\end{equation}
where $M^\phi_x$ is the associated curvature of $L$ at $x\in X$(see Definition~\ref{s1-d3}), $H^1_b(X, L^k)$ denotes the first $\ddbar_b$ cohomology group with values in $L^k$, $dv_X(x)$ is the volume form on $X$, $\mathcal{L}_x$ denotes the Levi form of $X$ at $x\in X$, and for $x\in X$, $q=0,1$, 
\begin{equation}\label{***}\begin{split}
&\Real_{\phi(x),q}=\{s\in\Real;\, \mbox{$M^\phi_x+s\mathcal{L}_x$ has exactly $q$ negative eigenvalues} \\
&\quad\mbox{and $n-1-q$ positive eigenvalues}\}.
\end{split}\end{equation}
\end{thm}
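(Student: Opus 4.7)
\medskip\noindent\textbf{Proof proposal.}
My plan is to derive the stated inequality from semi-classical spectral asymptotics of the Kohn Laplacian, following the strategy of Demailly's proof of the holomorphic Morse inequalities. Write $\Box^{(q)}_{b,k}$ for the Kohn Laplacian acting on $(0,q)$-forms with values in $L^k$, and set
\[
N_q(k,\lambda)\,:=\,\dim\mathbf{1}_{[0,\lambda]}\bigl(\Box^{(q)}_{b,k}\bigr).
\]
Under the hypothesis that the Levi form has at least two positive and two negative eigenvalues, conditions $Y(0)$ and $Y(1)$ hold, so $\Box^{(q)}_{b,k}$ is hypoelliptic with closed range in degrees $0$ and $1$ and $\dim H^q_b(X,L^k)=N_q(k,0)$. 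The standard algebraic Morse inequality applied to the finite-dimensional complex
\[
\mathbf{1}_{[0,\nu_k]}\bigl(\Box^{(0)}_{b,k}\bigr)\;\xrightarrow{\;\ddbar_b\;}\;\mathbf{1}_{[0,\nu_k]}\bigl(\Box^{(1)}_{b,k}\bigr)
\]
then yields
\[
-\dim H^0_b(X,L^k)+\dim H^1_b(X,L^k)\;\leqslant\;N_1(k,\nu_k)-N_0(k,\nu_k)
\]
for every cutoff $\nu_k\geqslant 0$. The theorem will follow once we establish the right-hand side asymptotics with $\nu_k=k\lambda$, $\lambda>0$ small and eventually sent to zero.

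\medskip\noindent\textbf{Rescaling and model operator.}
Fix $x_0\in X$ and work in a small chart with a local frame for $L$ in which $h^L=e^{-\phi}$. Conjugating by $e^{k\phi/2}$ absorbs the frame at the cost of a large parameter $k$, and an anisotropic dilation by $\sqrt{k}$ along the non-characteristic directions together with $k$ along the characteristic direction transforms $k^{-1}\Box^{(q)}_{b,k}$ into an operator that converges on compact sets to a model Kohn Laplacian $\Box^{(q)}_{b,x_0}$ on the Heisenberg group $\mathbb{H}_n$, built from the Levi form $\mathcal{L}_{x_0}$ and the curvature $M^\phi_{x_0}$ frozen at $x_0$. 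A partial Fourier transform in the characteristic variable decouples the model into a family of twisted harmonic oscillators parametrised by the dual variable $s\in\Real$; explicit diagonalisation shows that the bottom of their spectrum is attained precisely on the stratum where $M^\phi_{x_0}+s\mathcal{L}_{x_0}$ has exactly $q$ negative and $n-1-q$ positive eigenvalues, with spectral density proportional to $\abs{\det(M^\phi_{x_0}+s\mathcal{L}_{x_0})}$. This is the source of the geometric integrand appearing in the theorem.

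\medskip\noindent\textbf{Globalisation and the main obstacle.}
To convert the pointwise model into global asymptotics for $N_q(k,k\lambda)$, realise the spectral projector as $\chi(k^{-1}\Box^{(q)}_{b,k})$ for a smooth cutoff $\chi$ with $\chi=1$ on $[0,\lambda]$ and $\chi=0$ outside $[-\lambda,2\lambda]$, evaluate $\chi(k^{-1}\Box^{(q)}_{b,k})$ via a Helffer--Sj\"ostrand formula applied to a parametrix assembled from the rescaled model operators of the previous step, and integrate the resulting pointwise trace density over $X$. Letting first $k\to\infty$ and then $\lambda\to 0$ converts the spectral contribution into the integral of $\abs{\det(M^\phi_x+s\mathcal{L}_x)}$ over $\Real_{\phi(x),q}$ and over $X$, producing~\eqref{*} via the first paragraph. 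The hard part, and the main obstacle, is that $\Box^{(q)}_{b,k}$ is non-elliptic along the characteristic direction, so standard heat-kernel expansions are unavailable: one must exploit the subellipticity supplied by $Y(q)$ to make the functional calculus and the scaling argument uniform in $k$, and to show that the (measure-zero) set of $s$ where $M^\phi_x+s\mathcal{L}_x$ is singular gives no contribution to the leading term.
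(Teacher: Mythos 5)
Your outline of the first two steps matches the strategy of the cited proof (the theorem is quoted here from \cite{HM09}, whose key ingredients this paper reproduces in Sections 2 and 4): the algebraic Morse inequality on the spectral subcomplex $\cH^0_{b,\leq k\nu_k}(X,L^k)\xrightarrow{\ddbar_{b,k}}\cH^1_{b,\leq k\nu_k}(X,L^k)$ is exactly the reduction used there (and is sound, since the two-positive/two-negative hypothesis gives $Y(0)$ and $Y(1)$, and $\ddbar_{b,k}$ is injective on eigenspaces with nonzero eigenvalue), and the anisotropic $(\sqrt{k},k)$-rescaling to a model Kohn Laplacian on $H_n$ built from $\mathcal{L}_{x_0}$ and $M^\phi_{x_0}$ is precisely the scaling technique recalled in Section 4.

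The gap is your globalisation step, which is asserted rather than proved, and it is exactly where the analytic difficulty lives. What is needed is two-sided information: an upper bound for $k^{-n}\dim\cH^1_{b,\leq k\nu_k}(X,L^k)$ and a lower bound for $k^{-n}\dim\cH^0_{b,\leq k\nu_k}(X,L^k)$, with the densities of \eqref{*}. Your proposal to get these from a Helffer--Sj\"ostrand functional calculus applied to a parametrix ``assembled from the rescaled model operators'' is never constructed, and no argument is given that such a construction is uniform in $k$: $\Box^{(q)}_{b,k}$ is only subelliptic, has no semiclassical spectral gap (this is stressed in the introduction), so $\chi(k^{-1}\Box^{(q)}_{b,k})$ is not accessible by standard elliptic calculus, and declaring this ``the main obstacle'' does not discharge it. The proof you are reconstructing avoids any global parametrix: it expresses the pointwise trace of the low-energy projector through extremal functions (Lemma~\ref{s2-l1}), proves scaled subelliptic Kohn estimates (Proposition~\ref{scat-p2}) and a compactness statement (Proposition~\ref{scat-p3}) showing that normalized low-energy sequences converge to global solutions of $\Box^{(q)}_{b,H_n}\alpha=0$, controls such solutions in degree one by a partial Fourier transform in the characteristic variable (the mechanism behind Theorem~\ref{s7-t3}, which is what pins the spectral density to $\Real_{x,q}$), and, crucially for the degree-zero lower bound, constructs explicit localized quasi-modes (the forms $\alpha_k$ of \eqref{s5-epf3}); your sketch contains no analogue of this quasi-mode construction, yet without it the term $-\dim H^0_b(X,L^k)$ in \eqref{*} cannot be bounded. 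Finally, the integration over $X$ requires the uniform bound \eqref{s2-e6} to invoke dominated convergence/Fatou, and the constants must be tracked through the substitution $s=2\xi$ relating the density $\det(M^\phi_x+2\xi\mathcal{L}_x)$ of \eqref{s2-e6I} to the factor $\tfrac{1}{2(2\pi)^n}$ in \eqref{*}; neither point appears in your proposal.
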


From \eqref{*}, we see that if 
\begin{equation}\label{**}
\int_X\int_{\Real_{\phi(x),0}}\abs{\det(M^\phi_x+s\mathcal{L}_x)}dsdv_X(x)>
\int_X\int_{\Real_{\phi(x),1}}\abs{\det(M^\phi_x+s\mathcal{L}_x)}dsdv_X(x)
\end{equation}
then $L$ is big, that is ${\rm dim\,}H^0_b(X,L^k)\sim k^n$. This is a very general criterion and it is desirable to refine it in some cases where \eqref{**} is not easy to verify. 
The problem still comes from the presence of positive eigenvalues of $M^\phi_x$ and negative eigenvalues of $\mathcal{L}_x$.

For the better understanding, let's see a simple example. We consider compact analogues of the Heisenberg group $H_n$. Let $\lambda_1,\ldots,\lambda_{n-1}$ be given non-zero integers. 
We assume that $\lambda_1<0,\ldots,\lambda_{n_-}<0,\lambda_{n_-+1}>0,\ldots,\lambda_{n-1}>0$.
Let $\mathscr CH_n=(\Complex^{n-1}\times\Real)/_\sim$\,, where
$(z, \theta)\sim(\Td z, \Td\theta)$ if
\[\begin{split}
&\Td z-z=(\alpha_1,\ldots,\alpha_{n-1})\in\sqrt{2\pi}\mathbb Z^{n-1}+i\sqrt{2\pi}\mathbb Z^{n-1},\\
&\Td\theta-\theta-i\sum^{n-1}_{j=1}\lambda_j(z_j\ol\alpha_j-\ol z_j\alpha_j)\in\pi\mathbb Z\,.
\end{split}\]
We can check that $\sim$ is an equivalence relation
and $\mathscr CH_n$ is a compact manifold of dimension $2n-1$. The equivalence class of $(z, \theta)\in\Complex^{n-1}\times\Real$ is denoted by
$[(z, \theta)]$.
For a given point $p=[(z, \theta)]$, we define the CR structure 
$T^{1, 0}_p\mathscr CH_n$ of $\mathscr CH_n$ to be the space spanned by
$\{\frac{\pr}{\pr z_j}+i\lambda_j\ol z_j\frac{\pr}{\pr\theta},\ \ j=1,\ldots,n-1\}$.
Then $(\mathscr CH_n,T^{1,0}_p\mathscr CH_n)$ is a compact CR manifold of dimension $2n-1$. With a suitable choose of a Hermitian metric on the complexified tangent bundle of $\mathscr CH_n$, the Levi form of $\mathscr CH_n$ at $p\in\mathscr CH_n$ is given by
$\mathcal{L}_p=\sum^{n-1}_{j=1}\lambda_jdz_j\wedge d\ol z_j$.
Let $L=(\Complex^{n-1}\times\Real\times\Complex)/_\equiv$ where $(z,\theta,\eta)\equiv(\Td z, \Td\theta, \Td\eta)$ if
\[(z,\theta)\sim(\Td z, \Td\theta),\ \
\Td\eta=\eta\exp(\sum^{n-1}_{j,t=1}\mu_{j,t}(z_j\ol\alpha_t+\frac{1}{2}\alpha_j\ol\alpha_t))\,,\quad\text{for $(\alpha_1,\ldots,\alpha_{n-1})=\Td z-z$},\]
where $\mu_{j,t}=\mu_{t,j}$, $j, t=1,\ldots,n-1$, are given integers. We can check that $\equiv$ is an equivalence relation and
$L$ is a CR line bundle over $\mathscr CH_n$. For $(z, \theta, \eta)\in\Complex^{n-1}\times\Real\times\Complex$ we denote
$[(z, \theta, \eta)]$ its equivalence class. Take the pointwise norm
$
\big\lvert[(z, \theta, \eta)]\big\rvert^2_{h^L}:=\abs{\eta}^2\exp\big(-\textstyle\sum^{n-1}_{j,t=1}\mu_{j,t}z_j\ol z_t\big)
$
on $L$. Then the associated curvature of $L$ is given by 
$M^\phi_x=\sum^{n-1}_{j,t=1}\mu_{j,t}dz_j\wedge d\ol z_t,\ \ \forall x\in\mathscr CH_n$.
In this simple example, Conjecture~\ref{con1} becomes 

\begin{que}\label{qpc1}
If $n_-\geq2$, $n-1-n_-\geq2$, and the matrix $\left(\mu_{j,t}\right)^{n-1}_{j,t=1}$ is positive definite, then ${\rm dim\,}H^0_b(\mathscr CH_n,L^k)\sim k^n$ ?
\end{que}

If $\mu_{j,t}=\abs{\lambda_j}\delta_{j,t}$, $j,t=1,\ldots,n-1$, and $n_-\geq2$, $n-1-n_-\geq2$, where $\delta_{j,t}=1$ if $j=t$, $\delta_{j,t}=0$ if $j\neq t$, then it is easy to see that $\Real_{\phi(x),1}=\emptyset$, where $\Real_{\phi(x),1}$ is given by \eqref{***}.
Combining this observation with Morse inequalities for CR manifolds(see \eqref{*}), we get

\begin{thm}\label{HM1} 
If $n_-\geq2$, $n-1-n_-\geq2$, and $\mu_{j,t}=\abs{\lambda_j}\delta_{j,t}$, $j,t=1,\ldots,n-1$, then ${\rm dim\,}H^0_b(\mathscr CH_n,L^k)\sim k^n$.
\end{thm}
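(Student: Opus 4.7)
The plan is to apply Theorem~\ref{HM} directly: under the diagonal hypothesis $\mu_{j,t}=\abs{\lambda_j}\delta_{j,t}$, the first-cohomology integral on the right-hand side of \eqref{*} will vanish identically, so the Morse inequality collapses to a clean lower bound of order $k^n$ for $\dim H^0_b(\mathscr CH_n,L^k)$. A matching upper bound of the same order is then supplied by the weak half of the same Morse package in~\cite{HM09}.

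The first step is eigenvalue bookkeeping. Because $M^\phi_x$ and $\mathcal{L}_x$ are simultaneously diagonal in the frame $\{dz_j\wedge d\ol z_j\}_{j=1}^{n-1}$, the $j$-th eigenvalue of $M^\phi_x+s\mathcal{L}_x$ is $\abs{\lambda_j}+s\lambda_j$, which equals $\lambda_j(s-1)$ when $\lambda_j<0$ and $\lambda_j(1+s)$ when $\lambda_j>0$. Each eigenvalue with $\lambda_j<0$ thus changes sign only at $s=1$, and each with $\lambda_j>0$ only at $s=-1$. Combined with $n_-\geq 2$ and $n-1-n_-\geq 2$, this shows that at any $s\notin\{-1,1\}$ the number of negative eigenvalues is either $0$ (if $s\in(-1,1)$), at least $n_-\geq 2$ (if $s>1$), or at least $n-1-n_-\geq 2$ (if $s<-1$). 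Therefore $\Real_{\phi(x),1}=\emptyset$ and $\Real_{\phi(x),0}=(-1,1)$ at every $x\in\mathscr CH_n$.

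Substituting into \eqref{*} and using $\dim H^1_b\geq 0$ yields
\[
\dim H^0_b(\mathscr CH_n,L^k)\;\geq\;\frac{k^n}{2(2\pi)^n}\int_{\mathscr CH_n}\int_{-1}^{1}\abs{\det\bigl(M^\phi_x+s\mathcal{L}_x\bigr)}\,ds\,dv_X(x)+o(k^n).
\]
The integrand simplifies to $\bigl(\prod_{j=1}^{n-1}\abs{\lambda_j}\bigr)(1-s)^{n_-}(1+s)^{n-1-n_-}$, strictly positive on $(-1,1)$, so its integral is positive and compactness of $\mathscr CH_n$ gives the lower bound of order $k^n$. The matching $O(k^n)$ upper bound is the companion weak Morse inequality $\dim H^0_b(X,L^k)\leq \frac{k^n}{(2\pi)^n}\int_X\int_{\Real_{\phi(x),0}}\abs{\det(M^\phi_x+s\mathcal{L}_x)}\,ds\,dv_X(x)+o(k^n)$ from~\cite{HM09}.

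No genuine analytic obstacle appears; the only subtle point is the combinatorial role of the hypotheses $n_-\geq 2$ and $n-1-n_-\geq 2$, which is precisely what forces every $s$-region to be either entirely positive-definite or to carry at least two negative eigenvalues. Weakening either bound to $\geq 1$ would admit a value of $s$ with exactly one negative eigenvalue, reintroduce a non-empty $\Real_{\phi(x),1}$, and force one to verify the sign condition~\eqref{**}---precisely the difficulty that the paper's more general framework is designed to handle.
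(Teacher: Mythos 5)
Your proposal is correct and follows essentially the same route as the paper: the paper's proof of Theorem~\ref{HM1} is exactly the observation that the simultaneous diagonalization forces $\Real_{\phi(x),1}=\emptyset$ (and $\Real_{\phi(x),0}=(-1,1)$), after which the strong Morse inequality \eqref{*} gives the $ck^n$ lower bound and the weak Morse inequality of~\cite{HM09} gives the matching $O(k^n)$ upper bound. Your eigenvalue bookkeeping and the role of $n_-\geq2$, $n-1-n_-\geq2$ match the intended argument, so there is nothing to add.
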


The assumptions in Theorem~\ref{HM1} are somehow restrictive. It is clearly that we cannot go much further from Morse inequalities. By using Morse inequalities to approach Conjecture~\ref{con1}, we always have to impose extra conditions linking the Levi form and the curvature of the line bundle $L$. Similar problems also appear in the works of Marinescu~\cite{Ma96} and Berman~\cite{Be05} where they studied the $\ddbar$-Neumann cohomology groups associated to a high power of a given holomorphic line bundle on a compact complex manifold with boundary. In order to get many holomorphic sections, they also have to 
assume that, close to the boundary, the curvature of the line bundle is adapted to the Levi form of the boundary. In this work, by carefully studying semi-classical behaviour of microlocal Fourier transforms of the extreme functions for the spaces of lower energy forms of the associated Kohn Laplacian, we could solve Conjecture~\ref{con1} under rigidity conditions on $X$ and $L$ without any extra condition linking the Levi form of $X$ and the curvature of $L$. As an application, we solve Question~\ref{qpc1} completely. The proof of our main result presents a new way to overcome the analytic difficulty mentioned in the discussion after Conjecture~\ref{con1} under rigidity conditions. By using this new method, it is possible to remove 
the assumptions linking the curvatures of the line bundle and the boundary in the works of Marinescu~\cite{Ma96} and Berman~\cite{Be05} under rigidity conditions on the boundary and the line bundle.

The rigidity conditions we used in this work are inspired by the work of
Baouendi-Rothschild-Treves~\cite{BRT85}. They introduced rigidity condition on CR structure and proved that such a manifold can always be locally CR embedded in complex space as a generic submanifold. From their work, rigidity condition on CR structure seems suitable for our purpose. Initially, it is reasonable to first assume that $X$ can be locally embedded and study global embeddability of $X$. We 
can expect that the curvature of the line bundle and its transition functions have to satisfy some rigidity conditions (see Definition~\ref{s1-d1} and Definition~\ref{s1-d1-1}).  
Moreover, with these geometric conditions, it is possible to establish a micolocal asymptotic 
expansion of the Szeg\"{o} kernel and extend Kodaira embedding theorem to this situation. 

The geometric objects introduced in this paper form large classes of CR manifolds and CR line bundles. 
We hope that these geometric objects will be interesting for CR geometers and will be useful in CR geometry.

\subsection{Some standard notations}

We shall use the following notations: $\Real$ is the set of real numbers, $\ol\Real_+:=\set{x\in\Real;\, x\geq0}$, $\mathbb N=\set{1,2,\ldots}$, $\mathbb N_0=\mathbb N\bigcup\set{0}$. An element $\alpha=(\alpha_1,\ldots,\alpha_n)$ of $\mathbb N_0^n$ will be called a multiindex and the length of $\alpha$ is: $\abs{\alpha}=\alpha_1+\cdots+\alpha_n$. We write $x^\alpha=x_1^{\alpha_1}\cdots x^{\alpha_n}_n$, 
$x=(x_1,\ldots,x_n)$,
$\pr^\alpha_x=\pr^{\alpha_1}_{x_1}\cdots\pr^{\alpha_n}_{x_n}$, $\pr_{x_j}=\frac{\pr}{\pr x_j}$, $D^\alpha_x=D^{\alpha_1}_{x_1}\cdots D^{\alpha_n}_{x_n}$, $D_x=\frac{1}{i}\pr_x$, $D_{x_j}=\frac{1}{i}\pr_{x_j}$. 
Let $z=(z_1,\ldots,z_n)$, $z_j=x_{2j-1}+ix_{2j}$, $j=1,\ldots,n$, be coordinates of $\Complex^n$.  
We write $z^\alpha=z_1^{\alpha_1}\cdots z^{\alpha_n}_n$, $\ol z^\alpha=\ol z_1^{\alpha_1}\cdots\ol z^{\alpha_n}_n$,
$\frac{\pr^{\abs{\alpha}}}{\pr z^\alpha}=\pr^\alpha_z=\pr^{\alpha_1}_{z_1}\cdots\pr^{\alpha_n}_{z_n}$, $\pr_{z_j}=
\frac{\pr}{\pr z_j}=\frac{1}{2}(\frac{\pr}{\pr x_{2j-1}}-i\frac{\pr}{\pr x_{2j}})$, $j=1,\ldots,n$. 
$\frac{\pr^{\abs{\alpha}}}{\pr\ol z^\alpha}=\pr^\alpha_{\ol z}=\pr^{\alpha_1}_{\ol z_1}\cdots\pr^{\alpha_n}_{\ol z_n}$, $\pr_{\ol z_j}=
\frac{\pr}{\pr\ol z_j}=\frac{1}{2}(\frac{\pr}{\pr x_{2j-1}}+i\frac{\pr}{\pr x_{2j}})$, $j=1,\ldots,n$.

Let $\Omega$ be a $C^\infty$ paracompact manifold. 
We let $T\Omega$ and $T^*\Omega$ denote the tangent bundle of $\Omega$ and the cotangent bundle of $\Omega$ respectively.
The complexified tangent bundle of $\Omega$ and the complexified cotangent bundle of $\Omega$ will be denoted by $\Complex T\Omega$
and $\Complex T^*\Omega$ respectively. We write $\langle\,\cdot\,,\cdot\,\rangle$ to denote the pointwise duality between $T\Omega$ and $T^*\Omega$.
We extend $\langle\,\cdot\,,\cdot\,\rangle$ bilinearly to $\Complex T\Omega\times\Complex T^*\Omega$.
Let $E$ be a $C^\infty$ vector bundle over $\Omega$. The fiber of $E$ at $x\in\Omega$ will be denoted by $E_x$.
Let $F$ be another vector bundle over $\Omega$. We write 
$E\boxtimes F$ to denote the vector bundle over $\Omega\times\Omega$ with fiber over $(x, y)\in \Omega\times\Omega$ 
consisting of the linear maps from $E_x$ to $F_y$.  

\subsection{Generalized Sasakian CR manifolds and generalized Sasakian CR line bundles} 

Let $(X,T^{1,0}X)$ be a CR manifold of dimension $2n-1$, $n\geqslant2$, 
where $T^{1,0}X$ is a 
CR structure of $X$. That is, $T^{1,0}X$ is a complex $n-1$ dimensional subbundle of the complexified tangent bundle
$\Complex TX$, satisfying $T^{1,0}X\bigcap T^{0,1}X=\set{0}$, where $T^{0,1}X=\ol{T^{1,0}X}$, 
and $[\mathcal{V},\mathcal{V}]\subset\mathcal{V}$, where $\mathcal{V}=C^\infty(X,T^{1,0}X)$. In this section, we denote $Y:=X\times\Real$ and we write $t$ to denote the standard coordinate of $\Real$. We need  

\begin{defn} \label{s1-d-rigid}
We say that $(X,T^{1,0}X)$ is a generalized Sasakian CR manifold if there exists an integrable almost complex structure $J:TY\To TY$, $\Complex TY\To\Complex TY$, such that $Ju=iu$, $\forall u\in T^{1,0}X$.
\end{defn} 

Let $(X,T^{1,0}X)$ be a CR manifold of dimension $2n-1$, $n\geqslant2$, and let $J:TY\To TY$, $\Complex TY\To\Complex TY$, be an almost complex structure. We say that $J$ is a canonical complex structure on $Y$ if $J$ is integrable and $Ju=iu$, $\forall u\in T^{1,0}X$. Thus, $(X,T^{1,0}X)$ is a generalized Sasakian CR manifold if and only if there exists a canonical complex structure on $Y$. 

Let $(X,T^{1,0}X)$ be a generalized Sasakian CR manifold and let $J:TY\To TY$, $\Complex TY\To\Complex TY$ be any canonical complex structure on $Y$. From the Newlander-Nirenberg theorem, $J$ defines a complex structure $T^{1,0}Y\supset T^{1,0}X$. Put $T=J\frac{\pr}{\pr t}$. Then, $T\in C^\infty(X,TX)$, $T$ is a global real vector field on $X$. Since $J$ is integrable, it is easy to see that 
\begin{equation}\label{msmipI}\begin{split}
&\Complex TX=T^{1,0}X\oplus T^{0,1}X\oplus\set{\lambda T;\, \lambda\in\Complex},\\
&[T,\mathcal{V}]\subset\mathcal{V},\ \ \mathcal{V}:=C^\infty(X,T^{1,0}X).\end{split}\end{equation}

Conversely, let $(X,T^{1,0}X)$ be a CR manifold of dimension $2n-1$, $n\geqslant2$. We assume that there exists a global real vector field $T\in C^\infty(X,\Complex TX)$ such that \eqref{msmipI} hold. Then one can define a canonical complex structure on $Y$ by the rule: 
\[\begin{split}
&J:TY\To TY,\ \ \Complex TY\To\Complex TY\\
&Ju=iu,\ \ \forall u\in T^{1,0}X,\ \ Jv=-iv,\ \ \forall v\in T^{0,1}X,\ \ J\frac{\pr}{\pr t}=T.
\end{split}\] 
Thus, $(X,T^{1,0}X)$ is a generalized Sasakian CR manifold if and only if there exists a global real vector field $T\in C^\infty(X,\Complex TX)$ such that \eqref{msmipI} hold. We call $T$ a rigid global real vector field. 

Let's see some examples

\begin{ex}
(I) Let $M$ be an open subset with $C^\infty$ boundary $\pr M$ of a complex manifold $M'$ of dimension $n$. 
If for every $x_0\in\pr M$, we can find local holomorphic coordinates $(z_1,\ldots,z_n)$ defined in some neighborhood of $x_0$,
such that near $x_0$, $\pr M$ is given by the equation
\[{\rm Im\,}z_n=f(z_1,\ldots,z_{n-1}),\ \ \mbox{$f\in C^\infty$ is real valued},\]
then $\pr M$ is a generalized Sasakian CR manifold of dimension $2n-1$.
\\[2pt]
\ 
(II) Let $M$ be a complex manifold and $(E,h^E)$ be a holomorphic Hermitian line bundle on $M$, where the Hermitian 
fiber metric on $E$ is denoted by $h^E$. 
Let $(E^*,h^{E^*})$ be the dual bundle of $E$. We denote
\[G:=\set{v\in L^*;\, \abs{v}_{h^{L^*}}<1}\,,\quad\partial G=\set{v\in L^*;\, \abs{v}_{h^{L^*}}=1}\,.\]
The domain $G$ is called \emph{Grauert tube} associated to $E$. It is easy to see that $\pr G$ is a generalized Sasakian CR manifold. 
\\[2pt]
\
(III) The hypersurface
\[\set{(z_1,\ldots,z_n)\in\Complex^n;\, \sum^n_{j=1}\lambda_j\abs{z_j}^2=R}\]
is a generalized Sasakian CR manifolds, where $\lambda_j\in\Real$, $j=0,1,\ldots,n$, $R\in\Real$.
\\[2pt]
\
(IV) Heisenberg groups and compact Heisenberg groups (see section 9.1) are generalized Sasakian CR manifolds.
\end{ex}

From now on, we assume that $(X,T^{1,0}X)$ is a compact generalized Sasakian CR manifold and we let $\pi:Y\To X$ denote the standard projection. 

\begin{defn} \label{s1-d1}
Let $L$ be a complex line bundle over $X$. $(L,J)$ is a generalized Sasakian CR line bundle over $X$, where $J$ is a canonical complex structure on $Y$ if the pull-back $\pi^*L$ is a holomorphic line bundle over $Y$ with respect to $J$.
\end{defn} 

We need 

\begin{defn}\label{d-dhmilkI}
Let $T\in C^\infty(X,TX)$ be a rigid global real vector field on $X$. Let $U\Subset X$ be an open set. A function $u\in C^\infty (U)$ is said to be a $T$-rigid CR function on $U$ if $Tu=0$ and $Zu=0$ for all $Z\in C^\infty(U,T^{0,1}X)$.
\end{defn} 

From now on, we let $(L,J)$ be a generalized Sasakian CR line bundle over $X$ and we fix $T=J\frac{\pr}{\pr t}$. $T$ is a rigid global real vector field. Since $\pi^*L$ is a holomorphic line bundle over $Y$ with respect to the canonical complex structure $J$ on $Y$, it is easy to see that $X$ can be covered with open sets $U_j$ with trivializing sections $s_j$, $j=1,2,\ldots$, such that the corresponding transtion functions are $T$-rigid CR functions. In this paper, when trivializing sections $s$ are used, we will assume that they are of this special form. 

Fix a Hermitian fiber metric $h^L$ on $L$ and we will denote by
$\phi$ the local weights of the Hermitian metric $h^L$. More precisely, if
$s$ is a local trivializing
section of $L$ on an open subset $D\subset X$, then the local weight of $h^L$ with respect to $s$ is the function $\phi\in C^\infty(D,\Real)$ for which
\begin{equation} \label{s1-e6}
\abs{s(x)}^2_{h^L}=e^{-\phi(x)}\,,\quad x\in D.
\end{equation} 
We write $h^{\pi^*L}$ to denote the pull-back of $h^L$ by the projection $\pi$. Then, $h^{\pi^*L}$ is a Hermitian fiber metric on the holomorphic line bundle $\pi^*L$. Let $R^{\pi^*L}$ be the canonical curvature induced by $h^{\pi^*L}$. Let $\ddbar_J$ and $\pr_J$ be the $(0,1)$ and $(1,0)$ part of the exterior differential operator $d$ on functions with respect to $J$. If $s$ is a local trivializing
section of $L$ on an open subset $D\subset X$, $\abs{s}^2_{h^L}=e^{-\phi(x)}$, then 
\begin{equation}\label{msmiI}
R^{\pi^*L}(y)=\pr_J\ddbar_J\phi(\pi(y))\ \ \mbox{on $D\times\Real$}.
\end{equation}
We need 

\begin{defn} \label{s1-d3}
For $p\in X$ we define the Hermitian quadratic form $M^\phi_p$ on $T^{1,0}_pX$ by
\begin{equation} \label{msmiII}
M^\phi_p(U, \ol V)=\Big\langle U\wedge\ol V, R^{\pi^*L}(y)\Big\rangle,\ \ \pi(y)=p,\ \ U, V\in T^{1,0}_pX.
\end{equation}
\end{defn}

\begin{rem}\label{r-msmiIa}
Let $s$ be a local trivializing section of $L$ on an open subset $D\subset X$ and $\phi$ the corresponding local weight as in \eqref{s1-e6}. Let $\ddbar_b$ denote the tangential Cauchy-Riemann operator on functions (see chapter 7 in~\cite{CS01}). It is not difficult to see that for every $p\in D$, we have
\begin{equation} \label{s1-e14}
M^\phi_p(U, \ol V)=\frac{1}{2}\Big\langle U\wedge\ol V, d\big(\ddbar_b\phi-\pr_b\phi\big)(p)\Big\rangle,\ \ U, V\in T^{1,0}_pX,
\end{equation}
where $d$ is the usual exterior derivative and $\ol{\pr_b\phi}=\ddbar_b\ol\phi$.
\end{rem}

For $p\in X$, let $\mathcal{L}_p$ be the \emph{Levi form} (with respect to $T$) at $p$ (see  Definition~\ref{s1-d2}, for the precise meaning). 

\begin{defn} \label{s1-d-positive} 
We say that $h^L$ is positive at $x_0\in X$ if the Hermitian quadratic form $M^\phi_{x_0}$ is positive, 
$h^L$ is semi-positive if there is a positive constant $\delta>0$ such that for every $x\in X$ and $s\in[-\delta, \delta]$, the Hermitian quadratic form $M^\phi_x+2s\mathcal{L}_x$ is semi-positive. 
\end{defn} 

Since the transition functions are $T$-rigid CR functions, we can check that $T\phi$ is a well-defined global smooth function on $X$. 

\begin{defn} \label{s1-d1-1}
$h^L$ is said to be a $T$-rigid Hermitian fiber metric on $(L,J)$ if
\begin{equation}\label{pclr}
\mbox{$T\phi=C$ on $X$, for some constant $C$},
\end{equation}
where $\phi$ denotes the corresponding local weight as in \eqref{s1-e6}.
\end{defn} 

Note that the constant $C$ in \eqref{pclr} can be non-zero. (See section 9.1).

\begin{defn} \label{d-msmilkaI}
We say that $(L,J,h^L)$ is a rigid generalized Sasakian CR line bundle over X if $(L,J)$ is a generalized Sasakian CR line bundle over $X$ and $h^L$ is a $T$-rigid Hermitian fiber metric on $(L,J)$, $T=J\frac{\pr}{\pr t}$. 
\end{defn} 

\subsection{Hermitian CR geometry and the main results} 

Fix a smooth Hermitian metric $\langle\,\cdot\,|\,\cdot\,\rangle$ on $\Complex TX$ so that $T^{1,0}X$
is pointwise orthogonal to $T^{0,1}X$, $T$ is pointwise orthogonal to $T^{1,0}X\oplus T^{0,1}X$, $\langle T|T\rangle:=\norm{T}^2=1$ and $\langle u|v\rangle$ is real if $u$, $v$ are real tangent vectors.

Define 
\[\begin{split}
&T^{*1,0}X:=\set{e\in\Complex T^*X;\, \langle e,u\rangle=0,\forall u\in T^{0,1}X\oplus\set{\lambda T;\, \lambda\in\Complex}},\\
&T^{*0,1}X:=\set{f\in\Complex T^*X;\, \langle f,v\rangle=0,\forall v\in T^{1,0}X\oplus\set{\lambda T;\, \lambda\in\Complex}}.\end{split}\] $T^{*1,0}X$ and $T^{*0,1}X$ are 
subbundles of the complexified cotangent bundle $\Complex T^*X$. Define the vector bundle of $(0, q)$ forms of $X$ by $\Lambda^{0, q}T^*X:=\Lambda^{q}T^{*0,1}X$. 
Let $D\subset X$ be an open set. Let $\Omega^{0,q}(D)$ denote the space of smooth sections of $\Lambda^{0,q}T^*X$ over $D$. Similarly, if $E$ is a vector bundle over $D$, then we let $\Omega^{0,q}(D, E)$
denote the space of smooth sections of $\Lambda^{0,q}T^*X\otimes E$ over $D$. Let $\Omega^{0,q}_0(D, E)$ be the subspace of
$\Omega^{0,q}(D, E)$ whose elements have compact support in $D$. Let
\begin{equation} \label{s1-e5}
\ddbar_b:\Omega^{0,q}(X)\To\Omega^{0,q+1}(X)
\end{equation}
be the tangential Cauchy-Riemann operator (see chapter 7 in~\cite{CS01}).

The Hermitian metric $\langle\,\cdot\,|\,\cdot\,\rangle$ on $\Complex TX$ induces,
by duality, a Hermitian metric on $\Complex T^*X$ and also on  $\Lambda^{0, q}T^*X$ the bundle of $(0, q)$ forms of $X$. We shall also denote all these induced metrics by $\langle\,\cdot\,|\,\cdot\,\rangle$. For $f\in\Omega^{0,q}(X)$, we denote the pointwise norm $\abs{f(x)}^2:=\langle f(x)|f(x)\rangle$. Locally there is a real $1$-form $\omega_0$ of length one which is orthogonal to
$T^{*1,0}X\oplus T^{*0,1}X$. The form $\omega_0$ is unique up to the choice of sign.
We choose $\omega_0$ so that $\langle T, \omega_0\rangle=-1$.
Therefore $\omega_0$ is uniquely determined. We call $\omega_0$ the uniquely determined global real $1$-form. 
We have the
pointwise orthogonal decompositions:
\begin{equation} \label{s1-e3}\begin{split}
\Complex T^*X&=T^{*1,0}X\oplus T^{*0,1}X\oplus\set{\lambda\omega_0;\,
\lambda\in\Complex},  \\
\Complex TX&=T^{1,0}X\oplus T^{0,1}X\oplus\set{\lambda T;\,\lambda\in\Complex}.
\end{split}\end{equation}

We recall 

\begin{defn} \label{s1-d2}
For $p\in X$, the \emph{Levi form} $\mathcal{L}_p$ is the Hermitian quadratic form on $T^{1,0}_pX$ defined as follows. For any $U,\ V\in T^{1,0}_pX$, pick $\mathcal{U},\mathcal{V}\in
C^\infty(X, T^{1,0}X)$ such that
$\mathcal{U}(p)=U$, $\mathcal{V}(p)=V$. Set
\begin{equation} \label{s1-e4}
\mathcal{L}_p(U,\ol V)=\frac{1}{2i}\big\langle\big[\mathcal{U}\ ,\ol{\mathcal{V}}\,\big](p)\ ,\omega_0(p)\big\rangle\,,
\end{equation}
where $\big[\mathcal{U}\ ,\ol{\mathcal{V}}\,\big]=\mathcal{U}\ \ol{\mathcal{V}}-\ol{\mathcal{V}}\ \mathcal{U}$ denotes the commutator of $\mathcal{U}$ and $\ol{\mathcal{V}}$.
Note that $\mathcal{L}_p$ does not depend of the choices of $\mathcal{U}$ and $\mathcal{V}$. 

Since $\mathcal{L}_p$ is a Hermitian form there is a local orthonormal basis $\{\mathcal{U}_1,\ldots,\mathcal{U}_{n-1}\}$ of
$T^{1,0}X$ with respect to $\langle\,\cdot\,|\,\cdot\,\rangle$ such that $\mathcal{L}_p$ is diagonal in this basis,
$\mathcal{L}_p(\mathcal{U}_j,\ol{\mathcal{U}}_t)=\delta_{j,t}\lambda_j(p)$, 
$j,t=1,\ldots,n-1$, $\delta_{j,t}=1$ if $j=t$, $\delta_{j,t}=0$ if $j\neq t$, $\lambda_j(p)\in\Real$, $j=1,\ldots,n-1$.
The diagonal entries $\{\lambda_1(p),\ldots,\lambda_{n-1}(p)\}$ are
called the \emph{eigenvalues} of the Levi form
at $p\in X$ with respect to $\langle\,\cdot\,|\,\cdot\,\rangle$.

Given $q\in\{0,\ldots,n-1\}$, the Levi form is said to satisfy \emph{condition $Y(q)$} at $p\in X$, if $\mathcal{L}_p$ has at least either $\max{(q+1, n-q)}$ eigenvalues of the same sign or $\min{(q+1,n-q)}$ pairs of eigenvalues with opposite signs. Note that the sign of the eigenvalues does not depend on the choice of the metric $\langle\,\cdot\,|\,\cdot\,\rangle$.
\end{defn} 

Let $L^k$, $k>0$, be the $k$-th tensor power of the line bundle $L$. 
We write $\ddbar_{b,k}$ to denote the tangential
Cauchy-Riemann operator acting on forms with values in $L^k$, defined locally by:
\begin{equation} \label{s1-e7}
\ddbar_{b,k}:\Omega^{0,q}(X, L^k)\To\Omega^{0,q+1}(X, L^k)\,,\quad \ddbar_{b,k}(s^ku):=s^k\ddbar_bu,
\end{equation}
where $s$ is a local trivialization of $L$ on an open subset $D\subset X$
and $u\in\Omega^{0,q}(D)$.
We obtain a $\ddbar_{b,k}$-complex $(\Omega^{0,\bullet}(X, L^k),\ddbar_{b,k})$ with cohomology
\begin{equation}\label{db-cohom}
H^{\bullet}_b(X,L^k):=\ker\ddbar_{b,k}/\operatorname{Im} \ddbar_{b,k}.
\end{equation}
We assume that $X$ is compact and $Y(0)$ holds. By \cite[7.6-7.8]{Ko65}, \cite[5.4.11-12]{FK72}, \cite[Props.\,8.4.8-9]{CS01}, condition $Y(0)$ implies that $\dim H^0_b(X,L^k)<\infty$. 

Our main result is the following

\begin{thm}\label{main} 
Let $(X,T^{1,0}X)$ be a compact generalized Sasakian CR manifold of dimension $2n-1$, $n\geqslant2$ and let $(L,J,h^L)$ be a rigid generalized Sasakian CR line bundle over $X$. Assume that $h^L$ is semi-positive and positive at some point of $X$. Suppose conditions $Y(0)$ and $Y(1)$ hold at each point of $X$. Then, for $k$ large, there is a constant $c>0$ independnet of $k$, such that 
\[{\rm dim\,}H^0_b(X,L^k)\geq ck^n.\]
\end{thm}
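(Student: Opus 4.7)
The plan is to reduce the required lower bound to a spectral-dimension comparison for the Kohn Laplacian. For $q=0,1$, write $\Box^{(q)}_{b,k}$ for the $L^2$-closure of the Kohn Laplacian on $\Omega^{0,q}(X,L^k)$ and $\cH^{(q)}_{b,\leq \mu}(X,L^k)$ for its spectral subspace corresponding to eigenvalues in $[0,\mu]$; conditions $Y(0)$ and $Y(1)$ ensure that $\Box^{(0)}_{b,k}$ and $\Box^{(1)}_{b,k}$ have closed range and that these subspaces are finite dimensional. Since $\Box^{(0)}_{b,k}=\ddbar_{b,k}^{\,*}\ddbar_{b,k}$, the operator $\ddbar_{b,k}$ maps $\cH^{(0)}_{b,\leq \mu}$ into $\cH^{(1)}_{b,\leq \mu}$ with kernel exactly $\ker\Box^{(0)}_{b,k}=H^0_b(X,L^k)$, so for every $\mu_k>0$
\begin{equation*}
\dim H^0_b(X,L^k)\ \geq\ \dim\cH^{(0)}_{b,\leq \mu_k}(X,L^k)\ -\ \dim\cH^{(1)}_{b,\leq \mu_k}(X,L^k).
\end{equation*}
Taking $\mu_k:=k^{-N_0}$ for some large fixed $N_0$, the theorem reduces to (a) a lower bound $\dim\cH^{(0)}_{b,\leq \mu_k}(X,L^k)\geq c\,k^n$ and (b) the vanishing $\cH^{(1)}_{b,\leq \mu_k}(X,L^k)=\{0\}$ for $k$ large.

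For (b), the $T$-rigidity of $(X,T^{1,0}X)$ and of $(L,J,h^L)$ is used decisively. Because $T\phi$ is a global constant and the transition functions of $L$ are $T$-rigid CR, sections of $L^k$ admit a Fourier decomposition along the flow of $T$ that is compatible with the bundle structure. Writing $\tau=ks$ for the dual variable to a $T$-orbit parameter, the Bochner--Kohn--Morrey identity for $\Box^{(1)}_{b,k}$ produces, after this decomposition, a curvature term of the schematic form $k\,Q_s(u,u)$ on the Fourier slab of parameter $s$, where $Q_s$ is a Hermitian quadratic form on $(0,1)$-vectors whose signature is controlled by $M^\phi_x+s\mathcal{L}_x$. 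By Definition~\ref{s1-d-positive}, $M^\phi_x+s\mathcal{L}_x\geq 0$ uniformly on the window $|s|\leq 2\delta$, so $Q_s\geq 0$ there; for $|s|>2\delta$ the Levi form dominates $M^\phi_x$ and condition $Y(1)$ supplies the standard coercive bound $Q_s(u,u)\geq c\,|s|\,\|u\|^2$ on $(0,1)$-vectors. Combining the two regimes uniformly in $x$ and integrating over the Fourier variable gives $\bigl(\Box^{(1)}_{b,k}u,u\bigr)\geq c_0\,k\,\|u\|^2$ for $k$ large, hence $\cH^{(1)}_{b,\leq k^{-N_0}}(X,L^k)=\{0\}$ eventually.

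For (a) I localise near the point $x_0$ at which $h^L$ is positive. In a $T$-invariant Baouendi--Rothschild--Treves-type chart $(z,\theta)$ centred at $x_0$, one can trivialise $L$ by a section whose local weight $\phi$ satisfies $\phi(0)=0$, $d\phi(0)=0$, $M^\phi_{x_0}>0$, and $T\phi\equiv\mathrm{const}$. After the parabolic rescaling $(z,\theta)\mapsto(z/\sqrt k,\theta/k)$ and Fourier transform in $\theta$, the conjugated operator $k^{-1}\Box^{(0)}_{b,k}$ converges to a Bargmann--Fock model operator associated with the positive form $M^\phi_{x_0}$. Taking ground states of this model indexed by $\alpha\in\N_0^{n-1}$ with $|\alpha|\leq\epsilon k$ and by a Fourier parameter $\tau$ in an interval of length $\sim k$, cutting off by a smooth bump supported in the chart, and dressing by $s^k$, one obtains $\sim k^n$ candidate sections $u_{\alpha,\tau}\in\Omega^{0,0}(X,L^k)$ satisfying $\|\ddbar_{b,k}u_{\alpha,\tau}\|\leq C_M\,k^{-M}\|u_{\alpha,\tau}\|$ for every $M$ and whose normalised Gram matrix is close to the identity. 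This delivers the required $\dim\cH^{(0)}_{b,\leq \mu_k}(X,L^k)\geq c\,k^n$.

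The principal obstacle is step (b). It is precisely the analytic difficulty flagged in the discussion after Conjecture~\ref{con1}: neither $M^\phi_x$ nor its combination with the Levi form has a universal sign on $(0,1)$-vectors, and for $|s|$ large the set $\mathbb{R}_{\phi(x),1}$ of \eqref{***} can be non-empty, which is why the raw Morse inequality \eqref{*} is insufficient. The new ingredient is the microlocal analysis of the extremal functions realising $\cH^{(1)}_{b,\leq \mu_k}(X,L^k)$: using the rigidity one shows that any such extremal is, to leading order, concentrated in the Fourier window $|s|\leq 2\delta$ where semi-positivity applies, while the complementary window is controlled by $Y(1)$ together with a direct coercivity estimate. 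Producing this microlocal concentration, and patching the two regimes into a uniform estimate in $x$ and $s$, is where the bulk of the technical work sits.
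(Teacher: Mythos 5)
Your reduction to the two claims (a) and (b) is where the argument breaks down, and the failure is in (b). The inequality $\dim H^0_b(X,L^k)\geq\dim\cH^0_{b,\leq\mu_k}(X,L^k)-\dim\cH^1_{b,\leq\mu_k}(X,L^k)$ is fine, but the asserted spectral gap $\bigl(\Box^{(1)}_{b,k}u\,|\,u\bigr)_{h^{L^k}}\geq c_0k\norm{u}^2_{h^{L^k}}$ (hence $\cH^1_{b,\leq k^{-N_0}}(X,L^k)=\set{0}$) is false under the hypotheses of the theorem. The hypotheses only give $M^\phi_x+2s\mathcal{L}_x\geq0$ for $\abs{s}\leq\delta_0$; for $\abs{s}>\delta_0$ the set $\Real_{x,1}$ of \eqref{e-seaI} can be nonempty on all of $X$, and condition $Y(1)$ is a condition on the Levi form alone (it yields hypoellipticity and discreteness of the spectrum), not a sign condition on $M^\phi_x+2s\mathcal{L}_x$; it does not make the curvature term in the Bochner--Kohn--Morrey identity coercive at Fourier frequencies $s\in\Real_{x,1}$. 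Concretely, take $n-1=4$, Levi eigenvalues $(1,1,-1,-1)$ (so $Y(0)$ and $Y(1)$ hold) and $M^\phi_x=\operatorname{diag}(1,1,1,100)$: then $M^\phi_x+2s\mathcal{L}_x\geq0$ for $\abs{s}\leq\tfrac12$, yet for $s\in(\tfrac12,50)$ there is exactly one negative eigenvalue, so $\Real_{x,1}$ has positive measure. By the Szeg\"o kernel asymptotics (Theorem~\ref{s2-t1} with $q=1$, i.e.\ \eqref{s2-e6I}), $\dim\cH^1_{b,\leq k\nu_k}(X,L^k)\sim c'k^n$ with $c'>0$ in such a situation, so $\Box^{(1)}_{b,k}$ has an abundance of small eigenvalues: there is no semiclassical gap. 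This is precisely the analytic difficulty stated after Conjecture~\ref{con1}, and it cannot be repaired by shrinking the threshold to $k^{-N_0}$, because your step (a) (quasi-modes with $\ddbar_{b,k}$-error $O(k^{-M})$) is itself only available at thresholds of size $k\nu_k$ with $\nu_k\To0$: the natural localized states built from the Fourier window and a cut-off in the canonical coordinates have energy $o(k)$, not $O(k^{-N_0})$, so the two requirements on $\mu_k$ are incompatible as argued.

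The paper never proves, and does not need, vanishing or smallness of $\dim\cH^1_{b,\leq k\nu_k}$. Instead it compares \emph{weighted} Szeg\"o traces: it builds the microlocal cut-offs $Q^{(0)}_{M,k}$, $Q^{(1)}_{M,k}$ (localizing to the Fourier window $\operatorname{Supp}\psi\subset\,]-\delta_0,\delta_0[$ along the $T$-flow, where semi-positivity holds), uses the intertwining $\ddbar_{b,k}Q^{(0)}_{M,k}=Q^{(1)}_{M,k}\ddbar_{b,k}$ of \eqref{sp3-eX}, and shows via the scaling technique and the Fourier analysis of limit forms on $H_n$ (Theorem~\ref{s7-t3}) that low-energy $(0,1)$-eigenforms concentrate microlocally on $\Real_{x,1}$, which is disjoint from $\operatorname{Supp}\psi$; hence $(Q^{(1)}_{M,k}\pit^{(1)}_{k,\leq k\nu_k}\ol{Q^{(1)}_{M,k}})(x)$ is small even though $\pit^{(1)}_{k,\leq k\nu_k}$ itself is of size $k^n$, while $(Q^{(0)}_{M,k}\pit^{(0)}_{k,\leq k\nu_k})(x)$ stays bounded below. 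A Cauchy--Schwarz argument then transfers the lower bound from $\cH^0_{b,\leq k\nu_k}$ to the harmonic space $\cH^0_b(X,L^k)$. Your closing paragraph gestures at this concentration phenomenon, but your actual proof mechanism (coercivity of $\Box^{(1)}_{b,k}$ outside the window, hence an empty low-lying $(0,1)$-spectrum) contradicts it; the dimension-counting route you chose cannot give the theorem without the extra hypothesis \eqref{**}, which is exactly what the paper is designed to avoid.
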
 

It should be mentioned that the Levi curvature assumptions in Theorem~\ref{main} are a bit more general than the ones in 
Conjecture~\ref{con1}. 

\begin{rem}\label{almsmiII}
It should be mention that Theorem~\ref{main} implies the famous Grauert-Riemenschneider conjecture in complex geometry. 
Let $M$ be a compact complex manifold of complex dimension $n$ and let $E\To M$ be a holomorphic line bundle with a Hermitian fiber metric $h^E$. Let $R^E$ denotes the canonical curvature on $E$ induced by $h^E$. We assume that $R^E$ is semi-positive and positive at some point of $M$. Then Grauert-Riemenschneider conjecture claims that $L$ is big, that is, ${\rm dim\,}H^0(M, E^k)\sim k^n$, where $H^0(M,E^k)$ denotes the space of global holomorphic sections of $E^k$ the $k$-th power of $E$. This conjecture was first solved by Siu~\cite{Si1:84}. Let's see how to obtain this conjecture from Theorem~\ref{main}. With the notations used above, let $(\Td X,T^{1,0}\Td X)$ be a compact generalized Sasakian CR manifold of dimension $2m-1$, $m\geqslant2$, such that the Levi form of $\Td X$ has at least two negative and two positive eigenvalues and let $(\Td L,\Td J,h^{\Td L})$ be a rigid generalized Sasakian CR line bundle over $\Td X$ with $h^{\Td L}$ is positive at every point of $\Td X$. We can find such $(\Td X,T^{1,0}\Td X)$ and $(\Td L,\Td J,h^{\Td L})$ (see setion 9). Consider $X=M\oplus\Td X$, $T^{1,0}X:=T^{1,0}M\oplus T^{1,0}\Td X$, where $T^{1,0}M$ denotes the holomorphic tangent bundle of $M$. Then, $(X,T^{1,0}X)$ is a compact generalized Sasakian CR manifold of dimension $2(m+n)-1$ and the Levi form of $X$ has at least two negative and two positive eigenvalues. Thus, conditions $Y(0)$ and $Y(1)$ hold at each point of $X$. Put $L:=E\otimes\Td L$. Then, $L$ is a complex line bundle over $X$. Let $J$ be the canonical complex structure on $X\times\Real$ induced by $\Td J$ and the complex structure on $M$. It is obviously that $(L,J)$ is a generalized Sasakian CR line bundle over $X$. 
Put $h^{L}=h^{E}\otimes h^{\Td L}$. Then $h^L$ is a Hermitian fiber metric on $L$ and $(L,J,h^L)$ is a rigid generalized Sasakian CR line bundle over $X$. Moreover, it is easy to check that $h^L$ is semi-positive and positive at some point of $X$. From Theorem~\ref{main}, we conclude that for $k$ large, there is a constant $C_0>0$ such that
\begin{equation}\label{e-lolkmiI}
{\rm dim\,}H^0_b(X,L^k)\geq C_0k^{n+m}.
\end{equation}
We notice that ${\rm dim\,}H^0_b(X,L^k)={\rm dim\,}H^0(M,E^k)\times{\rm dim\,}H^0_b(\Td X,\Td L^k)$ and it is well-known that there is a constant $C_1>0$ such that ${\rm dim\,}H^0_b(\Td X,\Td L^k)\leq C_1k^{m}$ (see~\cite{HM09}). Combining this observation and \eqref{e-lolkmiI}, we conclude that there is a constant $c>0$ such that
${\rm dim\,}H^0(M,E^k)\geq ck^{n}$.
\end{rem} 

We investigate Theorem~\ref{main} on generalized torus CR manifolds. Let $\Phi^t(x)$ be the $T$-flow. That is, 
$\Phi^t(x)$ is a differentiable mapping: 
\[t\To\Phi^t(x)\in X:I\To X,\] 
$I$ is an open interval in $\Real$, $0\in I$, such that 
$\Phi^0(x)=x$, $\forall x\in X$, and $\frac{d\Phi^t(x)}{dt}=T(\Phi^t(x))$. We need 

\begin{defn}\label{dpc} 
We say that $(X,T^{1,0}X)$ is a generalized torus CR manifold if there is a constant $\gamma_0>0$ such that 
$\Phi^t(x)$ is well-defined, $\forall \abs{t}\leq\gamma_0$, $\forall x\in X$, and
$\Phi^{\gamma_0}(x)=x$ for every $x\in X$.
\end{defn} 

\begin{defn}\label{dpc1} 
We say that $(L,J)$ is an admissible generalized Sasakian CR line bundle over a compact generalized torus CR manifold $X$ if we can find an open covering $\set{U_j}^N_{j=1}$ of $X$ such that $L$ is trivial on $U_j$, for each $j$, and \[\set{\Phi^t(x);\, x\in U_j, \abs{t}\leq\gamma_0}=U_j,\]
for each $j$, where $\gamma_0>0$ is as in Definition~\ref{dpc}.
\end{defn} 

Let $(L,J)$ be an admissible generalized Sasakian CR line bundle over a compact generalized torus CR manifold $(X,T^{1,0}X)$. Take any 
Hermitian fiber metric $h^{L}$ on $L$ and let $\phi$ denotes the corresponding local weight as in \eqref{s1-e6}. Let $h^{L}_1$ be the Hermitian fiber metric on $L$ locally given by 
$\abs{s}^2_{h^{L}_1}=e^{-\phi_1}$, where $\phi_1=\frac{1}{\gamma_0}\int^{\gamma_0}_0\phi(\Phi^t(x))dt$,  $\gamma_0>0$ is as in Definition~\ref{dpc}, $s$ is a local trivializing section of $L$ with the special form in Definition~\ref{dpc1}. It is easy to check that $h^{L}_1$
is well-defined and $T\phi_1=0$. Thus, $(L,J,h^{L}_1)$ is a rigid generalized Sasakian CR line bundle over $(X,T^{1,0}X)$. 
Moreover, we can show that if $M^\phi_x$ is positive on $X$ then $M^{\phi_1}_x$ is positive on $X$ (see Proposition~\ref{ppcf}, for the proof). Combining this with Theorem~\ref{main}, we obtain 

\begin{thm}\label{main1} 
Let $(X,T^{1,0}X)$ be a compact generalized torus CR manifold of dimension $2n-1$, $n\geqslant2$ and let $(L,J)$ 
be an admissible generalized Sasakian CR line bundle over $X$ with a Hermitian fiber metric $h^L$. We assume that $h^L$ is positive on $X$ and conditions $Y(0)$ and $Y(1)$ hold at each point of $X$. Then, for $k$ large, there is a constant $c>0$ independnet of $k$, such that 
\[{\rm dim\,}H^0_b(X,L^k)\geq ck^n.\]
\end{thm}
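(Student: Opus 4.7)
The plan is to reduce Theorem~\ref{main1} to Theorem~\ref{main} by replacing the given fiber metric $h^L$ with a $T$-averaged metric $h^L_1$, exactly as suggested in the discussion preceding the statement. On each open set $U_j$ from Definition~\ref{dpc1}, with its fixed trivializing section $s_j$ and corresponding local weight $\phi$, set
\[
\phi_1(x) := \frac{1}{\gamma_0}\int_0^{\gamma_0}\phi(\Phi^t(x))\,dt.
\]
First I would verify that $\phi_1$ defines a legitimate global Hermitian fiber metric $h^L_1$ on $L$. Admissibility ensures $\Phi^t(U_j)\subset U_j$ for $\abs{t}\leq\gamma_0$, so the integral is meaningful. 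On overlaps $U_j\cap U_k$ the transition function $g_{jk}$ is a $T$-rigid CR function, hence constant along $T$-orbits, so the two local definitions of $\phi_1$ differ by $\log\abs{g_{jk}}^2$ as required for a consistent global metric. Differentiating under the integral sign and using $\Phi^{\gamma_0}(x)=x$ yields $T\phi_1\equiv0$, so $(L,J,h^L_1)$ is a rigid generalized Sasakian CR line bundle in the sense of Definition~\ref{d-msmilkaI}.

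The main obstacle is Proposition~\ref{ppcf}: showing that $M^{\phi_1}_x$ is positive on $X$. Because $[T,\mathcal{V}]\subset\mathcal{V}$ by \eqref{msmipI}, the $T$-flow $\Phi^t$ is a CR diffeomorphism, so its differential restricts to an isomorphism $d\Phi^t_x : T^{1,0}_xX\to T^{1,0}_{\Phi^t(x)}X$. Using the intrinsic formula \eqref{s1-e14} for $M^\phi$ and the fact that CR diffeomorphisms intertwine $d$, $\ddbar_b$, and $\pr_b$, I would derive the pullback identity
\[
M^{\phi\circ\Phi^t}_x(U,\ol V) = M^\phi_{\Phi^t(x)}\bigl(d\Phi^t_x\,U,\,\ol{d\Phi^t_x\,V}\bigr),\quad U,V\in T^{1,0}_xX.
\]
Since differentiation in $x$ commutes with integration in $t$, the curvature of $h^L_1$ is the $t$-average of these pulled-back forms. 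Each integrand is positive on $T^{1,0}_xX$ (positivity of $M^\phi$ at $\Phi^t(x)$ transported by an isomorphism), so the average $M^{\phi_1}_x$ is itself positive.

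With $M^{\phi_1}$ positive everywhere, $h^L_1$ is in particular semi-positive and positive at some point of $X$, so all hypotheses of Theorem~\ref{main} hold for $(L,J,h^L_1)$; the conditions $Y(0)$ and $Y(1)$ are intrinsic to $(X,T^{1,0}X)$ and are unaffected by the change of metric. Theorem~\ref{main} then produces a constant $c>0$ and an integer $k_0$ with $\dim H^0_b(X,L^k)\geq ck^n$ for all $k\geq k_0$. Since $H^0_b(X,L^k)$ depends only on the CR line bundle $L$ and not on the choice of Hermitian fiber metric, this lower bound is precisely the conclusion of Theorem~\ref{main1} for the original data.
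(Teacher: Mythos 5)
Your proposal is correct and follows essentially the same route as the paper: replace $h^L$ by the $T$-flow average $h^L_1$ with weight $\phi_1=\frac{1}{\gamma_0}\int_0^{\gamma_0}\phi(\Phi^t(x))dt$, check rigidity and well-definedness via the rigid CR transition functions, show $M^{\phi_1}_x$ remains positive, and apply Theorem~\ref{main}. The positivity step is exactly the paper's Proposition~\ref{ppcf}, which is proved there in canonical coordinates rather than via your intrinsic pullback identity under the CR diffeomorphism $\Phi^t$, but the content is the same and there is no gap.
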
 

\subsection{The outline of the proof of Theorem~\ref{main}}

Let $\Box^{(q)}_{b,k}$ denote the Kohn Laplacian with values in $L^k$ (see section 2). Fix $q=0,1,\ldots,n-1$. We assume that  $Y(q)$ holds. It is well-known that $\Box^{(q)}_{b,k}$ has a discrete
spectrum, each eigenvalues occurs with finite multiplicity and all eigenforms are smooth and 
${\rm Ker\,}\Box^{(q)}_{b,k}:=\cH^q_b(X,L^k)\cong H^q_b(X,L^k)$.
For $\lambda\geq0$, let $\cH^q_{b,\leq\lambda}(X,L^k)$ denote the space spanned by the eigenforms of $\Box^{(q)}_{b,k}$ 
whose eigenvalues are bounded by $\lambda$. Now, we assume that $Y(0)$ and $Y(1)$ hold and $(L,h^L)$ is semi-positive and positive at some point of $X$. Take $\delta_0>0$ be a small constant so that 
$M^\phi_x+2s\mathcal{L}_x\geq0$, $\forall\abs{s}\leq\delta_0$, $\forall x\in X$. 
Take $\psi(\eta)\in C^\infty_0(]-\delta_0,\delta_0[,\ol\Real_+)$ so that $\psi(\eta)=1$ if $-\frac{\delta_0}{2}\leq \eta\leq\frac{\delta_0}{2}$. Take $\chi(t)\in C^\infty_0(]-2,2[,\ol\Real_+)$ so that $0\leq\chi(t)\leq1$ and $\chi(t)=1$ if $-1\leq t\leq 1$ and $\chi(-t)=\chi(t)$ for all $t\in\Real$. Fix $M>0$. 
Under the rigidity assumptions in Theorem~\ref{main}, we can construct global continuous operators $Q^{(0)}_{M,k}:C^\infty(X,L^k)\To C^\infty(X,L^k)$ and $Q^{(1)}_{M,k}:\Omega^{0,1}(X,L^k)\To\Omega^{0,1}(X,L^k)$ such that  
\begin{equation}\label{epcI}
\ddbar_{b,k}Q^{(0)}_{M,k}=Q^{(1)}_{M,k}\ddbar_{b,k}\ \ \mbox{on $C^\infty(X,L^k)$}
\end{equation}
and $Q^{(0)}_{M,k}$, $Q^{(1)}_{M,k}$ are formally given by the following. Let $s$ be a local section of $L$ on $D\subset X$, $\abs{s}^2_{h^L}=e^{-\phi}$, and let $\Phi^t(x)$ be the $T$-flow. Then, 
\begin{equation}\label{epcII}
\begin{split}
&(Q^{(0)}_{M,k}f)(x)=s^ke^{\frac{k}{2}\phi(x)}\int e^{-it\eta}\psi(\eta)\chi(\frac{t}{M})e^{-\frac{k}{2}\phi(\Phi^{\frac{t}{k}}(x))}\Td f(\Phi^{\frac{t}{k}}(x))dtd\eta\ \ \mbox{on $D$},\\
&(Q^{(1)}_{M,k}g)(x)=s^ke^{\frac{k}{2}\phi(x)}\int e^{-it\eta}\psi(\eta)\chi(\frac{t}{M})e^{-\frac{k}{2}\phi(\Phi^{\frac{t}{k}}(x))}\Td g(\Phi^{\frac{t}{k}}(x))dtd\eta\ \ \mbox{on $D$},
\end{split}
\end{equation}
where $f=s^k\Td f\in C^\infty_0(D,L^k)$, $g=s^k\Td g\in\Omega^{0,1}_0(D,L^k)$. (See section 5, for the precise definitions of the operators $Q^{(0)}_{M,k}$, $Q^{(1)}_{M,k}$.) Let $\langle\,\cdot\,|\,\cdot\,\rangle_{h^{L^k}}$ denote the Hermitian metric on $\Lambda^{0,q}T^*X\otimes L^k$ induced by $h^{L}$ and $\langle\,\cdot\,|\,\cdot\,\rangle$. Let $dv_X=dv_X(x)$ be the volume form on $X$ induced by $\langle\,\cdot\,|\,\cdot\,\rangle$ and let $(\,\cdot\,|\,\cdot\,)_{h^{L^k}}$ be the $L^2$ inner product on $\Omega^{0,q}(X,L^k)$ induced by $\langle\,\cdot\,|\,\cdot\,\rangle_{h^{L^k}}$ and $dv_X$. For $\lambda\geq0$, define 
\begin{equation}\label{epcIII} 
\begin{split}
&(Q^{(0)}_{M,k}\pit^{(0)}_{k,\,\leq\lambda})(x):=\sum^{m_k}_{j=1}\langle Q^{(0)}_{M,k}f_j(x)| f_j(x)\rangle_{h^{L^k}}, \\
&(Q^{(1)}_{M,k}\pit^{(1)}_{k,\,\leq\lambda}\ol{Q^{(1)}_{M,k}})(x):=\sum^{p_k}_{j=1}\langle Q^{(1)}_{M,k}g_j(x)| Q^{(1)}_{M,k}g_j(x)\rangle_{h^{L^k}},
\end{split}
\end{equation} 
where $f_j(x)\in C^\infty(X, L^k)$, $j=1,\ldots,m_k$, is an orthonormal frame for the space $\cH^0_{b,\,\leq\lambda}(X, L^k)$ with respect to $(\,\cdot\,|\,\cdot\,)_{h^{L^k}}$, $g_j(x)\in\Omega^{0,1}(X, L^k)$, $j=1,\ldots,p_k$, is an orthonormal frame for $\cH^1_{b,\,\leq\lambda}(X, L^k)$ with respect to $(\,\cdot\,|\,\cdot\,)_{h^{L^k}}$. It is straightforward to see that the definitions \eqref{epcIII} are independent of the choices of orthonormal frames. The point of our proof is that
there exists a sequence $\nu_k>0$ with $\nu_k\To0$ as $k\To\infty$, such that 
\begin{equation}\label{epcIII-I}
\mbox{For each $x\in X$, $\lim_{k\To\infty}k^{-n}(Q^{(0)}_{M,k}\pit^{(0)}_{k,\leq k\nu_k})(x)$ exists and is real valued}, 
\end{equation}
\begin{equation}\label{epcIV}
\begin{split}
&\lim_{k\To\infty}k^{-n}(Q^{(0)}_{M,k}\pit^{(0)}_{k,\leq k\nu_k})(x)\\
&\quad\geq(2\pi)^{1-n}\int\psi(\xi)\det(M^\phi_x+2\xi\mathcal{L}_x)\mathds{1}_{\Real_{x,0}}(\xi)d\xi-\frac{C_1}{M^2},\ \ \forall x\in X,
\end{split}
\end{equation}
\begin{equation}\label{epcV}
\limsup_{k\To\infty}k^{-n}(Q^{(1)}_{M,k}\pit^{(1)}_{k,\leq k\nu_k}\ol{Q^{(1)}_{M,k}})(x)\leq\frac{C_1}{M^2},\ \ \forall x\in X,
\end{equation} 
and 
\begin{equation}\label{epcV-I}
\begin{split}
&\sup\set{k^{-n}\abs{(Q^{(0)}_{M,k}\pit^{(0)}_{k,\leq k\nu_k})(x)};\, k>0, x\in X}<\infty,\\ 
&\sup\set{k^{-n}(Q^{(1)}_{M,k}\pit^{(0)}_{k,\leq k\nu_k}\ol{Q^{(1)}_{M,k}})(x);\, k>0, x\in X}<\infty,
\end{split}
\end{equation}
where $\Real_{x,0}$ is given by \eqref{e-seaI} and $\mathds{1}_{\Real_{x,0}}(\xi)=1$ if $\xi\in\Real_{x,0}$, 
$\mathds{1}_{\Real_{x,0}}(\xi)=0$ if $\xi\notin\Real_{x,0}$ and $C_1>0$ is a constant independent of $k$ and $M$.

From \eqref{epcV-I}, we can apply Lebesgue dominate Theorem and Fatou's lemma and  we get by using \eqref{epcIV} and \eqref{epcV}, 
\begin{equation}\label{epcVI}
\begin{split}
&\abs{\int_X(Q^{(0)}_{M,k}\pit^{(0)}_{k,\leq k\nu_k})(x)dv_X(x)}\\
&\geq k^n\Bigr((2\pi)^{1-n}\int_X\bigr(\int\psi(\xi)\det(M^\phi_x+2\xi\mathcal{L}_x)\mathds{1}_{\Real_{x,0}}(\xi)d\xi\bigr)dv_X(x)-\frac{C_2}{M^2}\Bigr)+o(k^n),
\end{split}
\end{equation}
\begin{equation}\label{epcVII}
\int_X(Q^{(1)}_{M,k}\pit^{(1)}_{k,\leq k\nu_k}\ol{Q^{(1)}_{M,k}})(x)dv_X(x)\leq k^n\frac{C_2}{M^2}+o(k^n),
\end{equation}
where $C_2>0$ is a constant independent of $M$ and $k$. 

Let
$f_{1,k},f_{2,k},\ldots,f_{d_k,k}$
be an orthonormal basis for $\cH^0_b(X,L^k)$, where $d_{k}={\rm dim\,}\cH^0_{b}(X,L^k)$. 
Let $\Td f_{1,k},\Td f_{2,k},\ldots,\Td f_{n_k,k}$
be an orthonormal basis for the space $\cH^0_{b,0<\mu\leq k\nu_k}(X,L^k)$. From \eqref{epcVI} and \eqref{epcIII}, we see that if $M$ is large enough, then
\begin{equation}\label{epcVIII}
\begin{split}
&\sum^{d_k}_{j=1}\abs{\int_X\langle Q^{(0)}_{M,k}f_{j,k}|f_{j,k}\rangle_{h^{L^k}}(x)dv_X(x)}+\sum^{n_k}_{j=1}\abs{\int_X\langle Q^{(0)}_{M,k}\Td f_{j,k}|\Td f_{j,k}\rangle_{h^{L^k}}(x)dv_X(x)}\\
&\quad\geq\frac{k^n}{2}(2\pi)^{1-n}\int_X\Bigr(\int\psi(\xi)\det(M^\phi_x+2\xi\mathcal{L}_x)
\mathds{1}_{\Real_{x,0}}(\xi)d\xi\Bigr)dv_X(x)
\end{split}
\end{equation}
for $k$ large. From \eqref{epcI} and \eqref{epcIII}, it is not difficult to check that 
\begin{equation}\label{epcIX}
\begin{split}
&\sum^{n_k}_{j=1}\abs{\int _X\langle Q^{(0)}_{M,k}\Td f_{j,k}|\Td f_{j,k}\rangle_{h^{L^k}}(x)dv_X(x)}\\
&\leq\Bigr(\int_X(Q^{(1)}_{M,k}\pit^{(1)}_{k,\leq k\nu_k}\ol{Q^{(1)}_{M,k}})(x)dv_X(x)\Bigr)^{\frac{1}{2}}\Bigr(\sum^{n_k}_{j=1}\int_X\langle\Td f_{j,k}|\Td f_{j,k}\rangle_{h^{L^k}}(x)dv_X(x)\Bigr)^{\frac{1}{2}}.
\end{split}
\end{equation}
It is well-known (see~\cite{HM09}) that
\begin{equation}\label{epcVIII-I}
\sup\set{k^{-n}\sum^{n_k}_{j=1}\langle\Td f_{j,k}|\Td f_{j,k}\rangle_{h^{L^k}}(x);\, k>0, x\in X}<\infty.
\end{equation}
From \eqref{epcVIII-I}, \eqref{epcVII}, \eqref{epcIX} and \eqref{epcVIII}, it is straightforward to see that if $M$ 
is large enough, then 
\begin{equation}\label{epcX}
\begin{split}
&\sum^{d_k}_{j=1}\abs{\int_X\langle Q^{(0)}_{M,k}f_{j,k}|f_{j,k}\rangle_{h^{L^k}}(x)dv_X(x)}\\
&\quad\geq\frac{k^n}{4}(2\pi)^{1-n}\int_X\Bigr(\int\psi(\xi)\det(M^\phi_x+2\xi\mathcal{L}_x)
\mathds{1}_{\Real_{x,0}}(\xi)d\xi\Bigr)dv_X(x)
\end{split}
\end{equation}
for $k$ large. Moreover, it is straightforward to see that there is a constant $C_M>0$ independent of $k$ such that 
$\abs{\int_X\langle Q^{(0)}_{M,k}u|u\rangle_{h^{L^k}}(x)dv_X(x)}\leq C_M\int_X\langle u|u\rangle_{h^{L^k}}(x)dv_X(x)$, for all $u\in C^\infty(X,L^k)$.
Combining this with \eqref{epcX}, we have
\[\begin{split}
C_Md_k&=C_M\sum^{d_k}_{j=1}\int_X\langle f_{j,k}|f_{j,k}\rangle_{h^{L^k}}(x)dv_X(x)
\geq\sum^{d_k}_{j=1}\abs{\int_X\langle Q^{(0)}_{M,k}f_{j,k}|f_{j,k}\rangle_{h^{L^k}}(x)dv_X(x)}\\
&\geq\frac{k^n}{4}(2\pi)^{1-n}\int_X\Bigr(\int\psi(\xi)\det(M^\phi_x+2\xi\mathcal{L}_x)
\mathds{1}_{\Real_{x,0}}(\xi)d\xi\Bigr)dv_X(x).\end{split}\]
Theorem~\ref{main} follows. 

The paper is organized as follows. In section 2, we review the results in~\cite{HM09} about the asymptotic behaviour of the Szeg\"{o} kernel for lower energy forms in order to prove \eqref{epcIII-I}, \eqref{epcIV} and \eqref{epcV-I}. We introduce 
the extremal function for the space of lower energy forms with respect to a given continuous operator and 
relate it to the function $Q^{(1)}_{M,k}\pit^{(1)}_{k,\leq\lambda}\ol{Q^{(1)}_{M,k}}$ (see Lemma~\ref{s2-l1}). 
This result will be used in the proof of \eqref{epcV}. In section 3, we introduce canonical coordinates on generalized Sasakian CR manifolds and prove that locally we can always find canonical coordinates and local section such that the corresponding local weight has a simple form (see Proposition~\ref{can-p0}). Canonical coordinates will be used in the constructions of the operaors $Q^{(0)}_{M,k}$ and $Q^{(1)}_{M,k}$ and Proposition~\ref{can-p0} will be used in section 4 and the proofs of \eqref{epcIII-I}, \eqref{epcIV} and \eqref{epcV}. In section 4, we modify the scaling technique developed in~\cite{HM09} in order to establish the semi-classical Kohn estimates(see Propositions~\ref{scat-p2}) and a result about the asymptotic behaviour of a sequence of forms with small energy(see Proposition~\ref{scat-p3}). These results play important roles in the proofs of \eqref{epcIII-I}, \eqref{epcIV} and \eqref{epcV}. In section 5, we construct the operators $Q^{(0)}_{M,k}$ and $Q^{(1)}_{M,k}$. 
In section 6, we prove \eqref{epcV-I}, \eqref{epcIII-I}, \eqref{epcIV} and \eqref{epcVI}. In section 7, we prove \eqref{epcV} and \eqref{epcVII}. In section 8, 
we first prove the inequality \eqref{epcIX} and then we complete the proof of Theorem~\ref{main}.
In section 9, we exemplify our main result in two concrete examples, one of a quotient of the Heisenberg group 
and the other of a Grauert tube over the torus. 

\smallskip

\noindent
{\small\emph{
\textbf{Acknowledgements.} The author is grateful to Prof. Rapha\"el Ponge for comments and useful suggestions on an early draft of the manuscript.}}

\section{Szeg\"{o} kernels for lower energy forms} 

We will use the same notations as section 1. From now on we assume that $(L,J,h^L)$ is a rigid generalized Sasakian CR line bundle over $X$.

The Hermitian fiber metric on $L$ induces a Hermitian
fiber metric on $L^k$ that we shall denote by $h^{L^k}$. If $s$ is a local trivializing section
of $L$ then $s^k$ is a local trivializing section of $L^k$.
The Hermitian metrics $\langle\,\cdot\,|\,\cdot\,\rangle$ on $\Lambda^{0,q}T^*X$ and $h^{L^k}$ induce
Hermitian metrics on $\Lambda^{0,q}T^*X\otimes L^k$. We shall denote these induced metrics by $\langle\,\cdot\,|\,\cdot\,\rangle_{h^{L^k}}$.
For $f\in\Omega^{0,q}(X, L^k)$, we denote the pointwise norm $\abs{f(x)}^2_{h^{L^k}}:=\langle f(x)|f(x)\rangle_{h^{L^k}}$. As \eqref{s1-e7}, let 
\begin{equation} \label{s2-e1}
\ddbar_{b,k}:\Omega^{0,q}(X, L^k)\To\Omega^{0,q+1}(X, L^k)
\end{equation}
denote the tangential Cauchy-Riemann operator acting on forms with values in $L^k$.
We denote by $dv_X=dv_X(x)$ the volume form on $X$ induced by the fixed 
Hermitian metric $\langle\,\cdot\,|\,\cdot\,\rangle$ on $\Complex TX$. Then we get natural global $L^2$ inner products $(\ |\ )_{h^{L^k}}$, $(\ |\ )$
on $\Omega^{0,q}(X, L^k)$ and $\Omega^{0,q}(X)$ respectively. We denote by $L^2_{(0,q)}(X, L^k)$ the completion of $\Omega^{0,q}(X, L^k)$ with respect to $(\ |\ )_{h^{L^k}}$. For $f\in\Omega^{0,q}(X,L^k)$, we denote $\norm{f}^2_{h^{L^k}}:=(f\ |\ f)_{h^{L^k}}$. Similarly, for $f\in\Omega^{0,q}(X)$, we denote $\norm{f}^2:=(f\ |\ f)$. Let
\begin{equation} \label{s2-e2}
\ol{\pr}^{\,*}_{b,k}:\Omega^{0,q+1}(X, L^k)\To\Omega^{0,q}(X, L^k)
\end{equation}
be the formal adjoint of $\ddbar_{b,k}$ with respect to $(\ |\ )_{h^{L^k}}$\,. The \emph{Kohn-Laplacian} with values in $L^k$ is given by
\begin{equation} \label{s2-e3}
\Box_{b,k}^{(q)}=\ol{\pr}^{\,*}_{b,k}\ddbar_{b,k}+\ddbar_{b,k}\ol{\pr}^{\,*}_{b,k}:
\Omega^{0,q}(X, L^k)\To\Omega^{0,q}(X, L^k).
\end{equation} 
We extend 
$\ddbar_{b,k}$ to $L^2_{(0,r)}(X,L^k)$, $r=0,1,\ldots,n-1$, by 
\begin{equation}\label{e:ddbar1}
\ddbar_{b,k}:{\rm Dom\,}\ddbar_{b,k}\subset L^2_{(0,r)}(X, L^k)\To L^2_{(0,r+1)}(X, L^k)\,,
\end{equation}
where ${\rm Dom\,}\ddbar_{b,k}:=\{u\in L^2_{(0,r)}(X, L^k);\, \ddbar_{b,k}u\in L^2_{(0,r+1)}(X, L^k)\}$, where for any $u\in L^2_{(0,r)}(X,L^k)$, $\ddbar_{b,k} u$ is defined in the sense of distribution. 
We also write 
\begin{equation}\label{e:ddbar_star1}
\ol{\pr}^{\,*}_{b,k}:{\rm Dom\,}\ol{\pr}^{\,*}_{b,k}\subset L^2_{(0,r+1)}(X, L^k)\To L^2_{(0,r)}(X, L^k)
\end{equation}
to denote the Hilbert space adjoint of $\ddbar_{b,k}$ in the $L^2$ space with respect to $(\ |\ )_{h^{L^k}}$.
Let $\Box^{(q)}_{b,k}$ also denote the Gaffney extension of the Kohn Laplacian given by 
\begin{equation}\label{Gaf1}
\begin{split}
{\rm Dom\,}\Box^{(q)}_{b,k}=&\{s\in L^2_{(0,q)}(X,L^k);\, s\in{\rm Dom\,}\ddbar_{b,k}\cap{\rm Dom\,}\ol{\pr}^{\,*}_{b,k},\\
 &\quad\ddbar_{b,k}u\in{\rm Dom\,}\ol{\pr}^{\,*}_{b,k},\ \ol{\pr}^{\,*}_{b,k}u\in{\rm Dom\,}\ddbar_{b,k}\}\,,
 \end{split}
\end{equation}
and $\Box^{(q)}_{b,k}s=\ddbar_{b,k}\ol{\pr}^{\,*}_{b,k}s+\ol{\pr}^{\,*}_{b,k}\ddbar_{b,k}s$ for $s\in {\rm Dom\,}\Box^{(q)}_{b,k}$. We notice that $\Box^{(q)}_{b,k}$ is a positive self-adjoint operator. 
For a Borel set $B\subset\Real$ we denote by $E(B)$ the spectral projection of $\Box^{(q)}_{b,k}$ corresponding to the set $B$, where $E$ is the spectral measure of $\Box^{(q)}_{b,k}$ (see section 2 in Davies~\cite{Dav95}, for the precise meanings of spectral projection and spectral measure). We notice that the spectrum of $\Box^{(q)}_{b,k}$ is contained in $\ol\Real_+$. For $\lambda\geq0$, we set 
\begin{equation} \label{s1-specsp}
\begin{split}
&\cH^q_{b,\leq\lambda}(X, L^k):=\operatorname{Range}E\big((-\infty,\lambda]\big)\subset L^2_{(0,q)}(X,L^k)\,,\\
&\cH^q_{b,>\lambda}(X,L^k):=\operatorname{Range}E\big((\lambda,\infty)\big)\subset L^2_{(0,q)}(X,L^k).
\end{split}
\end{equation}  
It is well-known (see section 2 in~\cite{Dav95}) that for all $\lambda>0$,
\begin{equation} \label{s5-sp1}
L^2_{(0,q)}(X,L^k)=\cH^q_{b,\leq\lambda}(X, L^k)\oplus\cH^q_{b,>\lambda}(X, L^k)
\end{equation} 
and 
\begin{equation} \label{s5-sp2}
\norm{u}^2_{h^{L^k}}\leq\frac{1}{\lambda}(\Box^{(q)}_{b,k}u\ |\ u)_{h^{L^k}},\ \ \forall u\in\cH^q_{b,>\lambda}(X,L^k)\bigcap{\rm Dom\,}\Box^{(q)}_{b,k}.
\end{equation}
For $\lambda=0$, we denote 
\begin{equation} \label{s2-e4}
\cH_b^q(X, L^k):=\cH^q_{b,\leq0}(X,L^k)=\Ker\Box^{(q)}_{b,k}\,.
\end{equation}
Now, fix $q\in\set{0,1,\ldots,n-1}$ and until further notice we assume that $Y(q)$ holds. By \cite[7.6-7.8]{Ko65}, \cite[5.4.11-12]{FK72}, \cite[Props.\,8.4.8-9]{CS01}, we know that
$\Box^{(q)}_{b,k}$ is hypoelliptic, has compact resolvent, the strong Hodge decomposition holds and
$\Box^{(q)}_{b,k}$ has a discrete
spectrum, each eigenvalues occurs with finite multiplicity and all eigenforms are smooth. Hence, for any $\lambda\geq0$, 
\begin{equation} \label{s2-e5}
\dim\cH_{b,\leq\lambda}^q(X, L^k)<\infty,\ \cH_{b,\leq\lambda}^q(X, L^k)\subset\Omega^{0,q}(X, L^k),\ \cH_b^q(X, L^k)\cong H_b^q(X, L^k).
\end{equation} 
Let $g_j(x)\in\Omega^{0,q}(X, L^k)$, $j=1,\ldots,d_k$, $d_k={\rm dim\,}\cH^q_{b,\,\leqslant \lambda}(X,L^k)$, be 
any orthonormal frame for the space $\cH_{b,\,\leq\lambda}^q(X, L^k)$ with respect to $(\,\cdot\,|\,\cdot\,)_{h^{L^k}}$.  The Szeg\"{o} kernel function $\pit^{(q)}_{k,\leq\lambda}(x)$ of
the space $\cH^q_{b,\leqslant \lambda}(X,L^k)$ is given by 
\begin{equation}\label{e-seI} 
\pit^{(q)}_{k,\,\leqslant \lambda}(x):=\sum^{d_k}_{j=1}\abs{g_j(x)}^2_{h^{L^k}}.
\end{equation} 
Let 
\[A:\Omega^{0,q}(X,L^k)\To\Omega^{0,q}(X,L^k)\]
be a continuous operator. We define 
\begin{equation}\label{e-seII} 
(A\pit^{(q)}_{k,\,\leqslant \lambda})(x):=\sum^{d_k}_{j=1}\langle Ag_j(x)| g_j(x)\rangle_{h^{L^k}}, 
\end{equation} 
\begin{equation}\label{e-seIII} 
(A\pit^{(q)}_{k,\,\leqslant \lambda}\ol A)(x):=\sum^{d_k}_{j=1}\abs{Ag_j(x)}^2_{h^{L^k}}.
\end{equation} 
It is straightforward to see that the definitions \eqref{e-seI}, \eqref{e-seII} and \eqref{e-seIII} are independent of the choices of orthonormal frame $g_j$, $j=1,\ldots,d_k$. 

For $q=0,1,\ldots,n-1$ and $x\in X$, set
\begin{equation}\label{e-seaI}
\begin{split}
\Real_{x,q}=\big\{s\in\Real;\, \text{$M^\phi_x+2s\mathcal{L}_x$ has exactly $q$ negative eigenvalues} \\
\text{and $n-1-q$ positive eigenvalues}\big\}\,,
\end{split}
\end{equation}
where $M^\phi_x$ is given by \eqref{s1-e14} and the eigenvalues of the Hermitian quadratic form $M^\phi_x+2s\mathcal{L}_x$, $s\in\Real$,
are calculated with respect to the Hermitian metric $\langle\,\cdot\,|\,\cdot\,\rangle$.
It is not difficult to see that if $Y(q)$ holds at each point of $X$ then there is a constant $C>0$ such that
\begin{equation} \label{e-seaII}
\mbox{$\Real_{x,\,q}\subset[-C,C]$ for all $x\in X$.}
\end{equation}
Denote by $\det(M^\phi_x+2s\mathcal{L}_x)$ the product of all the eigenvalues of $M^\phi_x+2s\mathcal{L}_x$. 
Assuming \eqref{e-seaII} holds, the function
\begin{equation} \label{e-seaIII}
X\longrightarrow{\Real}\,,\quad x\longmapsto\int_{\Real_{x,q}}\abs{\det(M^\phi_x+2s\mathcal{L}_x)}ds
\end{equation}
is well-defined.
Since $M^\phi_x$ and $\mathcal{L}_x$ are continuous functions of $x\in X$, we conclude that the function \eqref{e-seaIII}
is continuous.

The following is well-known (see Theorem 1.6 in Hsiao-Marinescu~\cite{HM09})

\begin{thm}\label{s2-t1}
Assume that condition $Y(q)$ holds at each point of $X$. Then
for any sequence $\nu_k>0$ with $\nu_k\To0$ as $k\To\infty$, there is a constant $C_0>0$ independent of $k$,
such that
\begin{equation} \label{s2-e6}
k^{-n}\pit^{(q)}_{k,\,\leqslant  k\nu_k}(x)\leqslant  C_0
\end{equation}
for all $x\in X$. Moreover, there is a sequence $\mu_k>0$, $\mu_k\To0$ as $k\To\infty$, such that for any sequence
$\nu_k>0$ with $\lim_{k\To\infty}\frac{\mu_k}{\nu_k}=0$ and $\nu_k\To0$ as $k\To\infty$, we have
\begin{equation} \label{s2-e6I}
\lim_{k\To\infty}k^{-n}\pit^{(q)}_{k,\,\leqslant  k\nu_k}(x)=(2\pi)^{-n}\int_{\Real_{x,q}}\abs{\det(M^\phi_x+2s\mathcal{L}_x)}ds,
\end{equation}
for all $x\in X$.
\end{thm}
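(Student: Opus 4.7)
The plan is to reduce the pointwise analysis of $\pit^{(q)}_{k,\leq k\nu_k}$ at a fixed $x_0 \in X$ to the spectral analysis of a translation-invariant model Kohn Laplacian on $\Real^{2n-1}$, by means of a semi-classical scaling argument. Using Proposition~\ref{can-p0}, I first pick canonical coordinates $(x_1,\ldots,x_{2n-1})$ centered at $x_0$ together with a local trivializing section $s$ so that $T = \partial/\partial x_{2n-1}$ and the weight $\phi$ has a normal form: its Taylor expansion at $0$ reduces to the Hermitian quadratic $M^\phi_{x_0}$ up to cubic terms, while the Levi form contributes through the off-diagonal part of the connection form on $T^{1,0}X$ expressed in these coordinates. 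Rigidity gives $T\phi = C$, which is crucial because it means the $x_{2n-1}$-dependence of $\phi$ is harmless and the rescaling will produce an operator that is translation invariant in the $T$-direction.

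Next I apply the scaling $x \mapsto x/\sqrt{k}$ with an accompanying unitary weight rescaling of $s^k$. Under this dilation, $k^{-1}\Box^{(q)}_{b,k}$ converges, locally uniformly on compact sets together with derivatives, to a model Kohn Laplacian $\Box^{(q)}_{(\infty),x_0}$ on $\Real^{2n-1}$ whose coefficients are determined by $M^\phi_{x_0}$ and $\mathcal{L}_{x_0}$ alone; this is exactly what the scaling technique of Section~4 (Proposition~\ref{scat-p2}) is designed to produce. Because $\Box^{(q)}_{(\infty),x_0}$ commutes with translation in $x_{2n-1}$, a partial Fourier transform in that variable diagonalizes the model over the dual variable $\xi \in \Real$. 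On each fiber the resulting operator is a quadratic Kohn Laplacian on $\Complex^{n-1}$ associated to the Hermitian form $M^\phi_{x_0} + 2\xi\mathcal{L}_{x_0}$; its kernel on $(0,q)$-forms is one-dimensional exactly when $\xi \in \Real_{x_0,q}$, and the Gaussian ground state there contributes a diagonal value equal to $(2\pi)^{-(n-1)}|\det(M^\phi_{x_0} + 2\xi\mathcal{L}_{x_0})|$ by a direct computation. Integrating in $\xi$ using Plancherel yields the candidate limit in \eqref{s2-e6I}.

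The passage from the formal limit to the pointwise statement proceeds in two steps. For the uniform upper bound \eqref{s2-e6}, I use Proposition~\ref{scat-p2} to derive uniform semi-classical Sobolev estimates on eigenforms of $\Box^{(q)}_{b,k}$ with eigenvalue $\leq k\nu_k$; these give, via Sobolev embedding applied in rescaled coordinates, a uniform $L^\infty$ bound of order $k^n$ on $\pit^{(q)}_{k,\leq k\nu_k}$. For the convergence \eqref{s2-e6I}, I argue by contradiction: if the limit fails along some subsequence at a point $x_0$, I extract rescaled low-energy forms and apply the compactness result (Proposition~\ref{scat-p3}) to obtain weak limits in $L^2_{(0,q)}(\Real^{2n-1})$ that lie in the kernel of $\Box^{(q)}_{(\infty),x_0}$. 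Matching the trace of the $k\nu_k$-spectral projector with the trace of the model kernel projector — which is explicitly computable from the fiberwise analysis — gives the equality. The choice of $\mu_k$ is dictated by the rate at which the rescaled operator approaches its model (it controls the cubic remainder), and the condition $\mu_k/\nu_k \to 0$ is exactly what is needed to guarantee that the $k\nu_k$-spectral window captures precisely the forms converging to $\Ker \Box^{(q)}_{(\infty),x_0}$.

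The main obstacle will be the lower-semicontinuity half of this spectral matching: one must prevent too many eigenvalues of $\Box^{(q)}_{b,k}$ from piling up in $[0,k\nu_k]$ beyond those predicted by the model. This is what forces the introduction of the auxiliary sequence $\mu_k$ and the requirement $\nu_k \gg \mu_k$; qualitatively, $k\mu_k$ acts as the effective spectral gap in the rescaled problem, and one needs the Kohn subelliptic estimate together with $Y(q)$ to show that eigenvalues not accounted for by the model kernel are bounded below by $k\mu_k$. This gap estimate — equivalently, a quantitative version of the fact that $\Box^{(q)}_{(\infty),x_0}$ has no spectrum in $(0,\varepsilon)$ for some explicit $\varepsilon > 0$ away from the characteristic variety — is the technical heart of the argument.
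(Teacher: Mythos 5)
Your outline — canonical coordinates and rigidity, rescaling to a translation-invariant model Kohn Laplacian, partial Fourier transform in the $T$-direction with the fiberwise determinant $\det(M^\phi_{x_0}+2\xi\mathcal{L}_{x_0})$, compactness for the upper bound, and a rate $\mu_k$ measuring how fast quasimodes approach the model for the lower bound — is the route actually taken (the paper does not reprove this theorem; it quotes Theorem 1.6 of~\cite{HM09}, whose ingredients are reproduced here as Lemma~\ref{s2-l1}, Propositions~\ref{scat-p2}, \ref{scat-p3}, \ref{s5-ppf1} and Theorem~\ref{s7-t3}). Two details in your setup are off, though. The scaling must be the parabolic one $F_k(z,\theta)=(z/\sqrt{k},\theta/k)$, not the isotropic $x\mapsto x/\sqrt{k}$: with the isotropic dilation the coefficient of $\partial_\theta$ in \eqref{scat-e15} picks up a factor $1/\sqrt{k}$ so the Levi form drops out of the limit, and the $\beta\theta$ part of the weight blows up like $\sqrt{k}$, so the model whose fibers carry $M^\phi_{x_0}+2\xi\mathcal{L}_{x_0}$ would never appear. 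Also the fiberwise kernel for $\xi\in\Real_{x_0,q}$ is infinite-dimensional, not one-dimensional; what enters is the diagonal of its reproducing kernel, which does equal $(2\pi)^{1-n}\abs{\det(M^\phi_{x_0}+2\xi\mathcal{L}_{x_0})}$, so your computed density is right for the wrong reason.

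The genuine gap is the step you yourself call the technical heart. You propose to prove \eqref{s2-e6I} by matching ``traces'' of the spectral projector $E([0,k\nu_k])$ with the model kernel projector, resting on a gap estimate: eigenvalues not accounted for by the model kernel are $\geq k\mu_k$, equivalently the model operator has no spectrum in $(0,\varepsilon)$. That estimate is false in general: after the partial Fourier transform, the bottom fiberwise eigenvalue of $\Box^{(q)}_{b,H_n}$ tends to $0$ as $\eta$ approaches the endpoints of $\Real_{x_0,q}$ (where an eigenvalue of $M^\phi_{x_0}+2\eta\mathcal{L}_{x_0}$ crosses zero), so the model has spectrum accumulating at $0$; the absence of such a semiclassical gap is exactly the difficulty emphasized in the introduction. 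The actual proof needs neither a gap nor trace matching, and the quantities compared are pointwise, not traces. The upper bound \eqref{s2-e6} and the upper half of \eqref{s2-e6I} come from the extremal characterization of Lemma~\ref{s2-l1}: any unit-norm $\alpha\in\cH^q_{b,\leq k\nu_k}(X,L^k)$, rescaled, subconverges by Propositions~\ref{scat-p2} and \ref{scat-p3} to an element of $\ker\Box^{(q)}_{b,H_n}$, whose value at $0$ is controlled by the Fourier-transform estimate (the $(0,q)$ version of Theorem~\ref{s7-t3} in~\cite{HM09}); only $\nu_k\To0$ is used there. The sequence $\mu_k$ enters solely in the lower half: one builds the explicit Gaussian quasimodes of Proposition~\ref{s5-ppf1} with energy $\leq k\mu_k$ uniformly in the base point, and $\mu_k/\nu_k\To0$ ensures their spectral projections onto $[0,k\nu_k]$ retain the pointwise mass $(2\pi)^{-n}\int_{\Real_{x,q}}\abs{\det(M^\phi_x+2s\mathcal{L}_x)}ds$. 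As written, your lower-semicontinuity/spectral-gap step is both unnecessary and unprovable, and it must be replaced by this quasimode construction.
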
 

We introduce some notations.
For $p\in X$, we can choose a smooth orthonormal frame
$e_1,\ldots,e_{n-1}$ of $T^{*0,1}X$ over a neighborhood $U$ of $p$. We say that a multiindex $J=(j_1,\ldots,j_q)\in\{1,\ldots,n-1\}^q$ has length $q$ and write $\abs{J}=q$. We say that $J$ is strictly increasing if $1\leqslant  j_1<j_2<\cdots<j_q\leqslant  n-1$. For $J=(j_1,\ldots,j_q)$ we define $e_J:=e_{j_1}\wedge\cdots\wedge e_{j_q}$.
Then $\set{e_J;\, \mbox{$\abs{J}=q$, $J$ strictly increasing}}$
is an orthonormal frame for $\Lambda^{0,q}T^*X$ over $U$.

For $f\in\Omega^{0,q}(X, L^k)$, we may write
\[
f|_U=\sideset{}{'}\sum_{\abs{J}=q} f_Je_J\,,\quad \text{with $f_J=\langle f|e_J\rangle\in C^\infty(U,L^k)$}\,,
\]
where $\sum'$ means that the summation is performed only over strictly increasing multiindices. We call $f_J$ the component of $f$ along $e_J$. It will be clear from the context what frame is being used. For $q>0$, the \emph{extremal function} $S^{(q)}_{k,\leq\lambda,J}$ for the space $\cH^q_{b,\leq\lambda}(X,L^k)$ along the direction $e_J$ is defined by
\begin{equation} \label{s2-e7}
S^{(q)}_{k,\leq\lambda,J}(y)=\sup_{\alpha\in\,\cH_{b,\leq\lambda}^q(X, L^k),\,\norm{\alpha}_{h^{L^k}}=1}\abs{\alpha_J(y)}^2_{h^{L^k}}\,,
\end{equation} 
where $\alpha_J$ denotes the component of $\alpha$ along $e_J$. 
Let 
\[A:\Omega^{0,q}(X,L^k)\To\Omega^{0,q}(X,L^k)\]
be a continuous operator. For $\abs{J}=q$, $J$ is strictly increasing, we define 
\begin{equation}\label{s2-e7-1} 
(AS^{(q)}_{k,\,\leqslant \lambda,J}\ol A)(y):=\sup_{\alpha\in\,\cH_{b,\leq\lambda}^q(X, L^k),\,\norm{\alpha}_{h^{L^k}}=1}\abs{(A\alpha)_J(y)}^2_{h^{L^k}}\,,
\end{equation} 
where $(A\alpha)_J$ denotes the component of $A\alpha$ along $e_J$.
Similarly, when $q=0$, we define
\begin{equation} \label{s2-e8}
\begin{split}
&S^{(0)}_{k,\leq\lambda}(y)=\sup_{\alpha\in\,\cH_{b,\leq\lambda}^0(X, L^k),\,\norm{\alpha}_{h^{L^k}}=1}\abs{\alpha(y)}^2_{h^{L^k}}, \\
&(AS^{(0)}_{k,\,\leqslant \lambda}\ol A)(y):=\sup_{\alpha\in\,\cH_{b,\leq\lambda}^0(X, L^k),\,\norm{\alpha}_{h^{L^k}}=1}\abs{(A\alpha)(y)}^2_{h^{L^k}}.
\end{split}
\end{equation} 

We need the following
\begin{lem} \label{s2-l1}
Fix $\lambda\geq0$. Let $A:\Omega^{0,q}(X,L^k)\To\Omega^{0,q}(X,L^k)$
be a continuous operator. For every local orthonormal frame 
\[\set{e_J(y); \mbox{$\abs{J}=q$,\,$J$ strictly increasing}}\]
of $\Lambda^{0,q}T^*X$ over an open set $U\subset X$,
we have when $q>0$,
\begin{equation}\label{s2-e9}
(A\pit^{(q)}_{k,\leq\lambda}\ol A)(y)=\sideset{}{'}\sum_{\abs{J}=q}(AS^{(q)}_{k,\leq\lambda,J}\ol A)(y),
\end{equation}
for every $y\in U$. 

Similarly, when $q=0$, we have  
\begin{equation}\label{s2-e10}
(A\pit^{(0)}_{k,\leq\lambda}\ol A)(y)=(AS^{(0)}_{k,\leq\lambda}\ol A)(y),
\end{equation}
for every $y\in U$. 

We remind that $A\pit^{(q)}_{k,\leq\lambda}\ol A$ is given by \eqref{e-seIII}.
\end{lem}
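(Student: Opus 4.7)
The plan is to exploit the finite-dimensionality of $\cH^q_{b,\leq\lambda}(X,L^k)$ together with the reproducing-kernel-type identity that appears whenever one takes a supremum of a linear functional over a unit ball in a Hilbert space. Specifically, for fixed $y\in U$ and a strictly increasing multiindex $J$ with $\abs{J}=q$, the map
\[
\Lambda_{y,J}:\cH^q_{b,\leq\lambda}(X,L^k)\To L^k_y,\ \ \alpha\mapsto(A\alpha)_J(y)
\]
is linear and takes values in the one-dimensional fiber $L^k_y$. This is the only structural ingredient needed.

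First I would fix an orthonormal basis $g_1,\dots,g_{d_k}$ of $\cH^q_{b,\leq\lambda}(X,L^k)$ with respect to $(\,\cdot\,|\,\cdot\,)_{h^{L^k}}$, and choose a local trivializing section $s$ of $L$ near $y$, so that $\abs{(A\alpha)_J(y)}^2_{h^{L^k}}$ becomes $\abs{s^k(y)}^2_{h^{L^k}}$ times the squared modulus of an ordinary complex number. Writing $\alpha=\sum_{j=1}^{d_k}c_j g_j$ so that $\norm{\alpha}^2_{h^{L^k}}=\sum_j\abs{c_j}^2$, one gets
\[
\abs{(A\alpha)_J(y)}^2_{h^{L^k}}=\Bigl\lvert\sum_{j=1}^{d_k}c_j\,(Ag_j)_J(y)\Bigr\rvert^2_{h^{L^k}}.
\]
The Cauchy--Schwarz inequality, applied in the sequence space and saturated by the choice $c_j$ proportional to the complex conjugate of the coefficient of $(Ag_j)_J(y)$ in the trivialization, gives
\[
(AS^{(q)}_{k,\leq\lambda,J}\ol A)(y)\;=\;\sup_{\norm{\alpha}_{h^{L^k}}=1}\abs{(A\alpha)_J(y)}^2_{h^{L^k}}\;=\;\sum_{j=1}^{d_k}\abs{(Ag_j)_J(y)}^2_{h^{L^k}}.
\]
This is exactly the finite-dimensional analogue of the standard ``extremal function equals diagonal of the reproducing kernel'' identity; its validity is independent of the orthonormal basis chosen.

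Next I would sum over strictly increasing $J$ with $\abs{J}=q$ and interchange the two finite sums. Since $\{e_J\}$ is a pointwise orthonormal frame of $\Lambda^{0,q}T^*X$ over $U$, one has the pointwise Parseval identity
\[
\sideset{}{'}\sum_{\abs{J}=q}\abs{(Ag_j)_J(y)}^2_{h^{L^k}}=\abs{Ag_j(y)}^2_{h^{L^k}},
\]
so summing the previous identity in $J$ yields
\[
\sideset{}{'}\sum_{\abs{J}=q}(AS^{(q)}_{k,\leq\lambda,J}\ol A)(y)=\sum_{j=1}^{d_k}\abs{Ag_j(y)}^2_{h^{L^k}}=(A\pit^{(q)}_{k,\leq\lambda}\ol A)(y),
\]
which is \eqref{s2-e9}. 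For $q=0$ there is no multiindex and the first step already gives \eqref{s2-e10} directly. Finally, I would note the elementary verification, made at the outset, that $(A\pit^{(q)}_{k,\leq\lambda}\ol A)(y)$ as defined by \eqref{e-seIII} does not depend on the choice of orthonormal basis, which legitimizes the use of any $g_1,\dots,g_{d_k}$ at the beginning. There is no real obstacle here; the only thing one must be careful about is recording the argument for the equality case in Cauchy--Schwarz, i.e.\ exhibiting the maximizer explicitly so as to avoid a mere $\leq$ in \eqref{s2-e9}.
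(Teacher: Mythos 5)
Your proposal is correct and follows essentially the same route as the paper: fix an orthonormal basis of $\cH^q_{b,\leq\lambda}(X,L^k)$, trivialize $L$ near $y$, identify the supremum defining $(AS^{(q)}_{k,\leq\lambda,J}\ol A)(y)$ with $\sum_j\abs{(Ag_j)_J(y)}^2_{h^{L^k}}$ via the Cauchy--Schwarz equality case (your maximizer is exactly the paper's extremal element $u$), and then sum over $J$ using the pointwise orthonormality of $\set{e_J}$. The only cosmetic difference is that the paper obtains the two inequalities separately (the upper bound by completing $\alpha$ to an orthonormal basis, the lower bound from the explicit maximizer), which your single Cauchy--Schwarz argument packages together.
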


\begin{proof}
Let $(f_j)_{j=1,\ldots,d_k}$ be an orthonormal frame for the space
$\cH_{b,\leq\lambda}^q(X, L^k)$. Let $s$ be a local section of $L$ on $U$, $\abs{s}^2_{h^L}=e^{-\phi}$. 
On $U$, we write 
\[\begin{split}
&Af_j=s^k\Td g_j,\ \ \Td g_j\in\Omega^{0,q}(U),\ \ j=1,\ldots,d_k,\\
&\Td g_j=\sideset{}{'}\sum_{\abs{J}=q}\Td g_{j,J}e_J,\ \ j=1,\ldots,d_k.
\end{split}\]
On $U$ we write 
\begin{equation}\label{eallerI}
(A\pit^{(q)}_{k,\leq\lambda}\ol A)(y)=\sideset{}{'}\sum_{\abs{J}=q}(A\pit^{(q)}_{k,\leq\lambda,J}\ol A)(y),
\end{equation}
where 
\[(A\pit^{(q)}_{k,\leq\lambda,J}\ol A)(y):=e^{-\phi(y)}\sum_j\abs{\Td g_{j,J}(y)}^2.\]
It is easy to see that
$(A\pit^{(q)}_{k,\leq\lambda,J}\ol A)(y)$ is independent of the choice of the orthonormal frame $(f_j)_{j=1,\ldots,d_k}$. Take
$\alpha\in\cH^q_{b,\leq\lambda}(X, L^k)$ of unit norm. Since $\alpha$ is contained in an orthonormal base, obviously $|(A\alpha)_J(y)|^2_{h^{L^k}}\leqslant(A\pit^{(q)}_{k,\leq\lambda,J}\ol A)(y)$, where $(A\alpha)_J$ denotes the component of $A\alpha$ along $e_J$. Thus,
\begin{equation} \label{s2-e11}
(AS^{(q)}_{k,\leq\lambda,J}\ol A)(y)\leqslant(A\pit^{(q)}_{k,\leq\lambda,J}\ol A)(y)\,,\quad\text{for all strictly increasing $J$, $\abs{J}=q$.}
\end{equation}
Fix a point $p\in U$ and a strictly increasing multiindex $J$ with $\abs{J}=q$. We may assume that 
$\sum^{d_k}_{j=1}\abs{\Td g_{j,J}(p)}^2\neq0$. Put
\[
\textstyle
u(y)=\Big(\sum^{d_k}_{j=1}\abs{\Td g_{j,J}(p)}^2\Big)^{-1/2}\cdot\sum^N_{j=1}\ol{\Td g_{j,J}(p)}f_j(y)\,.
\]
We can easily check that $u\in\cH^q_{b,\leq\lambda}(X, L^k)$ and $\norm{u}_{h^{L^k}}=1$. Hence, 
\[\abs{(Au)_{J}(p)}^2_{h^{L^k}}\leqslant(AS^{(q)}_{k,\leq\lambda,J}\ol A)(p),\] 
therefore
\[
(A\pit^{(q)}_{k,\leq\lambda,J}\ol A)(p)=\sum^{d_k}_{j=1}e^{-\phi(p)}\abs{\Td g_{j,J}(p)}^2=|(Au)_{J}(p)|^2_{h^{L^k}}\leqslant(AS^{(q)}_{k,\leq\lambda,J}\ol A)(p)\,.
\]
From this and \eqref{s2-e11}, we conclude that $A\pit^{(q)}_{k,\leq\lambda,J}\ol A=AS^{(q)}_{k,\leq\lambda,J}\ol A$ for all strictly increasing multiindices $J$ with $\abs{J}=q$. Combining this with \eqref{eallerI}, \eqref{s2-e9} follows.

The proof of \eqref{s2-e10} is the same. The lemma follows.
\end{proof}

\section{Canonical coordinates of generalized Sasakian CR manifolds} 

In this work, we need the following beautiful result due to Baouendi-Rothschild-Treves~\cite[section1]{BRT85} 

\begin{thm}\label{t-can}
We recall that we work with the assumption that $(X,T^{1,0}X)$ is a generalized Sasakian CR manifold and we fix a rigid global real vector field $T=J\frac{\pr}{\pr t}$. 
For every point $x_0\in X$, there exists local coordinates $x=(x_1,\ldots,x_{2n-1})=(z,\theta)=(z_1,\ldots,z_{n-1},\theta)$, 
$z_j=x_{2j-1}+ix_{2j}$, $j=1,\ldots,n-1$, $\theta=x_{2n-1}$, defined in some small neighborhood $U$ of $x_0$ such that 
\begin{equation}\label{e-can}
\begin{split}
&T=\frac{\pr}{\pr\theta},\\
&Z_j=\frac{\pr}{\pr z_j}+i\frac{\pr\varphi}{\pr z_j}(z)\frac{\pr}{\pr\theta},\ \ j=1,\ldots,n-1,
\end{split}
\end{equation}
where $Z_j(x)$, $j=1,\ldots,n-1$, form a basis of $T^{1,0}_xX$, for each $x\in U$, and $\varphi(z)\in C^\infty(U,\Real)$
independent of $\theta$.
\end{thm}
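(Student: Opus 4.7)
The plan is to reduce to the Newlander--Nirenberg theorem applied to the integrable complex structure $J$ on $Y=X\times\Real$ that accompanies the generalized Sasakian structure, and then to exploit the rigidity of $T$ to bring the resulting local defining function of $X\subset Y$ into a form independent of the transverse direction.

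First I would apply Newlander--Nirenberg at $(x_0,0)\in Y$ to obtain local $J$-holomorphic coordinates $(w_1,\ldots,w_n)$ on a neighborhood $\Td U$ of $(x_0,0)$. Using $J\frac{\pr}{\pr t}=T$ and the forced identity $JT=-\frac{\pr}{\pr t}$, one verifies that $W:=\frac{1}{2}(T+i\frac{\pr}{\pr t})$ satisfies $JW=iW$, so $W$ is of type $(1,0)$ on $Y$. Since $T^{1,0}X\subset T^{1,0}Y$ by definition of the canonical complex structure, at each point of $X$ one has $T^{1,0}Y=T^{1,0}X\oplus\Complex W$. A $\Complex$-linear change of $(w_1,\ldots,w_n)$ at $(x_0,0)$ lets me arrange $\pr/\pr w_j|_{(x_0,0)}\in T^{1,0}_{x_0}X$ for $j<n$ and $\pr/\pr w_n|_{(x_0,0)}=W(x_0,0)$. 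A direct computation then gives $\pr/\pr u|_{(x_0,0)}=T$ and $\pr/\pr v|_{(x_0,0)}=-\frac{\pr}{\pr t}$, where $u=\mathrm{Re}(w_n)$, $v=\mathrm{Im}(w_n)$.

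Since $X=\{t=0\}\subset Y$ and $\pr/\pr v$ is transverse to $X$ at $(x_0,0)$, the implicit function theorem gives, after shrinking $\Td U$, a smooth real function $\psi$ with $\psi(0)=0$ and $d\psi(0)=0$ such that $X\cap\Td U=\{v=\psi(w_1,\ldots,w_{n-1},u)\}$. The crucial step is a further holomorphic change of coordinates on $Y$ that removes the $u$-dependence of $\psi$; this is where the rigidity assumption $[T,C^\infty(X,T^{1,0}X)]\subset C^\infty(X,T^{1,0}X)$ is essential. Rigidity is equivalent to $T$-invariance of the CR structure, so the flow of $T$ induces biholomorphisms between the slices $\{u=\mathrm{const}\}\cap X$. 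By solving an ODE along the $T$-orbits on a transversal slice and extending holomorphically in $(w_1,\ldots,w_{n-1})$, one produces a holomorphic correction $G(w_1,\ldots,w_{n-1})$ such that, after replacing $w_n$ by $w_n+iG$, the new defining function becomes independent of $u$. This normalization is the technical heart of Baouendi--Rothschild--Treves and is the step I expect to be the main obstacle.

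Once $\psi=\varphi(z)$ depends only on $z=(w_1,\ldots,w_{n-1})|_X$, set $z_j:=w_j|_X$ and $\theta:=u|_X$; then $X\cap U$ is parametrized by $(z,\theta)\mapsto(z,\theta+i\varphi(z))\in\Complex^n$. The coordinates were built precisely so that the $T$-flow aligns with $u$-translation, giving $T=\pr/\pr\theta$ on $U$. A direct computation shows the tangent $(1,0)$-vectors of $\{v=\varphi(z)\}\subset\Complex^n$ are $\pr/\pr w_j+2i(\pr\varphi/\pr z_j)\pr/\pr w_n$, which under the parametrization correspond to $Z_j=\pr/\pr z_j+i(\pr\varphi/\pr z_j)\pr/\pr\theta$ on $X\cap U$. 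Since the $Z_j$ form a rank-$(n-1)$ complex subbundle of $\Complex TX$ consisting of $(1,0)$-vectors, they are a basis of $T^{1,0}X$ over $U$.
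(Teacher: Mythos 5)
The paper does not prove this statement at all (it is quoted from Baouendi--Rothschild--Treves \cite{BRT85}), so your attempt has to stand on its own, and as written it does not: the step you yourself flag as ``the technical heart'' is not only unproved but, in the specific form you propose, cannot work. A substitution $w_n\mapsto w_n+iG(w_1,\ldots,w_{n-1})$ with $G$ holomorphic and independent of $w_n$ sends the graph $\{v=\psi(w',u)\}$ to $\{\tilde v={\rm Re}\,G(w')+\psi\bigl(w',\tilde u+{\rm Im}\,G(w')\bigr)\}$, which is independent of $\tilde u$ only if $\psi(w',\cdot)$ was already constant; so no correction of that form can remove the $u$-dependence, and the sentence about the $T$-flow inducing ``biholomorphisms between the slices $\{u={\rm const}\}\cap X$'' has no precise content (you have not even shown $Tu$ is constant near $x_0$). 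A second gap of the same nature: $T=\frac{\pr}{\pr\theta}$ is asserted ``because the coordinates were built so that the $T$-flow aligns with $u$-translation,'' but your construction only arranges $\frac{\pr}{\pr u}=T$ at the single point $(x_0,0)$ via a linear change; making it hold on a neighborhood is exactly what the theorem claims, and nothing in the sketch delivers it. The preliminary reductions (Newlander--Nirenberg on $Y$, $JW=iW$ for $W=\frac{1}{2}(T+i\frac{\pr}{\pr t})$, the graph representation of $X$, and the final identification of the tangential $(1,0)$ fields $\frac{\pr}{\pr w_j}+2i\frac{\pr\varphi}{\pr z_j}\frac{\pr}{\pr w_n}$ with $Z_j=\frac{\pr}{\pr z_j}+i\frac{\pr\varphi}{\pr z_j}\frac{\pr}{\pr\theta}$) are correct, but the core is missing.

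The gap can be closed without reproducing the full BRT argument, precisely because in this setting an integrable $J$ on $Y=X\times\Real$ is given. Work with the canonical ($t$-independent) structure determined by $T$ and use rigidity to check that $W=\frac{1}{2}(T+i\frac{\pr}{\pr t})$ is a \emph{holomorphic} vector field: for $t$-independent $\ol Z\in C^\infty(X,T^{0,1}X)$ one has $[W,\ol Z]=\frac{1}{2}[T,\ol Z]\in C^\infty(X,T^{0,1}X)$ by \eqref{msmipI}, and $[W,\ol W]=0$, so $L_W$ preserves $T^{0,1}Y$. The holomorphic flow-box theorem then gives holomorphic coordinates $(w_1,\ldots,w_n)$, $w_n=u+iv$, with $W=\frac{\pr}{\pr w_n}$, hence $T=\frac{\pr}{\pr u}$ and $\frac{\pr}{\pr t}=\frac{\pr}{\pr v}$ identically near $(x_0,0)$. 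Since $Tt\equiv 0$ and $\frac{\pr}{\pr t}t\equiv 1$, the function $t$ equals $v+h(w',\ol{w'})$ with $h$ independent of $w_n$, so $X=\{t=0\}$ is the rigid graph $\{v=\varphi(w')\}$ with $\varphi=-h$, and $T$ restricts to $\frac{\pr}{\pr u}$ along it on the whole neighborhood; your concluding computation of the $Z_j$ then applies verbatim and yields \eqref{e-can}. This one step simultaneously supplies the missing normalization and the missing identity $T=\frac{\pr}{\pr\theta}$.
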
 

Let $x=(x_1,\ldots,x_{2n-1})$ be local coordinates of $X$ defined in some open set in $X$. In this paper, when we write 
$x=(x_1,\ldots,x_{2n-1})=(z,\theta)$ we mean that $z=(z_1,\ldots,z_{n-1})$, $z_j=x_{2j-1}+ix_{2j}$, $j=1,\ldots,n-1$, $\theta=x_{2n-1}$. We call $x$ canonical coordinates if $x$ satisfies \eqref{e-can}. 

We also need 

\begin{prop}\label{can-p0}
For a given point $p\in X$, we can find cannonical coordinates $x=(x_1,\ldots,x_{2n-1})=(z,\theta)$ and local section $s$, $\abs{s}^2_{h^L}=e^{-\phi}$, defined in some small neighborhood $D$ of $p$ such that 
\begin{equation}\label{s-canpe0}
\begin{split}
&x(p)=0,\\
&Z_j=\frac{\pr}{\pr z_j}+i\lambda_j\ol z_j\frac{\pr}{\pr\theta}+O(\abs{z}^2)\frac{\pr}{\pr\theta},\ \ j=1,\ldots,n-1,\\ 
&\mbox{$\frac{\pr}{\pr z_1},\ldots,\frac{\pr}{\pr z_{n-1}}$ is an orthonormal frame for $T^{1,0}_pX$},\\
&\phi(z,\theta)=\beta\theta+\sum^{n-1}_{j,t=1}\mu_{j,t}\ol z_j z_t+O(\abs{z}\abs{\theta})+O(\abs{\theta}^2)+O(\abs{(z,\theta)}^3),
\end{split}
\end{equation}
where $Z_1(x),\ldots, Z_{n-1}(x)$
form a basis of $T^{1, 0}_xX$ varying smoothly with $x$ in a neighborhood of\, $p$,
$\lambda_1,\ldots,\lambda_{n-1}$ are the eigenvalues of $\mathcal{L}_p$ with respect to $\langle\,\cdot\,|\,\cdot\,\rangle$, $\beta\in\Real$, $\mu_{j,t}\in\Complex$, $\mu_{j,t}=\ol{\mu_{t,j}}$, $j,t=1,\ldots,n-1$.
\end{prop}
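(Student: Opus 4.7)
The plan is to begin from Theorem~\ref{t-can} and perform three successive normalisations: (i) a translation of $(z,\theta)$ to center at $p$; (ii) a pluriharmonic reparametrisation of $\theta$ followed by a $\Complex$-linear change of $z$ to bring $\varphi$ to its Hermitian-diagonal normal form; (iii) a gauge change $s\mapsto gs$ of the local trivialising section by a $T$-rigid CR function to eliminate the unwanted Taylor terms of the local weight.

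Applying Theorem~\ref{t-can} yields canonical coordinates $(z,\theta)$ near $p$ with $T=\pr/\pr\theta$ and $Z_j=\pr_{z_j}+i(\pr_{z_j}\varphi)\pr_\theta$ for some real $\varphi(z)$. After translation and a constant shift of $\theta$, I may assume $x(p)=0$ and $\varphi(0)=0$. A direct computation shows that a change $\Td\theta=\theta+h(z,\ol z)$ with $h$ real preserves the canonical form of Theorem~\ref{t-can} if and only if $\pr\ddbar h=0$, i.e., $h=\mathrm{Re}\,H$ for some holomorphic $H$; in that case $\Td\varphi=\varphi+\mathrm{Im}\,H$. Writing the expansion $\varphi(z)=c_jz_j+\ol{c_j}\ol z_j+a_{jk}z_jz_k+\ol{a_{jk}}\ol z_j\ol z_k+h_{jk}z_j\ol z_k+O(\abs{z}^3)$ with $h_{jk}$ Hermitian, a suitable degree-two choice of $H$ kills the linear and holomorphic quadratic terms, leaving $\varphi(z)=h_{jk}z_j\ol z_k+O(\abs{z}^3)$. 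The commutator identity $[Z_j,\ol Z_k]=-2i(\pr_{z_j}\pr_{\ol z_k}\varphi)\pr_\theta$ together with Definition~\ref{s1-d2} and $\langle T,\omega_0\rangle=-1$ gives $\mathcal{L}_p(Z_j,\ol Z_k)=h_{jk}$. A further $\Complex$-linear change $z\mapsto Az$ then simultaneously diagonalises $h_{jk}$ and orthonormalises $\pr_{z_j}|_p$ with respect to the Hermitian metric on $T^{1,0}_pX$, by the standard joint reduction of two Hermitian forms one of which is positive definite; this brings $\varphi$ to $\sum\lambda_j\abs{z_j}^2+O(\abs{z}^3)$, so that $Z_j=\pr_{z_j}+i\lambda_j\ol z_j\pr_\theta+O(\abs{z}^2)\pr_\theta$ and $\pr_{z_j}|_p=Z_j|_p$ is the required orthonormal frame.

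It remains to normalise $\phi$. The $T$-rigidity $T\phi=\beta$ (a global constant) forces $\phi(z,\theta)=\beta\theta+\psi(z,\ol z)$, so the cross terms $O(\abs{z}\abs{\theta})$ and $O(\abs{\theta}^2)$ vanish automatically. Expanding $\psi(z,\ol z)=\psi_0+c'_jz_j+\ol{c'_j}\ol z_j+a'_{jk}z_jz_k+\ol{a'_{jk}}\ol z_j\ol z_k+\mu_{jt}\ol z_jz_t+O(\abs{z}^3)$ and replacing $s$ by $s'=gs$ with $g(z)=\exp\bigl(\tfrac{1}{2}\psi_0+c'_jz_j+a'_{jk}z_jz_k\bigr)$ replaces $\phi$ by $\phi-2\,\mathrm{Re}\log g$, which annihilates the constant, linear, and holomorphic quadratic parts of $\psi$ and leaves exactly the stated form. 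Since $g$ is holomorphic in $z$ and independent of $\theta$, it is a $T$-rigid CR function in the sense of Definition~\ref{d-dhmilkI}, so the new trivialisation belongs to the admissible class fixed throughout the paper.

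The principal delicacy is identifying the right classes of admissible changes: the pluriharmonicity constraint on $h$ means that only the holomorphic and antiholomorphic parts of the Taylor jet of $\varphi$ can be removed by coordinate changes, so the mixed Hermitian part carrying the Levi information is necessarily left to the linear $z$-change; analogously, only real parts of holomorphic ($\theta$-independent) functions can be absorbed into $\phi$ via gauge changes in $s$, which is exactly enough to reduce $\psi$ to its Hermitian mixed quadratic jet. Tracking the distribution of Taylor information across these three reductions so that later steps do not reintroduce already-eliminated terms is the only bookkeeping point where care is needed.
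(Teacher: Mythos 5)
Your argument is correct and follows essentially the same route as the paper's proof: BRT canonical coordinates, $\theta$-shifts by real parts of holomorphic polynomials of degree at most two to remove the linear and (anti)holomorphic quadratic part of $\varphi$, a complex-linear change of $z$ to orthonormalize the frame and diagonalize the Levi form at $p$, and gauge changes of $s$ by rigid CR functions to normalize the weight. The only departure is cosmetic: you invoke the standing rigidity assumption $T\phi=\mathrm{const}$ to delete the $\theta$-cross terms of $\phi$ outright, whereas the paper's proof keeps them inside the allowed error $O(\abs{z}\abs{\theta})+O(\abs{\theta}^2)+O(\abs{(z,\theta)}^3)$ and thus never needs rigidity at this step.
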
 

\begin{proof}
Fix $p\in X$. Let $\Td x=(\Td x_1,\ldots,\Td x_{2n-1})=(\Td z,\Td \theta)=(\Td z_1,\ldots,\Td z_{n-1},\Td \theta)$, $\Td z_j=\Td x_{2j-1}+i\Td x_{2j}$, $j=1,\ldots,n-1$, $\Td \theta=\Td x_{2n-1}$ be canonical coordinates 
of $X$ defined in some small neighborhood $D$ of $p$. We have 
\begin{equation}\label{e-can1}
\begin{split}
&T=\frac{\pr}{\pr\Td \theta},\\
&Z_j=\frac{\pr}{\pr\Td z_j}+i\frac{\pr\Td \varphi}{\pr\Td z_j}(\Td z)\frac{\pr}{\pr\Td\theta},\ \ j=1,\ldots,n-1,
\end{split}
\end{equation}
where $Z_j(\Td x)$, $j=1,\ldots,n-1$, form a basis of $T^{1,0}_{\Td x}X$, for each $\Td x\in D$, and $\Td\varphi(\Td z)\in C^\infty(D,\Real)$ independent of $\Td\theta$. It is easy to see that we can take $\Td x$ so that $\Td x(p)=0$. Near $p$, we write 
\begin{equation}\label{e-can2-1-0}
\Td\varphi(\Td z)=a+\sum^{n-1}_{j=1}(\alpha_j\Td z_j+\ol\alpha_j\ol{\Td z_j})+O(\abs{\Td z}^2),
\end{equation}
where $a\in\Complex$, $\alpha_j\in\Complex$, $j=1,\ldots,n-1$. Let $\hat z=\Td z$, $\hat \theta=\Td\theta-\sum^{n-1}_{j=1}(i\alpha_j\Td z_j-i\ol\alpha_j\ol{\Td z_j})$. Then, $(\hat z,\hat\theta)$ form canocical coordinates of $X$ near $p$ and we can check that 
\begin{equation}\label{e-canr4}
\begin{split}
&\frac{\pr}{\pr\hat\theta}=\frac{\pr}{\pr\Td\theta},\\
&\frac{\pr}{\pr\hat z_j}=\frac{\pr}{\pr\Td z_j}+i\alpha_j\frac{\pr}{\pr\Td\theta},\ \ j=1,\ldots,n-1.
\end{split}
\end{equation} 
From \eqref{e-canr4}, \eqref{e-can2-1-0} and \eqref{e-can1}, we see that
\begin{equation}\label{e-can4}
\begin{split}
&T=\frac{\pr}{\pr\hat\theta},\\
&Z_j=\frac{\pr}{\pr\hat z_j}+i\frac{\pr\hat\varphi}{\pr\hat z_j}(\hat z)\frac{\pr}{\pr\hat\theta},\ \ j=1,\ldots,n-1,
\end{split}
\end{equation}
where 
\[\hat\varphi(\hat z)=\Td\varphi(\hat z)-\sum^{n-1}_{j=1}(\alpha_j\hat z_j+\ol\alpha_j\ol{\hat z_j})
=a+O(\abs{\hat z}^2).\] 
Thus, $\frac{\pr}{\pr\hat z_1},\ldots,\frac{\pr}{\pr\hat z_{n-1}}$ is a basis of $T^{1,0}_pX$. By taking some linear transformation, we can take $\hat z$ so that $\frac{\pr}{\pr\hat z_j}$, $j=1,\ldots,n-1$, is an orthonormal 
frame for $T^{1,0}_pX$ and the Levi form is diagonal at $p$ with respect to $\frac{\pr}{\pr\hat z_j}$, $j=1,\ldots,n-1$. We write 
\begin{equation}\label{eTW0}
\hat\varphi(\hat z)=\alpha+\sum^{n-1}_{j,t=1}(\beta_{j,t}\hat z_j\hat z_t+\ol\beta_{j,t}\ol{\hat z_j}\,\ol{\hat z_t})+\sum^{n-1}_{j,t=1}\gamma_{j,t}\ol{\hat z_j}\hat z_t+O(\abs{\hat z}^3),
\end{equation}
where $\beta_{j,t}\in\Complex$, $\gamma_{j,t}\in\Complex$, $\gamma_{j,t}=\ol{\gamma_{t,j}}$, $j,t=1,\ldots,n-1$. 
Since the Levi form is diagonal at $p$ with respect to $\frac{\pr}{\pr\hat z_j}$, $j=1,\ldots,n-1$, we can check that
\begin{equation}\label{eTW1}
\gamma_{j,t}=\lambda_j\delta_{j,t},\ \ j,t=1,\ldots,n-1,
\end{equation}
where $\lambda_1,\ldots,\lambda_{n-1}$ are the eigenvalues of $\mathcal{L}_p$ with respect to $\langle\,\cdot\,|\,\cdot\,\rangle$.
Let $z=\hat z$, $\theta=\hat\theta-\sum^{n-1}_{j,t=1}i(\beta_{j,t}\hat z_j\hat z_t-\ol\beta_{j,t}\ol{\hat z_j}\,\ol{\hat z_t})$. Then, $(z,\theta)$ form canonical coordinates of $X$ near $p$ and we can check that 
\begin{equation}\label{eTW2}
\begin{split}
&\frac{\pr}{\pr\theta}=\frac{\pr}{\pr\hat\theta},\\
&\frac{\pr}{\pr z_j}=\frac{\pr}{\pr\hat z_j}+i\sum^{n-1}_{t=1}\beta_{j,t}\hat z_t\frac{\pr}{\pr\hat\theta},\ \ j=1,\ldots,n-1.
\end{split}
\end{equation}

From \eqref{e-can4}, \eqref{eTW0}, \eqref{eTW1} and \eqref{eTW2}, we can check that 
\begin{equation*}
\begin{split}
&T=\frac{\pr}{\pr\theta},\\
&Z_j=\frac{\pr}{\pr z_j}+i\lambda_j\ol z_j\frac{\pr}{\pr\theta}+O(\abs{z}^2)\frac{\pr}{\pr\theta},\ \ j=1,\ldots,n-1.
\end{split}
\end{equation*}
Since $\frac{\pr}{\pr z_j}$, $j=1,\ldots,n-1$, is an orthonormal frame of $T^{1,0}_pX$, 
we conclude that $x=(z,\theta)$ satisfies the first three properties in \eqref{s-canpe0}. 

Let $\hat s$ be a local section defined in some neighborhood of $p$, $\abs{\hat s}^2_{h^L}=e^{-\hat\phi}$. Near $p$, we write 
\begin{equation}\label{e-can5s}
\hat\phi(z,\theta)=c+\beta\theta+\sum^{n-1}_{j=1}(a_jz_j+\ol a_j\ol z_j)+O(\abs{(z,\theta)}^2),
\end{equation}
where $c\in\Real$, $\beta\in\Real$ and $a_j\in\Complex$, $j=1,\ldots,n-1$. Let 
\begin{equation}\label{e-can6s}
g(z)=e^{\frac{c}{2}}(1+\sum^{n-1}_{j=1}a_jz_j).
\end{equation}
Then, $g(z)$ is a rigid CR function. We may replace $\hat s$ by $g\hat s:=\Td s$. We have 
\begin{equation}\label{e-can7s}
\abs{\Td s}^2_{h^L}=e^{-\Td\phi}=\abs{g}^2e^{-\hat\phi}=e^{2\log\abs{g}-\hat\phi}.
\end{equation} 
From \eqref{e-can6s}, we can check that
\[2\log\abs{g}=c+\sum^{n-1}_{j=1}(a_jz_j+\ol a_j\ol z_j)+O(\abs{z}^2).\] 
Combining this with \eqref{e-can7s} and \eqref{e-can5s}, we conclude that 
\[\Td\phi(z,\theta)=\beta\theta+O(\abs{(z,\theta)}^2).\] 
Near $p$, we write 
\begin{equation}\label{e-can8s}
\begin{split}
\Td\phi(z,\theta)&=\beta\theta+\sum^{n-1}_{j,t=1}(c_{j,t}z_jz_t+\ol{c_{j,t}}\,\ol z_j\ol z_t)+\sum^n_{j,t=1}\mu_{j,t}\ol z_jz_t\\
&\quad+O(\abs{z}\abs{\theta})+O(\abs{\theta}^2)+O(\abs{(z,\theta)}^3),
\end{split}
\end{equation}
where $c_{j,t}\in\Complex$, $\mu_{j,t}\in\Complex$, $\mu_{j,t}=\ol{\mu_{t,j}}$, $j,t=1,\ldots,n-1$. 
Let 
\begin{equation}\label{e-can9s}
g_1(z)=1+\sum^{n-1}_{j,t=1}c_{j,t}z_jz_t.
\end{equation}
Then, $g_1(z)$ is a rigid CR function. We may replace $\Td s$ by $g_1\Td s:=s$. We have 
\begin{equation}\label{e-can10s}
\abs{s}^2_{h^L}=e^{-\phi}=\abs{g_1}^2e^{-\Td\phi}=e^{2\log\abs{g_1}-\Td\phi}.
\end{equation} 
From \eqref{e-can9s}, we can check that
\[2\log\abs{g_1}=\sum^{n-1}_{j,t=1}(c_{j,t}z_jz_t+\ol{c_{j,t}}\,\ol z_j\ol z_t)+O(\abs{z}^3).\] 
Combining this with \eqref{e-can10s} and \eqref{e-can8s}, we conclude that 
\[\phi(z,\theta)=\beta\theta+\sum^{n-1}_{j,t=1}\mu_{j,t}\ol  z_jz_t+O(\abs{z}\abs{\theta})+O(\abs{\theta}^2)+O(\abs{(z,\theta)}^3).\]

The proposition follows.
\end{proof} 

\begin{prop}\label{ppcf} 
We assume that $X$ is a generalized torus CR manifold and $(L,J)$ is an admissible generalized Sasakian CR line bundle over $X$ 
(see Definition~\ref{dpc} and Definition~\ref{dpc1}).
Let $\phi$ and $\phi_1$ be as in the discussion after Definition~\ref{dpc1}. If $M^{\phi}_x$ is positive on $X$, then $M^{\phi_1}_x$ is positive on $X$.
\end{prop}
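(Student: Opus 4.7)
The plan is to show that $T$-flow averaging preserves positivity of $M^\phi$ by establishing an equivariance property. First, by Definition~\ref{dpc1}, we may work in a trivialization of $L$ on an open set invariant under $\Phi^t$ for $\abs{t}\leq\gamma_0$, so the local weight
\[\phi_1(x)=\frac{1}{\gamma_0}\int_0^{\gamma_0}\phi(\Phi^t(x))\,dt\]
is well-defined in the same trivialization as $\phi$. Since the formula of Remark~\ref{r-msmiIa}, $M^\phi_x(U,\ol V)=\frac{1}{2}\langle U\wedge\ol V,d(\ddbar_b\phi-\pr_b\phi)(x)\rangle$, is linear in $\phi$ and involves only second derivatives, I would differentiate under the integral to obtain
\[M^{\phi_1}_x(U,\ol V)=\frac{1}{\gamma_0}\int_0^{\gamma_0}M^{\phi\circ\Phi^t}_x(U,\ol V)\,dt,\qquad U,V\in T^{1,0}_xX.\]

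The crux of the argument would be to establish the equivariance
\[M^{\phi\circ\Phi^t}_x(U,\ol V)=M^\phi_{\Phi^t(x)}\big((\Phi^t)_*U,\ol{(\Phi^t)_*V}\big).\]
I would exploit the rigidity of $T$: since $[T,C^\infty(X,T^{1,0}X)]\subset C^\infty(X,T^{1,0}X)$, the flow $\Phi^t$ preserves the subbundle $T^{1,0}X$, and hence also $T^{0,1}X$, so the pullback $(\Phi^t)^*$ preserves $T^{*1,0}X$ and $T^{*0,1}X$. Using the defining identity $\ddbar_bf(Z)=Zf$ for $Z\in T^{0,1}X$ and any smooth function $f$, this preservation yields $\ddbar_b(\phi\circ\Phi^t)=(\Phi^t)^*(\ddbar_b\phi)$ and similarly $\pr_b(\phi\circ\Phi^t)=(\Phi^t)^*(\pr_b\phi)$. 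Applying $d$ and using the naturality $d\circ(\Phi^t)^*=(\Phi^t)^*\circ d$ together with Remark~\ref{r-msmiIa} then gives the equivariance.

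With equivariance in hand, the conclusion is immediate: $(\Phi^t)_*$ restricts to a $\Complex$-linear isomorphism $T^{1,0}_xX\to T^{1,0}_{\Phi^t(x)}X$, so positive definiteness of $M^\phi_{\Phi^t(x)}$ on $T^{1,0}_{\Phi^t(x)}X$ transfers to positive definiteness of $M^{\phi\circ\Phi^t}_x$ on $T^{1,0}_xX$ for every $t\in[0,\gamma_0]$. Therefore $M^{\phi_1}_x$ is positive definite, as a positive average of positive definite Hermitian forms on the fixed space $T^{1,0}_xX$. The main obstacle is the equivariance identity; as a sanity check I would also verify it in the canonical coordinates of Theorem~\ref{t-can}, where $\Phi^t(z,\theta)=(z,\theta+t)$, $T=\pr/\pr\theta$, and the $\theta$-independence of the CR structure makes the commutation of $\Phi^t$ with $\ddbar_b$ and $\pr_b$ transparent.
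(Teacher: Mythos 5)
Your proposal is correct and follows essentially the same route as the paper: both reduce the statement to the flow-equivariance of the curvature form, $M^{\phi\circ\Phi^t}_x(U,\ol V)=M^{\phi}_{\Phi^t(x)}\big((\Phi^t)_*U,\ol{(\Phi^t)_*V}\big)$, via the formula of Remark~\ref{r-msmiIa}, and then conclude by writing $M^{\phi_1}_x$ as the average $\frac{1}{\gamma_0}\int_0^{\gamma_0}M^{\phi}_{\Phi^t(x)}\,dt$ of positive forms. The only difference is cosmetic: you derive the equivariance intrinsically from the flow-invariance of $T^{1,0}X$ (rigidity of $T$) and naturality of pullback, while the paper verifies the same identity directly in the Baouendi--Rothschild--Treves canonical coordinates, chaining it over small time steps $t_0,t_1,\ldots$ adapted to a finite cover.
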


\begin{proof} 
Let $\set{W_1\subset W'_1,\ldots,W_N\subset W'_N}$ be open sets of $X$ such that $X=\bigcup^N_{j=1}W_j$ and there exist canonical coordinates on $W'_j$, for each $j$ and there is a constant $\epsilon_0>0$ such that for each $x\in X$, $\Phi^t(x)$ is well-defined, $\forall\abs{t}\leq\epsilon_0$, and 
\[\set{\Phi^t(x);\, x\in W_j,\abs{t}\leq\epsilon_0}\subset W'_j,\] 
for each $j$, where $\Phi^t(x)$ is the $T$-flow. Fix $t_0\in[-\epsilon_0,\epsilon_0]$. 
Put $\Td\phi(x)=\phi(\Phi^{t_0}x)$. It is obviously that $\Td\phi(x)$ also define a Hermitian fiber metric on $L$. By using 
canonical coordinates \eqref{e-can}, we can check that 
\[d(\ddbar_b\Td\phi-\pr_b\Td\phi)(x)=d(\ddbar_b\phi-\pr\phi)(\Phi^{t_0}x),\ \ \forall x\in X.\]
Thus, 
\[M^{\Td\phi}_x=M^\phi_{\Phi^{t_0}(x)},\ \ \forall x\in X.\]
Similarly, fix $t_1\in[-\epsilon_0,\epsilon_0]$ and put $\hat\phi(x)=\Td\phi(\Phi^{t_1}x)=\phi(\Phi^{t_0+t_1}x)$. We have \[M^{\hat\phi}_{x}=M^{\Td\phi}_{\Phi^{t_1}(x)}=M^{\phi}_{\Phi^{t_0+t_1}(x)},\ \ \forall x\in X.\]
Continuing in this way, we obtain for any $t\in[0,\gamma_0]$, $M^{\phi(\Phi^t(x))}_{x}=M^{\phi}_{\Phi^t(x)}$, $\forall x\in X$, where $\gamma_0$ is as in Definition~\ref{dpc}. Thus, 
\[M^{\phi_1}_x=\frac{1}{\gamma_0}\int^{\gamma_0}_0M^\phi_{\Phi^t(x)}dt,\ \ \forall x\in X.\]
The proposition follows.
\end{proof}

\section{The scaling technique} 

In this section we modify the scaling technique developed in~\cite{HM09} in order to prove \eqref{epcIII-I}, \eqref{epcIV} and \eqref{epcV}.  

Fix a point $p\in X$. Let $x=(x_1,\ldots,x_{2n-1})=(z,\theta)$ be canonical coordinates of $X$ defined in 
some small neighborhood $D$ of $p$ and let $s$ be a local section of $L$ on $D$, $\abs{s}^2_{h^L}=e^{-\phi}$. 
We take $x$ and $s$ so that 
\begin{equation}\label{scat-e1}
\begin{split}
&x(p)=0,\\
&Z_j=\frac{\pr}{\pr z_j}+i\lambda_j\ol z_j\frac{\pr}{\pr\theta}+O(\abs{z}^2)\frac{\pr}{\pr\theta},\ \ j=1,\ldots,n-1,\\
&\mbox{$\frac{\pr}{\pr z_1},\ldots,\frac{\pr}{\pr z_{n-1}}$ is an orthonormal frame for $T^{1,0}_pX$},\\
&\phi(z,\theta)=\beta\theta+\sum^{n-1}_{j,t=1}\mu_{j,t}\ol z_j z_t+O(\abs{z}\abs{\theta})+O(\abs{\theta}^2)+O(\abs{(z,\theta)}^3),
\end{split}
\end{equation}
where $Z_1(x),\ldots, Z_{n-1}(x)$
form a basis of $T^{1, 0}_xX$ varying smoothly with $x$ in a neighborhood of\, $p$, $\lambda_1,\ldots,\lambda_{n-1}$ are the eigenvalues of $\mathcal{L}_p$ with respect to $\langle\,\cdot\,|\,\cdot\,\rangle$, $\beta\in\Real$, $\mu_{j,k}\in\Complex$, $\mu_{j,t}=\ol{\mu_{t,j}}$, $j,t=1,\ldots,n-1$. 
By Proposition~\ref{can-p0}, this is always possible. Fix $q\in\set{0,1,\ldots,n-1}$. In this section, we work on 
$(0,q)$ forms and we work with this local coordinates $x=(z,\theta)$. 

Let $(\ |\ )_{k\phi}$ be the inner product on the space
$\Omega^{0,q}_0(D)$ defined as follows:
\[
(f\ |\ g)_{k\phi}=\int_D\!\langle f|g\rangle e^{-k\phi}dv_X,\]
where $f, g\in\Omega^{0,q}_0(D)$. Let
$\ddbar^{\,*,k\phi}_b:\Omega^{0,q+1}(D)\To\Omega^{0,q}(D)$
be the formal adjoint of $\ddbar_b$ with respect to $( \ |\ )_{k\phi}$. Put
\[
\Box^{(q)}_{b,k\phi}=\ddbar_b\ddbar^{\,*,k\phi}_b+\ddbar^{\,*,k\phi}_b\ddbar_b:\Omega^{0,q}(D)\To\Omega^{0,q}(D)\,.
\]
Let $u\in\Omega^{0,q}(D, L^k)$. Then there exists $\Td u\in\Omega^{0,q}(D)$ such that $u=s^k\Td u$ and we have
\begin{equation}\label{scat-e2}
\Box^{(q)}_{b,k}u=s^k\Box^{(q)}_{b,k\phi}\Td u.
\end{equation}

Let $U_1(z,\theta),\ldots,U_{n-1}(z,\theta)$ be an orthonormal frame of $T^{1,0}_{(z,\theta)}X$ varying smoothly 
with $(z,\theta)$ in a neighborhood of $p$. We take $U_1,\ldots,U_{n-1}$ so that $U_j(0,0)=\frac{\pr}{\pr z_j}$, 
$j=1,\ldots,n-1$. Put 
\begin{equation}\label{epch}
U_j(z,\theta)=\sum^{n-1}_{t=1}a_{j,t}(z,\theta)Z_t,\ \ j=1,\ldots,n-1,
\end{equation}
where $a_{j,t}\in C^\infty$, $j,t=1,\ldots,n-1$, $Z_1,\ldots,Z_{n-1}$ are as in \eqref{scat-e1}. Then, we have 
\begin{equation}\label{epchI}
a_{j,t}(z,\theta)=\delta_{j,t}+O(\abs{(z,\theta)}),\ \ j,t=1,\ldots,n-1.
\end{equation}
Let $(e_j(z, \theta))_{j=1,\ldots,n-1}$ denote the basis of $T^{*0,1}_{(z,\theta)}X$,
dual to $(\ol U_j(z,\theta))_{j=1,\ldots,n-1}$.
If $w\in T^{*0,1}_zX$, let
$(w\wedge)^*: \Lambda^{0,q+1}T^*_zX\To \Lambda^{0,q}T^*_zX,\ q\geqslant0$,
be the adjoint of the left exterior multiplication
$w\wedge: \Lambda^{0,q}T^*_zX\To \Lambda^{0,q+1}T^*_zX$, $u\mapsto w\wedge u$\,:
\begin{equation} \label{scat-e3}
\langle w\wedge u|v\rangle=\langle u|(w\wedge)^*v\rangle\,,
\end{equation}
for all $u\in\Lambda^{0,q}T^*_zX$, $v\in\Lambda^{0,q+1}T^*_zX$.
Notice that $(w\wedge)^*$ depends $\mathbb{C}$-anti-linearly on $w$. It is easy to see that 
\begin{equation} \label{scat-e4}
\ddbar_b=\sum^{n-1}_{j=1}e_j\wedge\ol U_j
+\sum^{n-1}_{j=1}(\ddbar_b e_j)\wedge (e_j\wedge)^*
\end{equation}
and correspondingly
\begin{equation} \label{scat-e5}
\ddbar^{\,*,k\phi}_b=\sum^{n-1}_{j=1}(e_j\wedge)^*\ol U^{\,*,k\phi}_j
+\sum^{n-1}_{j=1}e_j\wedge(\ddbar_b e_j\wedge)^*,
\end{equation}
where $\ol U^{\,*,k\phi}_j$ is the formal adjoint of $\ol U_j$ with respect to $(\ |\ )_{k\phi}$, $j=1,\ldots,n-1$. We can check that for $j=1,\ldots,n-1$,
\begin{equation} \label{scat-e6}
\ol U_j^{\,*,k\phi}=-U_j+k(U_j\phi)+s_j(z, \theta),
\end{equation}
where $s_j\in C^\infty(D)$, $s_j$ is independent of $k$, $j=1,\ldots,n-1$. 

For $r>0$, let
$D_r=\set{x=(z, \theta)\in\Real^{2n-1};\, \abs{x_j}<r,\ j=1,\ldots,2n-1}$.
Let $F_k$ be the scaling map:
$F_k(z, \theta)=(\frac{z}{\sqrt{k}}, \frac{\theta}{k})$.
From now on, we assume that $k$ is large enough so that $F_k(D_{\log k})\subset D$.
We define the scaled bundle $F^*_k\Lambda^{0,q}T^*X$ on $D_{\log k}$ to be the bundle whose fiber at $(z, \theta)\in D_{\log k}$ is
\[
F^*_k\Lambda^{0,q}T^*_{(z,\theta)}X:=\Bigr\{\textstyle\sideset{}{'}\sum_{\abs{J}=q}a_Je_J(\frac{z}{\sqrt{k}},\frac{\theta}{k});\, a_J\in\Complex,\abs{J}=q\Bigl\}\,.
\] 
We take the Hermitian metric $\langle\,\cdot\,|\,\cdot\,\rangle_{F^*_k}$ on $F^*_k\Lambda^{0,q}T^*X$ so that at each point $(z, \theta)\in D_{\log k}$\,,
\[
\Bigr\{e_J\big(\tfrac{z}{\sqrt{k}}\;,\tfrac{\theta}{k}\big)\,; \text{$\abs{J}=q$, $J$ strictly increasing}\Bigl\}\,,
\]
is an orthonormal basis for $F^*_k\Lambda^{0,q}T^*_{(z,\theta)}X$. For $r>0$, let $F^*_k\Omega^{0,q}(D_r)$
denote the space of smooth sections of $F^*_k\Lambda^{0,q}T^*X$ over $D_r$. Let $F^*_k\Omega^{0,q}_0(D_r)$ be the subspace of
$F^*_k\Omega^{0,q}(D_r)$ whose elements have compact support in $D_r$.
Given $f\in\Omega^{0,q}(F_k(D_{\log k}))$ we write
$f=\sum'_{\abs{J}=q}f_Je_J$.
We define the scaled form $F_k^*f\in F^*_k\Omega^{0,q}(D_{\log k})$ by:
\begin{equation}\label{scat-e6-1}
F_k^*f=\sideset{}{'}\sum_{\abs{J}=q}f_J\Big(\frac{z}{\sqrt{k}}\;, \frac{\theta}{k}\Big)e_J\Big(\frac{z}{\sqrt{k}}\;,\frac{\theta}{k}\Big)\,.
\end{equation}

Let $P$ be a partial differential operator of order one on $F_k(D_{\log k})$ with $C^\infty$ coefficients. We write
$P=a(z, \theta)\frac{\pr}{\pr\theta}+\sum^{2n-2}_{j=1}a_j(z, \theta)\frac{\pr}{\pr x_j}$, $a, a_j\in C^\infty(F_k(D_{\log k}))$, $j=1,\ldots,2n-2$. The partial diffferential operator
$P_{(k)}$ on $D_{\log k}$ is given by
\begin{equation} \label{scat-e7}
P_{(k)}=\sqrt{k}F^*_ka\frac{\pr}{\pr\theta}+\sum^{2n-2}_{j=1}F^*_ka_j\frac{\pr}{\pr x_j}
=\sqrt{k}a\Big(\frac{z}{\sqrt{k}}\,,\frac{\theta}{k}\Big)\frac{\pr}{\pr\theta}+\sum^{2n-2}_{j=1}a_j\Big(\frac{z}{\sqrt{k}}\,, \frac{\theta}{k}\Big)\frac{\pr}{\pr x_j}\,.
\end{equation}
Let $f\in C^\infty(F_k(D_{\log k}))$. We can check that
\begin{equation} \label{scat-e8}
P_{(k)}(F^*_kf)=\frac{1}{\sqrt{k}}F^*_k(Pf).
\end{equation} 

The scaled differential operator $\ddbar_{b,(k)}:F^*_k\Omega^{0,q}(D_{\log k})\To F^*_k\Omega^{0,q+1}(D_{\log k})$ is given by (compare to the formula \eqref{scat-e4} for $\ddbar_b$):
\begin{equation} \label{scat-e9}
\begin{split}
\ddbar_{b,(k)}=&\sum^{n-1}_{j=1}e_j\Big(\frac{z}{\sqrt{k}}\,,\frac{\theta}{k}\Big)\wedge\ol U_{j(k)}\\
&+\sum^{n-1}_{j=1}\frac{1}{\sqrt{k}}(\ddbar_b e_j)\Big(\frac{z}{\sqrt{k}}\,,\frac{\theta}{k}\Big)\wedge \Big(e_j\Big(\frac{z}{\sqrt{k}}\,,\frac{\theta}{k}\Big)\wedge\Big)^*.
\end{split}
\end{equation}
From \eqref{scat-e4} and \eqref{scat-e8}, we can check that if $f\in\Omega^{0,q}(F_k(D_{\log k}))$, then
\begin{equation} \label{scat-e10}
\ddbar_{b,(k)}F^*_kf=\frac{1}{\sqrt{k}}F^*_k(\ddbar_bf).
\end{equation} 

Let $(\ |\ )_{kF^*_k\phi}$ be the inner product on the space $F^*_k\Omega^{0,q}_0(D_{\log k})$
defined as follows:
\[
(f\ |\ g)_{kF^*_k\phi}=\int_{D_{\log k}}\!\langle f|g\rangle_{F^*_k}e^{-kF^*_k\phi}(F^*_km)(z, \theta)dv(z)d\theta\,,
\]
where $dv_X=mdv(z)d\theta$ is the volume form, $dv(z)=2^{n-1}dx_1\cdots dx_{2n-2}$. Note that $m(0,0)=1$. Let $\ddbar^{\,*,k\phi}_{b,(k)}:F^*_k\Omega^{0,q+1}(D_{\log k})\To F^*_k\Omega^{0,q}(D_{\log k})$ be the formal adjoint of $\ddbar_{b,(k)}$ with respect to $(\ |\ )_{kF^*_k\phi}$. Then, we can check that (compare the formula for $\ddbar^{\,*,k\phi}_b$, see \eqref{scat-e5} and \eqref{scat-e6})
\begin{equation} \label{scat-e11}
\begin{split}
\ddbar^{\,*,k\phi}_{b,(k)}=&\sum^{n-1}_{j=1}\Big(e_j\Big(\frac{z}{\sqrt{k}},\frac{\theta}{k}\Big)\wedge\Big)^*\Bigr(-U_{j(k)}+\sqrt{k}F^*_k(U_j\phi)+\frac{1}{\sqrt{k}}F^*_ks_j\Bigr)\\
&+\sum^{n-1}_{j=1}\frac{1}{\sqrt{k}}\,e_j\Big(\frac{z}{\sqrt{k}}\,,\frac{\theta}{k}\Big)\wedge\Big((\ddbar_b e_j)\Big(\frac{z}{\sqrt{k}}\,,\frac{\theta}{k}\Big)\wedge\Big)^*,
\end{split}
\end{equation}
where $s_j\in C^\infty(D_{\log k})$, $j=1,\ldots,n-1$, are independent of $k$.
We also have
\begin{equation} \label{scat-e12}
\ddbar^{\,*,k\phi}_{b,(k)}F^*_kf=\frac{1}{\sqrt{k}}F^*_k(\ddbar^{\,*,k\phi}_bf),\,\quad f\in\Omega^{0,q+1}(F_k(D_{\log k}))\,.
\end{equation}
We define now the \emph{scaled Kohn-Laplacian}:
\begin{equation} \label{scat-e13}
\Box^{(q)}_{b,k\phi,(k)}:=\ddbar^{\,*,k\phi}_{b,(k)}\ddbar_{b,(k)}+\ddbar_{b,(k)}\ddbar^{\,*,k\phi}_{b,(k)}:F^*_k\Omega^{0,q}(D_{\log k})\To F^*_k\Omega^{0,q}(D_{\log k}).
\end{equation}
From \eqref{scat-e10} and \eqref{scat-e12}, we see that if $f\in\Omega^{0,q}(F_k(D_{\log k}))$, then
\begin{equation} \label{scat-e14}
(\Box^{(q)}_{b,k\phi,(k)})F^*_kf=\frac{1}{k}F^*_k(\Box^{(q)}_{b,k\phi}f).
\end{equation} 

From \eqref{epch}, \eqref{epchI} and \eqref{scat-e1}, we can check that
\begin{equation} \label{scat-e15}
\ol U_{j(k)}=\frac{\pr}{\pr\ol z_j}-i\lambda_jz_j\frac{\pr}{\pr \theta}+\epsilon_kZ_{j,k}\,,\quad j=1,\ldots,n-1,
\end{equation}
on $D_{\log k}$, where $\epsilon_k$ is a sequence tending to zero with $k\To\infty$ and $Z_{j,k}$ is a first order differential operator and all the derivatives of the coefficients of $Z_{j,k}$ are uniformly bounded in $k$ on $D_{\log k}$, $j=1,\ldots,n-1$. Similarly, from \eqref{epch}, \eqref{epchI} and \eqref{scat-e1}, it is straightforward to see that
\begin{equation} \label{scat-e16}
\begin{split}
&-U_{t(k)}+\sqrt{k}F^*_k(U_t\phi)+\frac{1}{\sqrt{k}}F^*_ks_t\\
&=-\frac{\pr}{\pr z_t}-i\lambda_t\ol z_t\frac{\pr}{\pr\theta}+i\lambda_t\ol z_t\beta+\sum^{n-1}_{j=1}\mu_{j,\,t}\,\ol{z}_j+\delta_kV_{t,\,k}\,,\quad  t=1,\ldots,n-1,
\end{split}
\end{equation}
on $D_{\log k}$, where $\delta_k$ is a sequence tending to zero with $k\To\infty$ and $V_{t,k}$ is a first order differential operator and all the derivatives of the coefficients of $V_{t,k}$ are uniformly bounded in $k$ on $D_{\log k}$, $t=1,\ldots,n-1$.
From \eqref{scat-e16}, \eqref{scat-e15} and \eqref{scat-e13}, \eqref{scat-e11}, \eqref{scat-e9}, it is straightforward to obtain the following 

\begin{prop} \label{scat-p1}
We have that
\begin{equation*} 
\begin{split}
\Box^{(q)}_{b,k\phi,(k)}&=\sum^{n-1}_{j=1}\Bigr[\Bigr(-\frac{\pr}{\pr z_j}-i\lambda_j\ol z_j\frac{\pr}{\pr \theta}+i\lambda_j\ol z_j\beta+\sum^{n-1}_{t=1}\mu_{t,\,j}\,\ol z_t\Bigr)\Bigr(\frac{\pr}{\pr\ol z_j}-i\lambda_jz_j\frac{\pr}{\pr\theta}\Bigr)\Bigr]\\
&
+\sum^{n-1}_{j,\,t=1}e_j\Big(\frac{z}{\sqrt{k}}\,, \frac{\theta}{k}\Big)\wedge \Big(e_t\Big(\frac{z}{\sqrt{k}}\,,\frac{\theta}{k}\Big)\wedge\Big)^*\Bigr(\Bigr(\mu_{j,\,t}+i\lambda_j\delta_{j,\,t}\beta\Bigr)-2i\lambda_j\delta_{j,\,t}\,\frac{\pr}{\pr \theta}\Bigr)\\
&\quad+\varepsilon_kP_k,
\end{split}
\end{equation*}
on $D_{\log k}$, where $\varepsilon_k$ is a sequence tending to zero with $k\To\infty$, $P_k$ is a second order differential operator and all the derivatives of the coefficients of $P_k$ are uniformly bounded in $k$ on $D_{\log k}$.
\end{prop}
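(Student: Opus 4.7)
The plan is to substitute the explicit formulas \eqref{scat-e15} and \eqref{scat-e16} into the expressions \eqref{scat-e9} for $\ddbar_{b,(k)}$ and \eqref{scat-e11} for $\ddbar^{\,*,k\phi}_{b,(k)}$, then expand the composition $\Box^{(q)}_{b,k\phi,(k)}=\ddbar^{\,*,k\phi}_{b,(k)}\ddbar_{b,(k)}+\ddbar_{b,(k)}\ddbar^{\,*,k\phi}_{b,(k)}$ term by term, tracking which contributions are of leading order and which are absorbed in $\varepsilon_k P_k$. Introduce the shorthand $\Td a_j(z,\theta):=e_j(\tfrac{z}{\sqrt k},\tfrac{\theta}{k})$ and the constant-coefficient model operators
\[
\cL_j=\frac{\pr}{\pr\ol z_j}-i\lambda_jz_j\frac{\pr}{\pr\theta},\qquad \cL^*_t=-\frac{\pr}{\pr z_t}-i\lambda_t\ol z_t\frac{\pr}{\pr\theta}+i\lambda_t\ol z_t\beta+\sum_{s=1}^{n-1}\mu_{s,t}\ol z_s,
\]
so that \eqref{scat-e15}--\eqref{scat-e16} read $\ol U_{j(k)}=\cL_j+\epsilon_kZ_{j,k}$ and $-U_{t(k)}+\sqrt kF^*_k(U_t\phi)+\frac{1}{\sqrt k}F^*_ks_t=\cL^*_t+\delta_kV_{t,k}$.

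Next, collect the error contributions. The explicit $\frac{1}{\sqrt k}$-prefactored summands in \eqref{scat-e9} and \eqref{scat-e11} involve coefficient functions $(\ddbar_b e_j)(z/\sqrt k,\theta/k)$ and $s_j(z/\sqrt k,\theta/k)$ whose $C^\infty$-norms on $D_{\log k}$ are uniformly bounded in $k$; when composed with a first-order operator of bounded coefficients they produce second-order terms with prefactor $O(1/\sqrt k)$, hence absorbed in $\varepsilon_kP_k$. The same holds for the $\epsilon_kZ_{j,k}$ and $\delta_kV_{t,k}$ terms. Finally, in the main composition one needs to move $\cL_j$, $\cL^*_j$ past the form multipliers $\Td a_s\wedge$ and $(\Td a_s\wedge)^*$; since $\Td a_s$ depends on $(z,\theta)$ only through the scaled variables, each derivative of $\Td a_s$ carries a factor $\frac{1}{\sqrt k}$ or $\frac{1}{k}$, so the resulting commutator contributions are again of $\varepsilon_kP_k$-type.

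Modulo such errors one therefore obtains
\[
\Box^{(q)}_{b,k\phi,(k)}=\sum_{j,t=1}^{n-1}(\Td a_j\wedge)^*(\Td a_t\wedge)\,\cL^*_j\cL_t+\sum_{j,t=1}^{n-1}(\Td a_j\wedge)(\Td a_t\wedge)^*\,\cL_j\cL^*_t+\varepsilon_kP_k.
\]
Using the anticommutation identity $(\Td a_j\wedge)^*(\Td a_t\wedge)+(\Td a_t\wedge)(\Td a_j\wedge)^*=\delta_{j,t}$ (valid since the $\Td a_J$'s form an orthonormal basis in the scaled metric) to rewrite the first sum as $\sum_j\cL^*_j\cL_j-\sum_{j,t}(\Td a_t\wedge)(\Td a_j\wedge)^*\cL^*_j\cL_t$ and re-indexing $j\leftrightarrow t$ in the resulting combined sum, one gets
\[
\Box^{(q)}_{b,k\phi,(k)}=\sum_{j=1}^{n-1}\cL^*_j\cL_j+\sum_{j,t=1}^{n-1}(\Td a_j\wedge)(\Td a_t\wedge)^*[\cL_j,\cL^*_t]+\varepsilon_kP_k.
\]

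The last step is to compute $[\cL_j,\cL^*_t]$ explicitly. Only the pairings $[\pr_{\ol z_j},\ol z_s]=\delta_{j,s}$ and $[z_j\pr_\theta,\pr_{z_t}]=-\delta_{j,t}\pr_\theta$ are nonzero; direct evaluation yields $[\cL_j,\cL^*_t]=\mu_{j,t}+i\lambda_j\delta_{j,t}\beta-2i\lambda_j\delta_{j,t}\frac{\pr}{\pr\theta}$. Substituting this and the explicit form of $\cL^*_j\cL_j$ recovers the displayed formula. The main obstacle is purely combinatorial bookkeeping: verifying uniformly in $k$ on $D_{\log k}$ that all auxiliary pieces (the $\frac{1}{\sqrt k}$-terms in $\ddbar_{b,(k)},\ddbar^{\,*,k\phi}_{b,(k)}$, the $\epsilon_k,\delta_k$ remainders, and the commutators with $\Td a_s$) assemble into a single second-order operator $P_k$ whose coefficients have $C^\infty$-norms bounded independently of $k$, with a scalar prefactor $\varepsilon_k\to0$.
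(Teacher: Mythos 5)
Your proposal is correct and follows essentially the same route the paper intends: substitute \eqref{scat-e15}--\eqref{scat-e16} into \eqref{scat-e9} and \eqref{scat-e11}, absorb the $\frac{1}{\sqrt{k}}$-terms, the $\epsilon_kZ_{j,k}$, $\delta_kV_{t,k}$ remainders and the commutators with the scaled frame coefficients into $\varepsilon_kP_k$, and then perform the standard anticommutation/commutator algebra, which is exactly the computation underlying the model formula \eqref{scat-e25}. Your evaluation $[\cL_j,\cL^*_t]=\mu_{j,t}+i\lambda_j\delta_{j,t}\beta-2i\lambda_j\delta_{j,t}\frac{\pr}{\pr\theta}$ and the resulting expression agree with the stated proposition, so no gap remains.
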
 

Let $D\subset D_{\log k}$ be an open set and let $W^s_{kF^*_k\phi}(D,F^*_k\Lambda^{0, q}T^*X)$,
$s\in\mathbb N_0$, denote the Sobolev space of order $s$ of sections of $F^*_k\Lambda^{0,q}T^*X$
over $D$ with respect to the weight $e^{-kF^*_k\phi}$. The Sobolev norm on this space is given by
\begin{equation} \label{scat-e17}
\norm{u}^2_{kF^*_k\phi,s,D}
=\sideset{}{'}\sum_{\substack{\alpha\in\mathbb{N}^{2n-1}_0,\;\abs{\alpha}\leqslant  s\;\\{\abs{J}=q}}}
\int_{D}\!\abs{\pr^\alpha_{x}u_J}^2e^{-kF^*_k\phi}(F^*_km)(z, \theta)dv(z)d\theta,
\end{equation}
where
$u=\sum'_{\abs{J}=q}u_Je_J\big(\frac{z}{\sqrt{k}},\frac{\theta}{k}\big)\in W^s_{kF^*_k\phi}
(D,F^*_k\Lambda^{0,q}T^*X)$ and $m$ is the volume form.
If $s=0$, we write $\norm{\cdot}_{kF^*_k\phi,D}$ to denote $\norm{\cdot}_{kF^*_k\phi,0,D}$.
The following is well-known (see Proposition 2.4 and Lemma 2.6 in~\cite{HM09})

\begin{prop} \label{scat-p2}
Assume that $Y(q)$ holds at each point of $X$. 
For every $r>0$ with $D_{2r}\subset D_{\log k}$ and $s\in\mathbb N_0$, there are constants $C_{r,s}>0$, 
$C_r>0$, $C_{r,s}$ and $C_r$ are
independent of $k$, such that 
\begin{equation} \label{scat-e18}
\norm{u}^2_{kF^*_k\phi,s+1,D_{r}}\leqslant  C_{r,s}\Bigr(\norm{u}^2_{kF^*_k\phi,D_{2r}}+\big\|\Box^{(q)}_{b,k\phi,(k)}u\big\|^2_{kF^*_k\phi,s,D_{2r}}\Bigl),\ 
u\in F^*_k\Omega^{0,q}(D_{\log k})
\end{equation} 
and
\begin{equation} \label{scat-e19}
\begin{split}
&\sup_{x\in D_r}\abs{u(x)}^2\\
&\quad\leqslant  C_r\Bigr(\norm{u}^2_{kF^*_k\phi,D_{2r}}+\sum^n_{m=1}\big\|(\Box^{(q)}_{b,k\phi,(k)})^mu\big\|^2_{kF^*_k\phi,D_{2r}}\Bigl),\ 
u\in F^*_k\Omega^{0,q}(D_{\log k})\,.
\end{split}
\end{equation} 
\end{prop}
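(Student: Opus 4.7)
The proposition is a semi-classical version of Kohn's subelliptic $1/2$-estimate under condition $Y(q)$, transferred through the scaling map $F_k$ with constants uniform in $k$. The plan is to follow the strategy of \cite{HM09} (Proposition~2.4 and Lemma~2.6 therein), feeding in the local description provided by Proposition~\ref{scat-p1}.

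For the base case $s=0$, write $\Box^{(q)}_{b,k\phi,(k)}=\Box_0+\varepsilon_k P_k$ as in Proposition~\ref{scat-p1}, where $\Box_0$ has polynomial coefficients (and is a fixed Heisenberg-type Kohn Laplacian attached to the local data $(\lambda_j,\beta,\mu_{j,t})$ at $p$), so all its derivatives on $D_{2r}$ are bounded independently of $k$, while $P_k$ has coefficients whose derivatives are uniformly bounded in $k$ on $D_{\log k}$. On $D_{2r}$ the weight $kF^*_k\phi(z,\theta)=k\phi(z/\sqrt{k},\theta/k)$ is uniformly $C^\infty$-bounded in $k$ (it converges by \eqref{scat-e1} to an explicit quadratic/linear expression in $z,\theta$), so weighted and unweighted $L^2$ and Sobolev norms on $D_{2r}$ are uniformly equivalent in $k$. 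Take $\chi\in C^\infty_0(D_{2r})$ equal to one on a neighborhood of $\ol{D_r}$ and apply Kohn's local $1/2$-subelliptic estimate to $\chi u$ for the model operator $\Box_0$. Expanding $\Box_0(\chi u)=\chi\Box_0 u+[\Box_0,\chi]u$, bounding the commutator (first-order) by $\norm{u}^2_{kF^*_k\phi,D_{2r}}$, and absorbing $\varepsilon_k P_k u$ into the left-hand side for $k$ large (since $\varepsilon_k\To0$), yields \eqref{scat-e18} with $s=0$.

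For $s\geq1$, use standard bootstrap: apply the $s=0$ estimate to $\pr^\alpha_xu$ with a nested chain of cutoffs $D_r\subset D_{r_1}\subset\cdots\subset D_{2r}$ and commute $\pr^\alpha_x$ past $\Box^{(q)}_{b,k\phi,(k)}$. The commutator is a second-order operator with coefficients of length $|\alpha|+1$ uniformly bounded in $k$, and induction on $|\alpha|$ closes the argument. For the sup-norm bound \eqref{scat-e19}, use the Sobolev embedding $W^n\hookrightarrow L^\infty$ in dimension $2n-1$ (valid since $2n>2n-1$), apply \eqref{scat-e18} iteratively with $s=0,1,\ldots,n-1$ on a nested chain of cutoffs, and note that each iteration trades one gained Sobolev order for one application of $\Box^{(q)}_{b,k\phi,(k)}$, producing the sum $\sum_{m=1}^n\norm{(\Box^{(q)}_{b,k\phi,(k)})^mu}^2_{kF^*_k\phi,D_{2r}}$ on the right.

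The main obstacle is uniformity of the constants in $k$. Kohn's subelliptic $1/2$-estimate is a microlocal statement near the characteristic variety of $\Box_b$, so one must verify the constants depend only on $C^\infty$-bounds of the coefficients on a fixed compact set. This holds here: $\Box_0$ from Proposition~\ref{scat-p1} is a $k$-independent Heisenberg-type Kohn Laplacian whose principal symbol satisfies $Y(q)$ globally (its Levi form is constant and equals $\mathcal{L}_p$), and the remainder $\varepsilon_k P_k$ is a genuine perturbation that is absorbable once $\varepsilon_k$ is sufficiently small. The polynomial growth of the coefficients of $\Box_0$ and the compactness of $\ol{D_{2r}}$ keep all relevant quantities finite uniformly in $k$ on each fixed scale $r$.
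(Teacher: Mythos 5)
Your overall architecture (uniform subelliptic estimates for the scaled Kohn Laplacian, bootstrap in $s$, then Sobolev embedding $W^{n}\hookrightarrow L^\infty$ in dimension $2n-1$ to get \eqref{scat-e19}) is the same as in the source the paper cites for this statement — the paper itself gives no proof, referring to Proposition 2.4 and Lemma 2.6 of \cite{HM09}. However, the mechanism you propose for the crucial uniformity in $k$ has a genuine flaw. You write $\Box^{(q)}_{b,k\phi,(k)}=\Box_0+\varepsilon_kP_k$ and want to ``absorb $\varepsilon_kP_ku$ into the left-hand side''. But $P_k$ is a \emph{second order} operator with (only) uniformly bounded coefficients, so $\varepsilon_k\norm{P_ku}_{kF^*_k\phi,s,D_{2r}}$ is of size $\varepsilon_k\norm{u}_{kF^*_k\phi,s+2,D_{2r}}$, which is one order \emph{higher} than the quantity $\norm{u}_{kF^*_k\phi,s+1,D_r}$ you are estimating; no smallness of $\varepsilon_k$ lets you absorb an $(s+2)$-norm into an $(s+1)$-norm, and iterating the model estimate to handle it produces ever higher norms and never closes. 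The same obstruction appears already at the level of the $1/2$-estimate for the quadratic form: comparing $\ddbar_{b,(k)},\ddbar^{\,*,k\phi}_{b,(k)}$ with the model operators via \eqref{scat-e15}--\eqref{scat-e16} produces error terms like $\epsilon_k^2\norm{Z_{j,k}v}^2\lesssim\epsilon_k^2\norm{v}_1^2$ (the $Z_{j,k}$ may involve $\pr_\theta$), which is not controlled by $Q(v,v)+\norm{v}^2$ under $Y(q)$. So a naive perturbation off a fixed Heisenberg model does not give the uniform constants.

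The correct route — and the one behind \cite{HM09} — is not perturbative: one observes that the scaled data themselves (the vector fields $\ol U_{j(k)}$, the frame $e_j(\tfrac{z}{\sqrt k},\tfrac{\theta}{k})$, the weight $kF^*_k\phi$ and the volume density $F^*_km$) have all derivatives bounded uniformly in $k$ on $\ol{D_{2r}}$, and their Levi form converges to $\sum_j\lambda_jdz_j\wedge d\ol z_j$, so condition $Y(q)$ holds for the scaled structure with constants uniform in $k$; Kohn's subelliptic estimate and its bootstrap to \eqref{scat-e18} are then applied \emph{to the scaled operator directly}, with constants depending only on these $C^\infty$ bounds and the $Y(q)$ data (for the finitely many small $k$ one simply enlarges the constant). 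Relatedly, your treatment of the cutoff is too quick: $[\Box_0,\chi]u$ is first order, so it is bounded by $\norm{u}_{kF^*_k\phi,1,D_{2r}}$, not by $\norm{u}_{kF^*_k\phi,D_{2r}}$; reducing to the $L^2$ norm on the right of \eqref{scat-e18} requires the standard nested-domain iteration and interpolation, which should be said rather than elided. With these two repairs your outline coincides with the cited argument; as written, the absorption step is the missing idea.
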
 

We pause and introduce some notations. We identify $\Real^{2n-1}$ with the Heisenberg group $H_n:=\Complex^{n-1}\times\Real$. We also write $(z, \theta)$ to denote the coordinates of $H_n$, $z=(z_1,\ldots,z_{n-1})\in\Complex^{n-1}$, $z_j=x_{2j-1}+ix_{2j}$, $j=1,\ldots,n-1$, and $\theta\in\Real$. Then
\[
\begin{split}
&\Big\{U_{j,H_n}=\frac{\pr}{\pr z_j}+i\lambda_j\ol z_j\frac{\pr}{\pr\theta}\,;\, j=1,\ldots,n-1\Big\},\,\\
&\Big\{U_{j,H_n}\,,\ \ol{U}_{j,H_n}\,,\ T=\frac{\pr}{\pr\theta}\, ;\,j=1,\ldots,n-1\Big\}
\end{split}
\]
are orthonormal bases for the bundles $T^{1, 0}H_n$ and $\Complex TH_n$ respectively. Then
\[
\Big\{dz_j\, ,\ d\ol z_j\, ,\ \omega_0=-d\theta+\sum^{n-1}_{j=1}(i\lambda_j\ol z_jdz_j-i\lambda_jz_jd\ol z_j)\,;j=1,\ldots,n-1\Big\}
\]
is the basis of $\Complex T^*H_n$ which is dual to $\{U_{j,H_n},\ol U_{j,H_n}, -T;j=1,\ldots,n-1\}$.
We take the Hermitian metric $\langle\,\cdot\,|\,\cdot\,\rangle$ on $\Lambda^{0,q}T^*H_n$ such that 
\[\{d\ol z_J;\,  \text{$\abs{J}=q$, $J$ strictly increasing}\}\] 
is an orthonormal basis of $\Lambda^{0,q}T^*H_n$. The Cauchy-Riemann operator $\ddbar_{b,H_n}$ on $H_n$ is given by
\begin{equation} \label{scat-e20}
\ddbar_{b,H_n}=\sum^{n-1}_{j=1}d\ol z_j\wedge\ol U_{j,H_n}:\Omega^{0,q}(H_n)\To\Omega^{0,q+1}(H_n).
\end{equation}
Put
$\phi_0(z, \theta)=\beta\theta+\sum^{n-1}_{j,t=1}\mu_{j,\,t}\ol z_jz_t\in C^\infty(H_n,\Real)$,
where $\beta$ and $\mu_{j,\,t}$, $j,t=1,\ldots,n-1$, are as in \eqref{scat-e1}. Note that
\begin{equation} \label{scat-e21}
\sup_{(z, \theta)\in D_{\log k}}\abs{kF^*_k\phi-\phi_0}\To0,\ \ \mbox{as $k\To\infty$}.
\end{equation}

Let $(\ |\ )_{\phi_0}$
be the inner product on $\Omega^{0,q}_0(H_n)$ defined as follows:
\[
(f\ |\ g)_{\phi_0}=\int_{H_n}\!\langle f|g\rangle e^{-\phi_0}dv(z)d\theta\,, \quad f, g\in\Omega^{0,q}_0(H_n)\,,
\]
where $dv(z)=2^{n-1}dx_1dx_2\cdots dx_{2n-2}$. Let $\ddbar^{\,*,\phi_0}_{b,H_n}:\Omega^{0,q+1}(H_n)\To\Omega^{0,q}(H_n)$
be the formal adjoint of $\ddbar_{b,H_n}$ with respect to $(\ |\ )_{\phi_0}$. We have
\begin{equation} \label{scat-e22}
\ddbar^{\,*,\phi_0}_{b,H_n}=\sum^{n-1}_{t=1}(d\ol z_t\wedge)^*\,\ol U_{t,H_n}^{\,*,\phi_0}:\Omega^{0,q+1}(H_n)\To \Omega^{0,q}(H_n),
\end{equation}
where
\begin{equation} \label{scat-e23}
\ol U_{t,H_n}^{\,*,\phi_0}=-U_{t,H_n}+U_{t,H_n}\phi_0=-U_{t,H_n}+\sum^{n-1}_{j=1}\mu_{j,\,t}\ol z_j+i\lambda_t\ol z_t\beta\,.
\end{equation}
The Kohn-Laplacian on $H_n$ is given by
\begin{equation}\label{scat-e24}
\Box^{(q)}_{b,H_n}=\ddbar_{b,H_n}\ddbar^{\,*,\phi_0}_{b,H_n}+\ddbar^{\,*,\phi_0}_{b,H_n}\ddbar_{b,H_n}:\Omega^{0,q}(H_n)\To\Omega^{0,q}(H_n)\,.
\end{equation}
From \eqref{scat-e20}, \eqref{scat-e22} and \eqref{scat-e23}, we can check that
\begin{equation} \label{scat-e25}
\begin{split}
&\Box^{(q)}_{b,H_n}\\
&=\sum^{n-1}_{j=1}\ol U^{\,*,\phi_0}_{j,H_n}\ol U_{j,H_n}
        +\sum^{n-1}_{j,\,t=1}d\ol z_j\wedge (d\ol z_t\wedge)^*\Bigr[\Bigr(\mu_{j,\,t}+i\lambda_j\delta_{j,\,t}\beta\Bigr)-2i\lambda_j\delta_{j,\,t}\,\frac{\pr}{\pr\theta}\Bigr]\\
        &=\sum^{n-1}_{j=1}\Bigr[\Bigr(-\frac{\pr}{\pr z_j}-i\lambda_j\ol z_j\frac{\pr}{\pr\theta}+i\lambda_j\ol z_j\beta+\sum^{n-1}_{t=1}\mu_{t,j}\ol z_t\Bigr)\Bigr(\frac{\pr}{\pr\ol z_j}-i\lambda_jz_j\frac{\pr}{\pr\theta}\Bigr)\Bigr]\\
        &\quad+\sum^{n-1}_{j,t=1}d\ol z_j\wedge (d\ol z_t\wedge)^*\Bigr[\Bigr(\mu_{j,\,t}+i\lambda_j\delta_{j,\,t}\beta\Bigr)-2i\lambda_j\delta_{j,\,t}\,\frac{\pr}{\pr\theta}\Bigr].
\end{split}
\end{equation} 

Now, we can prove 

\begin{prop} \label{scat-p3} 
Assume that $Y(q)$ holds at each point of $X$. 
For each $k$, let $\alpha_k\in F^*_k\Omega^{0,q}(D_{\log k})$. We assume that $\norm{\alpha_k}_{kF^*_k\phi,D_{\log k}}\leqslant  1$ for each $k$, and there is a sequence $\nu_k>0$, $\nu_k\To0$ as $k\To\infty$, such that for each $k$,
\[\norm{(\Box^{(q)}_{b,k\phi,(k)})^m\alpha_k}_{kF^*_k\phi,D_{\log k}}\leq\nu_k^m,\ \ \forall m\in\mathbb N.\]
Identify $\alpha_k$ with a form on $H_n$ by extending it with zero and write \[\alpha_k=\sideset{}{'}\sum_{\abs{J}=q}\alpha_{k,J}e_J(\frac{z}{\sqrt{k}},\frac{\theta}{k}).\] 
Then there is a subsequence $\set{\alpha_{k_j}}$ of $\set{\alpha_k}$ such that for each strictly increasing multiindex $J$, $\abs{J}=q$, $\alpha_{k_j,\,J}$ converges uniformly with all its derivatives on any compact subset of $H_n$ to a smooth function $\alpha_J$. Furthermore, if we put $\alpha=\sum'_{\abs{J}=q}\alpha_Jd\ol z_J$, then
$\Box^{(q)}_{b,H_n}\alpha=0$.
\end{prop}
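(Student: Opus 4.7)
The plan is to combine the semi-classical subelliptic estimates of Proposition~\ref{scat-p2} with the explicit asymptotic form of $\Box^{(q)}_{b,k\phi,(k)}$ from Proposition~\ref{scat-p1}. The argument naturally splits into three steps: uniform interior regularity of $\alpha_k$, a compactness-and-diagonal extraction producing $\alpha$, and a passage to the limit identifying $\Box^{(q)}_{b,H_n}\alpha=0$.

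First, I would establish uniform $C^\infty_{\mathrm{loc}}$ bounds for $\alpha_k$. Fix $r>0$ and $s\in\mathbb{N}_0$. By iterating the gain-of-one-derivative estimate \eqref{scat-e18} a total of $s+1$ times with successively enlarged radii $r<2r<\cdots<2^{s+1}r$, one obtains a constant $C'_{r,s}>0$ independent of $k$ such that
\[
\|\alpha_k\|^2_{kF^*_k\phi,\,s+1,\,D_r}\leq C'_{r,s}\sum_{m=0}^{s+1}\big\|(\Box^{(q)}_{b,k\phi,(k)})^m\alpha_k\big\|^2_{kF^*_k\phi,\,D_{2^{s+1}r}}.
\]
Once $k$ is large enough that $D_{2^{s+1}r}\subset D_{\log k}$, the hypotheses $\|\alpha_k\|_{kF^*_k\phi,D_{\log k}}\leq1$ and $\|(\Box^{(q)}_{b,k\phi,(k)})^m\alpha_k\|_{kF^*_k\phi,D_{\log k}}\leq\nu_k^m$ force the right-hand side to be bounded uniformly in $k$. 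By \eqref{scat-e21}, $kF^*_k\phi$ converges to $\phi_0$ uniformly with all derivatives on $D_{2^{s+1}r}$, so the weighted norm on the left is uniformly equivalent to the ordinary Sobolev norm of the components $\alpha_{k,J}$ on $D_r$.

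Second, by Rellich's theorem each component $(\alpha_{k,J})_k$ is precompact in every $W^{s,2}(D_r)$, so a standard diagonal extraction along an exhaustion $D_{r_1}\Subset D_{r_2}\Subset\cdots\Subset H_n$ yields a subsequence $\alpha_{k_j,J}$ converging in $C^\infty_{\mathrm{loc}}(H_n)$ to a smooth limit $\alpha_J$ for every strictly increasing $J$ with $\abs{J}=q$. Since the frame vectors satisfy $\ol U_j(0,0)=\partial/\partial\ol z_j$ by \eqref{scat-e1}, their duals obey $e_j(0,0)=d\ol z_j$, whence $e_J(z/\sqrt{k},\theta/k)\to d\ol z_J$ in $C^\infty_{\mathrm{loc}}(H_n)$. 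Therefore $\alpha_{k_j}\to\alpha:=\sideset{}{'}\sum_{\abs{J}=q}\alpha_J\,d\ol z_J$ in $C^\infty_{\mathrm{loc}}(H_n,\Lambda^{0,q}T^*H_n)$.

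Finally, I would pass to the limit in $\Box^{(q)}_{b,k\phi,(k)}\alpha_k$. Write $\Box^{(q)}_{b,k\phi,(k)}=L_k+\varepsilon_kP_k$ as in Proposition~\ref{scat-p1}; comparing with \eqref{scat-e25}, the limiting operator $L_\infty$ obtained by replacing $e_j(z/\sqrt{k},\theta/k)$ with $d\ol z_j$ coincides exactly with $\Box^{(q)}_{b,H_n}$. On any compact $K\subset H_n$ the coefficients of $L_k$ converge smoothly to those of $L_\infty$, so the $C^\infty_{\mathrm{loc}}$ convergence of $\alpha_{k_j}$ yields $L_{k_j}\alpha_{k_j}\to\Box^{(q)}_{b,H_n}\alpha$ uniformly on $K$; meanwhile $\varepsilon_{k_j}P_{k_j}\alpha_{k_j}\to0$ uniformly on $K$ because $\varepsilon_{k_j}\to0$ and $P_{k_j}\alpha_{k_j}$ is uniformly bounded in $C^0(K)$ by Step~1. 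The hypothesis $\|\Box^{(q)}_{b,k_j\phi,(k_j)}\alpha_{k_j}\|_{kF^*_{k_j}\phi,D_{\log k_j}}\leq\nu_{k_j}\to0$ forces the limit to vanish, giving $\Box^{(q)}_{b,H_n}\alpha=0$.

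The main technical nuisance is the careful bookkeeping in Step~1: tracking the nested domains $D_{2^mr}$ and verifying that the iterated subelliptic estimate absorbs the bounds $\nu_k^m$ uniformly in $k$. Once these uniform Sobolev bounds are in place on every compact set, the compactness argument and the identification of the limiting equation are essentially formal consequences of Proposition~\ref{scat-p1} together with the uniform convergence \eqref{scat-e21} of the weights.
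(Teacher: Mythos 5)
Your proposal is correct and follows essentially the same route as the paper: iterate the semiclassical subelliptic estimate of Proposition~\ref{scat-p2} to obtain uniform local Sobolev bounds, extract a $C^\infty_{\mathrm{loc}}$ limit via Rellich, Sobolev embedding and a diagonal argument, and identify the limiting equation through the explicit formula of Proposition~\ref{scat-p1}. The only (harmless) variation is the last step: the paper applies the same estimates to $\Box^{(q)}_{b,k\phi,(k)}\alpha_k$ to show it tends to zero in $C^\infty_{\mathrm{loc}}$ and then passes to the limit componentwise, whereas you combine the smooth convergence of the coefficients with the $L^2$ smallness hypothesis and the uniform positive lower bound of the weight $e^{-kF^*_k\phi}$ on compact sets to force the limit to vanish—both arguments are valid.
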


\begin{proof} 
From \eqref{scat-e18} and using induction, we get for any $r>0$ and for every $s\in\mathbb N_0$, there is a constant $C_{r,s}>0$ independent of $k$, such that
\begin{equation} \label{scat-e26}
\begin{split}
\norm{\alpha_k}^2_{kF^*_k\phi,s+1,D_r}
&\leqslant  C_{r,s}\Bigr(\norm{\alpha_k}^2_{kF^*_k\phi,D_{2r}}+\sum^{s+1}_{m=1}
\norm{(\Box^{(q)}_{b,k\phi,(k)})^m\alpha_k}^2_{kF^*_k\phi,D_{2r}}\,\Bigr)\\
&\leq C_{r,s}(1+\sum^{\infty}_{m=1}\nu_k^m)\leq \Td C_{r,s}
\end{split}
\end{equation} 
for $k$ large, where $\Td C_{r,s}>0$ is independent of $k$.
Fix a strictly increasing multiindex $J$, $\abs{J}=q$,
and $r>0$. Combining \eqref{scat-e26} with Rellich 's compactness theorem \cite[p.\,281]{Yo80}, we conclude that  there is a subsequence of $\set{\alpha_{k,J}}$, which converges in all Sobolev spaces $W^s(D_r)$ for $s>0$. From the Sobolev embedding theorem \cite[p.\,170]{Yo80}, we see that the sequence converges in all $C^l(D_r)$, $l\geqslant0$, $l\in\mathbb Z$, locally unformly. Choosing a diagonal sequence, with respect to a sequence of $D_r$ exhausting $H_n$, we get a subsequence $\set{\alpha_{k_j,J}}$ of $\set{\alpha_{k,J}}$ such that $\alpha_{k_j,J}$ converges uniformly with all derivatives on any compact subset of $H_n$ to a smooth function $\alpha_J$.

Let $J'$ be another strictly increasing multiindex, $\abs{J'}=q$. We can repeat the procedure above and get a subsequce $\set{\alpha_{k_{j_s},J'}}$ of $\set{\alpha_{k_j,J'}}$ such that $\alpha_{k_{j_s},J'}$ converges uniformly with all derivatives on any compact subset of $H_n$ to a smooth function $\alpha_{J'}$. Continuing in this way, we get the first statement of the proposition.

Now, we prove the second statement of the proposition. Let $P=(p_1,\ldots,p_q)$, $R=(r_1,\ldots,r_q)$ be multiindices, $\abs{P}=\abs{R}=q$. Define
\[\varepsilon^P_R=\left\{ \begin{array}{ll}
&0,\ \ \mbox{if $\set{p_1,\ldots,p_q}\neq\set{r_1,\ldots,r_q}$}, \\
&\mbox{the sign of permutation $\left(
\begin{array}[c]{c}
 P  \\
R
\end{array}\right)$
},\ \ \mbox{if $\set{p_1,\ldots,p_q}=\set{r_1,\ldots,r_q}$}.
\end{array}\right.\]
For $j, t=1,\ldots,n-1$, define
\[\sigma^{jtP}_{R}=\left\{ \begin{array}{ll}
&0,\ \ \mbox{if $d\ol z_j\wedge (d\ol z_t\wedge)^*(d\ol z^P)=0$}, \\
&\varepsilon^Q_R,\ \ \mbox{if $d\ol z_j\wedge (d\ol z_t\wedge)^*(d\ol z^P)=d\ol z^Q$, $\abs{Q}=q$}.
\end{array}\right.\]

We may assume that $\alpha_{k,J}$ converges uniformly with all derivatives on any compact subset of $H_n$ to a smooth function $\alpha_J$, for all strictly increasing $J$, $\abs{J}=q$. As \eqref{scat-e26}, we have for any 
$r>0$ and for every $s\in\mathbb N_0$, there is a constant $C_{r,s}>0$ independent of $k$, such that
\begin{equation} \label{scat-e27}
\begin{split}
&\norm{\Box^{(q)}_{k,k\phi,(k)}\alpha_k}^2_{kF^*_k\phi,s+1,D_r}\\
&\quad\leqslant  C_{r,s}\Bigr(\norm{\Box^{(q)}_{k,k\phi,(k)}\alpha_k}^2_{kF^*_k\phi,D_{2r}}+\sum^{s+1}_{m=1}
\norm{(\Box^{(q)}_{b,k\phi,(k)})^{m+1}\alpha_k}^2_{kF^*_k\phi,D_{2r}}\,\Bigr)\\
&\quad\leq C_{r,s}\sum^{\infty}_{m=1}\nu_k^m\To0\ \ \mbox{as $k\To\infty$}.
\end{split}
\end{equation} 
Put 
\[\beta_k:=\Box^{(q)}_{k,k\phi,(k)}\alpha_k=\sideset{}{'}\sum_{\abs{J}=q}\beta_{k,J}e_J(\frac{z}{\sqrt{k}},\frac{\theta}{k})\in F^*_k\Omega^{0,q}(D_{\log k}).\]
Combining \eqref{scat-e27} with Sobolev embedding theorem \cite[p.\,170]{Yo80}, we conclude that 
\begin{equation}\label{scat-e28}
\begin{split}
&\mbox{$\beta_{k,J}$ converges uniformly with all derivatives on any compact subset of $H_n$} \\
&\mbox{to zero, for all strictly increasing $J$, $\abs{J}=q$.}
\end{split}
\end{equation}

From the explicit formula of $\Box^{(q)}_{b,k\phi,(k)}$
(see Proposition~\ref{scat-p1}), it is not difficult to see that for all strictly increasing $J$, $\abs{J}=q$, we have
\begin{equation} \label{scat-e29}
\begin{split}
\sum^{n-1}_{j=1}\ol U^{\,*,\phi_0}_{j,H_n}\ol U_{j,H_n}\alpha_{k,J}=-\sideset{}{'}\sum_{\substack{\abs{P}=q,\\1\,\leqslant\, \, j\,,\,t\,\leqslant \, n-1}}
\sigma^{jtP}_{J}
\Bigr[\Bigr(\mu_{j,\,t}+i\lambda_j\delta_{j,\,t}\beta\Bigr)&-2i\lambda_j\delta_{j,\,t}\frac{\pr}{\pr\theta}\Bigr]\alpha_{k,P}\\
&+\epsilon_kP_{k,J}\alpha_k+\beta_{k,J}
\end{split}
\end{equation}
on $D_{\log k}$, where $\epsilon_k$ is a sequence tending to zero with $k\To\infty$ , $P_{k,J}$ is a second order differential operator and all the derivatives of the coefficients of $P_{k,J}$ are uniformly bounded in $k$ on $D_{\log k}$ and $\beta_{k,J}$ is as in \eqref{scat-e28}. By letting $k\To\infty$ in \eqref{scat-e29} we get
\begin{equation} \label{scat-e30}
\sum^{n-1}_{j=1}\ol U^{\,*,\phi_0}_{j,H_n}\ol U_{j,H_n}\alpha_{J}=-\sideset{}{'}\sum_{\substack{\abs{P}=q,\\1\,\leqslant\, \, j\,,\,t\,\leqslant \, n-1}}
\sigma^{jtP}_{J}
\Bigr[\Bigr(\mu_{j,\,t}+i\lambda_j\delta_{j,\,t}\beta\Bigr)-2i\lambda_j\delta_{j,\,t}\frac{\pr}{\pr\theta}\Bigr]\alpha_{P}
\end{equation}
on $H_n$, for all strictly increasing $J$, $\abs{J}=q$. From this and the explicit formula of $\Box^{(q)}_{b,H_n}$ (see \eqref{scat-e25}),
we conclude that $\Box^{(q)}_{b,H_n}\alpha=0$. The proposition follows.
\end{proof}

\section{The operators $Q^{(0)}_{M,k}$ and $Q^{(1)}_{M,k}$} 

From now on, we assume that $h^L$ is semi-positive on $X$ and positive at some point of $X$ and conditions $Y(0)$ and $Y(1)$ hold at each point of $X$.

Take $\delta_0>0$ be a small constant so that 
\begin{equation}\label{sp3-eI}
M^\phi_x+2s\mathcal{L}_x\geq0,\ \ \forall \abs{s}\leq\delta_0,\ \ \forall x\in X.
\end{equation} 
Take $\psi(\eta)\in C^\infty_0(]-\delta_0,\delta_0[,\ol\Real_+)$ so that $\psi(\eta)=1$ if $-\frac{\delta_0}{2}\leq \eta\leq\frac{\delta_0}{2}$. Let $\hat\psi(t)=\int e^{-it\eta}\psi(\eta)d\eta$ be the Fourier transform of $\psi$. 
Put 
\begin{equation}\label{sp3-eI0}
C_0:=\sup_{t\in\Real}t^2\abs{\hat\psi(t)}.
\end{equation}
Let $E>0$ be a small constant so that
\begin{equation}\label{change}
\begin{split}
&\sqrt{\int_XE^2dv_X(x)}\sqrt{2(2\pi)^{-n}\int_X\Bigr(\int\det(M^\phi_x+2\xi\mathcal{L}_x)\mathds{1}_{\Real_{x,0}}(\xi)d\xi\Bigr)dv_X(x)}\\
&\leq\frac{(2\pi)^{1-n}}{4}
\int_X\Bigr(\int\psi(\xi)\det(M^\phi_x+2\xi\mathcal{L}_x)\mathds{1}_{\Real_{x,0}}(\xi)d\xi\Bigr)dv_X(x).
\end{split}
\end{equation}
Fix $M>0$ be a large constant so that 
\begin{equation}\label{sp3-eI1}
\begin{split}
&\frac{2C_0}{M}(2\pi)^{-n}\int_X\Bigr(\int\det(M^\phi_x+2\xi\mathcal{L}_x)\mathds{1}_{\Real_{x,0}}(\xi)d\xi\Bigr)dv_X(x)\\
&<\frac{(2\pi)^{1-n}}{2}
\int_X\Bigr(\int\psi(\xi)\det(M^\phi_x+2\xi\mathcal{L}_x)\mathds{1}_{\Real_{x,0}}(\xi)d\xi\Bigr)dv_X(x)
\end{split}
\end{equation}
and
\begin{equation}\label{sp3-eI2}
\frac{2C_0}{M}\Bigr((2\pi)^{-n}\int\abs{\det(M^\phi_x+2\xi\mathcal{L}_x)}\mathds{1}_{\Real_{x,1}}(\xi)d\xi\Bigr)^{\frac{1}{2}}<\frac{E}{\sqrt{n-1}},\ \ \forall x\in X,
\end{equation}
where $\mathds{1}_{\Real_{x,1}}(\xi)=1$ if $\xi\in\Real_{x,1}$, $\mathds{1}_{\Real_{x,1}}(\xi)=0$ if $\xi\notin\Real_{x,1}$.
Take $\chi(t)\in C^\infty_0(]-2,2[,\ol\Real_+)$ so that $0\leq\chi(t)\leq1$ and $\chi(t)=1$ if $-1\leq t\leq 1$ and 
$\chi(-t)=\chi(t)$, for all $t\in\Real$. Put 
\begin{equation}\label{sp3-eM0}
\chi_M(t):=\chi(\frac{t}{M}).
\end{equation} 
As before, let $\Phi^t(x)$ be the $T$-flow. 
The operator $Q^{(0)}_{M,k}$ is a continuous operator $C^\infty(X,L^k)\To C^\infty(X,L^k)$ defined as follows. Let $u\in C^\infty(X,L^k)$. Let $D\Subset D'\Subset X$ be open sets of $X$ and let $s$ be a local section of $L$ on $D'$, $\abs{s}^2_{h^L}=e^{-\phi}$. On $D'$, we write 
$u=s^k\Td u$, $\Td u\in C^\infty(D')$. Then, 
\begin{equation}\label{sp3-eII}
(Q^{(0)}_{M,k}u)(x):=s^ke^{\frac{k}{2}\phi(x)}\int e^{-it\eta}\psi(\eta)\chi_M(t)e^{-\frac{k}{2}\phi(\Phi^{\frac{t}{k}}(x))}\Td u(\Phi^{\frac{t}{k}}(x))dtd\eta\ \ \mbox{on $D$}.
\end{equation}

We first notice that for $k$ large, $\Phi^{\frac{t}{k}}(x)$ is well-defined for all $t\in{\rm Supp\,}\chi_M$, every $x\in X$ and $\Phi^{\frac{t}{k}}(x)\in D'$ for all $t\in{\rm Supp\,}\chi_M$, every $x\in D$. We may assume that $\Phi^{\frac{t}{k}}(x)$ is well-defined for all $t\in{\rm Supp\,}\chi_M$, every $x\in X$ and 
$\Phi^{\frac{t}{k}}(x)\in D'$ for all $t\in{\rm Supp\,}\chi_M$, every $x\in D$. 
Now we check that the definition above is independent of the choice of local sections. Let $\hat s$ be another local section 
of $L$ on $D'$, $\abs{\hat s}^2_{h^L}=e^{-\hat\phi}$. Then, we have $\hat s=gs$ for some non-zero rigid CR function $g$.
We can check that 
\begin{equation}\label{spe-eM1}
\begin{split}
&\hat\phi=\phi-2\log\abs{g},\\
&e^{-\frac{k}{2}\hat\phi}=e^{-\frac{k}{2}\phi}\abs{g}^k.
\end{split}
\end{equation}
Let $u\in C^\infty(X,L^k)$. On $D$, we write $u=s^k\Td u=\hat s^k\hat u$. We have 
\begin{equation}\label{spe-eM-1}
\hat u=g^{-k}\Td u.
\end{equation}
From \eqref{spe-eM1} and \eqref{spe-eM-1}, we can check that
\begin{equation}\label{spe-eM2}
e^{-\frac{k}{2}\hat\phi}\hat u=e^{-\frac{k}{2}\phi}\abs{g}^kg^{-k}\Td u.
\end{equation}
Since $Tg=0$, we have $(\abs{g}^kg^{-k})(\Phi^{\frac{t}{k}}x)=(\abs{g}^kg^{-k})(x)$
for all $t\in{\rm Supp\,}\chi_M$, $x\in D$. From this observation and \eqref{spe-eM2}, it is easy to see that 
\begin{equation}\label{spe-eM3}\begin{split}
&\int e^{-it\eta}\psi(\eta)\chi_M(t)e^{-\frac{k}{2}\hat\phi(\Phi^{\frac{t}{k}}(x))}\hat u(\Phi^{\frac{t}{k}}(x))dtd\eta\\
&\quad=(\abs{g}^kg^{-k})(x)\int e^{-it\eta}\psi(\eta)\chi_M(t)e^{-\frac{k}{2}\phi(\Phi^{\frac{t}{k}}(x))}\Td u(\Phi^{\frac{t}{k}}(x))dtd\eta\ \ \mbox{on $D$}.\end{split}\end{equation} 
Furthermore, we can check that
\[\hat s^ke^{\frac{k}{2}\hat\phi}=\abs{g}^{-k}g^k s^ke^{\frac{k}{2}\phi}.\] 
Combining this with \eqref{spe-eM3}, we obtain 
\[\begin{split}
&\hat s^ke^{\frac{k}{2}\hat\phi(x)}\int e^{-it\eta}\psi(\eta)\chi_M(t)e^{-\frac{k}{2}\hat\phi(\Phi^{\frac{t}{k}}(x))}\hat u(\Phi^{\frac{t}{k}}(x))dtd\eta\\
&\quad=s^ke^{\frac{k}{2}\phi(x)}\int e^{-it\eta}\psi(\eta)\chi_M(t)e^{-\frac{k}{2}\phi(\Phi^{\frac{t}{k}}(x))}\Td u(\Phi^{\frac{t}{k}}(x))dtd\eta\ \ \mbox{on $D$}.\end{split}\]
Thus, the definition of $Q^{(0)}_{M,k}$ is well-defined. 

We consider $(0,1)$ forms. The operator $Q^{(1)}_{M,k}$ is a continuous operator 
\[Q^{(1)}_{M,k}:\Omega^{0,1}(X,L^k)\To\Omega^{0,1}(X,L^k)\]
defined as follows. Let $D$ be an open set of $X$. We assume 
that there exist canonical coordinates $x$ defined in some neighborhood $W$ of $\ol D$ and $L$ is trivial on $W$. Let $\psi(\eta)$ and $\chi_M$ be as in \eqref{sp3-eII}. For $k$ large, we have 
\[\set{\Phi^{\frac{t}{k}}(x)\in W;\, \forall x\in D,t\in{\rm Supp\,}\chi_M}.\]
Let $s$ be a local section of $L$ on $W$, $\abs{s}^2_{h^L}=e^{-\phi}$.
Let $x=(x_1,\ldots,x_{2n-1})=(z,\theta)$ be canonical coordinates on $W$. Then, 
\begin{equation}\label{sp3-eIV}\begin{split}
&T=\frac{\pr}{\pr\theta},\\
&Z_j=\frac{\pr}{\pr z_j}+i\frac{\pr\varphi}{\pr z_j}(z)\frac{\pr}{\pr\theta},\ \ j=1,\ldots,n-1,
\end{split}
\end{equation}
where $Z_j(x)$, $j=1,\ldots,n-1$, form a basis of $T^{1,0}_xX$, for each $x\in D$, and $\varphi(z)\in C^\infty(D,\Real)$
independent of $\theta$. We can check that $d\ol z_j$, $j=1,\ldots,n-1$, is 
the basis of $T^{*0,1}X$, dual to $\ol Z_j$, $j=1,\ldots,n-1$. Let $u\in\Omega^{0,1}(X,L^k)$. On $W$, we write 
\[u=s^k\sum^{n-1}_{j=1}\Td u_j(x)d\ol z_j,\ \ \Td u_j\in C^\infty(D),\ j=1,\ldots,n-1.\]
 Then, 
\begin{equation}\label{sp3-eV}
\begin{split}
&(Q^{(1)}_{M,k}u)(x)\\
&:=s^ke^{\frac{k}{2}\phi(x)}\sum^{n-1}_{j=1}(\int e^{-it\eta}\psi(\eta)\chi_M(t)e^{-\frac{k}{2}\phi(\Phi^{\frac{t}{k}}(x))}\Td u_j(\Phi^{\frac{t}{k}}(x))dtd\eta)d\ol z_j\ \ \mbox{on $D$}.
\end{split}
\end{equation}
As before, we can show that the definition \eqref{sp3-eV} is independent of the choices of local sections. 
Now we check that the definition \eqref{sp3-eV} is independent of the choice of canonical coordinates. Let 
$y=(y_1,\ldots,y_{2n-1})=(w,\gamma)$, $w_j=y_{2j-1}+iy_{2j}$, $j=1,\ldots,n-1$, $\gamma=y_{2n-1}$, be another canonicl coordinates on $W$. Then, 
\begin{equation}\label{sp3-eVI}\begin{split}
&T=\frac{\pr}{\pr\gamma},\\
&\Td Z_j=\frac{\pr}{\pr w_j}+i\frac{\pr\Td\varphi}{\pr w_j}(w)\frac{\pr}{\pr\gamma},\ \ j=1,\ldots,n-1,
\end{split}
\end{equation}
where $\Td Z_j(y)$, $j=1,\ldots,n-1$, form a basis of $T^{1,0}_yX$, for each $y\in D$, and $\Td\varphi(w)\in C^\infty(D,\Real)$ independent of $\gamma$. From \eqref{sp3-eVI} and \eqref{sp3-eIV}, it is not difficult to see that on $W$, we have
\begin{equation}\label{sp3-eVII}
\begin{split}
&w=(w_1,\ldots,w_{n-1})=(H_1(z),\ldots,H_{n-1}(z))=H(z),\ \ H_j(z)\in C^\infty,\ \ \forall j,\\
&\gamma=\theta+G(z),\ \ G(z)\in C^\infty,
\end{split}
\end{equation}
where for each $j=1,\ldots,n-1$, $H_j(z)$ is holomorphic. From \eqref{sp3-eVII}, we can check that 
\begin{equation}\label{spca}
d\ol w_j=\sum^{n-1}_{l=1}\ol{(\frac{\pr H_j}{\pr z_l})}d\ol z_l,\ \ j=1,\ldots,n-1.
\end{equation}
From this observation, we have for $u\in\Omega^{0,1}(X,L^k)$,
\begin{equation}\label{sp3-eVIII}
\begin{split}
&u=s^k\sum^{n-1}_{j=1}\Td u_j(x)d\ol z_j=s^k\sum^{n-1}_{j=1}\hat u_j(y)d\ol w_j\ \ \mbox{on $W$},\\
&\Td u_l(x)=\sum^{n-1}_{j=1}\hat u_j(H(z),\theta+G(z))\ol{\frac{\pr H_j}{\pr z_l}(z)},\ \ l=1,\ldots,n-1.
\end{split}
\end{equation}
On $D$, we have $\Phi^{\frac{t}{k}}(x)=(z,\frac{t}{k}+\theta)$, $\Phi^{\frac{t}{k}}(y)=(w,\frac{t}{k}+\gamma)$ and $\frac{\pr H_j}{\pr z_l}(\Phi^{\frac{t}{k}}(z))=\frac{\pr H_j}{\pr z_l}(z)$, $j,l=1,\ldots,n-1$, $t\in{\rm Supp\,}\chi_M$.
From this observation and \eqref{sp3-eVIII}, \eqref{spca}, it is straightforward to see that 
\[\begin{split}
&s^ke^{\frac{k}{2}\phi(x)}\sum^{n-1}_{l=1}(\int e^{-it\eta}\psi(\eta)\chi_M(t)e^{-\frac{k}{2}\phi(\Phi^{\frac{t}{k}}(x))}\Td u_l(\Phi^{\frac{t}{k}}(x))dtd\eta)d\ol z_l\\
&=s^ke^{\frac{k}{2}\phi(y)}\sum^{n-1}_{j=1}(\int e^{-it\eta}\psi(\eta)\chi_M(t)e^{-\frac{k}{2}\phi(\Phi^{\frac{t}{k}}(y))}\hat u_j(\Phi^{\frac{t}{k}}(y))dtd\eta)d\ol w_j.\end{split}\]
Thus, the definition \eqref{sp3-eV} is independent of the choice of canonical coordinates. The operator $Q^{(1)}_{M,k}$ is well-defined. 

Now, we claim that 
\begin{equation}\label{sp3-eX} 
Q^{(1)}_{M,k}\ddbar_{b,k}u=\ddbar_{b,k}Q^{(0)}_{M,k}u,\ \ \forall u\in C^\infty(X,L^k).
\end{equation}
We work with canonical coordinates $x=(z,\theta)$ as \eqref{sp3-eIV}.
For $u\in C^\infty(X,L^k)$, we can check that 
\begin{equation} \label{sp3-eIX}
\ddbar_{b,k}u=s^k\sum^n_{j=1}(\ol Z_j\Td u)d\ol z_j=s^k\sum^{n-1}_{j=1}(\frac{\pr\Td u}{\pr\ol z_j}-i\frac{\pr\varphi}{\pr \ol z_j}(z)\frac{\pr\Td u}{\pr\theta})d\ol z_j
\end{equation}
on $W$, where $u=s^k\Td u$ on $W$. Combining \eqref{sp3-eIX} with \eqref{sp3-eV}, \eqref{sp3-eII} and notice that $\frac{\pr\Td u}{\pr\theta}(\Phi^t(x))=\frac{\pr}{\pr\theta}\bigr(\Td u(\Phi^t(x))\bigr)$, it is easy to see that 
\begin{equation}\label{sp3-eXI}
\begin{split}
&Q^{(1)}_{M,k}\ddbar_{b,k}u-\ddbar_{b,k}Q^{(0)}_{M,k}u\\
&=-\frac{k}{2}s^ke^{\frac{k}{2}\phi(x)}\times\\
&\quad\sum^{n-1}_{j=1}(\int e^{-it\eta}\psi(\eta)\chi_M(t)e^{-\frac{k}{2}\phi(\Phi^{\frac{t}{k}}(x))}\ol Z_j\bigr(\phi(x)-\phi(\Phi^{\frac{t}{k}}(x))\bigr)\Td u(\Phi^{\frac{t}{k}}(x))dtd\eta)d\ol z_j.
\end{split}
\end{equation}
Since $\ddbar_bT\phi=0$, we have $\ol Z_j\phi(x)=\ol Z_j\phi(\Phi^{\frac{t}{k}}(x))$, $j=1,\ldots,n-1$, $t\in{\rm Supp\,}\chi_M$. From this and \eqref{sp3-eXI}, \eqref{sp3-eX} follows. 

\section{The asymptotic behaviour of $(Q^{(0)}_{M,k}\pit^{(0)}_{k,\leq k\nu_k})(x)$} 

We will use the same notations as before. We recall that we work with the assumption that $Y(0)$ and $Y(1)$ hold at each point of $X$. We first need

\begin{thm} \label{s4-t1} 
For any sequence $\nu_k>0$ with $\nu_k\To0$ as $k\To\infty$, there is a constant $C>0$ independent of $k$,
such that
\begin{equation} \label{s4-e0}
\abs{(Q^{(0)}_{M,k}\pit^{(0)}_{k,\,\leqslant  k\nu_k}\ol{Q^{(0)}_{M,k}})(x)}\leqslant  Ck^n
\end{equation} 
and
\begin{equation}\label{s4-e1}
\abs{(Q^{(0)}_{M,k}\pit^{(0)}_{k,\,\leqslant  k\nu_k})(x)}\leqslant Ck^n,
\end{equation}
for all $x\in X$, $k>0$. Recall that $(Q^{(0)}_{M,k}\pit^{(0)}_{k,\leq k\nu_k})(x)$ and $(Q^{(0)}_{M,k}\pit^{(0)}_{k,\leq k\nu_k}\ol{Q^{(0)}_{M,k}})(x)$ are given by \eqref{e-seII} and \eqref{e-seIII} respectively.
\end{thm}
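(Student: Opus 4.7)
The plan is to reduce both \eqref{s4-e0} and \eqref{s4-e1} to the single pointwise estimate
\[
|(Q^{(0)}_{M,k}\alpha)(x_0)|^2_{h^{L^k}}\leq Ck^n,\qquad\alpha\in\cH^0_{b,\leq k\nu_k}(X,L^k),\ \|\alpha\|_{h^{L^k}}=1,
\]
uniform in $x_0\in X$, and then to prove this by the scaling method of Section~4. Lemma~\ref{s2-l1} (applied with $A=Q^{(0)}_{M,k}$, $q=0$) gives $(Q^{(0)}_{M,k}\pit^{(0)}_{k,\leq k\nu_k}\ol{Q^{(0)}_{M,k}})(x_0)=\sup_{\|\alpha\|_{h^{L^k}}=1}|(Q^{(0)}_{M,k}\alpha)(x_0)|^2_{h^{L^k}}$, so the pointwise estimate yields \eqref{s4-e0}. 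For \eqref{s4-e1}, choosing an orthonormal basis $\{f_j\}$ of $\cH^0_{b,\leq k\nu_k}(X,L^k)$ and applying the pointwise Cauchy--Schwarz inequality followed by Cauchy--Schwarz on $\ell^2$ gives
\[
|(Q^{(0)}_{M,k}\pit^{(0)}_{k,\leq k\nu_k})(x)|\leq\sum_j|Q^{(0)}_{M,k}f_j(x)|_{h^{L^k}}|f_j(x)|_{h^{L^k}}\leq\bigl((Q^{(0)}_{M,k}\pit^{(0)}_{k,\leq k\nu_k}\ol{Q^{(0)}_{M,k}})(x)\bigr)^{1/2}\bigl(\pit^{(0)}_{k,\leq k\nu_k}(x)\bigr)^{1/2},
\]
so \eqref{s4-e1} follows from \eqref{s4-e0} combined with the bound $\pit^{(0)}_{k,\leq k\nu_k}(x)\leq C_0k^n$ of Theorem~\ref{s2-t1}.

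To prove the pointwise bound I would fix $x_0\in X$ and, using Proposition~\ref{can-p0}, choose canonical coordinates $(z,\theta)$ with $x_0=0$ and a local section $s$, $\abs{s}^2_{h^L}=e^{-\phi}$, satisfying the normalization \eqref{scat-e1}. Writing $\alpha|_D=s^k\Td\alpha$ and noting that $\Phi^{t/k}(x_0)=(0,t/k)$ in these coordinates, the defining formula \eqref{sp3-eII} collapses at $x_0$ to the one-dimensional integral
\[
|(Q^{(0)}_{M,k}\alpha)(x_0)|_{h^{L^k}}=\Bigl|\int\hat\psi(t)\chi_M(t)e^{-k\phi(0,t/k)/2}\Td\alpha(0,t/k)\,dt\Bigr|,
\]
where $\hat\psi(t)=\int e^{-it\eta}\psi(\eta)d\eta$. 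Set $u_k:=k^{-n/2}F_k^*\Td\alpha\in F^*_k\Omega^{0,0}(D_{\log k})$. The change of variables $(w,\gamma)=(z/\sqrt{k},\theta/k)$ yields $\|u_k\|_{kF^*_k\phi,D_{\log k}}\leq 1$; since $\alpha\in\cH^0_{b,\leq k\nu_k}$ the spectral theorem gives $\|(\Box^{(0)}_{b,k})^m\alpha\|_{h^{L^k}}\leq(k\nu_k)^m$, and combined with the scaling identity \eqref{scat-e14} this produces $\|(\Box^{(0)}_{b,k\phi,(k)})^mu_k\|_{kF^*_k\phi,D_{\log k}}\leq\nu_k^m$ for every $m\in\N$. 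Fixing $r_0>2M$ and applying the semi-classical sup-norm estimate \eqref{scat-e19} of Proposition~\ref{scat-p2} (which uses $Y(0)$) to $u_k$ on $D_{r_0}$ gives an absolute bound $\sup_{x\in D_{r_0}}|u_k(x)|^2\leq\Td C$ independent of $k$ and of $\alpha$. Unscaling yields $|\Td\alpha(0,t/k)|^2\leq\Td Ck^n$ for $t\in\mathrm{Supp}\,\chi_M\subset[-2M,2M]$; since $\phi(0,\theta)=\beta\theta+O(\theta^2)$, the weight $e^{-k\phi(0,t/k)/2}$ remains bounded in $k$ on $\mathrm{Supp}\,\chi_M$, and inserting these two bounds into the one-dimensional integral produces $|(Q^{(0)}_{M,k}\alpha)(x_0)|^2_{h^{L^k}}\leq Ck^n$ with $C$ depending only on $\psi$ and $M$.

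The main obstacle I anticipate is uniformity of $C$ in $x_0\in X$: the normalized coordinates from Proposition~\ref{can-p0}, the cutoff radius $r_0$ on which $D_{r_0}$ embeds into the coordinate patch, and the Sobolev constants from Proposition~\ref{scat-p2} all a priori depend on the reference point $x_0$. Compactness of $X$ resolves this in the standard way: cover $X$ by finitely many normalized charts, and note that Proposition~\ref{scat-p1} shows all lower-order coefficients of $\Box^{(0)}_{b,k\phi,(k)}$ are uniformly controlled in $k$ and in the base point across each chart, so the constants $C_{r,s}$ and $C_r$ of Proposition~\ref{scat-p2} can be taken independent of $x_0$. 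Small values of $k$ are absorbed into $C$ by enlarging the constant, completing the proof.
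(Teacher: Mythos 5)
Your outer structure matches the paper exactly: the extremal characterization from Lemma~\ref{s2-l1} reduces \eqref{s4-e0} to a pointwise bound on $\abs{(Q^{(0)}_{M,k}\alpha)(x)}^2_{h^{L^k}}$ for unit-norm $\alpha\in\cH^0_{b,\leq k\nu_k}(X,L^k)$, and \eqref{s4-e1} then follows by Cauchy--Schwarz together with $\pit^{(0)}_{k,\leq k\nu_k}(x)\leq C_0k^n$ from Theorem~\ref{s2-t1}; both of these steps are literally the paper's. Where you diverge is the key pointwise bound: you re-run the scaling machinery (rescale $\alpha$, use the spectral bound $\|(\Box^{(0)}_{b,k})^m\alpha\|\leq(k\nu_k)^m$, apply the sup-norm estimate \eqref{scat-e19}, unscale), whereas the paper observes that no scaling is needed at all: for unit-norm $\alpha$ one has $\abs{\alpha(y)}^2_{h^{L^k}}\leq\pit^{(0)}_{k,\leq k\nu_k}(y)\leq Ck^n$ for \emph{every} $y\in X$ by \eqref{s2-e6}, and since the integrand in \eqref{sp3-eII} is exactly $e^{-\frac{k}{2}\phi(\Phi^{t/k}(x))}\Td\alpha(\Phi^{t/k}(x))$, i.e.\ $\abs{\alpha}_{h^{L^k}}$ evaluated at the flowed points up to the bounded factors $\psi(\eta)\chi_M(t)$, the bound $\abs{(Q^{(0)}_{M,k}\alpha)(x)}^2_{h^{L^k}}\leq C_1k^n$ is immediate and automatically uniform in $x$. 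Your route is correct in substance, but it essentially re-proves the uniform Szeg\H{o} bound \eqref{s2-e6} (whose proof in \cite{HM09} is precisely this scaling argument), and it forces you to address uniformity of the constants in Proposition~\ref{scat-p2} in the center point $x_0$; your resolution by compactness is the right idea, though the phrase ``finitely many normalized charts'' is slightly off since the normalization \eqref{scat-e1} is centered at each individual point --- what one really uses is that the constants depend only on uniform bounds for the structure data ($\lambda_j$, $\mu_{j,t}$, $\beta$ and finitely many derivatives of the coefficients), which compactness supplies. The paper's shortcut avoids this entire discussion, which is its main advantage; your version has the minor virtue of being self-contained at the point $x_0$ and of making explicit that only the values of $\alpha$ along the $T$-flow through $x_0$ enter.
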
 

\begin{proof}
Let $\nu_k>0$ be any sequence with $\nu_k\To0$ as $k\To\infty$. 
Let $f_j\in C^\infty(X,L^k)$, $j=1,\ldots,d_k$, be an orthonormal frame for $\cH^0_{b,\leq k\nu_k}(X,L^k)$.  
From \eqref{s2-e10}, we see that for each $x\in X$, 
\begin{equation}\label{s4-e5}
\begin{split}
(Q^{(0)}_{M,k}\pit^{(0)}_{k,\leq k\nu_k}\ol{Q^{(0)}_{M,k}})(x)&=\sum^{d_k}_{j=1}\abs{(Q^{(0)}_{M,k}f_j)(x)}^2_{h^{L^k}}\\
&=\sup_{\alpha\in\cH^0_{b,\leq k\nu_k}(X,L^k),\norm{\alpha}_{h^{L^k}}=1}\abs{(Q^{(0)}_{M,k}\alpha)(x)}^2_{h^{L^k}}. 
\end{split}
\end{equation} 
In view of \eqref{s2-e6}, we see that there is a constant $C>0$ independent of $k$ such that
\begin{equation}\label{s4-e4}
\pit^{(0)}_{k,\leq k\nu_k}(x)=\sum^{d_k}_{j=1}\abs{f_j(x)}^2_{h^{L^k}}\leq Ck^n,\ \ \forall x\in X.
\end{equation} 
For $\alpha\in\cH^0_{b,\leq k\nu_k}(X,L^k)$, $\norm{\alpha}_{h^{L^k}}=1$, we have 
\begin{equation}\label{s4-e6}
\abs{\alpha(x)}^2_{h^{L^k}}\leq\pit^{(0)}_{k,\leq k\nu_k}(x)\leq Ck^n,\ \ \forall x\in X,
\end{equation}
where $C>0$ is a constant independent of $k$ and $\alpha$. From \eqref{s4-e6} and \eqref{sp3-eII}, it is easy to see that 
there is a constant $C_1>0$ independent of $k$ such that 
\begin{equation}\label{s4-e7}
\abs{(Q^{(0)}_{M,k}\alpha)(x)}^2_{h^{L^k}}\leq C_1k^n,\ \ \forall x\in X,\ \ \forall \alpha\in\cH^0_{b,\leq k\nu_k}(X,L^k), 
\norm{\alpha}_{h^{L^k}}=1.
\end{equation} 
From \eqref{s4-e7} and \eqref{s4-e5}, \eqref{s4-e0} follows.

We have 
\begin{equation}\label{s4-e3}
\begin{split}
\abs{(Q^{(0)}_{M,k}\pit^{(0)}_{k,\leq k\nu_k})(x)}&=\abs{\sum^{d_k}_{j=1}\langle(Q^{(0)}_{M,k}f_j)(x)|f_j(x)\rangle_{h^{L^k}}}\\
&\quad\leq \Bigr(\sum^{d_k}_{j=1}\abs{(Q^{(0)}_{M,k}f_j)(x)}^2_{h^{L^k}}\Bigr)^{\frac{1}{2}}
\Bigr(\sum^{d_k}_{j=1}\abs{f_j(x)}^2_{h^{L^k}}\Bigr)^{\frac{1}{2}}.
\end{split}
\end{equation} 
From \eqref{s4-e3}, \eqref{s4-e4}, \eqref{s4-e5} and \eqref{s4-e0}, \eqref{s4-e1} follows.
\end{proof}

Fix a point $p\in X$. 
Let $x=(x_1,\ldots,x_{2n-1})=(z,\theta)$ be canonical coordinates of $X$ defined in 
some small neighborhood $D$ of $p$ and let $s$ be a local section of $L$ on $D$, $\abs{s}^2_{h^L}=e^{-\phi}$. 
We take $x$ and $s$ so that \eqref{scat-e1} hold.
Until further notice, we work with the local coordinates $x$ and the local section $s$ and we will use the same notations as section 4. We identify $D$ with some open set in $\Complex^{n-1}\times\Real$. Put
\begin{equation} \label{s5-epf2}
\begin{split}
u(z,\theta)&=(2\pi)^{-\frac{n}{2}}\bigr(\int_{\Real_{p,0}}\det(M^\phi_p+2s\mathcal{L}_p)ds\bigr)^{-\frac{1}{2}}\\
&\quad\times
\int e^{i\theta\xi+\frac{\beta\theta}{2}+(-\xi+\frac{i}{2}\beta)\sum^{n-1}_{j=1}\lambda_j\abs{z_j}^2}\det(M^\phi_p+2\xi\mathcal{L}_p)\mathds{1}_{\Real_{p,0}}(\xi)d\xi.
\end{split}
\end{equation} 
$u(z,\theta)\in C^\infty(\Complex^{n-1}\times\Real)$. 
We remind that $\Real_{p,0}$ is given by \eqref{e-seaI}. Set 
\begin{equation}\label{s5-epf3}
\alpha_k=k^{\frac{n}{2}}s^k\chi_1(\frac{\sqrt{k}}{\log k}z,\frac{\sqrt{k}}{\log k}\theta)u(\sqrt{k}z,k\theta)\in C^\infty_0(D,L^k),
\end{equation}
where $\chi_1\in C^\infty$, $0\leq\chi_1\leq1$, 
\[{\rm Supp\,}\chi_1\subset\set{(z,\theta)\in\Complex^{n-1}\times\Real;\, \abs{z}\leq1,\abs{\theta}\leq1},\] $\chi_1(z,\theta)=1$ if $\abs{z}\leq\frac{1}{2}$, $\abs{\theta}\leq\frac{1}{2}$. We notice that
\[{\rm Supp\,}\alpha_k\subset\set{(z,\theta)\in\Complex^{n-1}\times\Real;\, \abs{z}\leq\frac{\log k}{\sqrt{k}}, \abs{\theta}\leq\frac{\log k}{\sqrt{k}}}.\] 
Thus, for $k$ large, ${\rm Supp\,}\alpha_k\in D$ and $\alpha_k$ is well-defined. 
The following is well-known (see section 5 in~\cite{HM09})

\begin{prop}\label{s5-ppf1} 
With the notations used above, we have 
\begin{equation}\label{s5-epf3-1}
\lim_{k\To\infty}k^{-n}\abs{\alpha_k(0)}^2_{h^{L^k}}=(2\pi)^{-n}\int_{\Real_{p,0}}\det(M^\phi_p+2s\mathcal{L}_p)ds,
\end{equation}
\begin{equation}\label{s5-epf4}
\lim_{k\To\infty}\norm{\alpha_k}_{h^{L^k}}=1,
\end{equation}
\begin{equation}\label{s5-epf5}
\lim_{k\To\infty}\norm{(\frac{1}{k}\Box^{(0)}_{b,k}\alpha_k)^m}_{h^{L^k}}=0,\ \ \forall m\in\mathbb N,
\end{equation}
and there is a sequence $\gamma_k>0$, independent of the point $p$ and tending to zero as $k\To\infty$, such that 
\begin{equation}\label{s5-epf6}
(\frac{1}{k}\Box^{(0)}_{b,k}\alpha_k\ |\ \alpha_k)_{h^{L^k}}\leq\gamma_k,\ \ \forall k>0.
\end{equation}
\end{prop}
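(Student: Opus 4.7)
The plan is to verify the four claims by (i) evaluating $\abs{\alpha_k(0)}^2_{h^{L^k}}$ directly, (ii) reducing the $L^2$-norm $\norm{\alpha_k}^2_{h^{L^k}}$ to a Gaussian integral on the Heisenberg model via the scaling technique of Section~4, and (iii) exploiting the fact that the profile $u$ in \eqref{s5-epf2} is designed so as to lie in the kernel of the model Kohn Laplacian $\Box^{(0)}_{b,H_n}$, which via the scaling identity \eqref{scat-e14} will make the iterates $(\tfrac{1}{k}\Box^{(0)}_{b,k})^{m}\alpha_k$ small. Claim \eqref{s5-epf3-1} is immediate: at the origin $\chi_1=1$ and $\phi(0)=0$ by \eqref{scat-e1}, so $\abs{\alpha_k(0)}^2_{h^{L^k}}=k^n\abs{u(0,0)}^2$, and a direct evaluation from \eqref{s5-epf2} gives $\abs{u(0,0)}^2=(2\pi)^{-n}\int_{\Real_{p,0}}\det(M^\phi_p+2\xi\mathcal{L}_p)d\xi$.

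For \eqref{s5-epf4} I rescale $(z,\theta)=(w/\sqrt{k},\tau/k)=F_k(w,\tau)$, write $\alpha_k=s^k\Td\alpha_k$, and observe that in the new variables $F_k^*\Td\alpha_k(w,\tau)=k^{n/2}\beta_k(w,\tau)$, where $\beta_k$ is a cutoff of $u$ whose support exhausts $H_n$ as $k\to\infty$. Applying the norm-transformation law $\norm{F_k^*f}^2_{kF_k^*\phi,D_{\log k}}=k^n\norm{f}^2_{k\phi,F_k(D_{\log k})}$ of Section~4 yields
\begin{equation*}
\norm{\alpha_k}^2_{h^{L^k}}=\int_{D_{\log k}}\abs{\beta_k}^2 e^{-kF_k^*\phi}F_k^*m\,dv(w)d\tau .
\end{equation*}
Pointwise $kF_k^*\phi\to\phi_0$, $F_k^*m\to 1$, and the family is dominated by an integrable Gaussian, so dominated convergence reduces the problem to checking $\int_{H_n}\abs{u}^2 e^{-\phi_0}dv(w)d\tau=1$. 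Writing $\abs{u}^2$ as a double integral in $\xi,\xi'$ and integrating first in $\tau$ produces $2\pi\delta(\xi-\xi')$; the remaining $w$-Gaussian $\int e^{-\ol w^{T}(M^\phi_p+2\xi\mathcal{L}_p)w}dv(w)=2^{n-1}\pi^{n-1}/\det(M^\phi_p+2\xi\mathcal{L}_p)$ cancels one power of $\det$, and the normalising constant in \eqref{s5-epf2} has been chosen precisely so that the final integral equals $1$.

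The central point for \eqref{s5-epf5} and \eqref{s5-epf6} is that $\ol U_{j,H_n}u=0$ for every $j$: differentiating the integrand in \eqref{s5-epf2} gives
\begin{equation*}
\bigl(\tfrac{\pr}{\pr\ol z_j}-i\lambda_j z_j\tfrac{\pr}{\pr\theta}\bigr)u=C\!\int\!\bigl[(-\xi+i\tfrac{\beta}{2})\lambda_j z_j-i\lambda_j z_j(i\xi+\tfrac{\beta}{2})\bigr]e^{(\ldots)}\det\mathds{1}_{\Real_{p,0}}d\xi=0,
\end{equation*}
whence $\Box^{(0)}_{b,H_n}u=0$ by \eqref{scat-e25}. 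Iterating the scaling identity \eqref{scat-e14} together with the norm transformation of Step~2 gives
\begin{equation*}
\norm{\tfrac{1}{k^m}(\Box^{(0)}_{b,k})^m\alpha_k}^2_{h^{L^k}}=\norm{(\Box^{(0)}_{b,k\phi,(k)})^m\beta_k}^2_{kF_k^*\phi,D_{\log k}},
\end{equation*}
and I decompose $\Box^{(0)}_{b,k\phi,(k)}=\Box^{(0)}_{b,H_n}+R_k$, where by Proposition~\ref{scat-p1} the coefficients of $R_k$ are $O(\varepsilon_k)$ on $D_{\log k}$. On the set $\set{\chi_1=1}$ one has $\Box^{(0)}_{b,H_n}\beta_k=\Box^{(0)}_{b,H_n}u=0$, so only $R_k\beta_k$ survives there; in the transition zone derivatives of the cutoff produce factors of $(\log k)^{-j}$ multiplying objects whose Gaussian $L^2$-control was obtained in Step~2. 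This yields \eqref{s5-epf5}. For \eqref{s5-epf6} the case $m=1$ suffices, and I combine the identity $(\tfrac{1}{k}\Box^{(0)}_{b,k}\alpha_k\,|\,\alpha_k)_{h^{L^k}}=\tfrac{1}{k}\norm{\ddbar_{b,k}\alpha_k}^2_{h^{L^k}}$ with the analogous estimate for $\ddbar_{b,k\phi,(k)}\beta_k$; uniformity of $\gamma_k$ in $p$ then follows because the canonical coordinates of Proposition~\ref{can-p0} together with the data $\beta$, $\mu_{j,t}$, $\lambda_j$ feeding into $\phi_0$ and $\Box^{(0)}_{b,H_n}$ depend continuously on $p\in X$, which is compact, so $\varepsilon_k$ and the $L^2$-constants can be chosen uniformly in $p$.

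The main obstacle is the quantitative estimate in Step~3: the kernel identity $\ol U_{j,H_n}u=0$ is clean, but bounding the transition-region contribution of the $\log k$-scale cutoff $\chi_1$ after iteration by $(\Box^{(0)}_{b,k\phi,(k)})^m$, and checking that all the constants can be made uniform in $p\in X$ (which is what promotes the $m=1$ estimate to the uniform-in-$p$ bound $\gamma_k$ of \eqref{s5-epf6}), requires the careful bookkeeping that the scaling framework of Section~4 is designed to provide.
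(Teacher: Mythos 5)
You should know that the paper itself does not prove this proposition: it is quoted as ``well-known'' from Section 5 of \cite{HM09}. Your route --- evaluating $\abs{\alpha_k(0)}^2_{h^{L^k}}=k^n\abs{u(0,0)}^2$ directly, computing $\norm{u}_{\phi_0}=1$ by Plancherel in $\theta$ plus the complex Gaussian integral $(2\pi)^{n-1}/\det(M^\phi_p+2\xi\mathcal{L}_p)$, observing that $\ol U_{j,H_n}u=0$ (your cancellation computation is correct) so that $\Box^{(0)}_{b,H_n}u=0$ by \eqref{scat-e25}, transferring everything through \eqref{scat-e14}, and getting uniformity in $p$ from compactness --- is precisely the argument of the cited source, and the algebraic/normalizing computations you display are right.

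There is, however, one genuine gap in the analytic part as you wrote it. All the quantitative inputs you invoke --- \eqref{scat-e21}, Proposition~\ref{scat-p1}, and your ``domination by an integrable Gaussian'' --- are stated on $D_{\log k}$, but with the cutoff in \eqref{s5-epf3} the support of $\alpha_k$ is $\set{\abs{z}\leq\frac{\log k}{\sqrt k},\ \abs{\theta}\leq\frac{\log k}{\sqrt k}}$, which after the scaling $F_k$ becomes $\set{\abs{w}\leq\log k,\ \abs{\tau}\leq\sqrt{k}\log k}$ and is \emph{not} contained in $D_{\log k}$; your displayed identity $\norm{\alpha_k}^2_{h^{L^k}}=\int_{D_{\log k}}\abs{\beta_k}^2e^{-kF^*_k\phi}F^*_km\,dv\,d\tau$ is therefore not literally correct, and on the far-$\tau$ part of the support the expansion \eqref{scat-e1} only gives $k\,O(\abs{z}\abs{\theta})+k\,O(\abs{\theta}^2)=O((\log k)^2)$, so neither the closeness of $kF^*_k\phi$ to $\phi_0$ nor the $\varepsilon_k$-smallness of the remainder $R_k$ follows from what you cite, and the polynomial decay of $u$ in $\theta$ cannot absorb a possible factor $e^{C(\log k)^2}$. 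The fix is to use the rigidity hypothesis \eqref{pclr}, which you never invoke: in canonical coordinates $T=\frac{\pr}{\pr\theta}$ and $T\phi\equiv\beta$, so $\phi(z,\theta)=\beta\theta+\varphi(z)$ with no $\theta$-error terms at all, hence $kF^*_k\phi-\phi_0=O\bigl(\frac{(\log k)^3}{\sqrt k}\bigr)$ uniformly on the whole scaled support, and the coefficient estimates behind Proposition~\ref{scat-p1} persist there because the unscaled support still shrinks to $p$ (alternatively, one can shrink the $\theta$-cutoff to the scale $\frac{\log k}{k}$ so that the scaled support lies in $D_{\log k}$). With that repair, your cutoff bookkeeping (derivatives of $\chi_1$ contributing $O((\log k)^{-1})$, resp.\ $O((\sqrt k\log k)^{-1})$, times the vanishing Gaussian mass of the transition regions) and the compactness argument for the $p$-independence of $\gamma_k$ go through as you indicate.
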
 

We have the following 

\begin{prop}\label{s5-ppf2}
Let $\nu_k>0$ be any sequence with $\lim_{k\To\infty}\frac{\gamma_k}{\nu_k}=0$ and $\nu_k\To0$ as $k\To\infty$, where $\gamma_k$ is as in \eqref{s5-epf6}. Let $\alpha_k$ be as in \eqref{s5-epf3}. Let 
\begin{equation}\label{s5-epf7}
\begin{split}
&\alpha_k=\alpha^1_k+\alpha^2_k,\\
&\alpha^1_k\in\cH^0_{b,\leq k\nu_k}(X,L^k),\ \ \alpha^2_k\in\cH^0_{b,>k\nu_k}(X,L^k).
\end{split} 
\end{equation}
Then, 
\begin{equation}\label{s5-epf8}
\lim_{k\To\infty}\norm{\alpha^1_k}_{h^{L^k}}=1
\end{equation} 
and 
\begin{equation}\label{s5-epf9}
\lim_{k\To\infty}k^{-n}\abs{\alpha^1_k(0)}^2_{h^{L^k}}=(2\pi)^{-n}\int_{\Real_{p,0}}\det(M^\phi_p+2s\mathcal{L}_p)ds.
\end{equation} 
Moreover, on $D$, we put 
\begin{equation}\label{s5-epf10}
\alpha^2_k=k^{\frac{n}{2}}s^k\beta^2_k,\ \ \beta^2_k\in C^\infty(D).
\end{equation} 
Fix $r>0$. Then, for every $\varepsilon>0$, there is a $k_0>0$ such that for all $k\geq k_0$, we have $F_k(D_{2r})\subset D$ and
\begin{equation}\label{s5-epf11}
\abs{\beta^2_k(\frac{z}{\sqrt{k}},\frac{\theta}{k})}\leq\varepsilon,\ \ \forall (z,\theta)\in D_r.
\end{equation} 
In particular, 
\begin{equation}\label{s5-epf12}
\lim_{k\To\infty}\abs{\beta^2_k(\frac{z}{\sqrt{k}},\frac{\theta}{k})}=0,\ \ \forall (z,\theta)\in D.
\end{equation}
\end{prop}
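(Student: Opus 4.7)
\emph{Proof proposal.} The plan has three stages. First I would show $\|\alpha^2_k\|_{h^{L^k}}\to 0$, which immediately yields \eqref{s5-epf8}; second I would exploit the scaling apparatus of section~4 to obtain \eqref{s5-epf11}; third I would specialize at the origin to deduce \eqref{s5-epf12} and \eqref{s5-epf9}.

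For the first stage, observe that $\alpha^1_k\in\cH^0_{b,\le k\nu_k}(X,L^k)$ and $\alpha^2_k\in\cH^0_{b,>k\nu_k}(X,L^k)$ are orthogonal and each is invariant under $\Box^{(0)}_{b,k}$, so
\[
(\Box^{(0)}_{b,k}\alpha_k\ |\ \alpha_k)_{h^{L^k}}=(\Box^{(0)}_{b,k}\alpha^1_k\ |\ \alpha^1_k)_{h^{L^k}}+(\Box^{(0)}_{b,k}\alpha^2_k\ |\ \alpha^2_k)_{h^{L^k}}\ge(\Box^{(0)}_{b,k}\alpha^2_k\ |\ \alpha^2_k)_{h^{L^k}}.
\]
Combining this with \eqref{s5-sp2} and \eqref{s5-epf6} gives $\|\alpha^2_k\|^2_{h^{L^k}}\le\frac{1}{k\nu_k}(\Box^{(0)}_{b,k}\alpha^2_k\ |\ \alpha^2_k)_{h^{L^k}}\le\gamma_k/\nu_k\to0$, which together with \eqref{s5-epf4} yields \eqref{s5-epf8}.

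For the second stage, note that spectral projections commute with $\Box^{(0)}_{b,k}$, so $(\Box^{(0)}_{b,k})^m\alpha^2_k$ and $(\Box^{(0)}_{b,k})^m\alpha^1_k$ lie in orthogonal spectral subspaces; hence $\|(\Box^{(0)}_{b,k})^m\alpha^2_k\|_{h^{L^k}}\le\|(\Box^{(0)}_{b,k})^m\alpha_k\|_{h^{L^k}}$, and by \eqref{s5-epf5} we conclude $\|k^{-m}(\Box^{(0)}_{b,k})^m\alpha^2_k\|_{h^{L^k}}\to0$ for every fixed $m$. Writing $\alpha^2_k=k^{n/2}s^k\beta^2_k$ on $D$ and using \eqref{scat-e2} iteratively, one obtains $(\Box^{(0)}_{b,k})^m\alpha^2_k=k^{n/2}s^k(\Box^{(0)}_{b,k\phi})^m\beta^2_k$. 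Then the change of variables $(\tilde z,\tilde\theta)=(z/\sqrt k,\theta/k)$, whose Jacobian contributes a factor $k^n$, shows
\[
\|(\Box^{(0)}_{b,k\phi,(k)})^m(F^*_k\beta^2_k)\|^2_{kF^*_k\phi,D_{2r}}=\int_{F_k(D_{2r})}k^{-2m}|(\Box^{(0)}_{b,k})^m\alpha^2_k|^2_{h^{L^k}}dv_X\le\|k^{-m}(\Box^{(0)}_{b,k})^m\alpha^2_k\|^2_{h^{L^k}}\to0,
\]
using the intertwining relation \eqref{scat-e14}. Applying the pointwise estimate \eqref{scat-e19} of Proposition~\ref{scat-p2} to $u=F^*_k\beta^2_k$ gives $\sup_{(z,\theta)\in D_r}|\beta^2_k(z/\sqrt k,\theta/k)|^2\to 0$, which is \eqref{s5-epf11}, and specializing $(z,\theta)=0$ gives \eqref{s5-epf12}.

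For the third stage, setting $(z,\theta)=(0,0)$ in \eqref{s5-epf11} and using $\phi(0)=0$ gives $|\alpha^2_k(0)|^2_{h^{L^k}}=k^n|\beta^2_k(0)|^2=o(k^n)$. Writing $\alpha^1_k(0)=\alpha_k(0)-\alpha^2_k(0)$ and expanding
\[
|\alpha^1_k(0)|^2_{h^{L^k}}=|\alpha_k(0)|^2_{h^{L^k}}-2\mathrm{Re}\,\langle\alpha_k(0),\alpha^2_k(0)\rangle_{h^{L^k}}+|\alpha^2_k(0)|^2_{h^{L^k}},
\]
the Cauchy--Schwarz inequality together with \eqref{s5-epf3-1} and the just-established $k^{-n}|\alpha^2_k(0)|^2_{h^{L^k}}\to0$ forces the cross term to be $o(k^n)$, so \eqref{s5-epf9} follows from \eqref{s5-epf3-1}. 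The step I expect to require the most care is the bookkeeping in the change of variables linking $\|\alpha^2_k\|_{h^{L^k}}$ (and the norms of its iterated Kohn Laplacians) with the weighted Sobolev norms of $F^*_k\beta^2_k$, so that the hypotheses of Proposition~\ref{scat-p2} are verified with constants independent of $k$.
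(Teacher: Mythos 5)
Your proposal is correct and follows essentially the same route as the paper: the spectral estimate \eqref{s5-sp2} together with \eqref{s5-epf4}--\eqref{s5-epf6} gives \eqref{s5-epf8}, the scaled Kohn estimate \eqref{scat-e19} combined with the intertwining relation \eqref{scat-e14} and the bounds $\norm{(\tfrac{1}{k}\Box^{(0)}_{b,k})^m\alpha^2_k}_{h^{L^k}}\leq\norm{(\tfrac{1}{k}\Box^{(0)}_{b,k})^m\alpha_k}_{h^{L^k}}$ gives \eqref{s5-epf11} and \eqref{s5-epf12}, and evaluating at the origin together with \eqref{s5-epf3-1} gives \eqref{s5-epf9}. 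Your change-of-variables bookkeeping (the Jacobian factor $k^n$ cancelling the $k^{n/2}$ normalization of $\beta^2_k$) and the Cauchy--Schwarz treatment of the cross term are exactly the steps the paper leaves implicit, so no gap remains.
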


\begin{proof}
From \eqref{s5-sp2}, we have
\[\norm{\alpha^2_k}^2_{h^{L^k}}\leqslant \frac{1}{k\nu_k}\big(\Box^{(0)}_{b,k}\alpha^2_k\ \big|\ \alpha^2_k\big)_{h^{L^k}}\leqslant \frac{1}{k\nu_k}\big(\Box^{(0)}_{b,k}\alpha_k\ \big|\ \alpha_k\big)_{h^{L^k}}\leqslant \frac{\gamma_k}{\nu_k}\To 0,\]
as $k\To\infty$.
Thus, $\lim_{k\To\infty}\norm{\alpha^2_k}_{h^{L^k}}=0$. Since $\norm{\alpha_k}_{h^{L^k}}\To1$ as $k\To\infty$, \eqref{s5-epf8} follows.

Now, we prove \eqref{s5-epf11}. As \eqref{s5-epf10}, 
on $D$, we write $\alpha^2_k=s^kk^{\frac{n}{2}}\beta^2_k$,
$\beta^2_k\in C^\infty(D)$. From \eqref{scat-e19}, we know that
\begin{equation} \label{s5-epf13}
\begin{split}
\sup_{(z,\theta)\in D_r}\abs{F^*_k\beta^2_k(z,\theta)}^2&=\sup_{(z,\theta)\in D_r}\abs{\beta^2_k(\frac{z}{\sqrt{k}},\frac{\theta}{k})}^2\\
&\leqslant  C_{r}\Bigr(\norm{F^*_k\beta^2_k}^2_{kF^*_k\phi,D_{2r}}+\sum^n_{m=1}\norm{(\Box^{(q)}_{k,k\phi,(k)})^mF^*_k\beta^2_k}^2_{kF^*_k\phi,D_{2r}}\Bigr),
\end{split}
\end{equation}
where $C_r>0$ is independent of $k$. Now, we have
\begin{equation} \label{s5-epf14}
\norm{F^*_k\beta^2_k}^2_{kF^*_k\phi,D_{2r}}\leqslant \norm{\alpha^2_k}^2_{h^{L^k}}\To0,\ \ \mbox{as $k\To\infty$}.
\end{equation}
Moreover, from \eqref{scat-e14}, it is easy to can check that for all $m\in\mathbb N$,
\begin{equation} \label{s5-epf15}
\begin{split}
\Big\|(\Box^{(q)}_{b,k\phi,(k)})^mF^*_k\beta^2_k\Big\|^2_{kF^*_k\phi,D_{2r}}&\leqslant \Big\|(\tfrac{1}{k}\Box^{(q)}_{b,k})^m\alpha^2_k\Big\|^2_{h^{L^k}}\\
&\leqslant \Big\|(\tfrac{1}{k}\Box^{(q)}_{b,k})^m\alpha_k\Big\|^2_{h^{L^k}}\To0\ \ \text{as $k\To\infty$}.
\end{split}
\end{equation}
Here we used \eqref{s5-epf5}. Combining \eqref{s5-epf13} with \eqref{s5-epf14} and \eqref{s5-epf15}, 
\eqref{s5-epf11} follows.

From \eqref{s5-epf11}, we deduce
\begin{equation}\label{s5-ebis}
\lim_{k\To\infty}\abs{F^*_k\beta^2_k(0)}^2=\lim_{k\To\infty}\abs{\beta^2_k(0)}^2=
\lim_{k\To\infty}k^{-n}\abs{\alpha^2_k(0)}^2_{h^{L^k}}=0.
\end{equation}
From this and \eqref{s5-epf3-1}, \eqref{s5-epf9} follows.
\end{proof} 

Now, we can prove 

\begin{thm}\label{s6-t1}
Let $\delta_k=\min\set{\mu_k,\gamma_k}$, where $\mu_k$ is as in Theorem~\ref{s2-t1} and $\gamma_k$ is as in \eqref{s5-epf6}. Let $\nu_k>0$ be any sequence 
with $\lim_{k\To\infty}\frac{\delta_k}{\nu_k}=0$ and $\nu_k\To0$ as $k\To\infty$. Then, 
\begin{equation} \label{s4-e2}
\begin{split}
&\lim_{k\To\infty}k^{-n}(Q^{(0)}_{M,k}\pit^{(0)}_{k,\,\leqslant  k\nu_k})(x)\\
&=(2\pi)^{-n}\int e^{it\xi}\hat\psi(t)\chi_M(t)\det(M^\phi_x+2\xi\mathcal{L}_x)\mathds{1}_{\Real_{x,0}}(\xi)dtd\xi
\end{split}
\end{equation}
for all $x\in X$, where $\psi(\eta)$ is as in the discussion after \eqref{sp3-eI} and $\chi_M(t)$ is given by \eqref{sp3-eM0}, $\hat\psi(t)=\int e^{-it\eta}\psi(\eta)d\eta$. We remind that ${\rm Supp\,}\psi\bigcap \Real_{x,1}=\emptyset$, for every $x\in X$.
\end{thm}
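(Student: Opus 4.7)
The plan is to reduce to a local analysis near a point $p\in X$, unwind the definition of $Q^{(0)}_{M,k}$ to express the left-hand side of \eqref{s4-e2} as an oscillatory integral involving the local off-diagonal Szeg\"o kernel, bound the integrand uniformly in $k$, establish pointwise semiclassical convergence, and conclude by dominated convergence. Fix $p\in X$ and, by Proposition~\ref{can-p0}, choose canonical coordinates $x=(z,\theta)$ centred at $p$ and a local trivializing section $s$ with $|s|^2_{h^L}=e^{-\phi}$ satisfying the normal form \eqref{s-canpe0}. Then $T=\partial/\partial\theta$, $\Phi^{t/k}(0)=(0,t/k)$, $\phi(0)=0$, and $\phi(0,\theta)=\beta\theta+O(\theta^2)$. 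Let $\{g_j=s^k\tilde g_j\}_{j=1}^{d_k}$ be an orthonormal basis of $\cH^0_{b,\leq k\nu_k}(X,L^k)$; inserting \eqref{sp3-eII} into \eqref{e-seII} and using $|s^k(0)|^2_{h^{L^k}}=1$, one computes
\[
(Q^{(0)}_{M,k}\pit^{(0)}_{k,\leq k\nu_k})(0)
=\int\!\!\int e^{-it\eta}\psi(\eta)\chi_M(t)\,e^{-\frac{k}{2}\phi(0,t/k)}\,P_k((0,t/k);0)\,dt\,d\eta,
\]
where $P_k(y;w):=\sum_j\tilde g_j(y)\overline{\tilde g_j(w)}$ is the local reproducing kernel of $\cH^0_{b,\leq k\nu_k}(X,L^k)$. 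Cauchy-Schwarz on the defining sum together with Theorem~\ref{s2-t1} gives the uniform bound $|e^{-\frac{k}{2}\phi(0,t/k)}P_k((0,t/k);0)|\leq\sqrt{\pit^{(0)}_{k,\leq k\nu_k}(0,t/k)\,\pit^{(0)}_{k,\leq k\nu_k}(0)}\leq Ck^n$, so after dividing by $k^n$ the integrand is dominated by $C|\psi(\eta)\chi_M(t)|$.

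The main step is to establish the pointwise limit
\[
\lim_{k\to\infty}k^{-n}e^{-\frac{k}{2}\phi(0,t/k)}P_k((0,t/k);0)
=(2\pi)^{-n}\!\!\int e^{it\xi}\det(M^\phi_p+2\xi\mathcal{L}_p)\mathds{1}_{\Real_{p,0}}(\xi)\,d\xi
\]
locally uniformly in $t$. The idea is to use an extremal-function comparison between the true reproducing section and the explicit trial $\alpha_k$ of \eqref{s5-epf3}. Setting aside the easy case $\pit^{(0)}_{k,\leq k\nu_k}(0)=0$ (in which both sides vanish), the normalized reproducing section $\alpha^*_k(y):=s^k(y)P_k(y;0)/\sqrt{P_k(0;0)}$ lies in $\cH^0_{b,\leq k\nu_k}(X,L^k)$, has unit $L^2$-norm, and satisfies $\tilde\alpha^*_k(0)=\sqrt{P_k(0;0)}$. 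For the trial $\alpha_k$, the decomposition $\alpha_k=\alpha_k^1+\alpha_k^2$ of Proposition~\ref{s5-ppf2} and the reproducing identity $\langle\alpha_k^1,\alpha^*_k\rangle_{h^{L^k}}=\tilde\alpha_k^1(0)/\sqrt{P_k(0;0)}$, combined with $u(0)=(2\pi)^{-n/2}\sqrt{\int_{\Real_{p,0}}\det(M^\phi_p+2s\mathcal{L}_p)\,ds}=\sqrt{K_p}$ (where $K_p:=(2\pi)^{-n}\int_{\Real_{p,0}}\det(M^\phi_p+2s\mathcal{L}_p)\,ds$), \eqref{s5-epf8}, \eqref{s5-epf9} and Theorem~\ref{s2-t1}, force $\langle\alpha_k^1/\|\alpha_k^1\|,\alpha^*_k\rangle_{h^{L^k}}\to 1$ as a real positive number, so $\|\varepsilon_k\|_{h^{L^k}}\to 0$ for $\varepsilon_k:=\alpha^*_k-\alpha_k^1/\|\alpha_k^1\|$, with no phase ambiguity. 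Since $\varepsilon_k\in\cH^0_{b,\leq k\nu_k}$, the iterated scaled Kohn estimates yield $\|(\Box^{(0)}_{b,k\phi,(k)})^m (k^{-n/2}F_k^*\tilde\varepsilon_k)\|\leq\nu_k^m\|\varepsilon_k\|_{h^{L^k}}\to 0$ for all $m\geq 0$, and the pointwise estimate \eqref{scat-e19} of Proposition~\ref{scat-p2} upgrades this $L^2$-smallness to $\sup_{D_r}|k^{-n/2}F_k^*\tilde\varepsilon_k|\to 0$ for every $r>0$. Combined with the uniform convergence $k^{-n/2}F_k^*\tilde\alpha_k^1\to u$ on compacts of $H_n$ (a direct consequence of \eqref{s5-epf3} and \eqref{s5-epf11}), this gives $k^{-n/2}F_k^*\tilde\alpha^*_k\to u$ on compacts, that is $k^{-n/2}P_k(F_k y;0)/\sqrt{P_k(0;0)}\to u(y)$. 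Multiplying by $\sqrt{P_k(0;0)/k^n}\to\sqrt{K_p}$ and evaluating at $y=(0,t)$ yields $k^{-n}P_k((0,t/k);0)\to\sqrt{K_p}\,u(0,t)=(2\pi)^{-n}\int e^{it\xi+\beta t/2}\det(M^\phi_p+2\xi\mathcal{L}_p)\mathds{1}_{\Real_{p,0}}(\xi)\,d\xi$, and the factor $e^{-\frac{k}{2}\phi(0,t/k)}\to e^{-\beta t/2}$ cancels the $e^{\beta t/2}$ inside the integral to give the claimed pointwise limit. Dominated convergence with the uniform bound from the first paragraph together with $\int\psi(\eta)e^{-it\eta}\,d\eta=\hat\psi(t)$ then delivers \eqref{s4-e2}.

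The main obstacle is the extremal-function comparison, namely passing from $L^2$-closeness of $\alpha^*_k$ and $\alpha_k^1/\|\alpha_k^1\|$ to smooth closeness of their semiclassical rescalings on compact subsets of the Heisenberg model $H_n$. This is overcome by the scaled elliptic Kohn estimate of Proposition~\ref{scat-p2} applied to the difference, exploiting that both sections lie in the lower-energy spectral subspace so that iterated scaled Kohn Laplacian norms decay like $\nu_k^m$, and by the canonical real-positive normalization of $\tilde\alpha^*_k(0)$ and $\tilde\alpha_k^1(0)$ which eliminates any residual phase ambiguity between true and trial extremals.
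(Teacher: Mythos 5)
Your proposal is correct in substance, but it is organized differently from the paper's proof, so let me compare. The paper never passes through the off-diagonal kernel $P_k(\,\cdot\,;0)$: it expands $(Q^{(0)}_{M,k}\pit^{(0)}_{k,\leq k\nu_k})(0)$ in an orthonormal frame whose first member is $\alpha^1_k/\norm{\alpha^1_k}_{h^{L^k}}$, kills all other terms by Cauchy--Schwarz against the uniform bound \eqref{s4-e0} on $(Q^{(0)}_{M,k}\pit^{(0)}_{k,\leq k\nu_k}\ol{Q^{(0)}_{M,k}})(0)$ together with the concentration statement \eqref{s5-epf16}, then swaps $\alpha^1_k$ for the explicit trial section $\alpha_k$ using \eqref{s5-epf11}/\eqref{eTWa1}, and finally computes $\langle(Q^{(0)}_{M,k}\alpha_k)(0)|\alpha_k(0)\rangle_{h^{L^k}}$ directly. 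You instead prove a genuinely stronger local statement -- the semiclassical asymptotics of the spectral kernel $P_k((0,t/k);0)$ along the $T$-flow direction -- via a peak-section comparison: the normalized reproducing section $\alpha^*_k$ is shown to be $L^2$-close to $\alpha^1_k/\norm{\alpha^1_k}_{h^{L^k}}$ (your inner-product identity and the real-positive normalization are correct), and since the difference lies in $\cH^0_{b,\leq k\nu_k}(X,L^k)$ the iterated spectral bounds plus \eqref{scat-e19} of Proposition~\ref{scat-p2} upgrade this to locally uniform convergence of the rescalings, exactly as in \eqref{s5-epf13}--\eqref{s5-epf15}. Both routes rest on the same machinery (the trial section \eqref{s5-epf2}--\eqref{s5-epf3}, Proposition~\ref{s5-ppf2}, the scaled Kohn estimates, Theorem~\ref{s2-t1}) and end with the same Fourier computation; the paper's version is shorter because it only needs the value at the single point $0$, while yours buys an off-diagonal kernel asymptotic along the flow as a by-product, at the cost of one extra comparison lemma.

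One caveat: your case split ``$\pit^{(0)}_{k,\leq k\nu_k}(0)=0$'' does not isolate the actual degenerate situation. The argument (both yours and, implicitly, the paper's) requires $\int_{\Real_{p,0}}\det(M^\phi_p+2s\mathcal{L}_p)ds>0$, since otherwise $u$ in \eqref{s5-epf2} is undefined and your quotient $\tilde\alpha^1_k(0)/\sqrt{P_k(0;0)}$ is of indeterminate $0/0$ type even though $\pit^{(0)}_{k,\leq k\nu_k}(0)$ need not vanish. When that integral is zero the conclusion still holds trivially: by Cauchy--Schwarz and the uniform bound \eqref{s2-e6}, $k^{-n}\abs{e^{-\frac{k}{2}\phi(0,t/k)}P_k((0,t/k);0)}\leq\sqrt{C_0}\sqrt{k^{-n}\pit^{(0)}_{k,\leq k\nu_k}(0)}\To0$ by \eqref{s2-e6I}, matching the vanishing right-hand side of \eqref{s4-e2}; you should state this correctly, but it is a minor repair and the paper glosses over the same point (as it does over the fact that $\delta_k$ should really be $\max\set{\mu_k,\gamma_k}$ rather than $\min$, a hypothesis you inherit unchanged).
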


\begin{proof}
Let $\nu_k>0$ be any sequence with $\lim_{k\To\infty}\frac{\delta_k}{\nu_k}=0$ and $\nu_k\To0$ as $k\To\infty$. 
Fix a point $p\in X$. Let $x=(x_1,\ldots,x_{2n-1})=(z,\theta)$ be canonical coordinates of $X$ defined in 
some small neighborhood $D$ of $p$ and let $s$ be a local section of $L$ on $D$, $\abs{s}^2_{h^L}=e^{-\phi}$. 
As before we take $x$ and $s$ so that \eqref{scat-e1} hold 
and let $\alpha^1_k\in\cH^0_{b,\leq k\nu_k}(X,L^k)$ be as in \eqref{s5-epf7}. 
We take 
\[f^1_k:=\frac{\alpha^1_k}{\norm{\alpha^1_k}_{h^{L^k}}},f^2_k,\ldots,f^{d_k}_k\]
to be an orthonormal frame for $\cH^0_{b,\leq k\nu_k}(X,L^k)$. From \eqref{s5-epf8}, \eqref{s5-epf9} and \eqref{s2-e6I}, we conclude that
\begin{equation}\label{s5-epf16}
\lim_{k\To\infty}k^{-n}\abs{f^1_k(0)}^2_{h^{L^k}}=\lim_{k\To\infty}k^{-n}\pit^{(0)}_{k,\leq k\nu_k}(0)
=(2\pi)^{-n}\int_{\Real_{p,0}}\det(M^\phi_p+2s\mathcal{L}_p)ds.
\end{equation} 
Thus, 
\begin{equation}\label{s5-epf17}
\lim_{k\To\infty}k^{-n}\sum^{d_k}_{j=2}\abs{f^j_k(0)}^2_{h^{L^k}}=0.
\end{equation} 
Now, 
\begin{equation}\label{s5-epf18}
(Q^{(0)}_{M,k}\pit^{(0)}_{k,\leq k\nu_k})(0)=\langle(Q^{(0)}_{M,k}f^1_k)(0)|f^1_k(0)\rangle_{h^{L^k}}+\sum^{d_k}_{j=2}
\langle(Q^{(0)}_{M,k}f^j_k)(0)|f^j_k(0)\rangle_{h^{L^k}}.
\end{equation} 
From \eqref{s4-e0} and \eqref{s5-epf17}, we have 
\begin{equation}\label{s5-epf19}
\begin{split}
&\lim_{k\To\infty}k^{-n}\abs{\sum^{d_k}_{j=2}\langle(Q^{(0)}_{M,k}f^j_k)(0)|f^j_k(0)\rangle_{h^{L^k}}}\\
&\leq\lim_{k\To\infty}k^{-n}\sqrt{\sum^{d_k}_{j=2}\abs{(Q^{(0)}_{M,k}f^j_k)(0)}^2_{h^{L^k}}}\sqrt{\sum^{d_k}_{j=2}\abs{f^j_k(0)}^2_{h^{L^k}}}\To0,\ \ \mbox{as $k\To\infty$}.
\end{split}
\end{equation}
Combining \eqref{s5-epf19} with \eqref{s5-epf18}, we conclude that
\begin{equation}\label{s5-epf20a}
\lim_{k\To\infty}k^{-n}(Q^{(0)}_{M,k}\pit^{(0)}_{k,\leq\nu_k})(0)=\lim_{k\To\infty}k^{-n}\langle(Q^{(0)}_{M,k}f^1_k)(0)|f^1_k(0)\rangle_{h^{L^k}}.
\end{equation} 
Let $\alpha^2_k$ be as in \eqref{s5-epf7}. From \eqref{s5-epf11} and the definition of $Q^{(0)}_{M,k}$(see \eqref{sp3-eII}) , it is not difficult to see that 
\begin{equation}\label{eTWa1}
\lim_{k\To\infty}k^{-n}\langle(Q^{(0)}_{M,k}\alpha^2_k)(0)|\alpha^2_k(0)\rangle_{h^{L^k}}=0.
\end{equation}
Combining \eqref{eTWa1} with \eqref{s5-epf20a} and \eqref{s5-epf8}, we deduce 
\begin{equation}\label{s5-epf20}
\lim_{k\To\infty}k^{-n}(Q^{(0)}_{M,k}\pit^{(0)}_{k,\leq\nu_k})(0)=\lim_{k\To\infty}
k^{-n}\langle(Q^{(0)}_{M,k}\alpha_k)(0)|\alpha_k(0)\rangle_{h^{L^k}},
\end{equation} 
where $\alpha_k$ is as in \eqref{s5-epf3}.
On $D$, we put 
\begin{equation}\label{s5-epf20-1}
Q^{(0)}_{M,k}\alpha_k=s^kq_k,\ \ q_k\in C^\infty(D).
\end{equation}
By the definitions of $Q^{(0)}_{M,k}$ and $\alpha_k$(see \eqref{sp3-eII} and \eqref{s5-epf3}), we can check that 
\begin{equation}\label{s5-epf21}
\begin{split}
&q_k(0)\\
&=k^{\frac{n}{2}}(2\pi)^{-\frac{n}{2}}\bigr(\int_{\Real_{p,0}}\det(M^\phi_p+2s\mathcal{L}_p)ds\bigr)^{-\frac{1}{2}}\times\\
&\int e^{-it\eta}\psi(\eta)\chi_M(t)e^{-\frac{k}{2}\phi(0,\frac{t}{k})}\chi_1(0,\frac{t}{\sqrt{k}\log k})e^{it\xi+\frac{\beta}{2}t}\mathds{1}_{\Real_{p,0}}(\xi)\det (M^\phi_p+2\xi\mathcal{L}_p)d\xi dtd\eta.
\end{split}
\end{equation} 
We notice that $\frac{k}{2}\phi(0,\frac{t}{k})=\frac{\beta}{2}t+\epsilon_k(t)$, where $\epsilon_k(t)\To0$ 
as $k\To\infty$, uniformly on ${\rm Supp\,}\chi_M$ and $\chi_1(0,\frac{t}{\sqrt{k}\log k})\To1$ as $k\To\infty$, uniformly on ${\rm Supp\,}\chi_M$. Combining this observation with \eqref{s5-epf21}, \eqref{s5-epf3} and \eqref{s5-epf2}, we can check that
\begin{equation}\label{s5-epf22}
\begin{split}
&\lim_{k\To\infty}k^{-n}\langle (Q^{(0)}_{M,k}\alpha_k)(0)|\alpha_k(0)\rangle_{h^{L^k}}\\
&=\lim_{k\To\infty}k^{-\frac{n}{2}}q_k(0)\ol{u(0,0)}e^{-k\phi(0)}\\
&=(2\pi)^{-n}\int e^{-it\eta+it\xi}\psi(\eta)\chi_M(t)\mathds{1}_{\Real_{p,0}}(\xi)\det (M^\phi_p+2\xi\mathcal{L}_p)d\xi dtd\eta\\
&=(2\pi)^{-n}\int e^{it\xi}\hat\psi(t)\chi_M(t)\mathds{1}_{\Real_{p,0}}(\xi)\det (M^\phi_p+2\xi\mathcal{L}_p)d\xi dt,
\end{split}
\end{equation} 
where $\hat\psi(t):=\int e^{-it\eta}\psi(\eta)d\eta$, $u$ is as in \eqref{s5-epf2}. From \eqref{s5-epf22}
and \eqref{s5-epf20}, \eqref{s4-e2} follows. We get Theorem~\ref{s6-t1}. 
\end{proof}

We need 

\begin{thm}\label{s6-t2}
Let $\delta_k>0$, $\delta_k\To0$, as $k\To\infty$, be as in Theorem~\ref{s6-t1} and let $\nu_k>0$ be any sequence with $\lim_{k\To\infty}\frac{\delta_k}{\nu_k}=0$ and $\nu_k\To0$ as $k\To\infty$. 
Then there is a $k_0>0$ such that for all $k\geq k_0$, 
\begin{equation}\label{s6-e1.1}
\begin{split}
&\abs{\int_X(Q^{(0)}_{M,k}\pit^{(0)}_{k,\leq k\nu_k})(x)dv_X(x)}\\
&\quad\geq\frac{k^n}{2}(2\pi)^{1-n}\int_X\Bigr(\int\psi(\xi)\det(M^\phi_x+2\xi\mathcal{L}_x)
\mathds{1}_{\Real_{x,0}}(\xi)d\xi\Bigr)dv_X(x).
\end{split}
\end{equation} 
\end{thm}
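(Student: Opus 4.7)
The plan is to combine the pointwise limit supplied by Theorem~\ref{s6-t1} with the uniform bound from Theorem~\ref{s4-t1} via Lebesgue's dominated convergence theorem, and then to estimate the resulting expression from below by inverse Fourier transform together with the quantitative choice of $M$ recorded in \eqref{sp3-eI1}.

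First I would apply \eqref{s4-e1}, which gives $\abs{k^{-n}(Q^{(0)}_{M,k}\pit^{(0)}_{k,\leq k\nu_k})(x)}\leq C$ uniformly in $x\in X$ and $k$. Combined with the pointwise limit from Theorem~\ref{s6-t1} and the finite volume of $X$, dominated convergence yields
\[
\lim_{k\To\infty}k^{-n}\!\int_X(Q^{(0)}_{M,k}\pit^{(0)}_{k,\leq k\nu_k})(x)dv_X(x)=(2\pi)^{-n}\!\int_X\!\!\int e^{it\xi}\hat\psi(t)\chi_M(t)\det(M^\phi_x+2\xi\mathcal{L}_x)\mathds{1}_{\Real_{x,0}}(\xi)dtd\xi\,dv_X(x).
\]
Since $\psi$ is real and $\chi_M$ is real and even, a change of variables $t\mapsto -t$ shows that the inner $t$-integral is real, so the above limit is a real number.

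Next I would use Fourier inversion $\int e^{it\xi}\hat\psi(t)dt=2\pi\psi(\xi)$ together with the splitting $\chi_M=1-(1-\chi_M)$ to write
\[
\int e^{it\xi}\hat\psi(t)\chi_M(t)dt=2\pi\psi(\xi)-R_M(\xi),\qquad R_M(\xi):=\int e^{it\xi}\hat\psi(t)(1-\chi_M(t))dt.
\]
Because $1-\chi_M$ is supported in $\{\abs{t}\geq M\}$ and $\abs{\hat\psi(t)}\leq C_0/t^2$ by the definition \eqref{sp3-eI0} of $C_0$, we obtain $\abs{R_M(\xi)}\leq 2C_0/M$ uniformly in $\xi$. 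Since on $\Real_{x,0}$ all eigenvalues of $M^\phi_x+2\xi\mathcal{L}_x$ are positive, the determinant is positive there, and the above splitting yields
\[
\begin{split}
\text{limit}\geq{}&(2\pi)^{1-n}\int_X\Bigr(\int\psi(\xi)\det(M^\phi_x+2\xi\mathcal{L}_x)\mathds{1}_{\Real_{x,0}}(\xi)d\xi\Bigr)dv_X(x)\\
&-\frac{2C_0}{M}(2\pi)^{-n}\int_X\Bigr(\int\det(M^\phi_x+2\xi\mathcal{L}_x)\mathds{1}_{\Real_{x,0}}(\xi)d\xi\Bigr)dv_X(x).
\end{split}
\]

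Finally, the condition \eqref{sp3-eI1} imposed on $M$ is precisely that the error term above is strictly less than half of the main term, so the limit exceeds one half of the main term. Since the limit is real and strictly positive, for $k$ sufficiently large the modulus $\abs{k^{-n}\int_X(Q^{(0)}_{M,k}\pit^{(0)}_{k,\leq k\nu_k})(x)dv_X(x)}$ is at least one half of the main term as well, giving \eqref{s6-e1.1}. The heavy lifting is already done in Theorem~\ref{s6-t1}, so there is no real obstacle; the only thing to be careful about is that the quantity inside the absolute value on the left of \eqref{s6-e1.1} is a priori complex, which is handled by the reality of the inner $t$-integral after the change of variables $t\mapsto -t$.
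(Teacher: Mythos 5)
Your proposal is correct and follows essentially the same route as the paper: dominated convergence using the uniform bound \eqref{s4-e1} and the pointwise limit of Theorem~\ref{s6-t1}, reality of the limit via $t\mapsto-t$ (equivalently $\ol{\hat\psi(t)}=\hat\psi(-t)$ and $\chi_M$ even), Fourier inversion with the splitting $\chi_M=1-(1-\chi_M)$, the tail bound $\int_{\abs{t}\geq M}\abs{\hat\psi(t)}dt\leq 2C_0/M$, and finally the choice of $M$ in \eqref{sp3-eI1}. No gaps; this matches the paper's argument.
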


\begin{proof}
For each $x\in X$, put 
\begin{equation}\label{s6-eo1}
C(x):=(2\pi)^{-n}\int e^{it\xi}\hat\psi(t)\chi_M(t)\det(M^\phi_x+2\xi\mathcal{L}_x)\mathds{1}_{\Real_{x,0}}(\xi)d\xi dt.
\end{equation}
From \eqref{s4-e1}, \eqref{s4-e2} and the Lebesgue dominated Theorem, we conclude that 
\[
\int_X(Q^{(0)}_{M,k}\pit^{(0)}_{k,\leq k\nu_k})(x)dv_X(x)=k^n\int_XC(x)dv_X(x)+o(k^n)
\]
and hence 
\begin{equation}\label{s6-e0}
\abs{\int_X(Q^{(0)}_{M,k}\pit^{(0)}_{k,\leq k\nu_k})(x)dv_X(x)}\geq k^n\abs{\int_XC(x)dv_X(x)}+o(k^n).
\end{equation}
We first claim that for each $x\in X$, $C(x)$ is real. We notice that $\ol{\hat\psi(t)}=\hat\psi(-t)$ and $\chi_M(t)=\chi_M(-t)$. From this observation, we can check that
\[\begin{split}
\ol{C(x)}&=(2\pi)^{-n}\int e^{-it\xi}\ol{\hat\psi(t)}\chi_M(t)\det(M^\phi_x+2\xi\mathcal{L}_x)
\mathds{1}_{\Real_{x,0}}(\xi)d\xi dt\\
&=(2\pi)^{-n}\int e^{-it\xi}\hat\psi(-t)\chi_M(-t)\det(M^\phi_x+2\xi\mathcal{L}_x)
\mathds{1}_{\Real_{x,0}}(\xi)d\xi dt\\
&=(2\pi)^{-n}\int e^{it\xi}\hat\psi(t)\chi_M(t)\det(M^\phi_x+2\xi\mathcal{L}_x)
\mathds{1}_{\Real_{x,0}}(\xi)d\xi dt=C(x).
\end{split}\] 
Thus, $C(x)$ is real. 

Now, we claim that $\int_XC(x)dv_X(x)$ is positive and
\begin{equation}\label{s6-e1}
\int_XC(x)dv_X(x)>\frac{1}{2}(2\pi)^{1-n}\int_X\Bigr(\int\psi(\xi)\det(M^\phi_x+2\xi\mathcal{L}_x)
\mathds{1}_{\Real_{x,0}}(\xi)d\xi\Bigr)dv_X(x).
\end{equation} 
We have
\begin{equation}\label{s6-e2}
\begin{split}
&C(x)=(2\pi)^{-n}\int e^{it\xi}\hat\psi(t)\det(M^\phi_x+2\xi\mathcal{L}_x)
\mathds{1}_{\Real_{x,0}}(\xi)d\xi dt\\
&\quad+(2\pi)^{-n}\int e^{it\xi}\hat\psi(t)(\chi_M(t)-1)\det(M^\phi_x+2\xi\mathcal{L}_x)
\mathds{1}_{\Real_{x,0}}(\xi)d\xi dt\\
&=(2\pi)^{1-n}\int \psi(\xi)\det(M^\phi_x+2\xi\mathcal{L}_x)
\mathds{1}_{\Real_{x,0}}(\xi)d\xi \\
&\quad+(2\pi)^{-n}\int e^{it\xi}\hat\psi(t)(\chi_M(t)-1)\det(M^\phi_x+2\xi\mathcal{L}_x)
\mathds{1}_{\Real_{x,0}}(\xi)d\xi dt.
\end{split}
\end{equation} 
Here we used Fourier's inversion formula.
Since $0\leq\chi_M\leq 1$ and $\chi_M=1$ if $-M\leq t\leq M$, we have 
\begin{equation}\label{s6-e3}
\begin{split}
&\abs{\int e^{it\xi}\hat\psi(t)(\chi_M(t)-1)\det(M^\phi_x+2\xi\mathcal{L}_x)
\mathds{1}_{\Real_{x,0}}(\xi)d\xi dt}\\ 
&=\abs{\int_{\abs{t}\geq M}e^{it\xi}\hat\psi(t)(\chi_M(t)-1)\det(M^\phi_x+2\xi\mathcal{L}_x)\mathds{1}_{\Real_{x,0}}(\xi)d\xi dt}\\
&\leq\int_{\abs{t}\geq M}\abs{\hat\psi(t)}dt\abs{\int\det(M^\phi_x+2\xi\mathcal{L}_x)
\mathds{1}_{\Real_{x,0}}(\xi)d\xi}\\
&\leq\frac{2C_0}{M}\int \det(M^\phi_x+2\xi\mathcal{L}_x)
\mathds{1}_{\Real_{x,0}}(\xi)d\xi,
\end{split}
\end{equation} 
where $C_0=\sup_{t\in\Real}t^2\abs{\hat\psi(t)}$. Combining \eqref{s6-e3} with \eqref{s6-e2}, 
we get 
\[\begin{split}
C(x)&\geq (2\pi)^{1-n}\int\psi(\xi)\det(M^\phi_x+2\xi\mathcal{L}_x)
\mathds{1}_{\Real_{x,0}}(\xi)d\xi\\
&\quad-\frac{2C_0}{M}(2\pi)^{-n}\int\det(M^\phi_x+2\xi\mathcal{L}_x)
\mathds{1}_{\Real_{x,0}}(\xi)d\xi.
\end{split}\]
Combining this with \eqref{sp3-eI1}, \eqref{s6-e1} follows. 

From \eqref{s6-e1} and \eqref{s6-e0}, we obtain \eqref{s6-e1.1}.
\end{proof} 

\section{The asymptotic behaviour of $(Q^{(1)}_{M,k}\pit^{(1)}_{k,\leq k\nu_k}\ol{Q^{(1)}_{M,k}})(x)$} 

We will use the same notations as before. 
Fix $p\in X$. 
Let $x=(x_1,\ldots,x_{2n-1})=(z,\theta)$ be canonical coordinates of $X$ defined in 
some small neighborhood $D$ of $p$ and let $s$ be a local section of $L$ on $D$, $\abs{s}^2_{h^L}=e^{-\phi}$. 
We take $x$ and $s$ so that \eqref{scat-e1} hold.
Until further notice, we work with the local coordinates $x$ and the local section $s$. We also write $t$ to denote the coordinate $\theta$. We identify $D$ with some open set in $H_n=\Complex^{n-1}\times\Real$. Let $\nu_k>0$ be any sequence with $\nu_k\To0$ as $k\To\infty$. We are going to estimate 
$\limsup_{k\To\infty}k^{-n}(Q^{(1)}_{M,k}\pit^{(1)}_{k,\leq k\nu_k}\ol{Q^{(1)}_{M,k}})(p)$. For the convenience of the reader we recall some notations we used before. Let $e_j(z,\theta)$, $j=1,\ldots,n-1$, denote the basis of $T^{*(0,1)}X$, dual to $\ol U_j(z,\theta)$, $j=1,\ldots,n-1$, where $U_j$, $j=1,\ldots,n-1$, are as in \eqref{epch}. For $f\in\Omega^{0,1}(X,L^k)$, we write $f=\sum^{n-1}_{j=1}f_je_j$, $f_j\in C^\infty(X,L^k)$, 
$j=1,\ldots,n-1$. We call $f_j$ the component of $f$ along $e_j$. As \eqref{s2-e7-1}, for $j=1,\ldots,n-1$, we
define 
\begin{equation}\label{s7-e0} 
(Q^{(1)}_{M,k}S^{(1)}_{k,\,\leqslant k\nu_k,j}\ol{Q^{(1)}_{M,k}})(y):=\sup_{\alpha\in\,\cH_{b,\leq k\nu_k}^1(X, L^k),\,\norm{\alpha}_{h^{L^k}}=1}\abs{(Q^{(1)}_{M,k}\alpha)_j(y)}^2_{h^{L^k}}\,,
\end{equation} 
where $(Q^{(1)}_{M,k}\alpha)_j$ denotes the component of $Q^{(1)}_{M,k}\alpha$ along $e_j$. From \eqref{s2-e9}, we know that
\begin{equation}\label{s7-e0-1}
(Q^{(1)}_{M,k}\pit^{(1)}_{k,\,\leqslant k\nu_k}\ol{Q^{(1)}_{M,k}})(y)=\sum^{n-1}_{j=1}(Q^{(1)}_{M,k}S^{(1)}_{k,\,\leqslant k\nu_k,j}\ol{Q^{(1)}_{M,k}})(y),\ \ \forall y\in D.
\end{equation}

We consider $H_n$. Let $\psi(\eta)$ be as in the discussion after \eqref{sp3-eI} and let $\chi_M(t)$ be as in \eqref{sp3-eM0}.
The operator $Q^{(1)}_{M,H_n}$ is a continuous operator $\Omega^{0,1}(H_n)\To\Omega^{0,1}(H_n)$ defined as follows. Let $u\in\Omega^{0,1}(H_n)$. We write $u=\sum^{n-1}_{j=1}u_jd\ol z_j$, $u_j\in C^\infty(H_n)$, $j=1,\ldots,n-1$. Then, 
\begin{equation}\label{s7-eI}
\begin{split}
(Q^{(1)}_{M,H_n}u)(z,\theta)
&=\sum^{n-1}_{j=1}(\int e^{-it\eta}\psi(\eta)\chi_M(t)e^{-\frac{\beta}{2}(t+\theta)}u_j(z,t+\theta)dtd\eta)d\ol z_j\\
&:=\int e^{-it\eta}\psi(\eta)\chi_M(t)e^{-\frac{\beta}{2}(t+\theta)}u(z,t+\theta)dtd\eta.
\end{split}
\end{equation}
We remind that $\beta$ is as in \eqref{scat-e1}. For $j=1,\ldots,n-1$, put (Compare \eqref{s7-e0})
\begin{equation}\label{s7-eII}
\begin{split}
&(Q^{(1)}_{M,H_n}S^{(1)}_{j,H_n}\ol{Q^{(1)}_{M,H_n}})(0)\\
&=\sup\set{\abs{(Q^{(1)}_{M,H_n}\alpha)_j(0)}^2;\, \alpha\in\Omega^{0,1}(H_n),\Box^{(1)}_{b,H_n}\alpha=0,\norm{\alpha}_{\phi_0}=1},
\end{split}
\end{equation}
where 
\[(Q^{(1)}_{M,H_n}\alpha)(x)=\sum^{n-1}_{j=1}(Q^{(1)}_{M,H_n}\alpha)_j(x)d\ol z_j,\ \ (Q^{(1)}_{M,H_n}\alpha)_j\in 
C^\infty(H_n),\ \ j=1,\ldots,n-1,\] 
and 
\[\norm{\alpha}^2_{\phi_0}=\int\abs{\alpha(z,\theta)}^2e^{-\phi_0(z,\theta)}dv(z)d\theta,\ \ 
dv(z)=2^{n-1}dx_1dx_2\ldots dx_{2n-1}.\]
We recall that $\phi_0$ is as in the discussion after \eqref{scat-e20}. 
We first need 

\begin{thm} \label{s7-t1}
We have
\[
\limsup_{k\To\infty}k^{-n}(Q^{(1)}_{M,k}\pit^{(1)}_{k,\leq k\nu_k}\ol{Q^{(1)}_{M,k}})(0)\leqslant\sum^{n-1}_{j=1}
(Q^{(1)}_{M,H_n}S^{(1)}_{j,H_n}\ol{Q^{(1)}_{M,H_n}})(0).
\]
\end{thm}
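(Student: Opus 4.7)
The plan is to adapt the scaling technique of Section~4 (as used to prove Theorem~\ref{s6-t1}) to $(0,1)$-forms, reducing the estimate on $X$ to an estimate on the Heisenberg model $H_n$. Choose a local orthonormal frame $e_1,\ldots,e_{n-1}$ of $T^{*0,1}X$ near $p=0$ extending $d\ol z_j|_0$ (possible since $\frac{\pr}{\pr z_1},\ldots,\frac{\pr}{\pr z_{n-1}}$ is orthonormal at $p$ by \eqref{scat-e1}). Lemma~\ref{s2-l1} then gives
\[
(Q^{(1)}_{M,k}\pit^{(1)}_{k,\leq k\nu_k}\ol{Q^{(1)}_{M,k}})(0)=\sum_{j=1}^{n-1}(Q^{(1)}_{M,k}S^{(1)}_{k,\leq k\nu_k,j}\ol{Q^{(1)}_{M,k}})(0).
\]
Since $\cH^1_{b,\leq k\nu_k}(X,L^k)$ is finite-dimensional, for each $j$ the supremum in \eqref{s2-e7-1} is attained by a unit-norm $\alpha^{(j)}_k\in\cH^1_{b,\leq k\nu_k}(X,L^k)$, and it suffices to prove
\[
\limsup_{k\To\infty}k^{-n}\abs{(Q^{(1)}_{M,k}\alpha^{(j)}_k)_j(0)}^2_{h^{L^k}}\leq(Q^{(1)}_{M,H_n}S^{(1)}_{j,H_n}\ol{Q^{(1)}_{M,H_n}})(0)
\]
for each $j=1,\ldots,n-1$ and then sum.

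I would write $\alpha^{(j)}_k=s^k\Td\alpha^{(j)}_k$ on $D$ and set $\beta^{(j)}_k:=k^{-n/2}F^*_k\Td\alpha^{(j)}_k\in F^*_k\Omega^{0,1}(D_{\log k})$. Changing variables under $F_k$ gives $\norm{\beta^{(j)}_k}_{kF^*_k\phi,D_{\log k}}\leq1$; the intertwining identity \eqref{scat-e14}, together with the spectral bound $\norm{(\Box^{(1)}_{b,k})^m\alpha^{(j)}_k}_{h^{L^k}}\leq(k\nu_k)^m$ valid on the spectral subspace $\cH^1_{b,\leq k\nu_k}(X,L^k)$, yields
\[
\norm{(\Box^{(1)}_{b,k\phi,(k)})^m\beta^{(j)}_k}_{kF^*_k\phi,D_{\log k}}\leq\nu^m_k\quad\text{for every }m\in\mathbb{N}.
\]
Proposition~\ref{scat-p3} then provides, after passing to a subsequence, $\alpha^{(j)}\in\Omega^{0,1}(H_n)$ with $\Box^{(1)}_{b,H_n}\alpha^{(j)}=0$ such that $\beta^{(j)}_k$ converges uniformly with all derivatives on compact subsets of $H_n$ to $\alpha^{(j)}$. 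Fatou's lemma together with the uniform convergence $kF^*_k\phi\To\phi_0$ on compacts (see \eqref{scat-e21}) yields $\norm{\alpha^{(j)}}_{\phi_0}\leq1$.

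The crux is the identification $\lim_{k\To\infty}k^{-n/2}(Q^{(1)}_{M,k}\alpha^{(j)}_k)_j(0)=(Q^{(1)}_{M,H_n}\alpha^{(j)})_j(0)$. Since $\Phi^{t/k}(0)=(0,t/k)$, $\phi(0)=0$, and the components satisfy $\Td\alpha^{(j)}_{k,l}(0,t/k)=k^{n/2}\beta^{(j)}_{k,l}(0,t)$ in the orthonormal frame $e_l$ (which differs from $d\ol z_l$ at $(0,t/k)$ by a smooth matrix of order $O(t/k)=O(1/k)$ on $\mathrm{supp}\,\chi_M$), the defining formula \eqref{sp3-eV} yields
\[
k^{-n/2}(Q^{(1)}_{M,k}\alpha^{(j)}_k)_j(0)=\int e^{-it\eta}\psi(\eta)\chi_M(t)e^{-\frac{k}{2}\phi(0,t/k)}\beta^{(j)}_{k,j}(0,t)\,dt\,d\eta+o(1).
\]
The expansion \eqref{scat-e1} gives $\frac{k}{2}\phi(0,t/k)\To\frac{\beta t}{2}$ uniformly on $\mathrm{supp}\,\chi_M$; combined with the locally uniform convergence $\beta^{(j)}_{k,j}(0,t)\To\alpha^{(j)}_j(0,t)$, dominated convergence yields, by comparison with \eqref{s7-eI} evaluated at $(z,\theta)=0$,
\[
\lim_{k\To\infty}k^{-n/2}(Q^{(1)}_{M,k}\alpha^{(j)}_k)_j(0)=\int e^{-it\eta}\psi(\eta)\chi_M(t)e^{-\beta t/2}\alpha^{(j)}_j(0,t)\,dt\,d\eta=(Q^{(1)}_{M,H_n}\alpha^{(j)})_j(0).
\]
Since $\Box^{(1)}_{b,H_n}\alpha^{(j)}=0$ and $\norm{\alpha^{(j)}}_{\phi_0}\leq1$, the definition \eqref{s7-eII} applied to $\alpha^{(j)}/\norm{\alpha^{(j)}}_{\phi_0}$ (the zero case being trivial) gives $\abs{(Q^{(1)}_{M,H_n}\alpha^{(j)})_j(0)}^2\leq(Q^{(1)}_{M,H_n}S^{(1)}_{j,H_n}\ol{Q^{(1)}_{M,H_n}})(0)$, and summation over $j$ finishes the argument.

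The hard part will be the passage to the limit in the third paragraph: both the frame-change error between $d\ol z_j$ (used to define $Q^{(1)}_{M,k}$ in \eqref{sp3-eV}) and the orthonormal $e_j$ (governing Proposition~\ref{scat-p3})---controlled by the uniform $C^l$-bounds on compacts supplied by that proposition---and the control of $e^{-\frac{k}{2}\phi(0,t/k)}$ on $\mathrm{supp}\,\chi_M$ must be made rigorous to justify the $o(1)$ error above and the dominated convergence step.
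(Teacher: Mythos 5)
Your proposal follows essentially the same route as the paper's proof: decompose via Lemma~\ref{s2-l1} into the directional extremal functions, take (near-)extremal forms in $\cH^1_{b,\leq k\nu_k}(X,L^k)$, rescale by $F_k$ so that $\norm{\cdot}\leq1$ and $\norm{(\Box^{(1)}_{b,k\phi,(k)})^m\cdot}\leq\nu_k^m$, invoke Proposition~\ref{scat-p3} together with \eqref{scat-e21} to obtain a $\Box^{(1)}_{b,H_n}$-harmonic limit of $\phi_0$-norm at most one, and identify $\lim k^{-n/2}(Q^{(1)}_{M,k}\alpha_k)_j(0)$ with $(Q^{(1)}_{M,H_n}\alpha)_j(0)$ using $e_t=d\ol z_t+O(\abs{(z,\theta)})$ and $\tfrac{k}{2}\phi(0,\tfrac{t}{k})\To\tfrac{\beta}{2}t$ on ${\rm Supp\,}\chi_M$, exactly as in the paper. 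The only bookkeeping point to add is that one should first pass to a subsequence along which $k^{-n}\abs{(Q^{(1)}_{M,k}\alpha^{(j)}_k)_j(0)}^2$ realizes the $\limsup$ (as the paper does) before extracting the further convergent subsequence from Proposition~\ref{scat-p3}, so that the identified limit bounds the $\limsup$ and not merely one subsequential limit.
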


\begin{proof}
Fix $j\in\set{1,2,\ldots,n-1}$. We claim that
\begin{equation} \label{s7-eIII}
\limsup_{k\To\infty}k^{-n}(Q^{(1)}_{M,k}S^{(1)}_{k,\leq k\nu_k,j}\ol{Q^{(1)}_{M,k}})(0)\leqslant(Q^{(1)}_{M,H_n}S^{(1)}_{j,H_n}\ol{Q^{(1)}_{M,H_n}})(0).
\end{equation}
The definition \eqref{s7-e0} of $(Q^{(1)}_{M,k}S^{(1)}_{k,\leq k\nu_k,j}\ol{Q^{(1)}_{M,k}})(0)$
yields a sequence 
\[\alpha_{k_s}\in\cH^1_{b,\leq k_s\nu_{k_s}}(X, L^{k_s}),\ \  k_1<k_2<\ldots,\]
such that $\norm{\alpha_{k_s}}_{h^{L^{k_s}}}=1$ and
\begin{equation}\label{s7-eIV}
 \lim_{s\To\infty}k_s^{-n}\abs{(Q^{(1)}_{M,k_s}\alpha_{k_s})_j(0)}^2_{h^{L^{k_s}}}=\limsup_{k\To\infty}k^{-n}(Q^{(1)}_{M,k}S^{(1)}_{k,\leq k\nu_k,j}\ol{Q^{(1)}_{M,k}})(0)\,,
\end{equation}
where $(Q^{(1)}_{M,k_s}\alpha_{k_s})_j$ is the component
of $Q^{(1)}_{M,k_s}\alpha_{k_s}$ along $e_j$. On $D$, we write 
\[\alpha_{k_s}=s^{k_s}\Td\alpha_{k_s},\ \ \Td\alpha_{k_s}\in\Omega^{0,1}(D),\]
and on $D_{\log k_s}$, put
\[\gamma_{k_s}=k_s^{-\frac{n}{2}}F^*_{k_s}\Td\alpha_{k_s}\in F^*_{k_s}\Omega^{0,1}(D_{\log k_s}).\]
We recall that $F^*_{k_s}$ is the scaling map given by \eqref{scat-e6-1}.
It is not difficult to see that
\[\norm{\gamma_{k_s}}_{k_sF^*_{k_s}\phi,D_{\log k_s}}\leqslant1.\] 
Moreover, from \eqref{scat-e14} and \eqref{scat-e2}, it is straightforward to see that
\[\norm{(\Box^{(1)}_{b,k_s\phi,(k_s)})^m\gamma_{k_s}}_{k_sF^*_{k_s}\phi,D_{\log k_s}}\leq
\frac{1}{k^m_s}\norm{(\Box^{(1)}_{b,k_s})^m\alpha_{k_s}}_{h^{L^{k_s}}}\leq\nu^m_{k_s},\ \ \forall m\in\mathbb N.\] 
Proposition~\ref{scat-p3} yields a subsequence $\set{\gamma_{k_{s_u}}}$ of $\set{\gamma_{k_s}}$ such that for each $t$ in the set $\set{1,2,\ldots,n-1}$, $\gamma_{k_{s_u},t}$ converges uniformly with all derivatives on any compact subset of $H_n$ to a smooth function $\gamma_t$, where $\gamma_{k_{s_u},t}$ denotes the component of $\gamma_{k_{s_u}}$ along $e_t(\frac{z}{\sqrt{k}},\frac{\theta}{k})$. Set
$\gamma=\sum^{n-1}_{t=1}\gamma_td\ol z_t$. Then we have
$\Box^{(1)}_{b,H_n}\gamma=0$ and, by \eqref{scat-e21}, $\norm{\gamma}_{\phi_0}\leqslant  1$. Thus,
\begin{equation} \label{s7-eV}
\abs{(Q^{(1)}_{M,H_n}\gamma)_j(0)}^2\leqslant\frac{\abs{(Q^{(1)}_{M,H_n}\gamma)_j(0)}^2}{\norm{\gamma}^2_{\phi_0}}\leqslant  (Q^{(1)}_{M,H_n}S^{(1)}_{j,H_n}\ol{Q^{(1)}_{M,H_n}})(0),
\end{equation}
where 
\[Q^{(1)}_{M,H_n}\gamma=\sum^{n-1}_{t=1}(Q^{(1)}_{M,H_n}\gamma)_td\ol z_t,\ \ (Q^{(1)}_{M,H_n}\gamma)_t\in C^\infty(H_n),\ \ t=1,\ldots,n-1.\]
We claim that
\begin{equation} \label{s7-eVI}
\lim_{u\To\infty}k_{s_u}^{-n}\abs{(Q^{(1)}_{M,k_{s_u}}\alpha_{k_{s_u}})_j(0)}^2
=\abs{(Q^{(1)}_{M,H_n}\gamma)_j(0)}^2.
\end{equation}
We write
\[\Td\alpha_{k_{s_u}}=\sum^{n-1}_{j=1}\Td\alpha_{k_{s_u},j}e_j=\sum^{n-1}_{j=1}\hat\alpha_{k_{s_u},j}d\ol z_j.\]
Since $e_t=d\ol z_t+O(\abs{(z,\theta)}$, $t=1,\ldots,n-1$, we conclude that for all $t=1,\ldots,n-1$,
\begin{equation}\label{s7-eVI-I}
\lim_{u\To\infty}k_{s_u}^{-\frac{n}{2}}F^*_{k_{s_u}}\Td\alpha_{k_{s_u},t}=
\lim_{u\To\infty}k_{s_u}^{-\frac{n}{2}}F^*_{k_{s_u}}\hat\alpha_{k_{s_u},t}=\gamma_t.
\end{equation}
Moreover, from the definition of $Q^{(1)}_{M,k_{s_u}}$(see \eqref{sp3-eV}), it is easy to see that
\begin{equation}\label{s7-eVI-II}
\abs{(Q^{(1)}_{M,k_{s_u}}\alpha_{k_{s_u}})_j(0)}_{h^{L^k}}=\abs{\int e^{-it\eta}\psi(\eta)\chi_M(t)e^{-\frac{k_{s_u}}{2}(F^*_{k_{s_u}}\phi)(0,t)}F^*_{k_{s_u}}\hat\alpha_{k_{s_u},j}(0,t)dt}.
\end{equation}
Combining \eqref{s7-eVI-II} with \eqref{s7-eVI-I}, \eqref{s7-eI} and notice that 
$-\frac{k}{2}(F^*_k\phi)(0,t)\To-\frac{\beta}{2}t$, as $k\To\infty$, uniformly on ${\rm Supp\,}\chi_M$, \eqref{s7-eVI} follows.
The claim \eqref{s7-eIII} follows from \eqref{s7-eIV}, \eqref{s7-eV} and \eqref{s7-eVI}. Finally, \eqref{s7-eIII} and \eqref{s7-e0-1} imply the conclusion of the theorem.
\end{proof}

In order to estimate $\sum^{n-1}_{j=1}
(Q^{(1)}_{M,H_n}S^{(1)}_{j,H_n}\ol{Q^{(1)}_{M,H_n}})(0)$, we need the some preparation. Put 
\begin{equation}\label{s7-VIII-I}
\Phi_0=\sum^{n-1}_{j,t=1}\mu_{j,\,t}\ol z_jz_t,
\end{equation}
where $\mu_{j,t}$, $j,t=1,\ldots,n-1$, are as in \eqref{scat-e1}. Note that 
\[\phi_0(z,\theta)=\Phi_0(z)+\beta\theta.\]
For $q=0,1,\ldots,n-1$, we denote by $L^2_{(0,q)}(H_n, \Phi_0)$ the completion of $\Omega_0^{0,q}(H_n)$
with respect to the norm $\|\cdot\|_{\Phi_0}$, where
\[
\|u\|^2_{\Phi_0}=\int_{H_n}\abs{u}^2e^{-\Phi_0}dv(z)d\theta\,,\quad u\in\Omega_0^{(0,q)}(H_n)\,.\]

Let $u(z, \theta)\in\Omega^{0,1}(H_n)$ with $\norm{u}_{\phi_0}=1$, $\Box^{(1)}_{b,H_n}u=0$. Put
$v(z, \theta)=u(z, \theta)e^{-\frac{\beta}{2}\theta}$.
We have \[\int_{H_n}\!\abs{v(z,\theta)}^2e^{-\Phi_0(z)}dv(z)d\theta=1.\]
Choose $\chi(\theta)\in C^\infty_0(\Real)$ so that $\chi(\theta)=1$ when $\abs{\theta}<1$ and $\chi(\theta)=0$ when $\abs{\theta}>2$ and set $\chi_j(\theta)=\chi(\theta/j)$, $j\in\mathbb{N}$. Let
\begin{equation} \label{s7-eIX}
\hat v_j(z, \eta)=\int_{\Real}\!v(z,\theta)\chi_j(\theta)e^{-i\theta\eta}d\theta\in\Omega^{0,1}(H_n),\ j=1,2,\ldots.
\end{equation}
From Parseval's formula, we have
\begin{align*}
&\int_{H_n}\!\abs{\hat v_j(z,\eta)-\hat v_t(z,\eta)}^2e^{-\Phi_0(z)}d\eta dv(z)\\
&=2\pi\int_{H_n}\!\abs{v(z,\theta)}^2\abs{\chi_j(\theta)-\chi_t(\theta)}^2e^{-\Phi_0(z)}d\theta dv(z)\To0,\  j,t\To\infty.
\end{align*}
Thus, there is $\hat v(z, \eta)\in L^2_{(0,1)}(H_n, \Phi_0)$ such that $\hat v_j(z,\eta)\To\hat v(z, \eta)$ in $L^2_{(0,1)}(H_n, \Phi_0)$. We have 
\begin{equation}\label{alpc}
\int\abs{\hat v(z,\eta)}^2e^{-\Phi_0(z)}dv(z)d\eta=2\pi.
\end{equation}
We call $\hat v(z, \eta)$ the Fourier transform of $v(z, \theta)$ with respect to $\theta$. Formally,
\begin{equation} \label{s7-eX}
\hat v(z, \eta)=\int_{\Real}\! e^{-i\theta\eta}v(z,\theta)d\theta.
\end{equation} 


The following theorem is one of the main technical results in~\cite{HM09} (see section 3 in~\cite{HM09}, for the proof)

\begin{thm}\label{s7-t3}
With the notations used above.
Let $u(z, \theta)\in\Omega^{0,1}(H_n)$  with $\norm{u}_{\phi_0}=1$, 
$\Box^{(1)}_{b,H_n}u=0$ and let $\hat v(z, \eta)\in L^2_{(0,1)}(H_n, \Phi_0)$ be the Fourier transform of the function $u(z,\theta)e^{-\frac{\beta}{2}\theta}$ with respect to $\theta$(see the discussion before \eqref{s7-eX}). 
Then, for almost all $\eta\in\Real$, we have 
$\hat v(z,\eta)$ is smooth with respect to $z$
and 
\[\int_{\Complex^{n-1}}\abs{\hat v(z,\eta)}^2e^{-\Phi_0(z)}dv(z)<\infty\]
and 
\begin{equation}\label{s7-eXI}
\begin{split}
&\abs{\hat v(z,\eta)}^2\\
&\quad\leq (2\pi)^{-n+1}e^{\Phi_0(z)}\mathds{1}_{\mathbb R_{p,1}}(\eta)\abs{\det(M^\phi_p+2\eta\mathcal{L}_p)}
\int_{\Complex^{n-1}}\abs{\hat v(w,\eta)}^2e^{-\Phi_0(w)}dv(w)
\end{split}
\end{equation}
for all $z\in\Complex^{n-1}$. 
\end{thm}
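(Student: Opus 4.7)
The plan is to Fourier transform the harmonicity equation $\Box^{(1)}_{b,H_n}u=0$ in the $\theta$ variable, reducing the problem to a one-parameter family of second order equations on $\mathbb{C}^{n-1}$ indexed by the dual variable $\eta$, and then analyze each fiber equation via a Bargmann-type reproducing kernel.

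First I would conjugate by the factor $e^{\beta\theta/2}$: writing $u = e^{\beta\theta/2}v$ and inspecting the explicit formula \eqref{scat-e25} for $\Box^{(1)}_{b,H_n}$, a direct calculation shows that after conjugation the $\beta$-dependent terms in $\ol{U}^{*,\phi_0}_{j,H_n}$ and in the matrix-valued zeroth order piece $\mu_{j,t}+i\lambda_j\delta_{j,t}\beta$ telescope, so that the transformed operator is governed by the weight $\Phi_0(z)$ only. This explains the choice of $v = u\,e^{-\beta\theta/2}$. Since $\Box^{(1)}_{b,H_n}$ is translation-invariant in $\theta$ (equivalently, commutes with $T$), Fourier transforming in $\theta$ and using the substitution $\pr_\theta \mapsto i\eta$ produces, for each $\eta\in\Real$, a $z$-operator $\hat\Box^{(1)}_\eta$ on $(0,1)$-forms on $\mathbb{C}^{n-1}$ whose second-order symbol and zeroth-order term together encode the Hermitian form $M^\phi_p + 2\eta\mathcal{L}_p$. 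The distributional equation satisfied by $\hat v(\cdot,\eta)$ is then $\hat\Box^{(1)}_\eta\hat v(\cdot,\eta)=0$ for almost every $\eta$, and the hypoellipticity of this elliptic-in-$z$ operator yields the smoothness of $\hat v(z,\eta)$ in $z$ asserted in the first part of the theorem.

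Next I would describe, for fixed $\eta$, the $L^2(\mathbb{C}^{n-1},e^{-\Phi_0})$ kernel of $\hat\Box^{(1)}_\eta$ on $(0,1)$-forms. By a unitary change of variables that simultaneously diagonalizes $\mathcal{L}_p$ and puts $M^\phi_p+2\eta\mathcal{L}_p$ in normal form, $\hat\Box^{(1)}_\eta$ reduces to a sum of (possibly shifted) complex harmonic oscillators on $\mathbb{C}^{n-1}$. Elementary analysis of this model (carried out in Section 3 of \cite{HM09}) shows that the $L^2$ kernel is non-trivial precisely when $M^\phi_p+2\eta\mathcal{L}_p$ has exactly one negative and $n-2$ positive eigenvalues, that is when $\eta\in\Real_{p,1}$; otherwise the kernel is $\{0\}$. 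In the non-trivial regime the kernel is a finite-dimensional weighted Bergman-type space whose reproducing kernel $K_\eta(z,w)$ on the diagonal evaluates to
\[
K_\eta(z,z) = (2\pi)^{1-n}e^{\Phi_0(z)}\abs{\det(M^\phi_p + 2\eta\mathcal{L}_p)}.
\]

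Finally, the pointwise estimate \eqref{s7-eXI} is precisely the reproducing kernel bound $\abs{\hat v(z,\eta)}^2 \leq K_\eta(z,z)\,\norm{\hat v(\cdot,\eta)}^2_{L^2(\Phi_0)}$ applied fiberwise, multiplied by the indicator $\mathds{1}_{\Real_{p,1}}(\eta)$ that records the dichotomy above. The step I expect to be the main obstacle is the Bargmann-model computation of the dimension and the explicit diagonal value of $K_\eta$ in the presence of the general (non-diagonal) Hermitian form $M^\phi_p+2\eta\mathcal{L}_p$; this is where the exact normalization $(2\pi)^{1-n}\abs{\det(M^\phi_p+2\eta\mathcal{L}_p)}$ is pinned down, and I would quote the corresponding computations from \cite{HM09} rather than redo them.
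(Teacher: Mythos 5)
Your proposal follows essentially the same route as the proof the paper relies on: the paper does not prove Theorem~\ref{s7-t3} itself but refers to \cite[Section 3]{HM09}, where the argument is precisely the partial Fourier transform in $\theta$ (after the $e^{\frac{\beta}{2}\theta}$ conjugation), the fiberwise analysis of the resulting weighted model operator on $\Complex^{n-1}$, the signature dichotomy producing the factor $\mathds{1}_{\Real_{p,1}}(\eta)$, and the extremal/reproducing-kernel bound with diagonal value $(2\pi)^{1-n}e^{\Phi_0(z)}\abs{\det(M^\phi_p+2\eta\mathcal{L}_p)}$. The only slips are cosmetic and do not affect the argument: the fiberwise kernel is in general an infinite-dimensional Fock-type space rather than finite-dimensional (the reproducing-kernel inequality holds for any closed subspace), and conjugating by $e^{\frac{\beta}{2}\theta}$ alone leaves residual $\beta$-terms in the first-order factors, which are removed by the additional unimodular gauge $e^{\frac{i}{2}\beta\sum_j\lambda_j\abs{z_j}^2}$ (the same factor that appears in \eqref{s5-epf2}).
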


Now, we can prove

\begin{prop}\label{s7-p1}
Let $u(z, \theta)\in\Omega^{0,1}(H_n)$  with $\norm{u}_{\phi_0}=1$, 
$\Box^{(1)}_{b,H_n}u=0$. We have 
\begin{equation}\label{s7-ea1}
\abs{(Q^{(1)}_{M,H_n}u)(0)}^2\leq\frac{E^2}{n-1},
\end{equation}
where $E$ is as in \eqref{change}.
\end{prop}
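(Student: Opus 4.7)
The plan is to reduce the estimate to the pointwise bound on the partial Fourier transform $\hat v(0,\eta)$ supplied by Theorem~\ref{s7-t3} and to exploit the fact that $\mathrm{Supp}\,\psi$ is disjoint from $\Real_{p,1}$. First I set $v(z,\theta):=u(z,\theta)e^{-\beta\theta/2}$ and perform the $\eta$-integration in \eqref{s7-eI}, obtaining componentwise in the $d\ol z_j$ basis
\[(Q^{(1)}_{M,H_n}u)_j(0)=\int\chi_M(t)\hat\psi(t)v_j(0,t)\,dt.\]
By Plancherel in the $\theta$-variable this rewrites as
\[(Q^{(1)}_{M,H_n}u)_j(0)=\frac{1}{2\pi}\int\hat v_j(0,\eta)A(\eta)\,d\eta,\qquad A(\eta):=\int\chi_M(t)\hat\psi(t)e^{it\eta}\,dt,\]
where $\hat v$ is the partial Fourier transform constructed before \eqref{s7-eX}.

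The next step is to establish $|A(\eta)|\le 2C_0/M$ for $\eta\in\Real_{p,1}$. By Fourier inversion $\int\hat\psi(t)e^{it\eta}\,dt=2\pi\psi(\eta)$, and by the choice of $\delta_0$ in \eqref{sp3-eI} the form $M^\phi_p+2\eta\mathcal{L}_p$ is semi-positive for $|\eta|\le\delta_0$, so $\Real_{p,1}\cap\mathrm{Supp}\,\psi=\emptyset$ and $\psi$ vanishes on $\Real_{p,1}$. Consequently on $\Real_{p,1}$,
\[A(\eta)=-\int\bigl(1-\chi_M(t)\bigr)\hat\psi(t)e^{it\eta}\,dt,\]
and since $1-\chi_M$ vanishes on $\{|t|\le M\}$ while $|\hat\psi(t)|\le C_0/t^2$ by \eqref{sp3-eI0}, this integral is bounded in modulus by $\int_{|t|>M}C_0/t^2\,dt=2C_0/M$.

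The final step is a weighted Cauchy--Schwarz matched to the pointwise control from Theorem~\ref{s7-t3}. Splitting $\hat v_jA=\bigl(\hat v_j/|\det(M^\phi_p+2\eta\mathcal{L}_p)|^{1/2}\bigr)\bigl(A\,|\det(M^\phi_p+2\eta\mathcal{L}_p)|^{1/2}\bigr)$ on $\Real_{p,1}$ (where $\hat v_j(0,\cdot)$ is supported a.e.), applying Cauchy--Schwarz to each component, and summing over $j=1,\dots,n-1$, I obtain
\[|(Q^{(1)}_{M,H_n}u)(0)|^2\le\frac{1}{4\pi^2}\Bigl(\int_{\Real_{p,1}}\frac{|\hat v(0,\eta)|^2}{|\det(M^\phi_p+2\eta\mathcal{L}_p)|}\,d\eta\Bigr)\Bigl(\int_{\Real_{p,1}}|A(\eta)|^2|\det(M^\phi_p+2\eta\mathcal{L}_p)|\,d\eta\Bigr).\]
Theorem~\ref{s7-t3} bounds the first factor by $(2\pi)^{1-n}\int G(\eta)\,d\eta=(2\pi)^{2-n}$ using \eqref{alpc} (with $G(\eta):=\int|\hat v(w,\eta)|^2e^{-\Phi_0(w)}dv(w)$), and the second by $(4C_0^2/M^2)\int_{\Real_{p,1}}|\det(M^\phi_p+2\eta\mathcal{L}_p)|\,d\eta$. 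After simplification,
\[|(Q^{(1)}_{M,H_n}u)(0)|^2\le\frac{4C_0^2}{M^2}(2\pi)^{-n}\int|\det(M^\phi_p+2\eta\mathcal{L}_p)|\mathds{1}_{\Real_{p,1}}(\eta)\,d\eta,\]
which is strictly less than $E^2/(n-1)$ by the choice of $M$ in \eqref{sp3-eI2}.

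The main technical subtlety will be justifying the Plancherel identity above, since $v(0,\cdot)$ is only a priori smooth (not in $L^2$) and $\hat v$ is defined as an $L^2$-limit via the cut-offs $\chi_j(\theta)$ from \eqref{s7-eIX}. Because $\chi_j$ equals $1$ on $\mathrm{Supp}\,\chi_M$ for $j>2M$, the integral defining $(Q^{(1)}_{M,H_n}u)_j(0)$ is unchanged when $v$ is replaced by $v\chi_j$, so the identity follows by applying classical Plancherel to the truncated function and then using the $z$-regularity of $\hat v(\cdot,\eta)$ afforded by Theorem~\ref{s7-t3} to pass to the limit at $z=0$.
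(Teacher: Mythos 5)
Your quantitative estimates coincide with the paper's: the $\eta$-integration giving $\hat\psi(t)\chi_M(t)$, the splitting $A(\eta)=2\pi\psi(\eta)+\alpha_1(\eta)$ with $\abs{\alpha_1}\leq 2C_0/M$ from \eqref{sp3-eI0}, the vanishing of the $\psi$-term because ${\rm Supp\,}\psi\bigcap\Real_{p,1}=\emptyset$, and the combination of Theorem~\ref{s7-t3}, \eqref{alpc} and \eqref{sp3-eI2} (your weighted Cauchy--Schwarz with weight $\abs{\det(M^\phi_p+2\eta\mathcal{L}_p)}$ is a repackaging of the paper's step of inserting \eqref{s7-eXI} before Cauchy--Schwarz, and yields the same bound $\frac{4C_0^2}{M^2}(2\pi)^{-n}\int_{\Real_{p,1}}\abs{\det(M^\phi_p+2\eta\mathcal{L}_p)}d\eta<E^2/(n-1)$). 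The genuine divergence is the step you flag yourself: the identity $(Q^{(1)}_{M,H_n}u)_j(0)=\frac{1}{2\pi}\int\hat v_j(0,\eta)A(\eta)\,d\eta$. Your justification --- classical Plancherel for the truncation and then ``pass to the limit at $z=0$ using the $z$-regularity of $\hat v$'' --- does not work as written. For each fixed truncation index $m>2M$ Parseval does give $(Q^{(1)}_{M,H_n}u)_j(0)=\frac{1}{2\pi}\int\hat v_{m,j}(0,\eta)A(\eta)d\eta$, but the truncated transforms of \eqref{s7-eIX} converge to $\hat v$ only in $L^2_{(0,1)}(H_n,\Phi_0)$, i.e.\ in $L^2$ of $(z,\eta)$ jointly; this gives no control of their values at the single point $z=0$. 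Moreover the truncated transforms do not satisfy the equation underlying Theorem~\ref{s7-t3} (the cut-off $\chi_m(\theta)$ destroys it), so no pointwise bound or locally uniform convergence in $z$ is available for them; the smoothness of the limit $\hat v(\cdot,\eta)$ says nothing about how the truncations behave at $z=0$. This is precisely the difficulty the paper's mollifier $g_m(z)=m^{2n-2}\varphi(mz)e^{\Phi_0(z)}$ is built to avoid: there one only pairs $\hat v$ against $L^2$ test functions in $(z,\eta)$, uses \eqref{s7-eXI} on the limit $\hat v$ inside the smeared integral, and lets $m\To\infty$ only at the very end.

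Your route can be completed without the mollifier, but it needs an argument you did not supply: from the ($m$-independent) Parseval identity at a general point $z$ and the $L^2(dv(z)d\eta)$ convergence, one first gets $\int\chi_M(t)\hat\psi(t)v_j(z,t)dt=\frac{1}{2\pi}\int\hat v_j(z,\eta)A(\eta)d\eta$ for almost every $z$ (pair against $A(\eta)$, which lies in $L^2\bigcap L^\infty$); one must then show both sides are continuous in $z$ --- for the right-hand side this requires dominated convergence based on \eqref{s7-eXI}, the boundedness of $\Real_{p,1}$ coming from $Y(1)$ and \eqref{e-seaII}, and $\int\bigl(\int\abs{\hat v(w,\eta)}^2e^{-\Phi_0(w)}dv(w)\bigr)d\eta=2\pi$ from \eqref{alpc} --- and only then evaluate at $z=0$. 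As it stands, the proposal has a gap exactly at this point evaluation; everything else is correct and follows the paper's argument.
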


\begin{proof}
Let $\varphi\in C^\infty_0(\Complex^{n-1},\Real)$ such that $\int_{\Complex^{n-1}}\!\varphi(z)dv(z)=1$, $\varphi\geqslant0$, $\varphi(z)=0$ if $\abs{z}>1$. Put $g_m(z)=m^{2n-2}\varphi(mz)e^{\Phi_0(z)}$, $m=1,2,\ldots$.
Then, $\int_{\Complex^{n-1}}\!g_m(z)e^{-\Phi_0(z)}dv(z)=1$ and 
\begin{equation}\label{s7-ea2}
\begin{split}
(Q^{(1)}_{M,H_n}u)(0)&=\lim_{m\To\infty}\int e^{-it\eta}\psi(\eta)\chi_M(t)e^{-\frac{\beta}{2}t}e^{-\Phi_0(z)}g_m(z)u(z,t)dtdv(z)\\
&=\lim_{m\To\infty}\int\hat\psi(t)\chi_M(t)e^{-\frac{\beta}{2}t}e^{-\Phi_0(z)}g_m(z)u(z,t)dtdv(z).
\end{split}
\end{equation}
Choose $\chi(t)\in C^\infty_0(\Real)$ so that $\chi(t)=1$ when $\abs{t}<1$ and $\chi(t)=0$ when $\abs{t}>2$ and set $\chi_j(t)=\chi(t/j)$, $j\in\mathbb{N}$. For each $m$, we have 
\begin{equation}\label{GPCeI}
\begin{split}
&\int\hat\psi(t)\chi_M(t)e^{-\frac{\beta}{2}t}e^{-\Phi_0(z)}g_m(z)u(z,t)dtdv(z)\\
&=\lim_{j\To\infty}\int\hat\psi(t)\chi_M(t)e^{-\frac{\beta}{2}t}e^{-\Phi_0(z)}g_m(z)u(z,t)\chi_j(t)dtdv(z).
\end{split}
\end{equation} 
From Parseval's formula, we can check that for each $j$, 
\begin{equation}\label{GPCeII}
\begin{split}
&\int\hat\psi(t)\chi_M(t)e^{-\frac{\beta}{2}t}e^{-\Phi_0(z)}g_m(z)u(z,t)\chi_j(t)dtdv(z)\\
&=\frac{1}{2\pi}\int\alpha(\eta)\hat v_j(z,\eta)g_m(z)e^{-\Phi_0(z)}d\eta dv(z),
\end{split}
\end{equation} 
where $\hat v_j(z,\eta)$ is as in \eqref{s7-eIX} and 
\begin{equation}\label{GPCeIII}
\alpha(\eta)=\int e^{-it\eta}\hat\psi(\eta)\chi_M(t)dt.
\end{equation}
From \eqref{GPCeII} and \eqref{GPCeI}, we obtain for each $m$, 
\begin{equation}\label{GPCeIV}
\begin{split}
&\int\hat\psi(\eta)\chi_M(t)e^{-\frac{\beta}{2}t}e^{-\Phi_0(z)}g_m(z)u(z,t)dv(z)dt\\
&=\frac{1}{2\pi}\int\hat v(z,\eta)g_m(z)\alpha(\eta)e^{-\Phi_0(z)}dv(z)d\eta,
\end{split}
\end{equation}
where $\hat v(z,\eta)$ is as in \eqref{s7-eX}. Now, 
\[\begin{split}
\alpha(\eta)&=\int e^{-it\eta}\hat\psi(t)\chi_M(t)dt\\
&=\int e^{-it\eta}\hat\psi(t)dt+\int e^{-it\eta}\hat\psi(t)(\chi_M(t)-1)dt\\
&=(2\pi)\psi(\eta)+\alpha_1(\eta),
\end{split}\]
where 
\[\alpha_1(\eta)=\int e^{-it\eta}\hat\psi(t)(\chi_M(t)-1)dt.\]
Combining this with \eqref{GPCeIV}, we have 
\begin{equation}\label{GPCeV}
\begin{split}
&\int\hat\psi(t)\chi_M(t)e^{-\frac{\beta}{2}t}g_m(z)u(z,t)e^{-\Phi_0(z)}dv(z)dt\\
&=\int\hat v(z,\eta)g_m(z)\psi(\eta)e^{-\Phi_0(z)}dv(z)d\eta+\frac{1}{2\pi}\int\hat v(z,\eta)g_m(z)\alpha_1(\eta)e^{-\Phi_0(z)}dv(z)d\eta.
\end{split}
\end{equation}
Since $\hat v(z,\eta)\in L^2_{(0,1)}(H_n,\Phi_0)$, it is easy to see that 
\begin{equation}\label{epaI}
\int\abs{\psi(\eta)}\abs{\hat v(z,\eta)}\abs{g_m(z)}e^{-\Phi_0(z)}d\eta dv(z)<\infty,\ \ \forall m>0.
\end{equation} 
From \eqref{s7-eXI}, we see that $\hat v(z,\eta)=0$ almost everywhere on $\Real\setminus\Real_{p,1}$, for every $z\in\Complex^{n-1}$. Since ${\rm Supp\,}\psi\bigcap\Real_{p,1}=\emptyset$(see the discussion after \eqref{sp3-eI}), we conclude that for each $m>0$, 
\begin{equation}\label{epaII}
z\To\int\psi(\eta)\hat v(z,\eta)g_m(z)e^{-\Phi_0(z)}d\eta=0.
\end{equation} 
From \eqref{epaI}, \eqref{epaII} and Fubini's Theorem, we obtain 
\begin{equation}\label{epaIII}
\int\hat v(z, \eta)g_m(z)\psi(\eta)e^{-\Phi_0(z)}d\eta dv(z)=0
\end{equation}
for every $m>0$. From \eqref{epaIII} and \eqref{GPCeV}, we get for each $m$, 
\begin{equation}\label{GPCeVI}
\begin{split}
&\int\hat\psi(t)\chi_M(t)e^{-\frac{\beta}{2}t}g_m(z)u(z,t)e^{-\Phi_0(z)}dv(z)dt\\
&=\frac{1}{2\pi}\int\hat v(z,\eta)g_m(z)\alpha_1(\eta)e^{-\Phi_0(z)}dv(z)d\eta.
\end{split}
\end{equation}
Since $0\leq\chi_M\leq 1$ and $\chi_M=1$ if $-M\leq t\leq M$, we have 
\begin{equation}\label{GPCeVII}
\abs{\alpha_1(\eta)}=\abs{\int_{\abs{t}\geq M}e^{-it\eta}\hat\psi(t)(\chi_M(t)-1)dt}\leq\int_{\abs{t}\geq M}\abs{\hat\psi(t)}dt\leq\frac{2C_0}{M},\ \ \forall\eta\in\Real,
\end{equation}
where $C_0=\sup_{t\in\Real}t^2\abs{\hat\psi(t)}$. Put 
\[f(\eta):=\int_{\Complex^{n-1}}\abs{\hat v(z,\eta)}^2e^{-\Phi_0(z)}dv(z).\]
From \eqref{GPCeVII} and \eqref{GPCeVI}, we have for each $m$, 
\begin{equation}\label{GPCeVIII}
\begin{split}
&\abs{\int\hat\psi(\eta)\chi_M(t)e^{-\frac{\beta}{2}t}g_m(z)u(z,t)e^{-\Phi_0(z)}dv(z)dt}\\
&\leq\frac{2C_0}{M}\frac{1}{2\pi}\int\abs{\hat v(z,\eta)}g_m(z)e^{-\Phi_0(z)}dv(z)d\eta=\frac{2C_0}{M}\frac{1}{2\pi}\int_{\abs{z}\leq1}\abs{\hat v(\frac{z}{m},\eta)}\varphi(z)dv(z)d\eta\\
&\stackrel{by \eqref{s7-eXI}}\leq\frac{2C_0}{M}(2\pi)^{-\frac{n+1}{2}}\int_{\abs{z}\leq1}e^{\Phi(\frac{z}{m})}\mathds{1}_{\Real_{p,1}}(\eta)\abs{\det(M^{\phi}_p+2\eta\mathcal{L}_p)}^{\frac{1}{2}}\sqrt{f(\eta)}\varphi(z)dv(z)d\eta\\
&\leq\frac{2C_0}{M}(2\pi)^{-\frac{n+1}{2}}\sup\{e^{\Phi_0(\frac{z}{m})};\, \abs{z}\leq1\}\Bigr(\int_{\Real_{p,1}}\abs{\det(M^{\phi}_p+2\eta\mathcal{L}_p)}d\eta\Bigr)^{\frac{1}{2}}\Bigr(\int f(\eta)d\eta\Bigr)^{\frac{1}{2}}\\
&\stackrel{by \eqref{alpc}}=\frac{2C_0}{M}(2\pi)^{-\frac{n}{2}}\sup\{e^{\Phi_0(\frac{z}{m})};\, \abs{z}\leq1\}\Bigr(\int_{\Real_{p,1}}\abs{\det(M^{\phi}_p+2\eta\mathcal{L}_p)}d\eta\Bigr)^{\frac{1}{2}}.
\end{split}
\end{equation}
Combining \eqref{GPCeVIII} with \eqref{s7-ea2} and \eqref{sp3-eI2}, we get 
\[\abs{(Q^{(1)}_{M,H_n}u)(0)}\leq\frac{2C_0}{M}\Bigr((2\pi)^{-n}\int_{\Real_{p,1}}\abs{\det(M^{\phi}_p+2\eta\mathcal{L}_p)}d\eta\Bigr)^{\frac{1}{2}}<\frac{E}{\sqrt{n-1}},\]
where $E$ is as in \eqref{change}. \eqref{s7-ea1} follows.
\end{proof}

In view of Proposition~\ref{s7-p1}, we have proved that for all $u(z, \theta)\in\Omega^{0,1}(H_n)$ with $\norm{u}_{\phi_0}=1$, 
$\Box^{(1)}_{b,H_n}u=0$, we have 
\[\abs{(Q^{(1)}_{M,H_n}u)(0)_j}^2\leq\abs{(Q^{(1)}_{M,H_n}u)(0)}^2<
\frac{E^2}{n-1},\]
for all $j=1,\ldots,n-1$, where $(Q^{(1)}_{M,H_n}u)(0)=\sum^{n-1}_{j=1}(Q^{(1)}_{M,H_n}u)_j(0)d\ol z_j$ and $E$ is as in \eqref{change}.
Thus, for every $j=1,\ldots,n-1$, we have 
\[(Q^{(1)}_{M,H_n}S^{(1)}_{j,H_n}\ol{Q^{(1)}_{M,H_n}})(0)
<\frac{E^2}{n-1}\]
and
\begin{equation}\label{s7-eb4}
\sum^{n-1}_{j=1}(Q^{(1)}_{M,H_n}S^{(1)}_{j,H_n}\ol{Q^{(1)}_{M,H_n}})(0)
<E^2.
\end{equation} 
From \eqref{s7-eb4} and Theorem~\ref{s7-t1}, we obtain the main result of this section

\begin{thm}\label{s7-t4}
Let $\nu_k>0$ be any sequence with $\nu_k\To0$ as $k\To\infty$. For each $x\in X$, we have
\begin{equation}\label{s7-eb5}
\limsup_{k\To\infty}k^{-n}(Q^{(1)}_{M,k}\pit^{(1)}_{k,\leq k\nu_k}\ol{Q^{(1)}_{M,k}})(x)<
E^2,
\end{equation}
where $E$ is as in \eqref{change}.
\end{thm}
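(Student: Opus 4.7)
The plan is to note that Theorem~\ref{s7-t4} is essentially a packaging of Theorem~\ref{s7-t1} with the bound \eqref{s7-eb4}, once we observe that both of these statements, although formulated at the reference point $0$ in canonical coordinates centered at a fixed $p$, apply to an arbitrary point $x\in X$. So the first step is to fix $x\in X$ and recall that by Proposition~\ref{can-p0} we can choose canonical coordinates $(z,\theta)$ and a local trivializing section $s$ on a neighborhood $D$ of $x$ satisfying \eqref{scat-e1}, with $x$ corresponding to the origin $0$. All of the scaling and Heisenberg model constructions in sections 4--7 were carried out in this setup at a generic base point $p$, so they apply verbatim with $x$ playing the role of $p$.

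Next, I would apply Theorem~\ref{s7-t1} at the point $x$, yielding
\[
\limsup_{k\to\infty}k^{-n}(Q^{(1)}_{M,k}\pit^{(1)}_{k,\leq k\nu_k}\ol{Q^{(1)}_{M,k}})(x)\leq\sum_{j=1}^{n-1}(Q^{(1)}_{M,H_n}S^{(1)}_{j,H_n}\ol{Q^{(1)}_{M,H_n}})(0),
\]
where the Heisenberg-side quantity is built from the model operator $\Box^{(1)}_{b,H_n}$ associated with the data $\lambda_j,\beta,\mu_{j,t}$ at $x$. Then I would invoke Proposition~\ref{s7-p1}: for every $u\in\Omega^{0,1}(H_n)$ with $\|u\|_{\phi_0}=1$ and $\Box^{(1)}_{b,H_n}u=0$, each component $(Q^{(1)}_{M,H_n}u)_j(0)$ satisfies $|(Q^{(1)}_{M,H_n}u)_j(0)|^2\leq|(Q^{(1)}_{M,H_n}u)(0)|^2<E^2/(n-1)$. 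Taking the supremum over such $u$ in the definition \eqref{s7-eII} and summing over $j=1,\dots,n-1$ gives exactly the strict inequality \eqref{s7-eb4}:
\[
\sum_{j=1}^{n-1}(Q^{(1)}_{M,H_n}S^{(1)}_{j,H_n}\ol{Q^{(1)}_{M,H_n}})(0)<E^2.
\]

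Combining the last two displays yields $\limsup_{k\to\infty}k^{-n}(Q^{(1)}_{M,k}\pit^{(1)}_{k,\leq k\nu_k}\ol{Q^{(1)}_{M,k}})(x)<E^2$. The point worth emphasizing is that the constant $E$, chosen in \eqref{change}, depends only on global integrals of $\det(M^\phi+2\xi\mathcal{L})$ over $X$, not on the base point $x$; and the choice of $M$ in \eqref{sp3-eI2} was likewise arranged to be uniform in $x\in X$. So the bound $E^2$ is the same for every $x$, and the theorem follows.

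Since every ingredient is already established, there is no real obstacle remaining here; the only subtlety is the verification that the strict inequality from Proposition~\ref{s7-p1} survives passage to the supremum in \eqref{s7-eII}. This is automatic because Proposition~\ref{s7-p1} gives $|(Q^{(1)}_{M,H_n}u)(0)|^2\leq E^2/(n-1)$ with an explicit constant independent of $u$ and strictly smaller than $E^2/(n-1)$ by the strict inequality in \eqref{sp3-eI2}, so the supremum remains strictly less than $E^2/(n-1)$ and the sum over $j$ is strictly less than $E^2$.
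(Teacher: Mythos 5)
Your proposal is correct and follows essentially the same route as the paper: combine Theorem~\ref{s7-t1} (applied with the arbitrary point $x$ as the base point of the canonical coordinates) with Proposition~\ref{s7-p1} to obtain the componentwise bounds, sum over $j$ to get \eqref{s7-eb4}, and conclude, noting that $E$ and $M$ are chosen uniformly in $x$. Your remark that the strictness survives the supremum because the bound $\frac{2C_0}{M}\bigl((2\pi)^{-n}\int_{\Real_{x,1}}\abs{\det(M^\phi_x+2\eta\mathcal{L}_x)}d\eta\bigr)^{1/2}<\frac{E}{\sqrt{n-1}}$ from \eqref{sp3-eI2} is uniform in $u$ is exactly the point implicit in the paper's argument.
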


The proof of the following theorem is essentially the same as the proof of \eqref{s4-e0}. We omit the proof.

\begin{thm}\label{s7-t5}
For any sequence $\nu_k>0$ with $\nu_k\To0$ as $k\To\infty$, there is a constant $C>0$ independent of $k$,
such that
\begin{equation} \label{s7-eb6}
\abs{(Q^{(1)}_{M,k}\pit^{(1)}_{k,\,\leqslant  k\nu_k}\ol{Q^{(1)}_{M,k}})(x)}\leqslant  Ck^n,\ \ \forall x\in X.
\end{equation} 
\end{thm}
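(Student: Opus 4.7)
The plan is to mimic the argument used to establish \eqref{s4-e0} in Theorem~\ref{s4-t1}, with the only new ingredient being the passage from scalar sections to $(0,1)$-forms, which is handled via Lemma~\ref{s2-l1}.

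First, I would fix a sequence $\nu_k>0$ with $\nu_k\To 0$ and let $(g_j)_{j=1,\ldots,d_k}$ denote an orthonormal frame for $\cH^1_{b,\leq k\nu_k}(X,L^k)$. Working locally on an open set $U\subset X$ where a smooth orthonormal frame $\{e_j\}_{j=1}^{n-1}$ of $T^{*0,1}X$ is defined, Lemma~\ref{s2-l1} gives
\begin{equation*}
(Q^{(1)}_{M,k}\pit^{(1)}_{k,\leq k\nu_k}\ol{Q^{(1)}_{M,k}})(x)
=\sum_{j=1}^{n-1}(Q^{(1)}_{M,k}S^{(1)}_{k,\leq k\nu_k,j}\ol{Q^{(1)}_{M,k}})(x),
\end{equation*}
where each summand equals
$\sup\set{\abs{(Q^{(1)}_{M,k}\alpha)_j(x)}^2_{h^{L^k}};\,\alpha\in\cH^1_{b,\leq k\nu_k}(X,L^k),\,\norm{\alpha}_{h^{L^k}}=1}$,
and $(Q^{(1)}_{M,k}\alpha)_j$ denotes the component of $Q^{(1)}_{M,k}\alpha$ along $e_j$. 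Thus it suffices to produce a constant $C>0$, independent of $k$, such that
$\abs{(Q^{(1)}_{M,k}\alpha)_j(x)}^2_{h^{L^k}}\leq Ck^n$ for every unit-norm $\alpha\in\cH^1_{b,\leq k\nu_k}(X,L^k)$, every $j$, and every $x\in X$.

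For any such $\alpha$ and any strictly increasing multiindex $J$ with $\abs{J}=1$, the extremal characterization in \eqref{s2-e7} combined with Theorem~\ref{s2-t1} applied with $q=1$ (which is permitted because $Y(1)$ holds at each point of $X$) yields a $k$-uniform bound
$\abs{\alpha_J(x)}^2_{h^{L^k}}\leq S^{(1)}_{k,\leq k\nu_k,J}(x)\leq\pit^{(1)}_{k,\leq k\nu_k}(x)\leq C_0 k^n$ for all $x\in X$, where $C_0>0$ is independent of $k$ and of $\alpha$. Passing to the frame $\set{e_j}$ via a smooth, $k$-independent change of basis, the same kind of estimate holds for the components $\alpha_j$ of $\alpha$ with respect to $\set{e_j}$, up to a harmless constant depending only on the fixed Hermitian metric.

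Finally, I would substitute this pointwise bound into the defining integral formula \eqref{sp3-eV} for $Q^{(1)}_{M,k}$. Because $\psi\in C^\infty_0$, $\chi_M\in C^\infty_0$, the cutoff $e^{\frac{k}{2}\phi(x)-\frac{k}{2}\phi(\Phi^{t/k}(x))}$ stays bounded uniformly in $k$ on $\operatorname{Supp}\chi_M$ (by Taylor expansion in $t/k$), and all the objects defining $Q^{(1)}_{M,k}$ are smooth with compact support in the $(t,\eta)$ variables, the operator $Q^{(1)}_{M,k}$ satisfies a pointwise estimate of the form
\begin{equation*}
\abs{(Q^{(1)}_{M,k}\alpha)_j(x)}^2_{h^{L^k}}\leq C_M\sup_{y\in X}\abs{\alpha(y)}^2_{h^{L^k}},
\end{equation*}
with $C_M>0$ depending on $M$ but not on $k$ or $\alpha$. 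Combining this with $\abs{\alpha_j(y)}^2_{h^{L^k}}\leq C_1 k^n$ produces $\abs{(Q^{(1)}_{M,k}\alpha)_j(x)}^2_{h^{L^k}}\leq C_2 k^n$. Summing over $j=1,\ldots,n-1$ and covering $X$ by finitely many such trivializing neighborhoods delivers the desired uniform estimate \eqref{s7-eb6}. The only mild subtlety is tracking that the $L$-valued pointwise norms are computed with respect to $h^{L^k}$ and hence the weights $e^{\pm k\phi/2}$ cancel correctly, but this is exactly the computation that made the definition of $Q^{(1)}_{M,k}$ coordinate-invariant in section 5, so no new difficulty arises.
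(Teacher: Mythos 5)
Your proposal is correct and follows exactly the route the paper intends: the paper omits the proof of Theorem~\ref{s7-t5}, stating it is essentially the same as that of \eqref{s4-e0}, and your argument is precisely that adaptation — Lemma~\ref{s2-l1} with $q=1$ to reduce to component-wise extremal functions, Theorem~\ref{s2-t1} (valid since $Y(1)$ holds) for the uniform bound $\pit^{(1)}_{k,\leq k\nu_k}(x)\leq C_0k^n$, and the pointwise boundedness of $Q^{(1)}_{M,k}$ coming from its defining integral \eqref{sp3-eV} with the $e^{\pm k\phi/2}$ weights cancelling up to a factor bounded uniformly on ${\rm Supp\,}\chi_M$. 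No gaps.
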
 

Now, we can prove

\begin{thm}\label{s7-t6}
Let $\nu_k>0$ be any sequence with $\nu_k\To0$ as $k\To\infty$. 
Then there is a $k_0>0$ such that for all $k\geq k_0$, 
\begin{equation}\label{s7-eb7}
\int_X(Q^{(1)}_{M,k}\pit^{(1)}_{k,\leq k\nu_k}\ol{Q^{(1)}_{M,k}})(x)dv_X(x)\leq k^n\int_XE^2dv_X(x),
\end{equation}
where $E$ is as in \eqref{change}.
\end{thm}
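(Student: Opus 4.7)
The plan is to combine the pointwise asymptotic bound from Theorem~\ref{s7-t4} with the uniform-in-$k$ bound from Theorem~\ref{s7-t5} via the reverse Fatou lemma. Specifically, the function $x\mapsto k^{-n}(Q^{(1)}_{M,k}\pit^{(1)}_{k,\leq k\nu_k}\ol{Q^{(1)}_{M,k}})(x)$ is nonnegative and, by Theorem~\ref{s7-t5}, bounded by a constant $C>0$ independent of $k$ uniformly on the compact manifold $X$. Since $X$ is compact, $C\cdot\mathds{1}_X$ is integrable against $dv_X$, so the hypotheses of reverse Fatou are satisfied.

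Applying reverse Fatou and then Theorem~\ref{s7-t4} yields
\begin{equation*}
\begin{split}
&\limsup_{k\To\infty}\int_X k^{-n}(Q^{(1)}_{M,k}\pit^{(1)}_{k,\leq k\nu_k}\ol{Q^{(1)}_{M,k}})(x)\,dv_X(x)\\
&\quad\leq\int_X\limsup_{k\To\infty}k^{-n}(Q^{(1)}_{M,k}\pit^{(1)}_{k,\leq k\nu_k}\ol{Q^{(1)}_{M,k}})(x)\,dv_X(x)\\
&\quad\leq\int_X E^2\,dv_X(x).
\end{split}
\end{equation*}

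From the definition of $\limsup$, for every $\varepsilon>0$ there exists $k_0>0$ such that for all $k\geq k_0$,
\[
\int_X k^{-n}(Q^{(1)}_{M,k}\pit^{(1)}_{k,\leq k\nu_k}\ol{Q^{(1)}_{M,k}})(x)\,dv_X(x)\leq\int_X E^2\,dv_X(x)+\varepsilon.
\]
Since the inequality in Theorem~\ref{s7-t4} is strict pointwise, one can in fact arrange $\varepsilon=0$ for large $k$, yielding the desired estimate
\[
\int_X(Q^{(1)}_{M,k}\pit^{(1)}_{k,\leq k\nu_k}\ol{Q^{(1)}_{M,k}})(x)\,dv_X(x)\leq k^n\int_X E^2\,dv_X(x)
\]
after multiplying through by $k^n$.

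There is no real obstacle here: both ingredients have already been proven, and the argument is just a standard application of reverse Fatou justified by the uniform bound together with the finiteness of the volume of $X$. The only subtle point is ensuring the strict inequality in Theorem~\ref{s7-t4} passes through to the integrated statement without losing a factor; this is handled by absorbing the gap $E^2-\limsup_k k^{-n}(Q^{(1)}_{M,k}\pit^{(1)}_{k,\leq k\nu_k}\ol{Q^{(1)}_{M,k}})(x)$ into the slack between the pointwise limsup and the integral bound.
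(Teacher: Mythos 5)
Your argument is correct and is essentially the paper's own proof: Theorem~\ref{s7-t5} supplies the integrable (constant) dominating function on the compact $X$, reverse Fatou gives $\limsup_k k^{-n}\int_X(Q^{(1)}_{M,k}\pit^{(1)}_{k,\leq k\nu_k}\ol{Q^{(1)}_{M,k}})\,dv_X\leq\int_X\limsup_k k^{-n}(Q^{(1)}_{M,k}\pit^{(1)}_{k,\leq k\nu_k}\ol{Q^{(1)}_{M,k}})\,dv_X$, and the strict pointwise bound of Theorem~\ref{s7-t4} makes this last integral strictly smaller than $\int_XE^2dv_X$ (a strictly positive measurable gap on a set of finite positive volume has positive integral), so the integrals eventually fall below $k^n\int_XE^2dv_X(x)$. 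The only difference is stylistic: the paper keeps the strict inequality inside the displayed chain, whereas you recover it afterwards through the $\varepsilon$-argument, which is the same mechanism.
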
 

\begin{proof}
In view of Theorem~\ref{s7-t5}, $\sup_k k^{-n}Q^{(1)}_{M,k}\pit^{(1)}_{k,\leq k\nu_k}\ol{Q^{(1)}_{M,k}})(\cdot)$ is integrable on $X$.
Thus, we can apply Fatou's lemma and we get by using Theorem~\ref{s7-t4}:
\begin{equation*}
\begin{split}
&\limsup_{k\To\infty}k^{-n}\int_X(Q^{(1)}_{M,k}\pit^{(1)}_{k,\leq k\nu_k}\ol{Q^{(1)}_{M,k}})(x)dv_X(x)\\
&\quad\leq\int_X\limsup_{k\To\infty}k^{-n}(Q^{(1)}_{M,k}\pit^{(1)}_{k,\leq k\nu_k}\ol{Q^{(1)}_{M,k}})(x)dv_X(x)\\
&\quad<\int_XE^2dv_X(x).
\end{split}
\end{equation*}
The theorem follows.
\end{proof} 

\section{The proof of Theorem~\ref{main}}

Let $\delta_k>0$, $\delta_k\To\infty$ as $k\To\infty$, be as in Theorem~\ref{s6-t1} and let $\nu_k>0$ be any sequence with
$\lim_{k\To\infty}\frac{\delta_k}{\nu_k}=0$ and $\nu_k\To0$ as $k\To\infty$. Let 
$\gamma_{1,k}<\gamma_{2,k}<\cdots<\gamma_{m_k,k}$
be all the distinct non-zero eigenvalues of $\Box^{(0)}_{b,k}$ between $0$ and $k\nu_k$. Thus, $\gamma_{1,k}>0$ and $\gamma_{m_k,k}\leq k\nu_k$. We notice that $\gamma_{j,k}$, $j=1,\ldots,m_k$, are also eigenvalues of $\Box^{(1)}_{b,k}$. 
For $\mu\in\Real$, let $\cH^q_{b,\mu}(X,L^k)$ denote the space spanned by the eigenforms of $\Box^{(q)}_{b,k}$ whose eigenvalues are $\lambda$. For each $j\in\set{1,\ldots,m_k}$, let
$f^1_{j,k},f^2_{j,k},\ldots,f^{d_{j,k}}_{j,k}$
be an orthonormal basis for $\cH^0_{b,\gamma_{j,k}}(X,L^k)$, where $d_{j,k}={\rm dim\,}\cH^0_{b,\gamma_{j,k}}(X,L^k)$.
Let $f^1_{0,k},f^2_{0,k},\ldots,f^{d_{0,k}}_{0,k}$
be an orthonormal basis for $\cH^0_b(X,L^k)$, where $d_{0,k}={\rm dim\,}\cH^0_b(X,L^k)$.

Let $Q^{(0)}_{M,k}$ and $Q^{(1)}_{M,k}$ be as in \eqref{sp3-eII} and \eqref{sp3-eV} respectively. 
By the definition of $(Q^{(0)}_{M,k}\pit^{(0)}_{k,\leq k\nu_k})(x)$(see \eqref{e-seII}), we have
\begin{equation}\label{s8-e1}
\begin{split}
&(Q^{(0)}_{M,k}\pit^{(0)}_{k,\leq k\nu_k})(x)\\
&\quad=\sum^{d_{0,k}}_{t=1}\langle (Q^{(0)}_{M,k}f^t_{0,k})(x)|f^t_{0,k}(x)\rangle_{h^{L^k}}+\sum^{m_k}_{j=1}\sum^{d_{j,k}}_{t=1}\langle(Q^{(0)}_{M,k}f^t_{j,k})(x)|f^t_{j,k}(x)\rangle_{h^{L^k}}.
\end{split}
\end{equation}
From \eqref{s8-e1} and \eqref{s6-e1.1}, we conclude that
\begin{equation}\label{s8-e2}
\begin{split}
&\sum^{d_{0,k}}_{t=1}\abs{(Q^{(0)}_{M,k}f^t_{0,k}\ |\ f^t_{0,k})_{h^{L^k}}}+\sum^{m_k}_{j=1}\sum^{d_{j,k}}_{t=1}\abs{(Q^{(0)}_{M,k}f^t_{j,k}\ |\ f^t_{j,k})_{h^{L^k}}}\\
&\quad\geq\frac{k^n}{2}(2\pi)^{1-n}\int_X\Bigr(\int\psi(\xi)\det(M^\phi_x+2\xi\mathcal{L}_x)
\mathds{1}_{\Real_{x,0}}(\xi)d\xi\Bigr)dv_X(x),
\end{split}
\end{equation}
for $k$ large.
For $j=1,\ldots,m_k$, we put 
\[g^t_{j,k}=\frac{1}{\norm{\ddbar_{b,k}f^t_{j,k}}_{h^{L^k}}}\ddbar_{b,k}f^t_{j,k}=\frac{1}{\sqrt{\gamma_{j,k}}}\ddbar_{b,k}f^t_{j,k}
\in\cH^1_{b,\gamma_{j,k}}(X,L^k),\ \ t=1,\ldots,d_{j,k}.\]
For each $j=1,\ldots,m_k$, 
\begin{equation}\label{s8-e2-3}
\begin{split}
(Q^{(1)}_{M,k}g^t_{j,k}\ |\ g^t_{j,k})_{h^{L^k}}&=\frac{1}{\gamma_{j,k}}(Q^{(1)}_{M,k}\ddbar_{b,k}f^t_{j,k}\ |\ \ddbar_{b,k}f^t_{j,k})_{h^{L^k}}\\
&=\frac{1}{\gamma_{j,k}}(\ddbar_{b,k}Q^{(0)}_{M,k}f^t_{j,k}\ |\ \ddbar_{b,k}f^t_{j,k})_{h^{L^k}}\ \ \mbox{here we used \eqref{sp3-eX}}\\
&=\frac{1}{\gamma_{j,k}}(Q^{(0)}_{M,k}f^t_{j,k}\ |\ \Box^{(0)}_{b,k}f^t_{j,k})_{h^{L^k}}\\
&=(Q^{(0)}_{M,k}f^t_{j,k}\ |\ f^t_{j,k})_{h^{L^k}},\ \ t=1,\ldots,d_{j,k}.
\end{split}
\end{equation}
Hence, 
\begin{equation}\label{s8-e2a}
\begin{split}
&\sum^{m_k}_{j=1}\sum^{d_{j,k}}_{t=1}\abs{(Q^{(0)}_{M,k}f^t_{j,k}\ |\ f^t_{j,k})_{h^{L^k}}}\\
&=\sum^{m_k}_{j=1}\sum^{d_{j,k}}_{t=1}\abs{(Q^{(1)}_{M,k}g^t_{j,k}\ |\ g^t_{j,k})_{h^{L^k}}}\\
&\quad\leq\sqrt{\sum^{m_k}_{j=1}\sum^{d_{j,k}}_{t=1}\norm{Q^{(1)}_{M,k}g^t_{j,k}}^2_{h^{L^k}}}
\sqrt{\sum^{m_k}_{j=1}\sum^{d_{j,k}}_{t=1}\norm{g^t_{j,k}}^2_{h^{L^k}}}.
\end{split}
\end{equation}
Since $\norm{g^t_{j,k}}^2_{h^{L^k}}=\norm{f^t_{j,k}}^2_{h^{L^k}}=1$, for every $j$ and $t$, it is obviously that 
\[\sqrt{\sum^{m_k}_{j=1}\sum^{d_{j,k}}_{t=1}\norm{g^t_{j,k}}^2_{h^{L^k}}}
=\sqrt{\sum^{m_k}_{j=1}\sum^{d_{j,k}}_{t=1}\norm{f^t_{j,k}}^2_{h^{L^k}}}.\]
Combining this with \eqref{s8-e2a} and \eqref{s8-e2}, we get
\begin{equation}\label{s8-e2-1}
\begin{split}
&\sum^{d_{0,k}}_{t=1}\abs{(Q^{(0)}_{M,k}f^t_{0,k}\ |\ f^t_{0,k})_{h^{L^k}}}+\sqrt{\sum^{m_k}_{j=1}\sum^{d_{j,k}}_{t=1}\norm{Q^{(1)}_{M,k}g^t_{j,k}}^2_{h^{L^k}}}
\sqrt{\sum^{m_k}_{j=1}\sum^{d_{j,k}}_{t=1}\norm{f^t_{j,k}}^2_{h^{L^k}}}\\
&\quad\geq\frac{k^n}{2}(2\pi)^{1-n}\int_X\Bigr(\int\psi(\xi)\det(M^\phi_x+2\xi\mathcal{L}_x)
\mathds{1}_{\Real_{x,0}}(\xi)d\xi\Bigr)dv_X(x),
\end{split}
\end{equation}
for $k$ large. 

We can check that for each $j=1,\ldots,m_k$, $g^t_{j,k}$, $t=1,\ldots,d_{j,k}$ is an orthonormal basis of the space 
$\ddbar_{b,k}\cH^0_{b,\gamma_{j,k}}(X,L^k)\subset\cH^1_{b,\gamma_{j,k}}(X,L^k)$.
From this observation and the definition of 
$(Q^{(1)}_{M,k}\pit^{(1)}_{k,\leq k\nu_k}\ol{Q^{(1)}_{M,k}})(x)$(see \eqref{e-seIII}), we conclude that 
\begin{equation}\label{s8-e3}
\sum^{m_k}_{j=1}\sum^{d_{j,k}}_{t=1}\abs{Q^{(1)}_{M,k}g^t_{j,k}}^2_{h^{L^k}}(x)\leq(Q^{(1)}_{M,k}\pit^{(1)}_{k,\leq k\nu_k}\ol{Q^{(1)}_{M,k}})(x).
\end{equation}
Thus, 
\begin{equation}\label{s8-e4}
\sum^{m_k}_{j=1}\sum^{d_{j,k}}_{t=1}\norm{Q^{(1)}_{M,k}g^t_{j,k}}^2_{h^{L^k}}\leq\int_X(Q^{(1)}_{M,k}\pit^{(1)}_{k,\leq k\nu_k}\ol{Q^{(1)}_{M,k}})(x)dv(x).
\end{equation}
Combining \eqref{s8-e4} with \eqref{s7-eb7}, we get
\begin{equation}\label{s8-e5}
\sum^{m_k}_{j=1}\sum^{d_{j,k}}_{t=1}\norm{Q^{(1)}_{M,k}g^t_{j,k}}^2_{h^{L^k}}\\
\leq k^n\int_XE^2dv_X(x)
\end{equation}
for $k$ large, where $E$ is as in \eqref{change}.

From \eqref{s2-e6I} and \eqref{s2-e6}, we conclude that
\begin{equation}\label{s8-e6}
\int_X\pit^{(0)}_{k,\,\leqslant  k\nu_k}(x)dv_X(x)=k^n(2\pi)^{-n}\int_X\Bigr(\int_{\Real_{x,0}}\abs{\det(M^\phi_x+2s\mathcal{L}_x)}ds\Bigr)dv_X(x)+o(k^n),
\end{equation}
for $k$ large. It is obviously the case that 
\[\sum^{m_k}_{j=1}\sum^{d_{j,k}}_{t=1}\norm{f^t_{j,k}}^2_{h^{L^k}}\leq\int_X\pit^{(0)}_{k,\,\leqslant  k\nu_k}(x)dv_X(x).\]
Combining this with \eqref{s8-e6}, we get
\begin{equation}\label{s8-e7}
\begin{split}
&\sum^{m_k}_{j=1}\sum^{d_{j,k}}_{t=1}\norm{f^t_{j,k}}^2_{h^{L^k}}\\
&\quad\leq k^n2(2\pi)^{-n}\int_X\Bigr(\int\det(M^\phi_x+2\xi\mathcal{L}_x)
\mathds{1}_{\Real_{x,0}}(\xi)d\xi\Bigr)dv_X(x)
\end{split}
\end{equation}
for $k$ large. From \eqref{s8-e5}, \eqref{s8-e7}, \eqref{s8-e2-1} and \eqref{change}, we obtain

\begin{thm}\label{s8-t1}
Let $f^1_{0,k},f^2_{0,k},\ldots,f^{d_{0,k}}_{0,k}$
be an orthonormal basis for $\cH^0_b(X,L^k)$, where $d_{0,k}={\rm dim\,}\cH^0_b(X,L^k)$. Then, for $k$ large, we have
\begin{equation}\label{s8-e8}
\begin{split}
&\sum^{d_{0,k}}_{t=1}\abs{(Q^{(0)}_{M,k}f^t_{0,k}\ |\ f^t_{0,k})_{h^{L^k}}}\\
&\quad\geq\frac{k^n}{4}(2\pi)^{1-n}\int_X\Bigr(\int\psi(\xi)\det(M^\phi_x+2\xi\mathcal{L}_x)
\mathds{1}_{\Real_{x,0}}(\xi)d\xi\Bigr)dv_X(x).
\end{split}
\end{equation}
\end{thm}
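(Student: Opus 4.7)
The plan is to derive Theorem~\ref{s8-t1} by isolating the ground-state contribution from the lower bound \eqref{s6-e1.1}, which already controls the full sum over all eigenforms of $\Box^{(0)}_{b,k}$ with eigenvalues in $[0,k\nu_k]$. Concretely, I would start from the decomposition \eqref{s8-e1} and, applying \eqref{s6-e1.1}, obtain
\[
\sum^{d_{0,k}}_{t=1}\abs{(Q^{(0)}_{M,k}f^t_{0,k}\ |\ f^t_{0,k})_{h^{L^k}}}+\sum^{m_k}_{j=1}\sum^{d_{j,k}}_{t=1}\abs{(Q^{(0)}_{M,k}f^t_{j,k}\ |\ f^t_{j,k})_{h^{L^k}}}\geq\frac{k^n}{2}I(M,\psi)
\]
for $k$ large, where $I(M,\psi):=(2\pi)^{1-n}\int_X\int\psi(\xi)\det(M^\phi_x+2\xi\mathcal{L}_x)\mathds{1}_{\Real_{x,0}}(\xi)d\xi\,dv_X(x)$ denotes the target integral. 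It then suffices to prove that the double sum over $j=1,\ldots,m_k$ is at most $\tfrac{k^n}{4}I(M,\psi)$, so that the remaining $j=0$ contribution is at least $\tfrac{k^n}{4}I(M,\psi)$.

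To estimate the positive-eigenvalue part, I would exploit the intertwining identity \eqref{sp3-eX} together with the normalized forms $g^t_{j,k}:=\gamma_{j,k}^{-1/2}\ddbar_{b,k}f^t_{j,k}$, which lie in $\cH^1_{b,\gamma_{j,k}}(X,L^k)$ and have unit norm because $\Box^{(0)}_{b,k}f^t_{j,k}=\gamma_{j,k}f^t_{j,k}$ forces $\norm{\ddbar_{b,k}f^t_{j,k}}^2_{h^{L^k}}=\gamma_{j,k}$. The identity \eqref{s8-e2-3} then rewrites $(Q^{(0)}_{M,k}f^t_{j,k}\ |\ f^t_{j,k})_{h^{L^k}}=(Q^{(1)}_{M,k}g^t_{j,k}\ |\ g^t_{j,k})_{h^{L^k}}$, after which Cauchy--Schwarz gives \eqref{s8-e2a}. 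Because the $g^t_{j,k}$ form an orthonormal collection inside $\cH^1_{b,\leq k\nu_k}(X,L^k)$ (their eigenvalues $\gamma_{j,k}$ are distinct across $j$), the sum $\sum_{j,t}\abs{Q^{(1)}_{M,k}g^t_{j,k}}^2_{h^{L^k}}(x)$ is pointwise bounded by $(Q^{(1)}_{M,k}\pit^{(1)}_{k,\leq k\nu_k}\ol{Q^{(1)}_{M,k}})(x)$ as in \eqref{s8-e3}, and integration over $X$ combined with Theorem~\ref{s7-t6} yields $\sum_{j,t}\norm{Q^{(1)}_{M,k}g^t_{j,k}}^2_{h^{L^k}}\leq k^n\int_X E^2\,dv_X(x)$.

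For the other Cauchy--Schwarz factor, the bound $\sum_{j,t}\norm{f^t_{j,k}}^2_{h^{L^k}}\leq\int_X\pit^{(0)}_{k,\leq k\nu_k}(x)dv_X(x)$ combined with \eqref{s8-e6} (itself a consequence of Theorem~\ref{s2-t1} and dominated convergence using the uniform bound \eqref{s2-e6}) gives $\sum_{j,t}\norm{f^t_{j,k}}^2_{h^{L^k}}\leq 2k^n(2\pi)^{-n}\int_X\int\det(M^\phi_x+2\xi\mathcal{L}_x)\mathds{1}_{\Real_{x,0}}(\xi)d\xi\,dv_X(x)$ for $k$ large. Multiplying these two bounds and invoking the defining inequality \eqref{change} for the constant $E$ shows that the positive-eigenvalue contribution is at most $\tfrac{k^n}{4}I(M,\psi)$ for $k$ large, completing the proof.

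The main obstacle, such as it is, is bookkeeping rather than conceptual: the analytic heavy lifting has already been done in Theorem~\ref{s6-t2} (the lower bound) and Theorem~\ref{s7-t6} (the upper bound on the first Szeg\H{o} kernel), and the intertwining identity \eqref{sp3-eX} is exactly what is needed to transfer information between the two. The one place requiring genuine care is verifying that the $g^t_{j,k}$ are orthonormal across different eigenspaces of $\Box^{(0)}_{b,k}$, which follows because $\ddbar_{b,k}$ maps distinct eigenspaces of $\Box^{(0)}_{b,k}$ to distinct eigenspaces of $\Box^{(1)}_{b,k}$ and these are $L^2$-orthogonal; this ensures the pointwise bound \eqref{s8-e3} and in turn justifies applying Theorem~\ref{s7-t6}.
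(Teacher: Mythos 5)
Your proposal is correct and follows essentially the same route as the paper: the combined lower bound \eqref{s8-e2} from \eqref{s8-e1} and \eqref{s6-e1.1}, the transfer of the positive-eigenvalue terms to $(0,1)$-forms via $g^t_{j,k}=\gamma_{j,k}^{-1/2}\ddbar_{b,k}f^t_{j,k}$ and the intertwining \eqref{sp3-eX}, Cauchy--Schwarz, the pointwise Szeg\H{o} bound \eqref{s8-e3} with Theorem~\ref{s7-t6}, the bound \eqref{s8-e7} via Theorem~\ref{s2-t1}, and finally the choice of $E$ in \eqref{change}. The only cosmetic difference is that you phrase the conclusion as "the $j\geq 1$ part is at most $\tfrac{k^n}{4}I$" rather than carrying it through \eqref{s8-e2-1}, which is the same estimate.
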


The following is straightforward 

\begin{lem}\label{s8-l1}
For $k$ large, there is a constant $C>0$ independent of $k$, such that 
\[\norm{Q^{(0)}_{M,k}u}^2_{h^{L^k}}\leq C\norm{u}^2_{h^{L^k}},\ \ \forall u\in C^\infty(X,L^k).\]
\end{lem}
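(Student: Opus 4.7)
The plan is to first rewrite $Q^{(0)}_{M,k}u$ in a form amenable to $L^2$ estimates by performing the $\eta$-integration. In the local expression \eqref{sp3-eII}, the $\eta$-integral produces the Fourier transform $\hat\psi(t)=\int e^{-it\eta}\psi(\eta)d\eta$. Since $\psi\in C^\infty_0(\Real)$, $\hat\psi$ is Schwartz and in particular $\hat\psi\chi_M\in L^2(\Real)$ with a norm depending only on $\psi$ and $M$. Working in a local trivialization $s$ with $\abs{s}^2_{h^L}=e^{-\phi}$ and writing $u=s^k\Td u$, the prefactor $e^{\frac{k}{2}\phi(x)}$ combines with $\abs{s^k(x)}^2_{h^{L^k}}=e^{-k\phi(x)}$ to give
\[
\abs{(Q^{(0)}_{M,k}u)(x)}^2_{h^{L^k}}=\Bigr|\int\hat\psi(t)\chi_M(t)e^{-\frac{k}{2}\phi(\Phi^{\frac{t}{k}}(x))}\Td u(\Phi^{\frac{t}{k}}(x))dt\Bigr|^2,
\]
and this expression is controlled by the globally defined quantity $\abs{u(\Phi^{t/k}(x))}^2_{h^{L^k}}=e^{-k\phi(\Phi^{t/k}(x))}\abs{\Td u(\Phi^{t/k}(x))}^2$, so the analysis does not depend on the choice of trivialization.

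Next I would apply the Cauchy-Schwarz inequality in the $t$-integral, splitting the integrand as $\hat\psi(t)\chi_M(t)$ times $e^{-\frac{k}{2}\phi(\Phi^{t/k}(x))}\Td u(\Phi^{t/k}(x))$. Since $\chi_M$ has support in $[-2M,2M]$, this yields the pointwise bound
\[
\abs{(Q^{(0)}_{M,k}u)(x)}^2_{h^{L^k}}\leq C_1\int_{-2M}^{2M}\abs{u(\Phi^{\frac{t}{k}}(x))}^2_{h^{L^k}}dt,
\]
where $C_1:=\int\abs{\hat\psi(t)\chi_M(t)}^2dt$ is a constant independent of $k$, $x$, and $u$.

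Finally I would integrate over $X$, apply Fubini's theorem, and change variables under the flow. Since $X$ is compact and $T\in C^\infty(X,TX)$, the flow $\Phi^s:X\To X$ is a global diffeomorphism for all $s\in\Real$. For $k$ large enough that $2M/k\leq1$, the Jacobian of $\Phi^{t/k}$ relative to $dv_X$ is uniformly bounded by some constant $C_2$ on the compact set $\set{(s,y);\,\abs{s}\leq1,\,y\in X}$. Hence the change of variable $y=\Phi^{t/k}(x)$ gives
\[
\int_X\abs{u(\Phi^{\frac{t}{k}}(x))}^2_{h^{L^k}}dv_X(x)\leq C_2\norm{u}^2_{h^{L^k}}
\]
for all $\abs{t}\leq2M$, and combining with the previous step yields $\norm{Q^{(0)}_{M,k}u}^2_{h^{L^k}}\leq 4MC_1C_2\norm{u}^2_{h^{L^k}}$, which is the claim. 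The only point requiring care is that the pointwise formula coming from a local trivialization is actually global; this is precisely the well-definedness of $Q^{(0)}_{M,k}$ already established in Section~5, so beyond it the argument is a direct Cauchy-Schwarz plus boundedness of the flow pullback, with no substantial obstacle.
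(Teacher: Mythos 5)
Your proposal is correct and follows essentially the same route as the paper: a Cauchy--Schwarz estimate in $t$ against $\hat\psi\chi_M$, followed by a change of variables along the $T$-flow whose volume distortion is uniformly bounded for $\abs{t/k}$ small. The only (cosmetic) difference is organizational: you prove a trivialization-independent pointwise bound and then change variables globally under the flow $\Phi^{t/k}$, whereas the paper carries out the same estimate in canonical coordinates, where $\Phi^{t/k}$ is a translation in $\theta$ and the Jacobian bound appears as the density comparison $m(z,\theta)\leq C_1 m(z,\theta+\tfrac{t}{k})$, gluing by a partition of unity.
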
 

\begin{proof}
Let $D\Subset D'\Subset D''\Subset X$ be open sets of $X$ and let $s$ be a local section of $L$ on $D''$. We assume that there exist 
canonical coordinates $x=(x_1,\ldots,x_{2n-1})=(z,\theta)$ on $D''$.  
Let $\chi_M$ be as in \eqref{sp3-eM0}. 
For $k$ large, we have 
\[\set{\Phi^{\frac{t}{k}}(x)\in D';\, \forall x\in D, t\in{\rm Supp\,}\chi_M}\] 
and ${\rm Supp\,}f(\Phi^{\frac{t}{k}}x)\subset D'$, $\forall t\in{\rm Supp\,}\chi_M$, $\forall f\in C^\infty_0(D,L^k)$. In cannonical coordinates $x=(z,\theta)$, we have 
$\Phi^{\frac{t}{k}}(x)=(z,\frac{t}{k}+\theta)$. Let $m(z,\theta)dv(z)d\theta$ be the volume form on $D''$, where $dv(z)=2^{n-1}dx_1dx_2\ldots dx_{2n-2}$. Since $m(z,\theta)$ is strictly positive, for $k$ large, there is a constant $C_1>0$ independent of $k$, such that 
\begin{equation}\label{efin1}
m(z,\theta)\leq C_1m(z,\theta+\frac{t}{k}),\ \ \forall (z,\theta)\in D',\ \ t\in{\rm Supp\,}\chi_M.
\end{equation} 
Let $u\in C^\infty_0(D,L^k)$. On $D''$, we write $u=s^k\Td u$, $\Td u\in C^\infty_0(D)$.
From the definition of $Q^{(0)}_{M,k}$ (see \eqref{sp3-eII}), we 
can check that for $k$ large,
\begin{equation}\label{alI}
\begin{split}
&\int\abs{(Q^{(0)}_{M,k}u)(x)}^2_{h^{L^k}}dv_X(x)\\
&=\int_{D'}\abs{\int e^{-it\eta}\psi(\eta)\chi_M(t)e^{-\frac{k}{2}\phi(z,\theta+\frac{t}{k
})}\Td u(z,\theta+\frac{t}{k})dtd\eta}^2m(z,\theta)dv(z)d\theta\\
&\leq \Td C\int_{(z,\theta)\in D'}\chi_M(t)e^{-k\phi(z,\theta+\frac{t}{k})}\abs{\Td u(z,\theta+\frac{t}{k})}^2m(z,\theta)dtd\theta dv(z)\\
&\leq\Td CC_1\int_{(z,\theta)\in D'}\chi_M(t)e^{-k\phi(z,\theta+\frac{t}{k})}\abs{\Td u(z,\theta+\frac{t}{k})}^2m(z,\theta+\frac{t}{k})dtd\theta dv(z)\\
&=\Td CC_1\int_{(z,\lambda-\frac{t}{k})\in D'}\chi_M(t)e^{-k\phi(z,\lambda)}\abs{\Td u(z,\lambda)}^2m(z,\lambda)dtd\lambda dv(z)\\
&\leq C\int e^{-k\phi(z,\theta)}\abs{\Td u(z,\theta)}^2m(z,\theta)dv(z)d\theta=C\norm{u}^2_{h^{L^k}}.
\end{split}
\end{equation}
where $\Td C>0$, $C>0$ are independent of $k$ and $u$ and $C_1$ is as in \eqref{efin1}. From \eqref{alI} and by using partition of unity, 
the lemma follows.
\end{proof}

\begin{proof}[Proof of Theorem~\ref{main}]
From Lemma~\ref{s8-l1} and \eqref{s8-e8}, we see that for $k$ large,
\[\begin{split}
\sqrt{C}d_{0,k}&=\sqrt{C}\sum^{d_{0,k}}_{t=1}\norm{f^t_{0,k}}^2_{h^{L^k}}
\geq\sum^{d_{0,k}}_{t=1}\abs{(Q^{(0)}_{M,k}f^t_{0,k}\ |\ f^t_{0,k})_{h^{L^k}}}\\
&\geq\frac{k^n}{4}(2\pi)^{1-n}\int_X\Bigr(\int\psi(\xi)\det(M^\phi_x+2\xi\mathcal{L}_x)
\mathds{1}_{\Real_{x,0}}(\xi)d\xi\Bigr)dv_X(x),\end{split}\]
where $C>0$ is the constant as in Lemma~\ref{s8-l1} and $d_{0,k}={\rm dim\,}\cH^0_b(X,L^k)$. Theorem~\ref{main} follows.
\end{proof} 

\section{Examples} 

In this section, some examples are collected. The aim is to illustrate the main results in some 
simple situations. 

\subsection{Compact Heisenberg groups} 
Let $\lambda_1,\ldots,\lambda_{n-1}$ be given non-zero integers.
Let $\mathscr CH_n=(\Complex^{n-1}\times\Real)/_\sim$\,, where
$(z, \theta)\sim(\Td z, \Td\theta)$ if
\[\begin{split}
&\Td z-z=(\alpha_1,\ldots,\alpha_{n-1})\in\sqrt{2\pi}\mathbb Z^{n-1}+i\sqrt{2\pi}\mathbb Z^{n-1},\\
&\Td\theta-\theta-i\sum^{n-1}_{j=1}\lambda_j(z_j\ol\alpha_j-\ol z_j\alpha_j)\in\pi\mathbb Z.
\end{split}\]
We can check that $\sim$ is an equivalence relation
and $\mathscr CH_n$ is a compact manifold of dimension $2n-1$. The equivalence class of $(z, \theta)\in\Complex^{n-1}\times\Real$ is denoted by
$[(z, \theta)]$.
For a given point $p=[(z, \theta)]$, we define
$T^{1, 0}_p\mathscr CH_n$ to be the space spanned by
\[
\textstyle
\big\{\frac{\pr}{\pr z_j}+i\lambda_j\ol z_j\frac{\pr}{\pr\theta},\ \ j=1,\ldots,n-1\big\}.
\]
It is easy to see that the definition above is independent of the choice of a representative $(z, \theta)$ for $[(z, \theta)]$.
Moreover, we can check that $T^{1, 0}\mathscr CH_n$ is a CR structure and $T:=\frac{\pr}{\pr\theta}$ is a rigid global real vector field. Thus, $(\mathscr CH_n, T^{1, 0}\mathscr CH_n)$ is a compact generalized Sasakian CR manifold of dimension $2n-1$.
Let $J$ denote the canonical complex structure on $X\times\Real$ given by $J\frac{\pr}{\pr t}=T$, where $t$ denotes the coordinate of $\Real$. We take a Hermitian metric $\langle\,\cdot\,|\,\cdot\,\rangle$ on the complexified tangent bundle $\Complex T\mathscr CH_n$ such that
\[
\Big\lbrace
\tfrac{\pr}{\pr z_j}+i\lambda_j\ol z_j\tfrac{\pr}{\pr\theta}\,, \tfrac{\pr}{\pr\ol z_j}-i\lambda_jz_j\tfrac{\pr}{\pr\theta}\,, -\tfrac{\pr}{\pr\theta}\,;\, j=1,\ldots,n-1\Big\rbrace
\]
 is an orthonormal basis. The dual basis of the complexified cotangent bundle is
\[
\Big\lbrace
dz_j\,,\, d\ol z_j\,,\, \omega_0:=-d\theta+\textstyle\sum^{n-1}_{j=1}(i\lambda_j\ol z_jdz_j-i\lambda_jz_jd\ol z_j); j=1,\ldots,n-1
\Big\rbrace\,.
\]
The Levi form $\mathcal{L}_p$ of $\mathscr CH_n$ at $p\in\mathscr CH_n$ is given by
$\mathcal{L}_p=\sum^{n-1}_{j=1}\lambda_jdz_j\wedge d\ol z_j$.
 
Now, we construct a generalized Sasakian CR line bundle $(L,J)$ over $\mathscr CH_n$. Let $L=(\Complex^{n-1}\times\Real\times\Complex)/_\equiv$ where $(z,\theta,\eta)\equiv(\Td z, \Td\theta, \Td\eta)$ if
\[\begin{split}
&(z,\theta)\sim(\Td z, \Td\theta),\\
&\Td\eta\exp(\Td\theta+i\sum^{n-1}_{j=1}\lambda_j\abs{\Td z_j}^2)=\eta\exp(\theta+i\sum^{n-1}_{j=1}\lambda_j\abs{ z_j}^2)\exp(\sum^{n-1}_{j,t=1}\mu_{j,t}(z_j\ol\alpha_t+\frac{1}{2}\alpha_j\ol\alpha_t)),
\end{split}\]
where $\alpha=(\alpha_1,\ldots,\alpha_{n-1})=\Td z-z$, $\mu_{j,t}=\mu_{t,j}$, $j, t=1,\ldots,n-1$, are given integers. We can check that $\equiv$ is an equivalence relation and
$(L,J)$ is a generalized Sasakian CR line bundle over $\mathscr CH_n$. For $(z, \theta, \eta)\in\Complex^{n-1}\times\Real\times\Complex$ we denote
$[(z, \theta, \eta)]$ its equivalence class.
It is straightforward to see that the pointwise norm
\[
\big\lvert[(z, \theta, \eta)]\big\rvert^2_{h^L}:=\abs{\eta}^2\exp\big(2\theta-\textstyle\sum^{n-1}_{j,t=1}\mu_{j,t}z_j\ol z_t\big)
\]
is well-defined. In local coordinates $(z, \theta, \eta)$, the weight function of this metric is
\[\phi=-2\theta+\sum^{n-1}_{j,t=1}\mu_{j,t}z_j\ol z_t.\] 
We can check that $T\phi=-2$. Thus, $(L,J,h^L)$ is a rigid generalized Sasakian CR line bundle over $\mathscr CH_n$.
Note that
\[
\textstyle\ddbar_b=\sum^{n-1}_{j=1}d\ol z_j\wedge(\frac{\pr}{\pr\ol z_j}-i\lambda_jz_j\frac{\pr}{\pr\theta})\,,\quad
\pr_b=\sum^{n-1}_{j=1}dz_j\wedge(\frac{\pr}{\pr z_j}+i\lambda_j\ol z_j\frac{\pr}{\pr\theta}).
\]
Thus
$d(\ddbar_b\phi-\pr_b\phi)=2\sum^{n-1}_{j,t=1}\mu_{j,t}dz_j\wedge d\ol z_t$ and for any $p\in\mathscr CH_n$, 
\[M^\phi_p=\sum^{n-1}_{j,t=1}\mu_{j,t}dz_j\wedge d\ol z_t.\]
From this and Theorem~\ref{main}, we obtain

\begin{thm} \label{t-ex1}
If the matrix $\left(\mu_{j,t}\right)^{n-1}_{j,t=1}$ is positive definite and $Y(0)$, $Y(1)$ hold on $\mathscr CH_n$, then 
for $k$ large, there is a constant $c>0$ independent of $k$, such that 
\[{\rm dim\,}H^0_b(\mathscr CH_n, L^k)\geq ck^n.\]
\end{thm}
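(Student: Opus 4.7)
The plan is to simply verify the hypotheses of Theorem~\ref{main} for the pair $(\mathscr CH_n, (L,J,h^L))$ constructed in the example, since the conclusion of Theorem~\ref{t-ex1} matches the conclusion of Theorem~\ref{main} verbatim.

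First I would recall what has already been established earlier in the subsection: $(\mathscr CH_n, T^{1,0}\mathscr CH_n)$ is a compact generalized Sasakian CR manifold of dimension $2n-1$ with the rigid global real vector field $T=\pr/\pr\theta$, and $(L,J)$ together with the pointwise norm $\lvert[(z,\theta,\eta)]\rvert^2_{h^L}$ defines a generalized Sasakian CR line bundle whose local weight is $\phi=-2\theta+\sum_{j,t=1}^{n-1}\mu_{j,t}z_j\ol z_t$. Since $T\phi=-2$ is a (non-zero) constant, Definition~\ref{s1-d1-1} is satisfied and $(L,J,h^L)$ is a \emph{rigid} generalized Sasakian CR line bundle over $\mathscr CH_n$. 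The curvature form was already computed as $M^\phi_p=\sum_{j,t=1}^{n-1}\mu_{j,t}\,dz_j\wedge d\ol z_t$ for every $p\in\mathscr CH_n$.

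Next I would verify the positivity and semi-positivity hypotheses of Theorem~\ref{main}. By the assumption that $(\mu_{j,t})_{j,t=1}^{n-1}$ is positive definite and the fact that $\{\pr/\pr z_j+i\lambda_j\ol z_j\,\pr/\pr\theta\}_{j=1}^{n-1}$ is an orthonormal basis of $T^{1,0}\mathscr CH_n$, the Hermitian quadratic form $M^\phi_p$ is represented at every $p$ by the \emph{same} positive definite matrix $(\mu_{j,t})$; in particular $h^L$ is positive at every point of $\mathscr CH_n$ (a fortiori at some point). For semi-positivity, note that the Levi form $\mathcal{L}_p=\sum_{j=1}^{n-1}\lambda_j\,dz_j\wedge d\ol z_j$ is represented by the fixed matrix $\operatorname{diag}(\lambda_1,\dots,\lambda_{n-1})$, so $M^\phi_p+2s\mathcal{L}_p$ is represented by the matrix $(\mu_{j,t}+2s\lambda_j\delta_{j,t})_{j,t=1}^{n-1}$, which is independent of $p$. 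Since positivity is open and $(\mu_{j,t})$ is positive definite, there exists $\delta_0>0$ such that this matrix is positive semi-definite (in fact positive definite) for all $|s|\le\delta_0$ and all $x\in\mathscr CH_n$. Therefore $h^L$ is semi-positive in the sense of Definition~\ref{s1-d-positive}.

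Finally, the hypothesis that $Y(0)$ and $Y(1)$ hold at every point of $\mathscr CH_n$ is assumed outright in the statement. All hypotheses of Theorem~\ref{main} are thus satisfied, and its conclusion yields the desired bound $\dim H^0_b(\mathscr CH_n,L^k)\geq ck^n$ for $k$ large with some constant $c>0$ independent of $k$. No serious obstacle is expected: the proof is entirely a matter of matching definitions and observing that the positivity of the constant matrix $(\mu_{j,t})$ together with compactness automatically delivers the uniform semi-positivity required by Definition~\ref{s1-d-positive}.
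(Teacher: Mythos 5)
Your proposal is correct and follows exactly the paper's route: the paper establishes that $(L,J,h^L)$ is rigid (via $T\phi=-2$), computes $M^\phi_p=\sum_{j,t=1}^{n-1}\mu_{j,t}\,dz_j\wedge d\ol z_t$ and $\mathcal{L}_p=\sum_{j=1}^{n-1}\lambda_j\,dz_j\wedge d\ol z_j$, and then invokes Theorem~\ref{main}. Your explicit verification of the semi-positivity condition of Definition~\ref{s1-d-positive} (the matrix $(\mu_{j,t}+2s\lambda_j\delta_{j,t})$ is independent of $x$ and stays positive for $\abs{s}\le\delta_0$ by openness of positivity) is exactly the point the paper leaves implicit, and it is correct.
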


\subsection{Holomorphic line bundles over a complex torus}

Let
\[T_n:=\Complex^n/(\sqrt{2\pi}\mathbb Z^n+i\sqrt{2\pi}\mathbb Z^n)\]
be the flat torus. Let $\lambda=\left(\lambda_{j,t}\right)^{n}_{j,t=1}$, where $\lambda_{j,t}=\lambda_{t,j}$, 
$j, t=1,\ldots,n$, are given integers. Let $L_\lambda$ be the holomorphic
line bundle over $T_n$
with curvature the $(1,1)$-form
$\Theta_\lambda=\sum^n_{j,t=1}\lambda_{j,t}dz_j\wedge d\ol z_t$.
More precisely, $L_\lambda:=(\Complex^n\times\Complex)/_\sim$\,, where
$(z, \theta)\sim(\Td z, \Td\theta)$ if
\[
\Td z-z=(\alpha_1,\ldots,\alpha_n)\in \sqrt{2\pi}\mathbb Z^n+i\sqrt{2\pi}\mathbb Z^n\,,\quad
\Td\theta=\textstyle\exp\big(\sum^n_{j,t=1}\lambda_{j,t}(z_j\ol\alpha_t+\tfrac{1}{2}\alpha_j\ol\alpha_t\,)\big)\theta\,.
\]
We can check that $\sim$ is an equivalence relation and $L_\lambda$ is a holomorphic line bundle over $T_n$.
For $[(z, \theta)]\in L_\lambda$
we define the Hermitian metric by
\[
\big\vert[(z, \theta)]\big\vert^2:=\abs{\theta}^2\textstyle\exp(-\sum^n_{j,t=1}\lambda_{j,t}z_j\ol z_t)
\]
and it is easy to see that this definition is independent of the choice of a representative $(z, \theta)$ of $[(z, \theta)]$. We denote by $\phi_\lambda(z)$ the weight of this Hermitian fiber metric. Note that $\pr\ddbar\phi_\lambda=\Theta_\lambda$.

Let $L^*_\lambda$ be the
dual bundle of $L_\lambda$ and let $\norm{\,\cdot\,}_{L^*_\lambda}$ be the norm of $L^*_\lambda$ induced by the Hermitian fiber metric on $L_\lambda$. Consider the compact CR manifold of dimension $2n+1$: $X=\{v\in L^*_\lambda;\, \norm{v}_{L^*_\lambda}=1\}$; this is the boundary of the Grauert tube associated to $L^*_\lambda$. The manifold $X$ is equipped with a natural $S^1$-action.
Locally $X$ can be represented in local holomorphic coordinates $(z,\eta)$, where $\eta$ is the fiber coordinate, as the set of all $(z,\eta)$ such that $\abs{\eta}^2e^{\phi_\lambda(z)}=1$.
The $S^1$-action on $X$ is given by $e^{i\theta}\circ (z,\eta)=(z,e^{i\theta}\eta)$, $e^{i\theta}\in S^1$, $(z,\eta)\in X$. Let $T$ be the global real vector field on $X$ determined by $Tu(x)=\frac{\pr}{\pr\theta}u(e^{i\theta}\circ x)\big|_{\theta=0}$, for all $u\in C^\infty(X)$. 
We can check that $T$ is a rigid global real vector field on $X$. Thus, $X$ is a compact generalized Sasakian CR manifold of dimension $2n+1$. Let $J$ denote the canonical complex structure on $X\times\Real$ given by $J\frac{\pr}{\pr t}=T$, where $t$ denotes the coordinate of $\Real$.

Let $\pi:L^*_\lambda\To T_n$
be the natural projection from $L^*_\lambda$ onto $T_n$. Let $\mu=\left(\mu_{j,t}\right)^{n}_{j,t=1}$, where $\mu_{j,t}=\mu_{t,j}$, $j, t=1,\ldots,n$, are given integers. Let $L_\mu$ be another holomorphic
line bundle over $T_n$ determined by the constant curvature form
$\Theta_\mu=\sum^n_{j,t=1}\mu_{j,t}dz_j\wedge d\ol z_t$ as above. 
The pullback line bundle $\pi^*L_\mu$ is a holomorphic line bundle over $L^*_\lambda$. If we restict $\pi^*L_\mu$ on $X$, then we can check that $(\pi^*L_\mu,J)$ is a generalized Sasakian CR line bundle over $X$. 

The Hermitian fiber metric on $L_\mu$ induced by $\phi_\mu$ induces a Hermitian fiber metric on $\pi^*L_\mu$ that we
shall denote by $h^{\pi^*L_\mu}$. We let $\psi$ to denote the weight of $h^{\pi^*L_\mu}$. 
The part of $X$ that lies over a fundamental domain of $T_n$ can be represented in local holomorphic coordinates
$(z, \xi)$, where $\xi$ is the fiber coordinate, as the set of all $(z, \xi)$ such that
$r(z, \xi):=\abs{\xi}^2\exp(\sum^n_{j,t=1}\lambda_{j,t}z_j\ol z_t)-1=0$
and the weight $\psi$ may be written as $\psi(z, \xi)=\sum^n_{j,t=1}\mu_{j,t}z_j\ol z_t$. From this we see that $(\pi^*L_\mu,J,h^{\pi^*L_\mu})$ is a rigid generalized Sasakian CR line bundle over $X$. It is straightforward to check that for any $p\in X$, we have 
$M^\psi_p=\frac{1}{2}d(\ddbar_b\psi-\pr_b\psi)(p)|_{T^{1, 0}X}=\sum^n_{j,t=1}\mu_{j,t}dz_j\wedge d\ol z_t$.
From this observation and Theorem~\ref{main}, we obtain

\begin{thm} \label{s7-t3a}
If the matrix $\left(\mu_{j,t}\right)^{n-1}_{j,t=1}$ is positive definite and $Y(0)$, $Y(1)$ hold on $X$, then 
for $k$ large, there is a constant $c>0$ independent of $k$, such that 
\[\dim H^0_b(X, (\pi^*L_\mu)^k)\geq ck^{n+1}.\]
\end{thm}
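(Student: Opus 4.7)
The plan is to derive Theorem~\ref{s7-t3a} as an immediate consequence of Theorem~\ref{main} applied to the compact generalized Sasakian CR manifold $X$ of dimension $2(n+1)-1$ and to the line bundle $(\pi^*L_\mu,J,h^{\pi^*L_\mu})$, so that the $k^n$ growth of Theorem~\ref{main} becomes $k^{n+1}$ here. The discussion immediately preceding the statement has already established that $X$ is a compact generalized Sasakian CR manifold, that $(\pi^*L_\mu,J)$ is a generalized Sasakian CR line bundle, and that the local weight $\psi(z,\xi)=\sum^n_{j,t=1}\mu_{j,t}z_j\ol z_t$ is independent of the fiber coordinate $\xi$; in particular $T\psi=0$, so $h^{\pi^*L_\mu}$ is $T$-rigid and $(\pi^*L_\mu,J,h^{\pi^*L_\mu})$ is a rigid generalized Sasakian CR line bundle over $X$. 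The same discussion also identifies the curvature as $M^\psi_p=\sum^n_{j,t=1}\mu_{j,t}dz_j\wedge d\ol z_t$, and conditions $Y(0)$ and $Y(1)$ are part of the hypothesis. Thus the only remaining item to verify is the positivity assumption required by Theorem~\ref{main}.

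For this, fix $p\in X$ and work in local holomorphic coordinates $(z,\xi)$ in which $X$ is cut out by $r(z,\xi)=|\xi|^2\exp\bigl(\sum^n_{j,t=1}\lambda_{j,t}z_j\ol z_t\bigr)-1=0$. The CR structure at $p$ is $T^{1,0}_pX=\{V\in T^{1,0}_pL^*_\lambda;\, Vr(p)=0\}$. A short computation using the defining equation shows that the $(1,0)$-vectors
\[
V_j:=\frac{\pr}{\pr z_j}-\ol\xi\Bigl(\sum^n_{t=1}\lambda_{j,t}\ol z_t\Bigr)\frac{\pr}{\pr\xi},\qquad j=1,\ldots,n,
\]
satisfy $V_jr(p)=0$ and hence form a basis of $T^{1,0}_pX$. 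Since the $\pr/\pr\xi$ component of $V_j$ contributes nothing to $dz_k$, one has $dz_k(V_j)=\delta_{k,j}$, and therefore
\[
M^\psi_p(V_j,\ol{V_t})=\sum^n_{k,l=1}\mu_{k,l}\,dz_k(V_j)\,d\ol z_l(\ol{V_t})=\mu_{j,t}.
\]
In other words, in the basis $\{V_1,\ldots,V_n\}$ of $T^{1,0}_pX$ the Hermitian form $M^\psi_p$ is represented exactly by the matrix $(\mu_{j,t})$, so the positive definiteness assumption on this matrix translates into $M^\psi_p$ being positive definite on $T^{1,0}_pX$ at every $p\in X$.

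Consequently $h^{\pi^*L_\mu}$ is strictly positive on the whole of $X$; in particular it is semi-positive and positive at some point, which is all Theorem~\ref{main} demands. Applying Theorem~\ref{main} to $X$ (whose dimension is $2(n+1)-1$) then yields a constant $c>0$, independent of $k$, such that $\dim H^0_b(X,(\pi^*L_\mu)^k)\geq ck^{n+1}$ for all $k$ large, which is exactly the content of Theorem~\ref{s7-t3a}. I do not anticipate any substantive obstacle in this plan: the only nontrivial step is the explicit description of a basis of $T^{1,0}_pX$ from the defining function of the Grauert tube boundary, and this is a purely algebraic computation; once it is in hand, Theorem~\ref{s7-t3a} becomes a specialization of Theorem~\ref{main}.
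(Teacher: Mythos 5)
Your proposal is correct and follows essentially the paper's own route: the discussion preceding the statement already establishes that $X$ is a compact generalized Sasakian CR manifold of dimension $2(n+1)-1$ and that $(\pi^*L_\mu,J,h^{\pi^*L_\mu})$ is a rigid generalized Sasakian CR line bundle with $M^\psi_p=\sum^n_{j,t=1}\mu_{j,t}dz_j\wedge d\ol z_t$, so the theorem is a direct specialization of Theorem~\ref{main}, your basis computation merely making explicit the positivity check the paper leaves implicit. One cosmetic slip: the tangent vectors should be $V_j=\frac{\pr}{\pr z_j}-\xi\bigl(\sum^n_{t=1}\lambda_{j,t}\ol z_t\bigr)\frac{\pr}{\pr\xi}$ (with $\xi$, not $\ol\xi$), but since only $dz_k(V_j)=\delta_{k,j}$ enters your evaluation, the identity $M^\psi_p(V_j,\ol{V_t})=\mu_{j,t}$ and hence the argument are unaffected.
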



\begin{thebibliography}{10} 

\bibitem{AndS70}
A.~Andreotti and Y.-T, Siu,
\emph{Projective embedding of pseudoconcave spaces},
Ann. Scuola Norm. Sup. Pisa(3), \textbf{24} (1970), 231--278.

\bibitem{BRT85}
M.-S.~Baouendi and L.-P.~Rothschild and F.-Treves,
\emph{C{R} structures with group action and extendability of {C}{R} functions},
Invent. Math., \textbf{83} (1985), 359--396.

\bibitem{Be05}
R.~Berman, 
\emph{Holomorphic {M}orse inequalities on manifolds with boundary},
  Ann. Inst. Fourier (Grenoble) \textbf{55} (2005), no.~4, 1055--1103.


\bibitem{BdM1:74b}
L.~{B}outet~de {M}onvel, \emph{Int\'egration des \'equations de
  {C}auchy-{R}iemann induites formelles}, S\'eminaire Goulaouic-Lions-Schwartz
  1974--1975; \'Equations aux deriv\'ees partielles lin\'eaires et non
  lin\'eaires, Centre Math., \'Ecole Polytech., Paris, 1975, Exp. no. 9,
  pp.~13.



\bibitem{CS01}
S-C.~Chen and M-C.~Shaw,
\emph{Partial differential equations in several complex variables},
AMS/IP Studies in Advanced Mathematics. 19. Providence, RI: American Mathematical Society (AMS). Somerville, MA: International Press, xii, 380 p., (2001).  

\bibitem{Dav95}
E.-B.~Davies, \emph{{Spectral} {Theory} and {Differential} {Operators}}, 
Cambridge Stud. Adv. Math., vol. \textbf{42}, (1995). 

\bibitem{De:85}
J.-P.~Demailly, \emph{{Champs magn\'etiques et inegalit\'es de Morse pour la
  $d''$--cohomologie}}, Ann. Inst. Fourier (Grenoble) \textbf{35} (1985),
  189--229.

\bibitem{EH00}
E.~Epstein and G.~Henkin,
\emph{Stability of embeddings for pseudoconcave surfaces and their boundaries},
Acta Math.,\textbf{185} (2000), no. 2, 161--237.

\bibitem{FK72}
G.~B. Folland and J.~J. Kohn, \emph{The {N}eumann problem for the
  {C}auchy-{R}iemann complex}, Annals of Mathematics Studies. No.75. Princeton, N.J.: Princeton University Press and University of Tokyo Press, 146 p. (1972).

\bibitem{Hsiao08}
C-Y.~Hsiao,
\emph{Projections in several complex variables}, M\'em. Soc. Math. France, Nouv. S\'er., \textbf{123} (2010), 131 p.

\bibitem{HM09}
C-Y.~Hsiao and G.~Marinescu,
\emph{Szeg\"{o} kernel asymptotics and Morse inequalities on CR manifolds}, 
Math. Z. 271 (2012), Page 509--553. 

\bibitem{Hsiao12}
C-Y.~Hsiao,
\emph{Existence of CR sections for high power of semi-positive rigid Heisenberg line bundles over compact Heisenberg manifolds}, preprint available at arXiv:1204.4810. 


\bibitem{Ko65}
J.J.~Kohn,
\emph{Boundaries of complex manifolds.},
Proc. Conf. Complex Analysis, Minneapolis 1964, 81-94, (1965).

\bibitem{Ma96}
G.~Marinescu, \emph{{Asymptotic Morse Inequalities for Pseudoconcave Manifolds}},
  Ann. Scuola Norm. Sup. Pisa Cl. Sci. \textbf{23} (1996), no.~1, 27--55.
  
\bibitem{Si1:84}
Y.~T. Siu, \emph{{A vanishing theorem for semipositive line bundles over
  non--K\"ahler manifolds}}, J. Differential Geom. \textbf{20} (1984),
  431--452.

\bibitem{Yo80}
K.~Yosida,
\emph{Functional Analysis}, Repr. of the 6th ed.
Berlin: Springer-Verlag. xiv, 506 p., (1994).

\end{thebibliography}
\end{document}